\providecommand{\tabularnewline}{\\}
\providecommand{\algorithmname}{Algorithm}
\theoremstyle{plain}
\newtheorem{thm}{\protect\theoremname}
\theoremstyle{remark}
\newtheorem{claim}[thm]{\protect\claimname}
\setlist[itemize]{leftmargin=1.5em}
\setlist[enumerate]{leftmargin=1.5em}
\DeclareMathOperator{\ind}{\mathds{1}}  % Indicator
\numberwithin{equation}{section}
\definecolor{yxc}{RGB}{255,0,0}
\definecolor{yjc}{RGB}{125,0,0}
\definecolor{cm}{RGB}{0,0,200}
\definecolor{yly}{RGB}{0,150,0}
\providecommand{\claimname}{Claim}
\providecommand{\theoremname}{Theorem}
\begin{document}
\theoremstyle{plain} \newtheorem{lemma}{\textbf{Lemma}} \newtheorem{proposition}{\textbf{Proposition}}\newtheorem{theorem}{\textbf{Theorem}}\setcounter{theorem}{0}
\newtheorem{corollary}{\textbf{Corollary}} \newtheorem{assumption}{\textbf{Assumption}}
\newtheorem{example}{\textbf{Example}} \newtheorem{definition}{\textbf{Definition}}
\newtheorem{fact}{\textbf{Fact}}\newtheorem{property}{Property}
\theoremstyle{definition}

\theoremstyle{remark}\newtheorem{remark}{\textbf{Remark}}\newtheorem{condition}{Condition}\newtheorem{conjecture}{Conjecture} 
\title{\bf Robust Matrix Completion with Heavy-tailed Noise}
\author{Bingyan Wang \thanks{Department of Operations Research and Financial Engineering, Princeton University, Princeton, NJ 08544, USA; Email: \texttt{\{bingyanw,jqfan\}@princeton.edu}.} \and Jianqing Fan\footnotemark[1]}
\date{}

\maketitle

\begin{abstract}
This paper studies low-rank matrix completion in the presence of heavy-tailed and possibly asymmetric noise, where we aim to estimate an underlying low-rank matrix given
a set of highly incomplete noisy entries. Though the matrix completion
problem has attracted much attention in the past decade, there is still lack of theoretical understanding when the observations are contaminated by heavy-tailed noises. Prior theory
falls short of explaining the empirical results and is unable to capture
the optimal dependence of the estimation error on  the noise level. In this paper,
we adopt an adaptive Huber loss to accommodate heavy-tailed noise, which is robust
against large and possibly asymmetric errors when the parameter in the loss function is carefully designed to balance the Huberization biases and robustness to outliers. Then, we propose an efficient nonconvex algorithm via a balanced low-rank Burer-Monteiro matrix factorization and gradient decent with robust spectral initialization.  We prove that under merely bounded second moment condition on the error distributions, rather than the sub-Gaussian assumption, the Euclidean error of the iterates generated by the proposed algorithm decrease geometrically fast until achieving a minimax-optimal statistical estimation error, which has the same order as that in the sub-Gaussian case. The key technique behind this significant advancement is a powerful leave-one-out analysis framework. The theoretical results are corroborated by our simulation studies.
\end{abstract}

\noindent \textbf{Keywords:} Huber loss, nonconvex optimization, gradient
descent, leave-one-out analysis

\tableofcontents{}

\section{Introduction\label{sec:Introduction}}

In a diverse array of real-world applications such as collaborative
filtering \citep{rao2015collaborative}, quantum-state tomography
\citep{gross2011recovering}, spectrum sensing \citep{corroy2011distributed}
and recommender system \citep{ramlatchan2018survey}, we are interested
in recovering a large-scale low-rank data matrix from noisy and highly
incomplete observations. This problem, usually termed as \emph{matrix
completion}, has attracted a lot of attention over a decade \citep{candes2010matrix,keshavan2010matrix,candes2011tight,ma2017implicit,chi2019nonconvex,chen2020nonconvex, chen2020noisy}.

Suppose the matrix of interest is $\bm{M}^{\star}=\{M_{i,j}^{\star}\}\in\mathbb{R}^{n\times n}$
of rank $r$ \footnote{Here we assume $\bm{M}^{\star}$ is a square matrix for simplicity
of presentation. It is straightforward to extend our results to the
case of a rectangular matrix $\bm{M}^{\star}\in\mathbb{R}^{n_{1}\times n_{2}}$. }, and we can observe a subset of noisy entries 
\[
M_{i,j}=M_{i,j}^{\star}+\varepsilon_{i,j},\qquad\left(i,j\right)\in\Omega,
\]
where $\varepsilon_{i,j}$ denotes the additive noise at index $(i,j)$,
and $\Omega\subseteq\{1,\cdots,n\}\times\{1,\cdots,n\}$ represents
sampling set. A variety of algorithms has been proposed for estimating
$\bm{M}^{\star}$, among which two paradigms have received much attention:
convex relaxation and nonconvex optimization. Both of them minimize
a form of regularized loss function \citep{candes2010matrix,chen2015fast,chen2020nonconvex}.
To enforce low-rank structure, the convex relaxation approach usually
adds a penalty term to the loss function and then implements well-developed
convex programming algorithms to obtain the estimates. There is vast
literature regarding this and Section \ref{sec:Prior-arts} 
provides a more detailed coverage on this part. However, a major drawback
of convex approach is that it suffers from high computational costs.
To remedy this issue, one turns to the nonconvex approach with a good initialization, which
often enjoys better computational performance and thus can be applied
to data of larger scale.  This will also be the focus of this paper.

For both convex and nonconvex methods, the vast majority of prior
literature relies heavily on the sub-Gaussian assumption of the noise
\citep{candes2010matrix,negahban2012restricted,
klopp2014noisy,chen2015fast,ma2017implicit,chen2020noisy}.
Under this assumption, regularized least-squares methods have been
proposed for nonconvex regularization and widely studied over the past decade, which usually takes
the form of

\begin{equation}
\underset{\bm{X},\bm{Y}\in\mathbb{R}^{n\times r}}{\mathrm{minimize}}\qquad\frac{1}{2p}\sum_{\left(i,j\right)\in\Omega}\left(\left(\bm{X}\bm{Y}^{\top}\right)_{i,j}-M_{i,j}\right)^{2}+\text{regularization terms},\label{eq:squareloss-obj}
\end{equation}
where the factorization $\bm{X} \bm{Y}^\top$ is used to model rank-$r$ matrix $\bm{M}^*$.
Theoretical studies are often conducted under the sub-Gaussian assumption, which can easily fail in many
modern applications.  In fact, heavy-tailed data is ubiquitous and can be encountered
in various domains such as functional magnetic resonance imaging \citep{eklund2016cluster},
financial markets \citep{cont2001empirical}, gene microarray analysis
\citep{wang2015high}, to name just a few. See also \citet{fan2021shrinkage} for additional examples where they also argue that by chance alone, some noises will have heavy tails in high dimensions.  Therefore, it is compelling to address the robustness issue in the matrix completion.

In this paper, we focus
on recovering a low-rank matrix from its highly incomplete subset
of entries contaminated by heavy-tailed noise. To accommodate the new
challenges here, we cannot stick to the least-squares formulation
\eqref{eq:squareloss-obj}, since it is well-known that square loss
can be vulnerable when dealing with heavy-tailed noise \citep{huber1973robust,catoni2012challenging}.
To address this issue, a natural solution is to resort to more robust
loss functions such as $\ell_{1}$-loss \citep{bassett1978asymptotic,huber2004robust},
Huber loss \citep{huber1973robust} and quantile loss \citep{koenker2001quantile}.
As in \citet{fan2017estimation} and \citet{sun2020adaptive}, we allow the distributions of $\varepsilon_{i,j}$ to be asymmetric so that $\tau$ should diverge appropriately in order to control the bias due to Huberization.  For this reason, the loss function is also referred to as the adaptive Huber loss, which will be our main focus.  We shall adopt the following nonconvex minimization problem
\begin{equation}
\underset{\bm{X},\bm{Y}\in\mathbb{R}^{n\times r}}{\mathrm{minimize}}\qquad\frac{1}{2p}\sum_{\left(i,j\right)\in\Omega}\rho_{\tau}\left(\left(\bm{X}\bm{Y}^{\top}\right)_{i,j}-M_{i,j}\right)+\frac{1}{8}\left\Vert \bm{X}^{\top}\bm{X}-\bm{Y}^{\top}\bm{Y}\right\Vert _{\mathrm{F}}^{2},\label{eq:huberloss-obj}
\end{equation}
where $\rho_{\tau}(\cdot)$ is the Huber loss function which will
be defined formally later and $\tau$ diverges at an appropriate rate. Here, the penalty term is intended to control
the balance between two low-rank factors $\bm{X}$ and $\bm{Y}$
which is crucial to the establishments of our theoretical guarantees
as we shall present later.  
Due to the fact that \eqref{eq:huberloss-obj}
is a highly nonconvex function, the objective function has numerous
local minima which prevents us from solving it easily by applying
some standard algorithm. This calls for a carefully designed algorithm
with provable performance guarantees. The constant $1/8$ is sufficient to guarantee the locally strong convexity of the objective function \eqref{eq:huberloss-obj} around the ground truth \citep{zheng2016convergence,tu2016low}.

\subsection{Comparison with prior theory}

\paragraph{Inadequacy of prior works.}

Among prior literature, matrix completion with heavy-tailed noise
has been studied by a series of papers.  \citet{elsener2018robust}
assumes a constant lower bound of the density function and adopts
Huber loss with nuclear norm penalty. \citet{minsker2018sub} deals with the case that
the noise has finite second moment and preprocesses the data before
feeding into the nuclear norm penalized square loss function. 
\citet{fan2021shrinkage} truncates the data before passing into the estimation 
method and it requires $2+\varepsilon$ moment of the noise to be finite. 
Despite the various assumptions and somewhat different formulations, their
statistical estimation errors behave similarly (cf.~Table \ref{table:comparison}).
They have a trailing term and prevent the upper bound from being
proportional to the noise level, even for sub-Gaussian noise. Consequently,
when the noise level is small, there would be a considerable gap between
their results and the optimal results available on sub-Gaussian noise \citep{ma2017implicit,chen2020noisy}.

Furthermore, a recent work \citep{shen2022computationally} proposes
a nonconvex Riemannian sub-gradient algorithm under the condition
that the noise is symmetric (which transfers means to medians) and some regularity conditions hold. Their
algorithm has improved over the convex approach in terms of computational
costs. However, their estimation error still has a trailing term as
listed in Table \ref{table:comparison}, which summarized the state-of-art theoretical results discussed above.

In view of these prior theories,  the following questions arise naturally. 
\begin{enumerate}
\item \emph{Is it possible to complete the matrix under only bounded second moment condition with the same rate of convergence as the sub-Gaussian case?}

\item \emph{Is it possible to close the theoretical gap by incorporating
the techniques of robust statistics into nonconvex optimization?} 

\item \emph{Is it possible to design an efficient nonconvex algorithm to
achieve the desired statistical accuracy? } 
\end{enumerate}
These questions are important but poorly understood.  They form the subject of this paper.

\paragraph{Our contribution. }

The current paper is devoted to providing a satisfactory answer to
the aforementioned questions. In a nutshell, we propose a two-stage nonconvex gradient
descent algorithm with a robust spectral initialization and establish theories to guarantee the optimality of its iterates after running for a sufficient number (logarithmically dependent on the model parameters) of iterations. Our result is also listed in Table \ref{table:comparison}, which, to the best of our knowledge, is the first one that achieves the optimal error rate under only bounded second moment condition (without assuming symmetric distribution). 

\begin{table}[t]
\caption{Comparison of our theoretical guarantees to prior theory, where we
hide all logarithmic factors. Here, the Euclidean estimation error
refers to $\|\bm{X}\bm{Y}^{\top}-\bm{M}^{\star}\|_{\mathrm{F}}$.}
\label{table:comparison} \vspace{0.8em}
 \centering %
\begin{tabular}{ccc}
\toprule 
$\vphantom{2_{2_{2_{2}}}^{2^{2^{2}}}}$  & \multirow{1}{*}{Algorithm} & Euclidean estimation error\tabularnewline
\midrule 
\citet{minsker2018sub}  & convex relaxation  & $\sqrt{\frac{\left(\sigma^{2}+\left\Vert \bm{M}^{\star}\right\Vert _{\infty}^{2}\right)rn}{p}}$\vphantom{$\frac{1^{7^{7^{7}}}}{1^{7^{7^{7}}}}$}\hspace{-0.4em}\tabularnewline
\midrule 
\citet{elsener2018robust}  & convex relaxation  & $\sqrt{\frac{\left(\tau^{2}+\left\Vert \bm{M}^{\star}\right\Vert _{\infty}^{2}\right)rn}{p}}$\vphantom{$\frac{1^{7^{7^{7}}}}{1^{7^{7^{7}}}}$}\hspace{-0.4em}\tabularnewline
\midrule 
\citet{fan2021shrinkage}  & convex relaxation  & $\sqrt{\frac{\left(\mu^{2}r+\sigma^{2}\right)rn}{p}}$\vphantom{$\frac{1^{7^{7^{7}}}}{1^{7^{7^{7}}}}$}\hspace{-0.4em}\tabularnewline
\midrule 
\citet{shen2022computationally}  & Riemannian sub-gradient  & $\max\left\{ \tau+\mathbb{E}\left|\varepsilon\right|,1+\mathbb{E}\left|\varepsilon\right|\tau^{-1}\right\} \sqrt{\frac{rn}{p}}$\vphantom{$\frac{1^{7^{7^{7}}}}{1^{7^{7^{7}}}}$}\hspace{-0.4em}\tabularnewline
\midrule 
\textbf{This paper}  & nonconvex GD  & $\sigma\sqrt{\frac{rn}{p}}$\vphantom{$\frac{1^{7^{7^{7}}}}{1^{7^{7^{7}}}}$}\hspace{-0.4em}\tabularnewline
\bottomrule
\end{tabular}
\end{table}

\subsection{Paper organization and notation}

The outline of the paper is as follows. Section \ref{sec:Main-results}
provides a formal statement of the model assumptions and presents
our main results. Section \ref{sec:Prior-arts} gives a review on
prior literature of matrix completion. Section \ref{sec:Numerical-experiments}
conducts numerical experiments that verify our theoretical results. Section \ref{sec:Proof-sketch}
gives a sketch of the proof techniques.  We conclude the paper in Section \ref{sec:Discussion}
by discussing several future directions.  All the proof details are
deferred to the Appendix.

Throughout the paper, for two functions $f(\cdot)$ and $g(\cdot)$,
we use the notations $f(n)\lesssim g(n)$ and $f(n)=O(g(n))$ to indicate
that there exists some constant $C_{1}>0$ such that $f(n)\leq C_{1}g(n)$
holds when $n$ is sufficiently large. Analogously, we adopt the notation
$f(n)\gtrsim g(n)$ to indicate that $f(n)\geq C_{2}g(n)$ for some
constant $C_{2}>0$ for all $n$ that are large enough. Moreover,
$f(n)\asymp g(n)$ means that $f(n)\lesssim g(n)$ and $f(n)\gtrsim g(n)$
hold simultaneously. In our proof, $C$ and $\widetilde{C}$ serve
as constants whose value might change from line to line.

Additionally, the matrix notations $\bm{X}$, $\bm{Y}\in\mathbb{R}^{n\times r}$
and $\bm{M}\in\mathbb{R}^{n\times n}$ shall be frequently used. 
%For any vector $\bm{v}$ and any matrix $\bm{M}$, we denote by $\bm{v}^{\top}$
%and $\bm{M}^{\top}$ their transposes, respectively. 
The notation $\|\bm{v}\|_{2}$ represents the $\ell_{2}$ norm of an vector $\bm{v}$,
and we let $\left\Vert \bm{M}\right\Vert $ and $\Vert\bm{M}\Vert_{\mathrm{F}}$
represent the spectral norm and the Frobenius norm of $\bm{M}$, respectively.
Moreover, we define $\Vert\bm{M}\Vert_{\infty}\coloneqq\max_{i,j}\vert M_{i,j}\vert$
and $\Vert\bm{X}\Vert_{2,\infty}\coloneqq\max_{i}\Vert\bm{X}_{i,\cdot}\Vert_{2}$.
We use $\mathcal{P}_{\Omega}(\cdot):\mathbb{R}^{n\times n}\mapsto\mathbb{R}^{n\times n}$
to stand for the projection onto the subspace of matrices whose support
is $\Omega$, i.e. 
\[
\left[\mathcal{P}_{\Omega}\left(\bm{M}\right)\right]_{i,j}=\begin{cases}
M_{i,j}, & \text{if\ }\left(i,j\right)\in\Omega\\
0, & \text{otherwise}
\end{cases}
\]
for any matrix $\bm{M}\in\mathbb{R}^{n\times n}$. Furthermore, for
any matrix $\bm{Z}$, we denote by $\bm{Z}_{\cdot,l}$ (resp.~$\bm{Z}_{l,\cdot}$)
the $l$th column (resp.~column) of $\bm{Z}$. For a function $f(\bm{X},\bm{Y})$,
we use $\nabla f_{\bm{X}}(\bm{X},\bm{Y})$ (resp.~$\nabla f_{\bm{Y}}(\bm{X},\bm{Y})$)
to denote the gradient of $f(\cdot)$ with respect to $\bm{X}$ (resp.~$\bm{Y}$).
For a non-singular matrix $\bm{R}\in\mathbb{R}^{r\times r}$ with
SVD $\bm{R}=\bm{U}_{\bm{R}}\bm{\Sigma}_{\bm{R}}\bm{V}_{\bm{R}}^{\top}$,
we define the orthogonal matrix $\mathsf{sgn}(\bm{R})$ by 
\begin{equation}
\mathsf{sgn}\left(\bm{R}\right)\eqqcolon\bm{U}_{\bm{R}}\bm{V}_{\bm{R}}^{\top}.\label{defn-sgn}
\end{equation}

\section{Robust matrix completion and main results\label{sec:Main-results}}

\subsection{Model and algorithm}

\paragraph{Model.}

As elucidated in Section \ref{sec:Introduction}, we are interested
in recovering a rank-$r$ matrix $\bm{M}^{\star}$. Let $\bm{M}^{\star}=\bm{U}^{\star}\bm{\Sigma}^{\star}\bm{V}^{\star\top}$
be the SVD of $\bm{M}^{\star}$ where $\bm{U}^{\star}$, $\bm{V}^{\star}\in\mathbb{R}^{n\times r}$
consists of orthogonal columns and $\bm{\Sigma}^{\star}\in\mathbb{R}^{r\times r}$
is a diagonal matrix with decreasing singular values $\sigma_{1}^{\star}\geq\sigma_{2}^{\star}\geq\cdots\geq\sigma_{r}^{\star}>0$.
Denote by $\kappa\coloneqq\sigma_{\max}^{\star}/\sigma_{\min}^{\star}$
the condition number of $\bm{M}^{\star}$, where $\sigma_{\max}^{\star}\coloneqq\sigma_{1}^{\star}$
and $\sigma_{\min}^{\star}\coloneqq\sigma_{r}^{\star}$. In addition,
let $\bm{X}^{\star}=\bm{U}^{\star}(\bm{\Sigma}^{\star})^{1/2}$ and
$\bm{Y}^{\star}=\bm{V}^{\star}(\bm{\Sigma}^{\star})^{1/2}$ be the
balanced low-rank factors of $\bm{M}^{\star}$, namely $\bm{X}^{\star} \bm{Y}^{\star}{}^\top = \bm{M}^{\star}$ and $\bm{X}^{\star}{}^\top \bm{X}^{\star}= \bm{Y}^{\star}{}^\top \bm{Y}^{\star}$. We consider the following assumptions
regarding the highly incomplete and noisy observations of $\bm{M}^{\star}$.

\begin{assumption}\label{assum}We assume 
\begin{enumerate}
\item \textbf{\emph{(Random sampling)}} The entry at each index $(i,j)$
can be observed independently with probability $p$, namely the entry missing at random with probability $1-p$.

\item \textbf{\emph{(Heavy-tailed noise)}} The noise matrix $\bm{E}=\{\varepsilon_{i,j}\}_{1\leq i,j\leq n}$
is composed of independent heteroskedastic noise with zero mean and bounded variance: 
\[
\mathbb{E}\left[\varepsilon_{i,j}\right]=0,\qquad\mathbb{E}\left[\varepsilon_{i,j}^{2}\right]=\sigma_{i,j}^{2}\leq\sigma^{2}.
\]
\end{enumerate}
\end{assumption}

Note that the heavy-tailed noise should be contrasted to the sub-Gaussian assumption in high-dimenison.  In particular, the bounded second moment can include distribution such as
the mixture normal $(1-\delta)* N(0, 1) + \delta N(0, \delta^{-1})$ with $\delta \to 0$.  It can contain data points with outliers of order $n^{1/2}$ among $n$ data points by taking $\delta \asymp n^{-1}$ or $O(n^{1/4})$ numbers of outliers of order $n^{3/8}$ by taking $\delta \asymp n^{-3/4}$.
As introduced before, the robust nonconvex problem
studied here is 
\begin{equation}
\min_{\bm{X},\bm{Y}\in\mathbb{R}^{n\times r}}\qquad f\left(\bm{X},\bm{Y}\right)=\frac{1}{2p}\sum_{\left(i,j\right)\in\Omega}\rho_{\tau}\left(\left(\bm{X}\bm{Y}^{\top}\right)_{i,j}-M_{i,j}\right)+\frac{1}{8}\left\Vert \bm{X}^{\top}\bm{X}-\bm{Y}^{\top}\bm{Y}\right\Vert _{\mathrm{F}}^{2},\label{eq:obj}
\end{equation}
where the Huber loss function with parameter $\tau$ is defined as
\begin{equation}
\label{defn:huber}
\rho_{\tau}\left(x\right)\coloneqq\begin{cases}
x^{2}/2, & \mathrm{if\ }\left|x\right|\leq\tau,\\
\tau\left|x\right|-\tau^{2}/2, & \mathrm{if\ }\left|x\right|>\tau.
\end{cases} 
\end{equation}
The regularization term in \eqref{eq:obj} is widely employed in the
literature \citep{zheng2016convergence,tu2016low,chen2020nonconvex}
to control the discrepancy or balance between $\bm{X}$ and $\bm{Y}$. It accommodates
an unavoidable scaling issue underlying this model, since there is
no hope to distinguish between $(\bm{X}\bm{R},\bm{Y}\bm{R}^{-\top})$
with $\bm{R}\in\mathbb{R}^{r\times r}$ being any invertible matrix
and $(\bm{X},\bm{Y})$ given only observations based on $\bm{X}\bm{Y}^{\top}$.

\paragraph{Algorithm. }

This paper considers an algorithm consists of two stages: (i) robust spectral
initialization which would generate a consistent yet not optimal initial
estimate, (ii) a gradient descent (GD) algorithm which update the
estimate iteratively. It can be seen momentarily that the initial
estimate given by (i) would fall into a \emph{local} region in the
neighborhood of the global minimum where restricted strong convexity
holds true, and then the GD algorithm can iteratively refine the estimates
within the local region. The complete algorithm is summarized in Algorithm
\ref{alg:gd-rmc}. 
\begin{itemize}
\item \emph{Spectral initialization.} Due to the nonconvex landscape, nonconvex
algorithms typically require initialization point with \emph{good}
properties to avoid getting stuck into some highly sub-optimal local
minima. To achieve this goal, in the first stage of Algorithm \ref{alg:gd-rmc},
we initialize the algorithm by the top-$r$ SVD of \eqref{eq:spectral-method-matrix}
where 
\begin{equation}
\psi_{\tau}\left(t\right)=\frac{\partial}{\partial t}\rho_{\tau}\left(t\right),\label{defn-psi}
\end{equation}
is the truncation (Winsorization) operator.
Under this definition, $\bm{M}^{0}$ given by \eqref{eq:spectral-method-matrix}
is a nearly unbiased estimator of $\bm{M}^{\star}$ when $\tau$ is sufficiently large, suggesting that
the top-$r$ SVD of $\bm{M}^{0}$ shall be a proper estimate for the
low rank factors of $\bm{M}^{\star}$. 
\item \emph{Gradient descent.} In the second stage, we proceed by performing
gradient descent iteratively to refine our estimates. As we shall
see momentarily, the number of iterations $t_{0}$ is logarithmically
dependent on the model parameters. This implies superior computational
performance of Algorithm \ref{alg:gd-rmc}, since in each iteration,
we only need to compute the gradient $\nabla f(\cdot)$ and update
the estimates $\bm{X}^{t}$ and $\bm{Y}^{t}$. In addition, the step size
$\eta$ is fixed throughout iterations. We will state how to choose
its value shortly. 
\end{itemize}
\begin{algorithm}[h]
\caption{Gradient descent for robust matrix completion (with spectral initialization)}

\label{alg:gd-rmc}\begin{algorithmic}

\STATE \textbf{{Input}}: data matrix $\bm{M}$, sampling set $\Omega$,
rank $r$, observation probability $p$, and maximum number of iterations
$t_{0}$.

\STATE \textbf{{Spectral initialization}}: let $\bm{U}^{0}\bm{\Sigma}^{0}\bm{V}^{0\top}$
be the top-$r$ SVD of 
\begin{equation}
\bm{M}^{0}\coloneqq\frac{1}{p}\mathcal{P}_{\Omega}\left(\psi_{\tau}\left(\bm{M}\right)\right),\label{eq:spectral-method-matrix}
\end{equation}
(see \eqref{defn-psi} for definition of $\psi_{\tau}(\cdot)$) and
set $\bm{X}^{0}=\bm{U}^{0}\left(\bm{\Sigma}^{0}\right)^{1/2}$, $\bm{Y}^{0}=\bm{V}^{0}\left(\bm{\Sigma}^{0}\right)^{1/2}$.

\STATE \textbf{{Gradient updates}}: \textbf{for }$t=0,1,\ldots,t_{0}-1$
\textbf{do}

\STATE \vspace{-1em}
 
\begin{subequations}
\label{subeq:gradient_update_ncvx} 
\begin{align}
\bm{X}^{t+1} & =\bm{X}^{t}-\eta\nabla f_{\bm{X}}\left(\bm{X}^{t},\bm{Y}^{t}\right)\label{eq:gd-x}\\
\bm{Y}^{t+1} & =\bm{Y}^{t}-\eta\nabla f_{\bm{Y}}\left(\bm{X}^{t},\bm{Y}^{t}\right)\label{eq:gd-y}
\end{align}
\end{subequations}
where $\nabla f_{\bm{X}}(\cdot)$ and $\nabla f_{\bm{Y}}(\cdot)$
represent the gradient of $f(\cdot)$ given by \eqref{eq:obj} w.r.t.~$\bm{X}$ and $\bm{Y}$,
respectively.

\end{algorithmic} 
\end{algorithm}

\subsection{Theoretical guarantees}

In this section, we present our theory for Algorithm \ref{alg:gd-rmc}
and elaborate on the implications of our results.

Before proceeding to the main results, we introduce a crucial condition
on $\bm{M}^{\star}$, which allows for reliable estimation schemes.
It is standard and widely adopted in the literature of matrix completion
\citep{candes2009exact,candes2010matrix,chen2015incoherence,chen2015fast,sun2016guaranteed}.

\begin{definition}\text{(Incoherence).}\label{defn:incoherence}A
rank-$r$ matrix $\bm{M}^{\star}$ with SVD $\bm{M}^{\star}=\bm{U}^{\star}\bm{\Sigma}^{\star}\bm{V}^{\star\top}$
is said to satisfy the incoherence condition with parameter $\mu$
if 
\[
\left\Vert \bm{U}^{\star}\right\Vert _{2,\infty}\leq\sqrt{\frac{\mu}{n}}\left\Vert \bm{U}^{\star}\right\Vert _{\mathrm{F}}=\sqrt{\frac{\mu r}{n}},\qquad\text{and}\qquad\left\Vert \bm{V}^{\star}\right\Vert _{2,\infty}\leq\sqrt{\frac{\mu}{n}}\left\Vert \bm{V}^{\star}\right\Vert _{\mathrm{F}}=\sqrt{\frac{\mu r}{n}}.
\]
\end{definition}

We note an identifiability issue underlying this
problem, namely, given any orthonormal matrix $\bm{R}\in\mathbb{R}^{r\times r}$,
there always holds $\bm{X}^{\star}\bm{Y}^{\star\top}=\bm{X}^{\star}\bm{R}(\bm{Y}^{\star}\bm{R})^{\top}$.
In view of this, when measuring the discrepancy between 
\[
\bm{F}^{t}\coloneqq\left[\begin{array}{c}
\bm{X}^{t}\\
\bm{Y}^{t}
\end{array}\right]\qquad\text{and}\qquad\bm{F}^{\star}\coloneqq\left[\begin{array}{c}
\bm{X}^{\star}\\
\bm{Y}^{\star}
\end{array}\right],
\]
we shall consider the distance metric modulo the global rotation matrix
$\bm{H}^{t}$ which best align $\bm{F}^{t}$ and $\bm{F}^{\star}$
in the sense that 
\begin{align}
\bm{H}^{t} & \eqqcolon\arg\min_{\bm{R}\in\mathcal{O}^{r\times r}}\left\Vert \bm{F}^{t}\bm{R}-\bm{F}^{\star}\right\Vert _{\mathrm{F}}
=\mathsf{sgn}(\bm{F}^t{}^\top \bm{F}^*),\label{eq:def-ht}
\end{align}
where $\mathsf{sgn}(\cdot)$ is defined in \eqref{defn-sgn}.
With these notions in hand, we are ready to present the main theorems.
To begin with, the first theorem below presents the theoretical results
when the condition number $\kappa$, the incoherence parameter $\mu$,
and the rank $r$ of $\bm{M}^{\star}$ are all constantly bounded.
It makes the requirement of sample size and noise level clearer to
recognize.   It also presents the property of the robust special spectral method as the initialization.  

\begin{theorem}\label{thm:main}Let $\bm{M}^{\star}$ be rank-$r$
and $\mu$-incoherent with condition number $\kappa$. Suppose $\kappa$,
$\mu$, $r\sim O(1)$ and Assumption \ref{assum} holds. Take $\tau=C_{\tau}(\Vert\bm{M}^{\star}\Vert_{\infty}+\sigma\sqrt{np})$
for some large enough constant $C_{\tau}>0$. Assume the sample size
and the noise level satisfy 
\begin{equation}
n^{2}p\geq Cn\log^{2}n\qquad\text{and}\qquad\sigma\sqrt{\frac{n}{p}}\leq c\frac{\sigma_{\min}}{{\log n}},\label{eq:condition}
\end{equation}
where $C>0$ is some large enough constant and $c>0$ is some sufficiently
small constant. Then with probability exceeding $1-O(n^{-3})$, the
iterates of Algorithm \ref{alg:gd-rmc} obey 
\begin{align}
\left\Vert \bm{F}^{0}\bm{H}^{0}-\bm{F}^{\star}\right\Vert _{\mathrm{F}} & \leq C_{0}\left(\frac{\sigma}{\sigma_{\min}}+\frac{\left\Vert \bm{M}^{\star}\right\Vert _{\infty}}{\sigma_{\min}}\right)
\sqrt{\frac{n}{p}} \left\Vert \bm{F}^{\star}\right\Vert _{\mathrm{F}},\label{eq:thm-initial}\\
\left\Vert \bm{F}^{t}\bm{H}^{t}-\bm{F}^{\star}\right\Vert _{\mathrm{F}} & \leq\rho^{t}\left\Vert \bm{F}^{0}\bm{H}^{0}-\bm{F}^{\star}\right\Vert _{\mathrm{F}}+C_{1}\frac{\sigma}{\sigma_{\min}}\sqrt{\frac{n}{p}}\left\Vert \bm{F}^{\star}\right\Vert _{\mathrm{F}}. \label{eq:thm-decay}
\end{align}
for all $0\leq t\leq t_{0}=O(n^{5})$, where $C_{0}$ and $C_{1}$
are some absolute constants and $\rho=1-\frac{\sigma_{\min}}{20}\eta$,
as long as $0\leq\eta\leq c'/(\sigma_{\max}\log n)$ for some small
constant $c'>0$.
\end{theorem}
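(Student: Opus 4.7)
The plan is to prove the two bounds separately: first the robust spectral initialization bound \eqref{eq:thm-initial} by concentration, and then the geometric contraction \eqref{eq:thm-decay} by an inductive leave-one-out analysis. For the initialization, the strategy is to show that $\bm{M}^{0}=p^{-1}\mathcal{P}_{\Omega}(\psi_{\tau}(\bm{M}))$ is a nearly unbiased estimator of $\bm{M}^{\star}$ even under heavy-tailed noise. The pointwise bias $\mathbb{E}[\psi_{\tau}(M_{i,j}^{\star}+\varepsilon_{i,j})]-M_{i,j}^{\star}$ equals $\mathbb{E}[\psi_{\tau}(M_{i,j}^{\star}+\varepsilon_{i,j})-(M_{i,j}^{\star}+\varepsilon_{i,j})]$ and is supported on $\{|M_{i,j}^{\star}+\varepsilon_{i,j}|>\tau\}$; by Markov's inequality this bias is $O(\sigma^{2}/\tau)$ per entry once $\tau\gg\|\bm{M}^{\star}\|_{\infty}$, whence $\|\mathbb{E}[\bm{M}^{0}]-\bm{M}^{\star}\|\lesssim n\sigma^{2}/\tau\lesssim\sigma\sqrt{n/p}$ for the prescribed $\tau\asymp\|\bm{M}^{\star}\|_{\infty}+\sigma\sqrt{np}$. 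For the stochastic part, matrix Bernstein applied to the $n^{2}$ independent summands, whose spectral norms are bounded by $\tau/p$ and whose matrix variance proxy is of order $(\sigma^{2}+\|\bm{M}^{\star}\|_{\infty}^{2})/p$, gives $\|\bm{M}^{0}-\mathbb{E}[\bm{M}^{0}]\|\lesssim(\sigma+\|\bm{M}^{\star}\|_{\infty})\sqrt{n/p}$ up to logarithmic factors. Feeding this spectral bound into a Davis--Kahan/Wedin argument for the balanced factorization $\bm{F}^{0}=[(\bm{U}^{0}(\bm{\Sigma}^{0})^{1/2})^{\top};(\bm{V}^{0}(\bm{\Sigma}^{0})^{1/2})^{\top}]^{\top}$, together with the signal-to-noise condition in \eqref{eq:condition}, yields \eqref{eq:thm-initial}.

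For the contraction \eqref{eq:thm-decay}, the plan is to run a single induction on $t$ that simultaneously maintains three inductive hypotheses: a Frobenius bound matching the right-hand side of \eqref{eq:thm-decay}; an incoherence bound $\|\bm{F}^{t}\bm{H}^{t}-\bm{F}^{\star}\|_{2,\infty}$ of the same order as the Frobenius bound divided by $\sqrt{n}$; and closeness to a family of leave-one-out iterates $\bm{F}^{t,(l)}$ whose $l$-th row/column of $\Omega$ and of the noise is resampled. Under these hypotheses the residual matrix $\bm{X}^{t}\bm{Y}^{t\top}-\bm{M}^{\star}-\bm{E}$ has entries uniformly bounded by $\tau/2$, so the Huber Hessian is identically the identity on all observed entries, reducing the local landscape to the least-squares case for which the restricted strong convexity and smoothness analysis of \citet{zheng2016convergence} and \citet{tu2016low} applies. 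The one-step descent inequality then takes the form
\[
\|\bm{F}^{t+1}\bm{H}^{t+1}-\bm{F}^{\star}\|_{\mathrm{F}}\leq(1-\tfrac{\sigma_{\min}}{20}\eta)\|\bm{F}^{t}\bm{H}^{t}-\bm{F}^{\star}\|_{\mathrm{F}}+\eta\|\nabla f(\bm{F}^{\star})\|_{\mathrm{F}},
\]
and the statistical error term $\|\nabla f(\bm{F}^{\star})\|_{\mathrm{F}}$ is controlled by matrix Bernstein applied to $p^{-1}\mathcal{P}_{\Omega}(\psi_{\tau}(-\bm{E}))\bm{Y}^{\star}$, whose summands are bounded by $\tau/p$ and have variance proxy of order $\sigma^{2}/p$, giving $\|\nabla f(\bm{F}^{\star})\|_{\mathrm{F}}\lesssim\sigma\sqrt{n/p}\|\bm{F}^{\star}\|_{\mathrm{F}}$. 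Geometric summation of this recursion over $t$ iterations produces \eqref{eq:thm-decay}.

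The main obstacle is the joint leave-one-out induction, which is needed for two reasons specific to the heavy-tailed, Huberized setting. First, keeping $\|\bm{F}^{t}\bm{H}^{t}-\bm{F}^{\star}\|_{2,\infty}$ small along the full trajectory is what guarantees $|(\bm{X}^{t}\bm{Y}^{t\top})_{i,j}-M_{i,j}^{\star}|\ll\tau$ entrywise, so that the Huber loss locally coincides with the squared loss and the restricted strong convexity argument of the least-squares analysis can be invoked; without entrywise control the Hessian of $\rho_{\tau}$ may vanish on observed entries, destroying the contraction. Second, the usual leave-one-out decoupling must be adapted to the clipped score $\psi_{\tau}(-\varepsilon_{i,j})$ rather than $\varepsilon_{i,j}$ itself, and one must show that replacing $\psi_{\tau}(-\varepsilon_{l,j})$ by its expectation along row $l$ changes $\bm{F}^{t,(l)}$ by a negligible amount. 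A secondary but delicate point is the choice of $\tau$: it must be large enough that the Huberization bias $\sigma^{2}/\tau$ is dominated by the target statistical error $\sigma\sqrt{n/p}$, yet small enough that $\psi_{\tau}(-\varepsilon_{i,j})$ has bounded magnitude and matrix Bernstein delivers the noise-proportional rate $\sigma\sqrt{n/p}$; the prescribed $\tau\asymp\|\bm{M}^{\star}\|_{\infty}+\sigma\sqrt{np}$ is precisely the balance point.
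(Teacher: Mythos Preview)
Your overall architecture matches the paper's: robust spectral initialization via bias $+$ matrix Bernstein $+$ Wedin, followed by a leave-one-out induction that simultaneously maintains a Frobenius bound, an $\ell_{2}/\ell_{\infty}$ bound, and proximity to the auxiliary sequences, with the contraction step driven by restricted strong convexity and the residual term $\eta\|\nabla f(\bm{F}^{\star})\|_{\mathrm{F}}\lesssim\eta\,\sigma\sqrt{n/p}\,\|\bm{F}^{\star}\|_{\mathrm{F}}$.

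There is, however, a genuine gap in your strong-convexity step. You write that under the incoherence hypothesis ``the residual matrix $\bm{X}^{t}\bm{Y}^{t\top}-\bm{M}^{\star}-\bm{E}$ has entries uniformly bounded by $\tau/2$, so the Huber Hessian is identically the identity on all observed entries.'' This is false for heavy-tailed noise. The $\ell_{2}/\ell_{\infty}$ hypothesis controls $|(\bm{X}^{t}\bm{Y}^{t\top})_{i,j}-M_{i,j}^{\star}|\leq\tau/2$, but the argument of the Huber loss is $(\bm{X}^{t}\bm{Y}^{t\top})_{i,j}-M_{i,j}=(\bm{X}^{t}\bm{Y}^{t\top})_{i,j}-M_{i,j}^{\star}-\varepsilon_{i,j}$, and $\varepsilon_{i,j}$ is unbounded. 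On the random set $\{|\varepsilon_{i,j}|>\tau/2\}$ the Huber second derivative vanishes, so the Huber Hessian is \emph{strictly smaller} than the least-squares Hessian there, and you cannot simply invoke the least-squares analysis.

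What the paper actually does (Lemma~\ref{lem:strongcvx}) is write the Huber quadratic form as the least-squares quadratic form $\mathcal{E}_{0}$ plus a correction supported on $\{|(\bm{X}\bm{Y}^{\top})_{i,j}-M_{i,j}|>\tau\}$, and then use your incoherence hypothesis only to obtain the \emph{inclusion} $\{|(\bm{X}\bm{Y}^{\top})_{i,j}-M_{i,j}|>\tau\}\subseteq\{|\varepsilon_{i,j}|>\tau/2\}$. The latter is a random sparse set with per-entry probability $\lesssim\sigma^{2}/\tau^{2}\asymp 1/(np)$ by Markov, and the correction terms (four of them, including a cross term with $\varepsilon_{i,j}\ind_{|\varepsilon_{i,j}|\leq\tau}$ and a term with $\tau\,\mathrm{sgn}(\cdot)\ind_{|\cdot|>\tau}$) are bounded via operator-norm estimates for Bernoulli-masked matrices, giving a total deficit $\lesssim(\sigma\sqrt{n/p}+\mu r\sigma_{\max}/(np))\|\bm{\Delta}\|_{\mathrm{F}}^{2}\ll\sigma_{\min}\|\bm{\Delta}\|_{\mathrm{F}}^{2}$. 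This is the step that genuinely exploits the Huberization; without it the heavy-tailed analysis collapses. Your proposal should replace ``reduces to least squares'' with this sparsity-of-clipped-entries argument.
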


\begin{remark} The analysis behind Theorem \ref{thm:main} remains
valid as long as the total number of iterations $t_{0}$ is polynomially
dependent on the problem dimension, i.e. $t_{0}=O(n^{c})$ for some
constant $c>0$.  To make the contraction term in \eqref{eq:thm-decay} neglible, the number of iterations should be at least of order $\log (\left\Vert \bm{M}^{\star}\right\Vert/\sigma)$.
\end{remark}

\begin{remark} 
Theorem~\ref{thm:main} contains both the results for the robust spectral inialization and its subsequence iterates.  On the way of our proof, we also establish
\begin{align}
 \left\Vert \bm{F}^{t}\bm{H}^{t}-\bm{F}^{\star}\right\Vert _{2,\infty} & \lesssim\kappa^{1.5}\sqrt{r}\left(\frac{\sigma}{\sigma_{\min}} +\frac{\left\Vert \bm{M}^{\star}\right\Vert _{\infty}}{\sigma_{\min}}\right)  \sqrt{\frac{n}{p}} \log n\left\Vert \bm{F}^{\star}\right\Vert _{2,\infty}.\label{eq:hyp-2infty}
\end{align}
We expect this result can be improved further.
\end{remark}

Despite its simplicity, Theorem \ref{thm:main} reveals deep insights
into the core idea of our newly-developed theoretical understanding
towards robust matrix completion. As can been seen from \eqref{eq:thm-decay},
Theorem \ref{thm:main} guarantees that the robust spectral initialization point $\bm{F}^{0}$
falls close enough to the ground truth $\bm{F}^{\star}$ and the estimation
error of the iterates $\{\bm{X}^{t},\bm{Y}^{t}\}_{t>0}$ generated
by the gradient descent step decay geometrically fast until reaching
some error floor. This behavior will be further illustrated numerically
in Section \ref{sec:Numerical-experiments}. A few remarks are in
order. 
\begin{itemize}
\item \textbf{Minimax optimality.} An immediate consequence of the theorem is that 
\begin{align*}
\left\Vert \bm{X}^{t}\bm{Y}^{t\top}-\bm{M}^{\star}\right\Vert _{\mathrm{F}} & \leq\widetilde{C}\rho^{t} \left(\sigma+\left\Vert \bm{M}^{\star}\right\Vert _{\infty}\right)\sqrt{\frac{n}{p}} +C_{1}\sigma\sqrt{\frac{n}{p}}.
\end{align*}
Consequently, as $t$ increases, $\Vert\bm{X}^{t}\bm{Y}^{t\top}-\bm{M}^{\star}\Vert_{\mathrm{F}}$
converges to $\sigma\sqrt{\frac{n}{p}}$, matching the lower bound
developed in \citet{koltchinskii2011nuclear,negahban2012restricted}
in the presence of sub-Gaussian noise. This confirms the minimaxity
of nonconvex optimization for matrix completion with heavy-tailed
noise. Furthermore, this implies that when addressed properly, heavy-tailed
noise can even behave analogously to sub-Gaussian noise in matrix
completion \citep{chen2020noisy}. Compared with the other methods
listed in Table \ref{table:comparison}, this error level gets rid
of the trailing term and is proportional to the noise level $\sigma$
even as $\sigma$ becomes vanishingly small, which also coincides
with our intuition. 
\item \textbf{Fast convergence.} In stark contrast to the convex approaches,
which usually suffer from high computational costs, the gradient descent
algorithm here is easy to implement and demonstrates linear convergence
with contraction rate $\rho$, resulting in an iteration complexity
scaling logarithmically with the model parameters. Hence it is straightforward
to see that to reach the error floor, the computational complexity
is $\widetilde{O}(n)$ (up to some log factors) under our sample size
condition, which is almost the best we can expect since the time spent
loading the data is $\widetilde{O}(n^2)$ in our case. Note that previous
work \citep{shen2022computationally} also achieves geometric convergence,
while its estimation error is not optimal. 
\item \textbf{Minimal sample size and noise conditions.} As stated in Theorem
\ref{thm:main}, when $\kappa$, $\mu$ and $r$ are all $O(1)$,
the sample size requirement scales as 
\begin{equation}
n^{2}p\gtrsim n\mathrm{poly}\log\left(n\right),\label{eq:samplesize}
\end{equation}
which matches the information-theoretic lower limit even in the absence
of noise \citep{candes2009exact,candes2010matrix}. 

Under the same
conditions, the noise level requirement \eqref{eq:condition} in our
main theorem is $\sigma\sqrt{n\log^2 n/p}\lesssim\sigma_{\min}$. Therefore,
if we adopt the following definition of signal-to-noise ratio (SNR)
\[
\mathsf{SNR}\coloneqq\frac{\mathbb{E}\left[\left\Vert \mathcal{P}_{\Omega}\left(\bm{M}^{\star}\right)\right\Vert _{\mathrm{F}}^{2}/\left|\Omega\right|\right]}{\sigma^{2}},
\]
the noise level requirement \eqref{eq:condition} implies that our theory will work as long 
as 
\[
\mathsf{SNR} = \frac{\left\Vert \bm{M}^{\star}\right\Vert _{\mathrm{F}}^{2}}{n^{2}\sigma^{2}} \gtrsim\frac{\log n}{np}.
\]
The lower bound in the above equation can be vanishingly small in
view of the sample size condition \eqref{eq:samplesize}. This shows
that our theory works even in the low-SNR regime. 

Furthermore, it is worth noting that Theorem \ref{thm:main} removes the symmetric noise assumption which is required in previous works \citep{elsener2018robust, shen2022computationally}.  This will also be verified shortly by the numerical experiments reported in Section \ref{sec:Numerical-experiments}.  Note that the symmetric noise assumption makes Huberization bias zero and problem becomes easier.

\item \textbf{Implicit regularization.} On closer inspection of the works
listed in Table \ref{table:comparison}, the convex problems need
either the constraint $\Vert\bm{M}\Vert_{\infty}\leq\alpha$ \citep{elsener2018robust,fan2021shrinkage}
or the nuclear norm penalty $\Vert\bm{M}\Vert_{*}$ in loss function
\citep{elsener2018robust,minsker2018sub, fan2021shrinkage}, while
these can be removed in our study by a powerful entrywise control,
as we shall elaborate on shortly in Section \ref{sec:Proof-sketch}.
This indicates that gradient descent can implicitly bound the spikiness
of the estimates. 
\end{itemize}

Next, we present the more geneal case where $\kappa$, $\mu$, $r$ are allowed
to grow with $n$.  It allows us to examine the explicit dependence on $\kappa$, $\mu$, $r$, which is not available in 
Theorem \ref{thm:main}.

\begin{theorem}\label{thm:main-1}Let $\bm{M}^{\star}$ be rank-$r$
and $\mu$-incoherent with condition number $\kappa$. Suppose Assumption
\ref{assum} holds and take $\tau=C_{\tau}(\Vert\bm{M}^{\star}\Vert_{\infty}+\sigma\sqrt{np})$
for some large enough constant $C_{\tau}>0$. Assume the sample size
and the noise level satisfy 
\begin{equation}
n^{2}p\geq C\kappa^{6}\mu^{2}r^{4}n\log^{2}n\qquad\text{and}\qquad\sigma\sqrt{\frac{n}{p}}\leq c\frac{\sigma_{\min}}{\sqrt{\kappa^{4}\mu r^{2}\log^{2}n}},\label{eq:condition-1}
\end{equation}
where $C>0$ is some large enough constant and $c>0$ is some sufficiently
small constant. Then with probability exceeding $1-O(n^{-3})$, the
iterates of Algorithm \ref{alg:gd-rmc} obey 
\begin{align}
\left\Vert \bm{F}^{t}\bm{H}^{t}-\bm{F}^{\star}\right\Vert _{\mathrm{F}} & \leq\rho^{t}\left\Vert \bm{F}^{0}\bm{H}^{0}-\bm{F}^{\star}\right\Vert _{\mathrm{F}}+C_{1}\frac{\sigma}{\sigma_{\min}}\sqrt{\frac{n}{p}}\left\Vert \bm{F}^{\star}\right\Vert _{\mathrm{F}},\label{eq:thm-decay-1}\\
\left\Vert \bm{F}^{0}\bm{H}^{0}-\bm{F}^{\star}\right\Vert _{\mathrm{F}} & \leq C_{0}\sqrt{\kappa} \left(\frac{\sigma}{\sigma_{\min}}+\frac{\left\Vert \bm{M}^{\star}\right\Vert _{\infty}}{\sigma_{\min}} \right)\sqrt{\frac{n}{p}}\left\Vert \bm{F}^{\star}\right\Vert _{\mathrm{F}}.\label{eq:thm-initial-1}
\end{align}
for all $0\le t\leq t_{0}=O(n^{-5})$, where $C_{0}$ and $C_{1}$
are some absolute constants and $\rho=1-\frac{\sigma_{\min}}{20}\eta$,
as long as $0\leq\eta\leq c'/(\mu\kappa^{3}r^{2}\sigma_{\max}\log n)$
for some small constant $c'>0$. \end{theorem}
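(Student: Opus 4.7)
The plan is to prove Theorem~2 via an induction-based leave-one-out analysis, generalizing the constant-$\kappa,\mu,r$ argument underlying Theorem~1 while carefully tracking the dependence on these parameters. I would maintain four induction hypotheses simultaneously over iterations $t = 0, 1, \dots, t_0$: (i) a Frobenius-norm contraction bound of the form $\|\bm{F}^t\bm{H}^t - \bm{F}^\star\|_{\mathrm{F}} \le \rho^t C_0\sqrt{\kappa}(\sigma/\sigma_{\min} + \|\bm{M}^\star\|_\infty/\sigma_{\min})\sqrt{n/p}\|\bm{F}^\star\|_{\mathrm{F}} + C_1(\sigma/\sigma_{\min})\sqrt{n/p}\|\bm{F}^\star\|_{\mathrm{F}}$; (ii) a spectral-norm bound $\|\bm{F}^t\bm{H}^t - \bm{F}^\star\| \lesssim (\cdots)\sqrt{n/p}\|\bm{F}^\star\|$; (iii) a two-to-infinity bound \eqref{eq:hyp-2infty}; and (iv) a leave-one-out closeness bound $\|\bm{F}^t\bm{H}^t - \bm{F}^{t,(l)}\bm{H}^{t,(l)}\|_{\mathrm{F}}$ of the same order. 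The closeness (iv) plus (ii) will imply the entrywise bound (iii), which in turn decouples the statistical dependence between $\bm{F}^t$ and the Bernoulli mask $\mathcal{P}_\Omega$ inside the Huberized gradient.

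The base case requires the robust spectral initialization bound \eqref{eq:thm-initial-1}. Here I would write $\bm{M}^0 - \bm{M}^\star = p^{-1}\mathcal{P}_\Omega(\psi_\tau(\bm{M}) - \bm{M}^\star) + (p^{-1}\mathcal{P}_\Omega - \mathcal{I})(\bm{M}^\star)$ and further decompose the first term into a bias part $p^{-1}\mathcal{P}_\Omega(\mathbb{E}\psi_\tau(M_{ij}) - M^\star_{ij})$ and a stochastic part. The bias is controlled by $\tau \gtrsim \|\bm{M}^\star\|_\infty + \sigma\sqrt{np}$: for $|M^\star_{ij}|\le\tau/2$ the Huberization bias is at most $\sigma^2/\tau$ per entry, while the stochastic part has bounded summands ($\le 2\tau$) with variance $\le \sigma^2$, so matrix Bernstein gives $\|\bm{M}^0 - \bm{M}^\star\| \lesssim \sigma\sqrt{n/p} + \|\bm{M}^\star\|_\infty\sqrt{n/p}$ up to log factors. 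Applying Wedin's theorem, combined with the standard factor-level perturbation bound (cf.~Tu et al.), yields \eqref{eq:thm-initial-1} with the $\sqrt{\kappa}$ factor arising from aligning the balanced factors via $\mathsf{sgn}(\cdot)$. The two-to-infinity refinement of this bound follows from a leave-one-out argument on the rows of $\bm{M}^0$.

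For the induction step, I would first establish, on the event that the hypotheses hold at iteration $t$, a \emph{local restricted strong convexity and smoothness} property of the Huberized empirical loss: for any $\bm{F}$ in a neighborhood of $\bm{F}^\star$ satisfying the required incoherence, $\langle \nabla f(\bm{F}) - \nabla f(\bm{F}^\star), \bm{F} - \bm{F}^\star \rangle \gtrsim \sigma_{\min}\|\bm{F} - \bm{F}^\star\|_{\mathrm{F}}^2$ and $\|\nabla^2 f\| \lesssim \mu\kappa^2 r\sigma_{\max}\log n$. The RSC uses that $\rho_\tau''(x) = \mathds{1}\{|x|\le\tau\}$ is nonzero on a set of high probability given the entrywise control (iii), and the quadratic penalty $\tfrac18\|\bm{X}^\top\bm{X} - \bm{Y}^\top\bm{Y}\|_{\mathrm{F}}^2$ supplies strong convexity along the rotational null direction. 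Standard contraction arithmetic then yields the $(1 - \sigma_{\min}\eta/20)$ decay, provided the residual stochastic gradient $\nabla f(\bm{F}^\star)$ obeys $\|\nabla f(\bm{F}^\star)\|_{\mathrm{F}} \lesssim \sigma\sqrt{rn/p}\|\bm{F}^\star\|$. This gradient is a matrix Bernstein sum of the Huber-truncated noise scores $\psi_\tau(\varepsilon_{ij})$, which have zero first-moment contribution to leading order (controlled bias via $\tau$) and bounded summands, giving the desired statistical error floor.

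The main obstacle is handling the heavy-tailed, asymmetric noise through the score term $\psi_\tau(\varepsilon_{ij} + \bm{X}^t\bm{Y}^{t\top}_{ij} - M^\star_{ij})$ appearing inside the gradient: unlike the square-loss case this is a nonlinear function of the residual, and the interaction between $\bm{F}^t$ (random) and $\mathcal{P}_\Omega$ must be controlled simultaneously with the Huberization bias. This is exactly where the leave-one-out construction becomes indispensable: I would define $f^{(l)}(\bm{X},\bm{Y})$ by replacing the $l$-th row and column of the data with their noiseless, expected-in-$\Omega$ counterparts, and couple $\bm{F}^t$ with $\bm{F}^{t,(l)}$ iteratively. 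Showing that the coupling error remains of the claimed order after one gradient step hinges on a careful Bernstein-type bound for $\sum_{j:(l,j)\in\Omega}\psi_\tau(\cdot)(\bm{Y}^{t,(l)})_{j,\cdot}$ that exploits independence between $\bm{F}^{t,(l)}$ and the $l$-th row of $\mathcal{P}_\Omega$. Once this coupling is controlled, the entrywise hypothesis (iii) propagates, which closes the induction. The explicit dependence on $\kappa,\mu,r$ in \eqref{eq:condition-1} arises from requiring all four induction hypotheses to survive at iteration $t+1$ with constants no worse than at $t$, which requires $n^2 p \gtrsim \kappa^6\mu^2 r^4 n\log^2 n$ to absorb the Bernstein tail and incoherence factors.
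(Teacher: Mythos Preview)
Your proposal is correct and follows essentially the same approach as the paper: the proof proceeds by an induction over $t$ maintaining Frobenius, $\ell_2/\ell_\infty$, and leave-one-out proximity hypotheses, with the base case handled by robust spectral initialization (matrix Bernstein on the truncated entries plus Wedin), and the induction step driven by local restricted strong convexity of the Huberized loss (using that $\rho_\tau''=\mathds{1}\{|\cdot|\le\tau\}$ is active under the entrywise control) together with matrix Bernstein bounds on the leave-one-out gradient differences. Two small discrepancies worth noting: the paper's smoothness bound is actually the clean $\|\nabla^2 f\|\le 10\sigma_{\max}$ rather than your $\mu\kappa^2 r\sigma_{\max}\log n$ (the extra factors enter only through the step-size restriction needed for the leave-one-out coupling), and the leave-one-out loss $f^{(l)}$ replaces only the $l$-th \emph{row} (for $1\le l\le n$) or the $(l-n)$-th \emph{column} (for $n+1\le l\le 2n$), not both simultaneously.
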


In an analogy to Theorem \ref{thm:main}, Theorem \ref{thm:main-1}
also exhibits that Algorithm \ref{alg:gd-rmc} starts from a proper
initialization and then the iterates $\{\bm{F}^{t}\}_{t=0}^{t_{0}}$
demonstrates geometric convergence of to some error floor. Furthermore,
there are several aspects of Theorem \ref{thm:main-1} calling for
future improvement. For example, the sample size condition \eqref{eq:condition-1}
requires that the sample complexity to scale as $O(\kappa^{4}nr^{3})$.
In contrast, in the noiseless setting, \citet{gross2011recovering} and \citet{chen2015incoherence}
have shown that the sample complexity needed to recover the low-rank
matrix scales as $O(nr)$. Moreover, in the presence of sub-Gaussian
noise, \citet{chen2020noisy} has established minimax optimal estimation
error with sample complexity $O(\kappa^{4}nr^{2})$. Hence, there
might still exist much room for improvement of the dependency on $r$
and $\kappa$. To put it into perspective, this sub-optimal scaling
in $r$ and $\kappa$ appears frequently in theory of nonconvex low-rank
matrix recovery \citep{chen2015fast,sun2016guaranteed,ding2020leave,shen2022computationally}.
New analysis techniques shall be explored to sharpen the results.

\section{Prior arts\label{sec:Prior-arts}}

%Due to its superior computational advantage over the convex approach,
%the nonconvex approach has been employed to study a diverse array
%of high dimensional statistical estimation problems with low-rank
%structure, including matrix sensing \citep{zheng2015convergent,tu2016low,jain2013low},
%matrix completion \citep{chen2015fast,jain2013low,hardt2014understanding,wei2016guarantees,zheng2016convergence,chen2020noisy,chen2020nonconvex,ma2017implicit,sun2016guaranteed,keshavan2010matrix,keshavan2009matrix,hardt2014fast},
%phase retrieval \citep{candes2015phase,cai2016optimal,zhang2016provable,chen2017solving,wang2017solving,zhang2017nonconvex,chen2019gradient,yang2019misspecified,ma2017implicit,zhang2020phase,netrapalli2013phase,waldspurger2018phase,ma2019optimization},
%blind deconvolution \citep{ling2017blind,dong2018nonconvex,huang2018blind,li2019rapid,chen2021convex},
%Robust PCA \citep{netrapalli2014non,yi2016fast,gu2016low,cherapanamjeri2017nearly,chen2021bridging}, to name just a few. 

Due to its superior computational advantage over the convex approach, the nonconvex approach has been employed to study a diverse array of high dimensional statistical estimation problems with low-rank structure, including matrix sensing, matrix completion, phase retrieval, blind deconvolution, robust PCA, to name just a few. The readers are referred to \citet{chi2019nonconvex} %and \citet{chen2021spectral}
for an overview of this topic and references therein. Among these problems, matrix completion
is the focus of this paper and the recent decade has witnessed a flurry
of research activities under this topic since the seminal work by \citet{candes2009exact}.  
% See, for example, \citep{chen2015fast,jain2013low,hardt2014understanding,wei2016guarantees,zheng2016convergence,chen2020noisy,chen2020nonconvex,ma2017implicit,sun2016guaranteed,keshavan2010matrix,keshavan2009matrix,hardt2014fast} and its closed related work on robust PCA  \citep{netrapalli2014non,yi2016fast,gu2016low,cherapanamjeri2017nearly,chen2021bridging}.
A variety of nonconvex algorithms have been analyzed, such as projected
gradient descent \citep{chen2015fast,zheng2016convergence,sun2016guaranteed},
alternating minimization \citep{jain2013low,hardt2014understanding,hardt2014fast},
Riemannian gradient descent \citep{wei2016guarantees} and gradient
descent with Burer-Monteiro factorization \citep{zheng2016convergence,burer2003nonlinear}.
Besides, nuclear norm minimization has also attracted much attention
\citep{candes2010matrix,gross2011recovering,candes2011tight,koltchinskii2011nuclear,negahban2012restricted,chen2015incoherence}.
All these papers consider either noiseless setting or sub-Gaussian
noise, while heavy-tailed noise is not allowed for.

Heavy-tailed noise is an ubiquitous and widely studied issue arising
in a variety of modern statistical problems. A number of papers have
been dedicated to resolving this problem. For instance, \citet{catoni2012challenging}
proposes using a robust loss function to estimate the mean and variance
of data with bounded variance; \citet{brownlees2015empirical} investigates
empirical risk minimization based on the robust estimator proposed
by \citet{catoni2012challenging}. In high-dimensional linear regression problem,
 \citet{fan2017estimation,loh2017statistical} and \citet{sun2020adaptive}
study the usage of robust loss functions 
and analyze the theoretical properties of the proposed robust estimators.
\citet{charisopoulos2019low,tong2021low,li2020nonconvex} study the
nonsmooth and nonconvex formulation of low-rank matrix recovery with
the help of $\ell_{1}$ loss, which subsumes many well-known problems
including phase retrieval, matrix completion, blind deconvolution,
etc. \citet{alquier2019estimation} studies the applications of general
Lipschitz loss functions in a series of statistical problems including
matrix completion, logistic LASSO and kernel methods. Another line
of research follows the ``median of means'' approach \citep{nemirovskij1983problem,minsker2015geometric,hsu2016loss}
to attenuate the effects of heavy-tailed data.

Taking a closer look at robust matrix completion, a variety of papers
have been devoted to studying the scenarios when observations are
contaminated by outliers or heavy-tailed noises. In this regime, the
proposed methods can also be roughly categorized as convex and nonconvex
ones. Regarding the convex approach, to mitigate the effects of heavy-tailed
noises, \citet{fan2021shrinkage} proposes to first shrink the data
to construct robust covariance estimators and then minimizes $\ell_{2}$
risk with nuclear norm penalty under the assumption of finite $2+\varepsilon$
moment of noise. \citet{elsener2018robust} assumes a constant lower
bound of the density function and a regularity condition on the distribution
function of the errors. Under such conditions, it then studies the
performance of $\ell_{1}$ and Huber loss with nuclear norm penalty,
and obtains estimation error rates for approximately low-rank matrices.
\citet{minsker2018sub} introduces a robust estimator inspired by
\citet{catoni2012challenging} and proves a similar estimation error
bound to \citet{fan2021shrinkage} under the finite second moment
condition. Turning to robust nonconvex optimization, \citet{shen2022computationally}
introduces a nonconvex Riemannian sub-gradient algorithm to study
matrix completion with heavy-tailed noise and $\ell_{1}$ loss, Huber
loss and quantile loss respectively, under some regularity conditions
on the density function and distribution function of the errors similar
to that of \citet{elsener2018robust}. Another collection of works
focuses on studying robust matrix completion in the presence of outliers.
For example, \citet{cambier2016robust} considers the case where the
observed entries are corrupted by random outliers and studies $\ell_{1}$
loss function with the help of Riemannian optimization. \citet{klopp2017robust} and \citet{chen2021bridging}
extend the model setting to incorporate both outliers and sub-Gaussian
noise, and the latter achieves optimal estimation error.

\section{Numerical experiments\label{sec:Numerical-experiments}}

In this section, we conduct a variety of numerical experiments to
corroborate the validity of our theory established in Section \ref{sec:Main-results}.
Throughout the experiments, we fix the dimension to be $n=1000$ and
the rank $r=5$. The observation probability is $p=0.3$. The ground
truth matrix $\bm{U}^{\star}$, $\bm{V}^{\star}\in\mathbb{R}^{n\times r}$
are generated by sampling from standard Gaussian distribution and
then orthogonalizing their columns. The diagonal of $\bm{\Sigma}^{\star}$
is set to be equidistant from $\sigma_{1}^{\star}=r$ to $\sigma_{r}^{\star}=1$.

\selectlanguage{british}%
\begin{figure}
\hfill{}\includegraphics[clip,width=0.25\paperwidth]{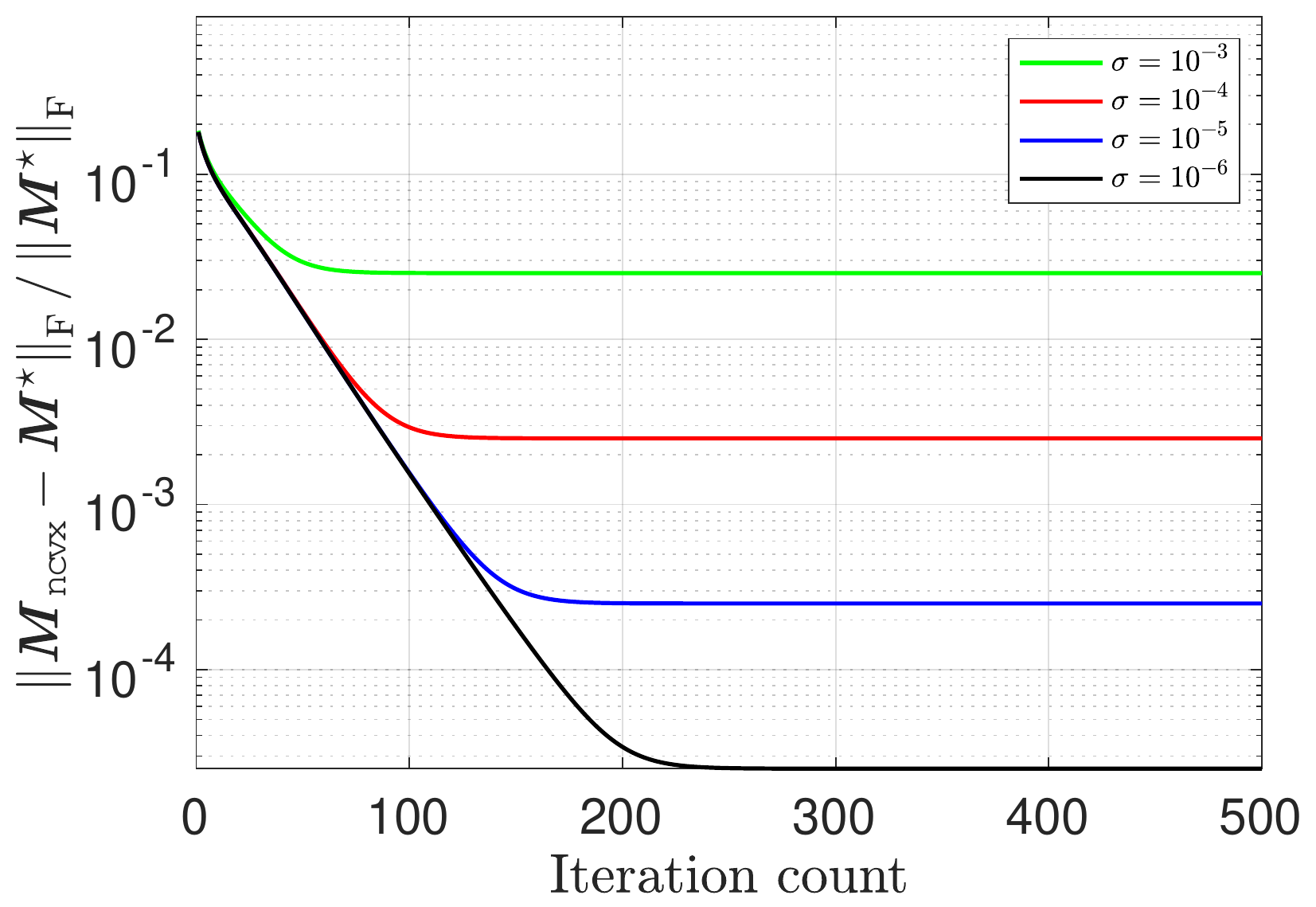}
\hfill{}\includegraphics[clip,width=0.25\paperwidth]{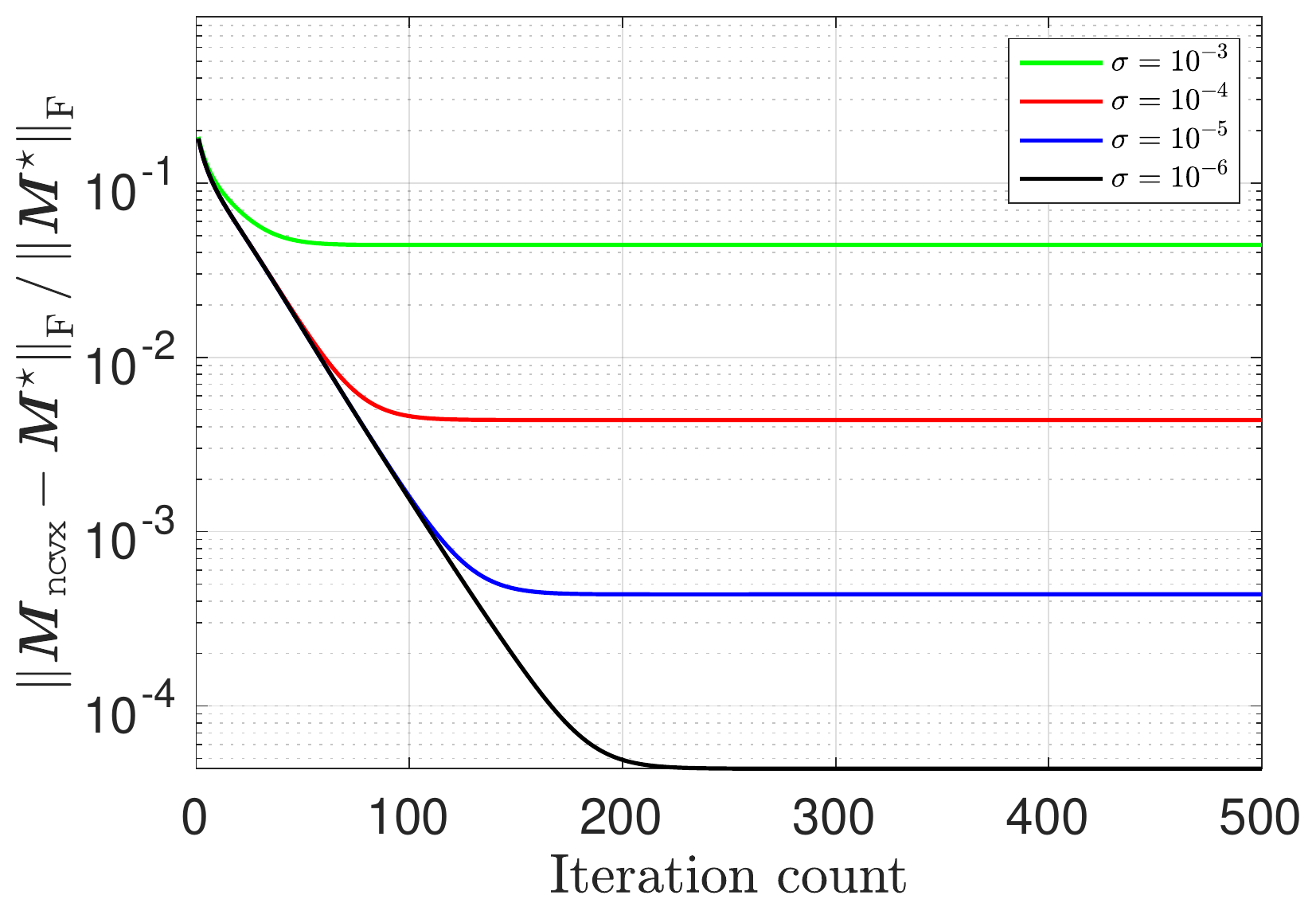}
\hfill{}\includegraphics[clip,width=0.25\paperwidth]{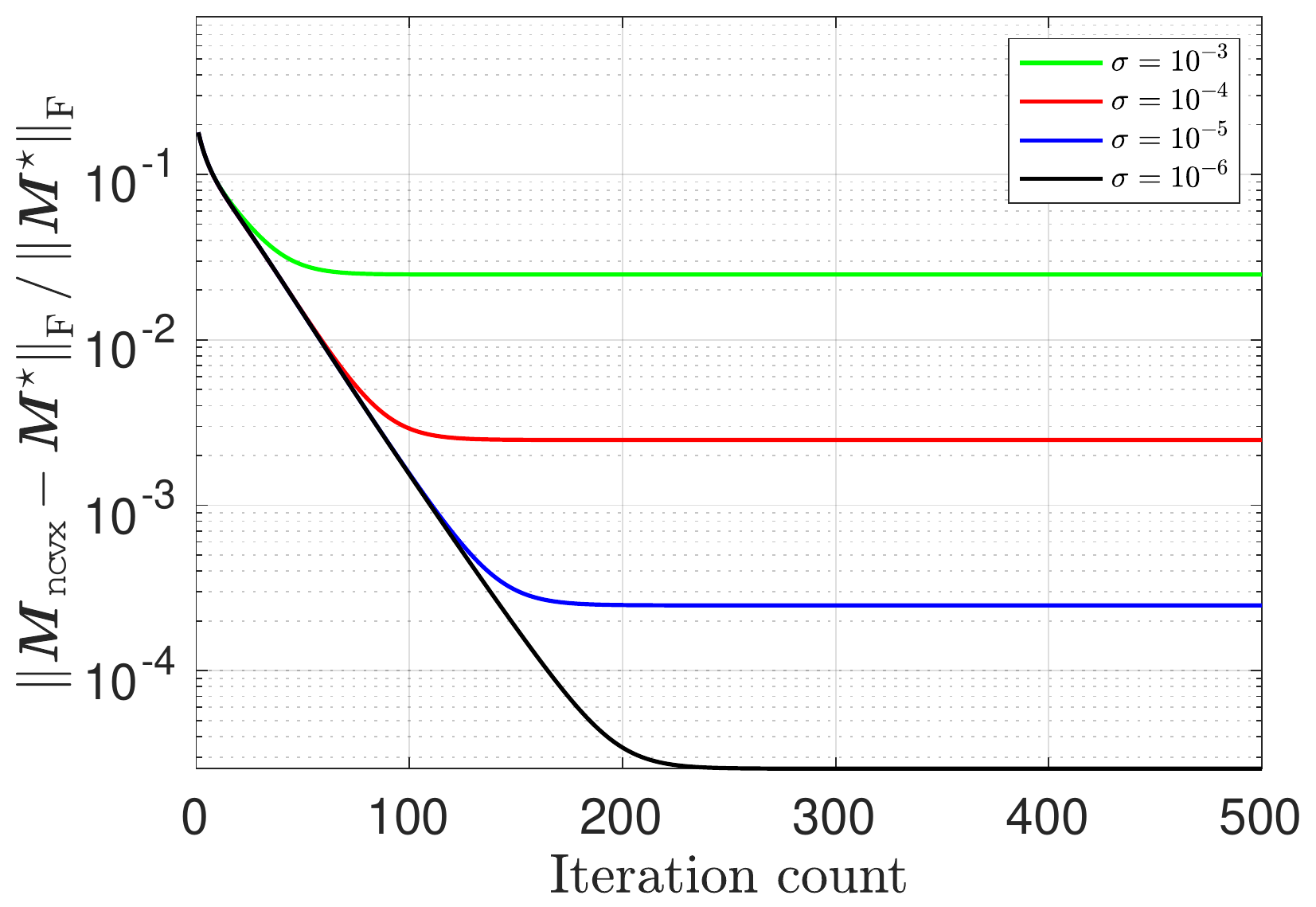}
\begin{minipage}[t]{0.36\textwidth}%
\selectlanguage{english}%
\centering(a) Gaussian distribution. \selectlanguage{english}%
\end{minipage}%
\begin{minipage}[t]{0.32\textwidth}%
\selectlanguage{english}%
\centering(b) Student's distribution $t(3)$. \selectlanguage{english}%
\end{minipage}\hfill{}%
\begin{minipage}[t]{0.3\textwidth}%
\selectlanguage{english}%

\centering(c) Distribution  in \eqref{eq:sparse-dist}. \selectlanguage{english}%
\end{minipage}

\selectlanguage{english}%
\caption{Relative Euclidean estimation errors of \eqref{eq:obj} with different error distributions vs. iteration count. \label{fig:dist_ncvx}}
\selectlanguage{english}%
\end{figure}

\selectlanguage{english}%
In the first series of experiments, we report the numerical convergence
of gradient descent (cf.~Algorithm \ref{alg:gd-rmc}) as the noise
level $\sigma$ varies from $10^{-6}$ to $10^{-3}$. The step size $\eta$ is set to be $0.05$ and the threshold parameter in Huber loss function \eqref{defn:huber} is taken to be $\tau=3\left(\Vert\bm{M}^{\star}\Vert_{\infty}+\sigma\sqrt{np}\right)$. Let $\bm{M}_{\mathsf{ncvx}}=\bm{X}_{\mathsf{ncvx}}\bm{Y}_{\mathsf{ncvx}}^{\top}$
be the nonconvex solution from Algorithm \ref{alg:gd-rmc} and $\bm{M}^{\star}$ be the ground truth.
Figure \ref{fig:dist_ncvx} displays the relative Euclidean estimation
errors ($\left\Vert \bm{M}_{\mathsf{ncvx}}-\bm{M}^{\star}\right\Vert _{\text{F}}/\left\Vert \bm{M}^{\star}\right\Vert _{\text{F}}$)
vs.~the iteration count respectively. In (a) and (b), the noises
are generated from Gaussian distribution and Student's $t$-distribution
with 3 degrees of freedom respectively. In (c), we adopt the noise
distribution defined by the following probability mass function for a trinomial distribution: 
\begin{equation}
f\left(x\right)=\begin{cases}
\frac{1}{2}\delta, & x=\frac{\sigma}{\sqrt{\delta}}\\
\frac{1}{2}\delta, & x=-\frac{\sigma}{\sqrt{\delta}}\\
1-\delta, &  x = 0, 
\end{cases}\label{eq:sparse-dist}
\end{equation}
and take $\delta =0.01$. In this case, only a fraction of observed entries
are corrupted by noise and the magnitude of noise can be much larger (10 times) than $\sigma$. Here, (b) and (c) focus on heavy-tailed noise distribution
with finite second moments, while (a) considers Gaussian distribution
which does not have heavy tail and serves as a benchmark to be compared
with. As can be seen from the plots, the nonconvex gradient descent
algorithm studied here converges linearly (in fact, within around
200 iterations) before it hits an error floor. In addition, the relative
error of matrix completion increases as the noise level $\sigma$
increases, which is consistent with Theorem \ref{thm:main}. 

In the second series of experiments, we report the statistical estimation
errors as the noise level $\sigma$ varies. The parameter $\tau$ in Huber loss function \eqref{defn:huber} is also chosen to be $\tau=3\left(\Vert\bm{M}^{\star}\Vert_{\infty}+\sigma\sqrt{np}\right)$. For each value of
$\sigma$, we conduct 50 random trials and use their average as the
reported estimation error. In each trial, we run the nonconvex algorithm
(cf.~Algorithm \ref{alg:gd-rmc}) until convergence or the maximum
number of iterations is reached. Figure \ref{fig:dist_norm} depicts
the relative Euclidean error $\left\Vert \bm{M}_{\mathsf{ncvx}}-\bm{M}^{\star}\right\Vert _{\text{F}}/\left\Vert \bm{M}^{\star}\right\Vert _{\text{F}}$
vs. the noise level $\sigma$. It captures the behavior of \eqref{eq:obj} as $\sigma$ varies from $10^{-6}$ to $10^{-3}$. The results suggest that the relative Euclidean estimation
error scales linearly with $\sigma$, providing empirical evidence
for the theories developed in Theorems \ref{thm:main} and \ref{thm:main-1}.
 
\begin{figure}
\selectlanguage{british}%
\hfill{}\includegraphics[clip,width=0.33\paperwidth]{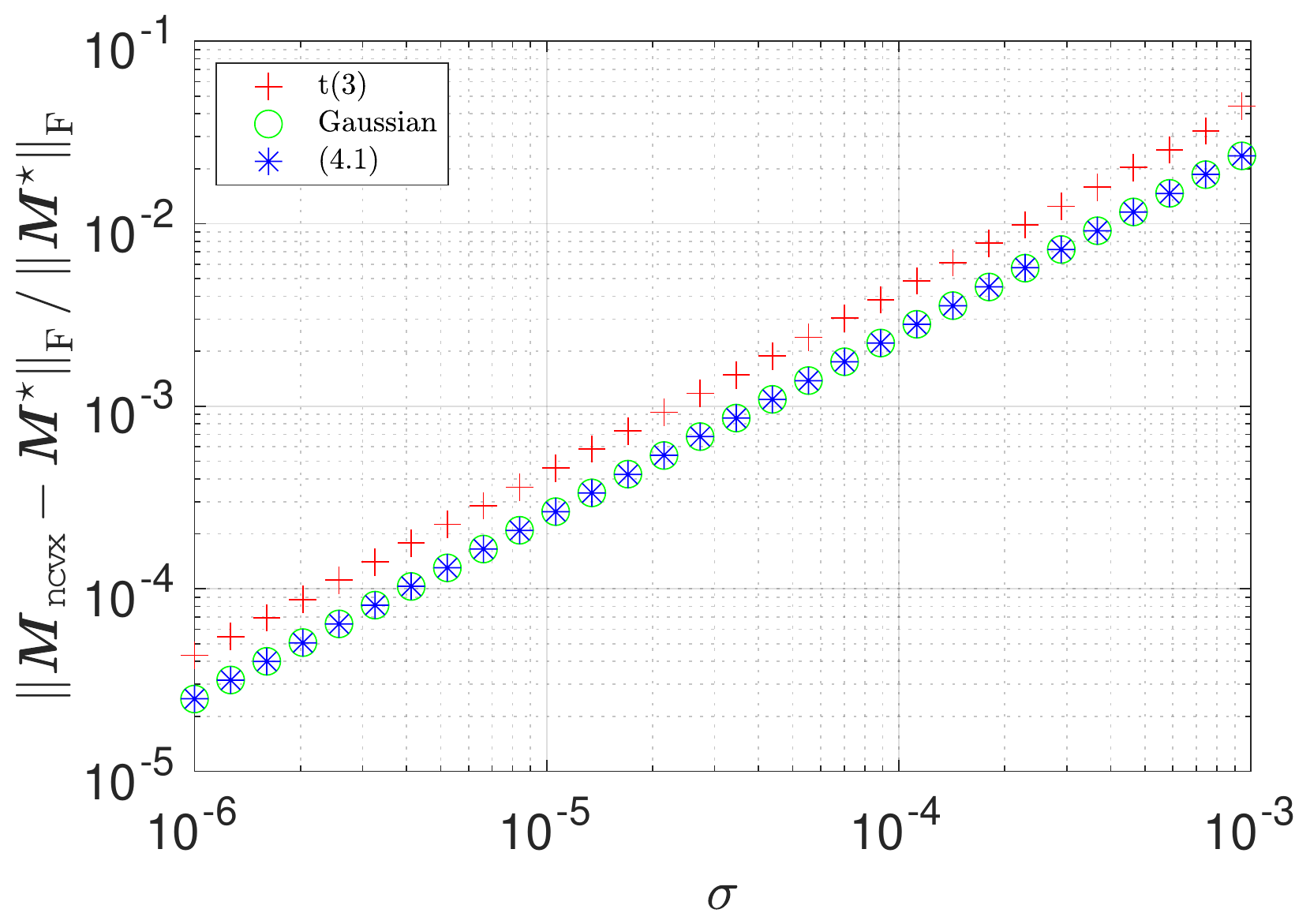}\hfill{}
% \hfill{}\includegraphics[clip,width=0.33\paperwidth]{figure/fig_nh}\hfill{}
%\begin{minipage}[t]{0.55\textwidth}%
%	\selectlanguage{english}%
%	\centering(a) \selectlanguage{english}%
%	%
%\end{minipage}%
%\begin{minipage}[t]{0.45\textwidth}%
%	\selectlanguage{english}%
%	\centering(b) \selectlanguage{english}%
%	%
%\end{minipage}\hfill{}%

\selectlanguage{english}%
\caption{Relative Euclidean estimation errors of \eqref{eq:obj} with different error distributions vs. the noise level $\sigma$. The results are averaged over 50 independent random trials. }
\label{fig:dist_norm}
\end{figure}

Next, we study how the estimation error depends on the choice of parameter $\tau$ in Huber loss function \eqref{defn:huber}. The noise level $\sigma$ is fixed to be $10^{-3}$ and the parameter $\tau$ is varied from $10^{-5}$ to $10^2$. To demonstrate the capacity of our theory to incorporate asymmetric distribution, we adopt a highly asymmetric noise distribution which is defined as follows: 
\begin{equation}
	f\left(x\right)=\begin{cases}
		\delta, & x=\sigma\sqrt{\frac{1-\delta}{\delta}}\\
		1-\delta, & x=-\sigma\sqrt{\frac{\delta}{1-\delta}}, \\
	\end{cases}\label{eq:asym-dist}
\end{equation}
and choose $\delta =0.0001$. It is straightforward to check that the distribution defined above is zero-mean with variance $\sigma^2$. The other two distributions used in Figure \ref{fig:dist_tau} are (b) Student's $t$-distribution with shape parameter $\nu=2.1$ and scale parameter $\sigma$; (c) Gaussian distribution with variance $\sigma^2$. Figure \ref{fig:dist_tau} displays the relative Euclidean error $\left\Vert \bm{M}_{\tau}-\bm{M}^{\star}\right\Vert _{\text{F}}/\left\Vert \bm{M}^{\star}\right\Vert _{\text{F}}$ vs. $\tau$. Here, we denote the estimator associated with parameter $\tau$ by $\bm{M}_{\tau}$. As can be seen in (a) and (b), adopting Huber loss with proper choice of $\tau$ can indeed significantly improve the estimation error, with the minimum is achieved approximately by $\tau=0.01$. This corresponds to the constant $C_{\tau}$ defined in Theorems \ref{thm:main} and \ref{thm:main-1} being roughly $0.15$, much smaller than our choice of $C_{\tau}=3$ in Figures \ref{fig:dist_ncvx} and \ref{fig:dist_norm}.  For all distributions, there are trunction biases in the spectral initialization:  the smaller $\tau$, the bigger the bias.  This gives various qualities of spectral initializaitons, which clearly have adverse impact on the convergence of the gradient decent for the non-convex loss.  That explains the poor performance of the estimator when $\tau$ is small even for symmetric error distributions.  There is additional Huberization bias for the error distribution \eqref{eq:asym-dist} that makes the performance for small $\tau$ much worse than the optimally chosen one in Figure  \ref{fig:dist_tau}.  As $\tau$ increases, the biases get smaller, but the impact of heavy tails gradually becomes dominant (except for the Gaussian noise)  and this is why we observe the shapes in Figure \ref{fig:dist_tau} (the fluctations are probably due to large value of variance for $t(2.1)$ distirbution and relatively small number of simulations).   In contrast, Figure \ref{fig:dist_tau}(c) plots the results of Gaussian distribution, which remains roughly the same after $\tau$ becomes large enough so that the bias in the initialization is small. These intriguing observations lend further support to our theoretical results and highlight the benefits of adopting Huber loss function when encountering heavy-tailed noise and also provide some guidance on how to choose $\tau$ in real applications. 
\begin{figure}
	\hfill{}\includegraphics[clip,width=0.25\paperwidth]{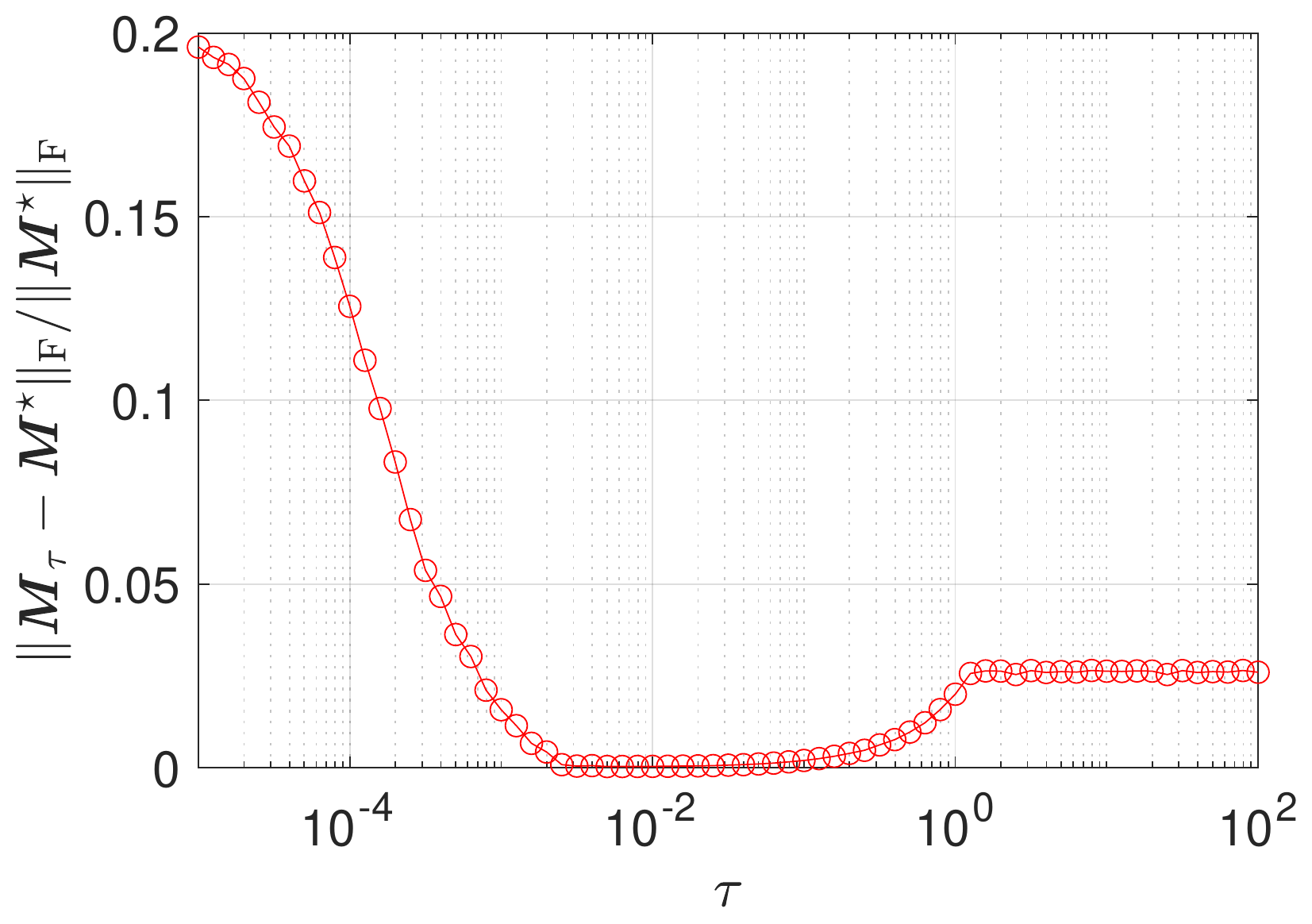}
	\hfill{}\includegraphics[clip,width=0.25\paperwidth]{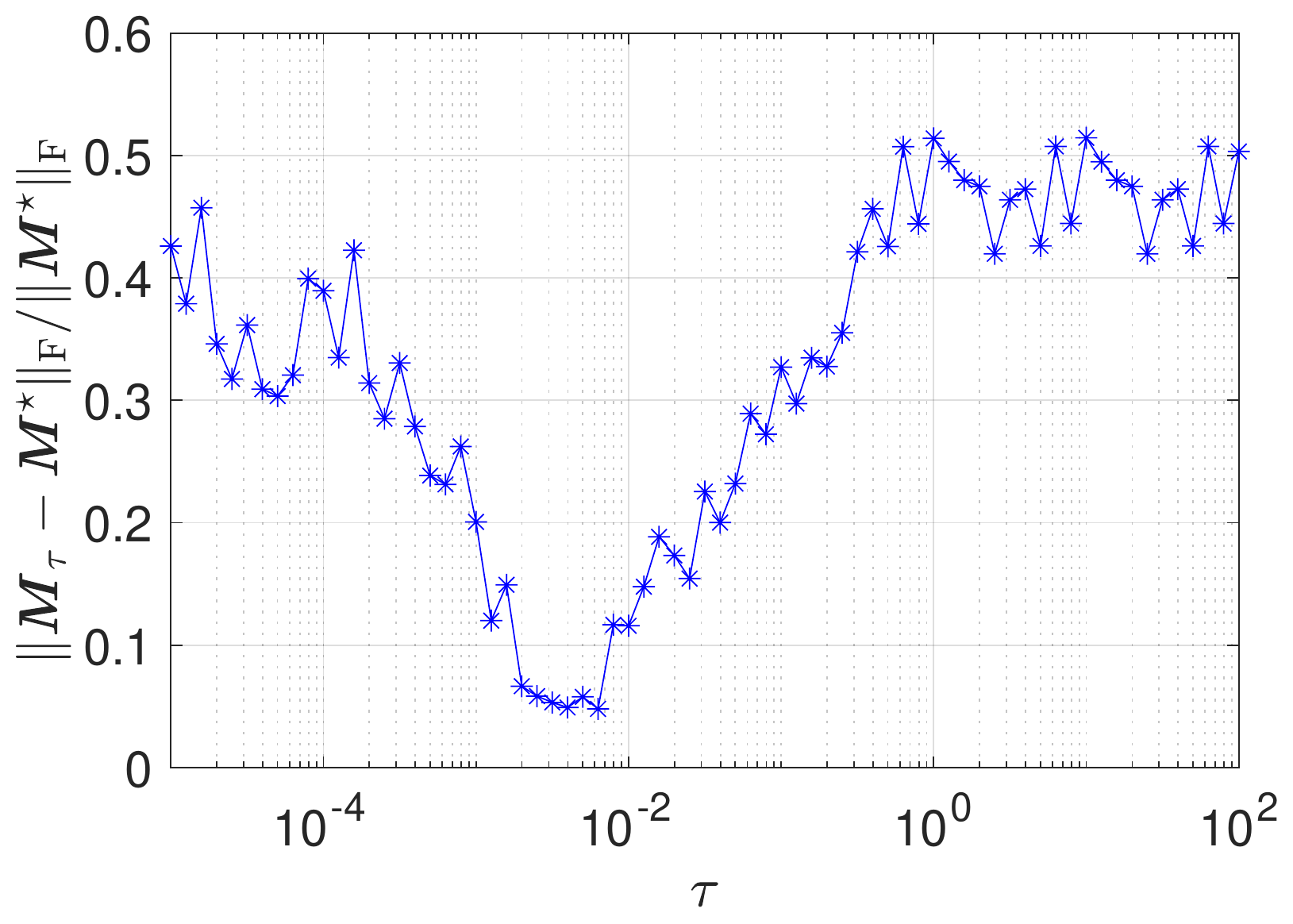}
	\hfill{}\includegraphics[clip,width=0.25\paperwidth]{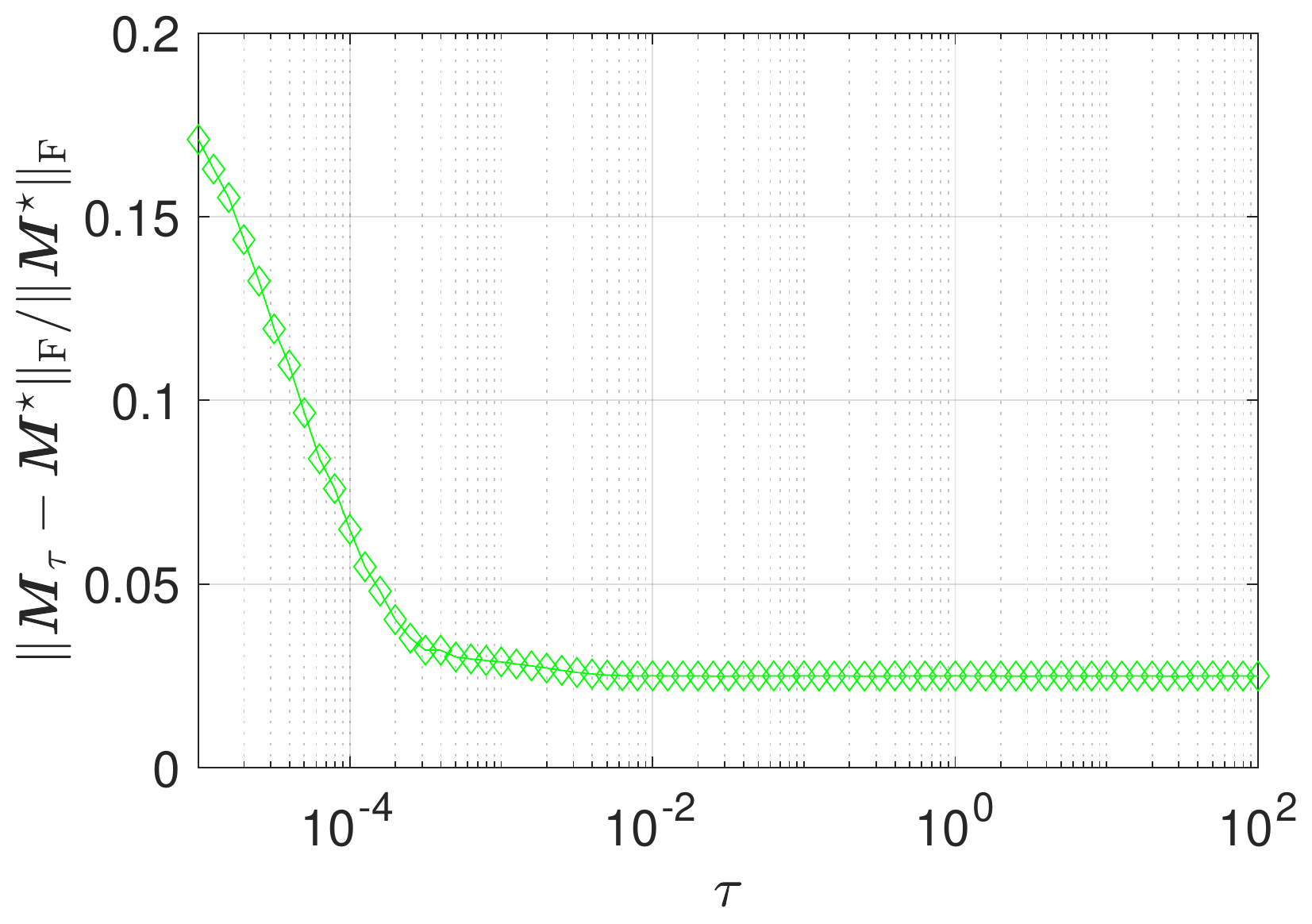}
	
	\begin{minipage}[t]{0.36\textwidth}%
		\selectlanguage{english}%
		\centering(a) Noise distribution defined in \eqref{eq:asym-dist}. \selectlanguage{english}%
	\end{minipage}%
	\begin{minipage}[t]{0.32\textwidth}%
		\selectlanguage{english}%
		\centering(b) Student's $t$-distribution with shape parameter $\nu=2.1$. \selectlanguage{english}%
	\end{minipage}\hfill{}%
	\begin{minipage}[t]{0.3\textwidth}%
		\selectlanguage{english}%
		\centering(c) Gaussian distribution. \selectlanguage{english}%
	\end{minipage}
	
	\selectlanguage{english}%
	\caption{Relative Euclidean estimation errors of \eqref{eq:obj} with different error distributions vs. $\tau$. The results are averaged over 50 independent random trials. \label{fig:dist_tau}}
	\selectlanguage{english}%
\end{figure}

Finally, we investigate the improvement of \eqref{eq:obj} over the regularized least-squares estimator defined by 
\begin{equation}
	\underset{\bm{X},\bm{Y}\in\mathbb{R}^{n\times r}}{\mathrm{minimize}}\qquad\frac{1}{2p}\sum_{\left(i,j\right)\in\Omega}\left(\left(\bm{X}\bm{Y}^{\top}\right)_{i,j}-M_{i,j}\right)^2+\frac{1}{8}\left\Vert \bm{X}^{\top}\bm{X}-\bm{Y}^{\top}\bm{Y}\right\Vert _{\mathrm{F}}^{2},\label{eq:ls-obj}
\end{equation}
as the noise level $\sigma$ varies from $10^{-6}$ to $10^{-3}$. Note that \eqref{eq:ls-obj} is equivalent to \eqref{eq:obj} with $\tau=\infty$. For each value of $\sigma$, we experiment with a series of $\tau$ ranging from $10^{-4}$ to $10^{-1}$. Specifically, for each pair of $\sigma$ and $\tau$, we conduct 50 random trials and calculate their average estimation error. Then for each value of $\sigma$, we record the minimum estimation error across different values of $\tau$. In addition, we also calculate the estimation error of the regularized least-squares estimator \eqref{eq:ls-obj}. We denote the minimizer of \eqref{eq:ls-obj} by $(\bm{X}_{LS},\bm{Y}_{LS})$ and define $\bm{M}_{LS}=\bm{X}_{LS}\bm{Y}_{LS}^{\top}$. Figure \ref{fig:dist_ratio} depicts the ratio between the estimation error of \eqref{eq:obj} with best choice of $\tau$ and the estimation error of $\bm{M}_{LS}$ ($\min_{\tau}\Vert\bm{M}_{\tau}-\bm{M}^{\star}\Vert_F/\Vert\bm{M}_{LS}-\bm{M}^{\star}\Vert_F$) vs. the noise level $\sigma$. For Gaussian distribution, adopting the Huber loss with the best choice of $\tau$ has barely improved over the least-squares estimator, as expected. In contrast, for asymmetric distribution \eqref{eq:asym-dist} and Student's $t$-distribution, we can observe considerable improvement over least-squares estimator when $\sigma$ is not exceedingly small. When $\sigma=10^{-3}$, the estimation error of adopting square loss in the objective function can be almost 10 times larger than using the best Huber loss. This impressive result emphasizes the superior advantage of the Huber loss over square loss when dealing with heavy-tailed distribution. 
\begin{figure}
	\selectlanguage{british}%
	\hfill{}\includegraphics[clip,width=0.33\paperwidth]{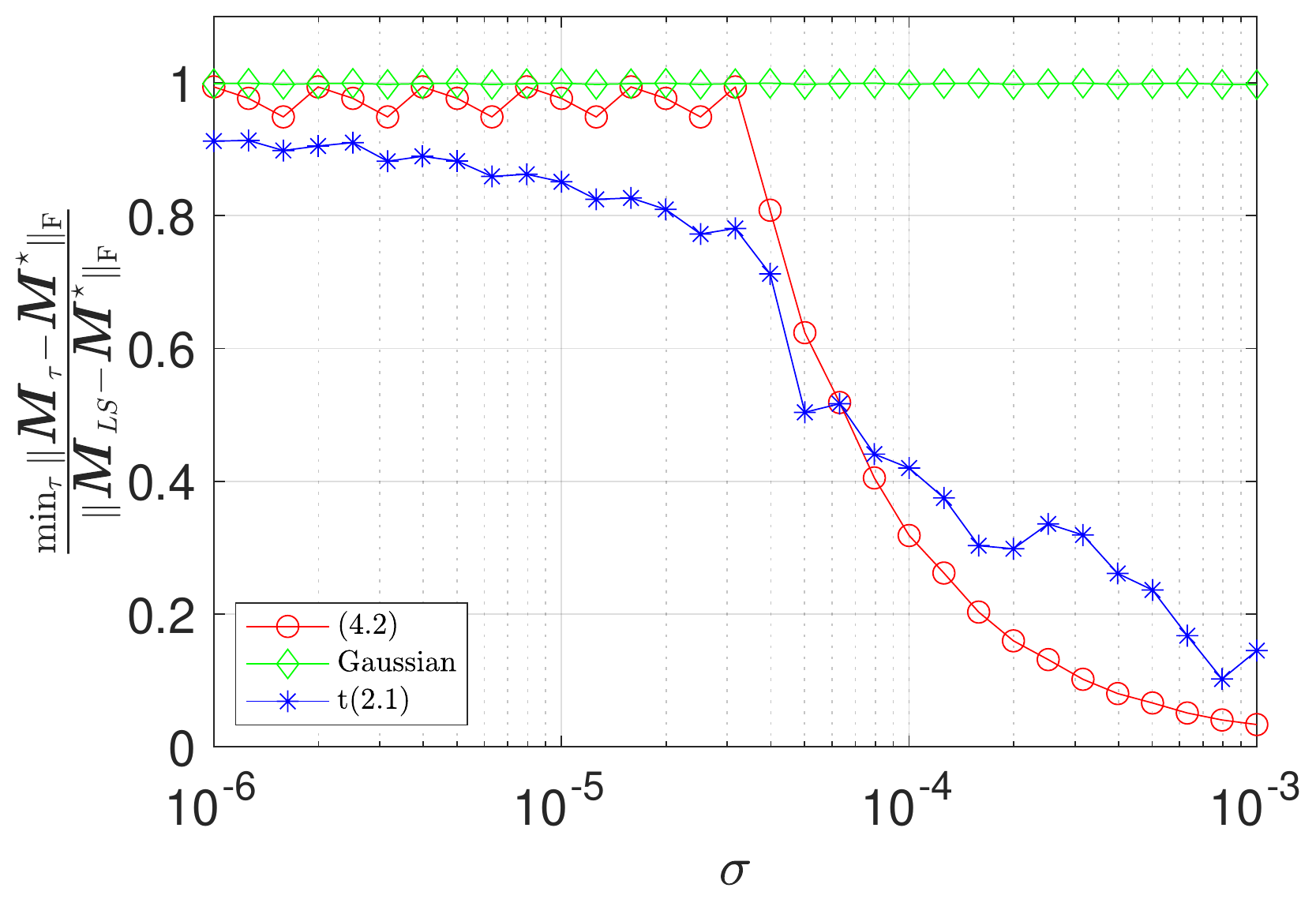}\hfill{}
	\selectlanguage{english}%
	\caption{The ratio between the minimum estimation error of \eqref{eq:obj} with respect to $\tau$ (i.e.$~\min_{\tau}\Vert\bm{M}_{\tau}-\bm{M}^{\star}\Vert_F$) and the estimation error of \eqref{eq:ls-obj} (i.e.$~\Vert\bm{M}_{LS}-\bm{M}^{\star}\Vert_F$) vs. the noise level $\sigma$.}
	\label{fig:dist_ratio}
\end{figure}
\section{Proof sketch\label{sec:Proof-sketch}}

In this section, we sketch the proof of Theorem \ref{thm:main}. The
proof details are all deferred to the Appendix.
We would establish
the following set of induction hypotheses for all $n^5 \gtrsim  t\geq0$:
\begin{subequations}
	\label{eq:hyp} 
	\begin{align}
		\left\Vert \bm{F}^{t}\bm{H}^{t}-\bm{F}^{\star}\right\Vert _{\mathrm{F}} & \lesssim\left(1-\frac{\sigma_{\min}}{20}\eta\right)^{t}\left\Vert \bm{F}^{0}\bm{H}^{0}-\bm{F}^{\star}\right\Vert _{\mathrm{F}}+C_{1}\frac{\sigma}{\sigma_{\min}}\sqrt{\frac{n}{p}}\left\Vert \bm{F}^{\star}\right\Vert _{\mathrm{F}},\label{eq:hyp-frob}\\
		\left\Vert \bm{F}^{0}\bm{H}^{0}-\bm{F}^{\star}\right\Vert _{\mathrm{F}} & \leq C_{0}\sqrt{\kappa}\left(\frac{\sigma}{\sigma_{\min}}\sqrt{\frac{n}{p}}+\frac{\left\Vert \bm{M}^{\star}\right\Vert _{\infty}}{\sigma_{\min}}\sqrt{\frac{n}{p}}\right)\left\Vert \bm{F}^{\star}\right\Vert _{\mathrm{F}},\label{eq:hyp-initial}\\
		\left\Vert \bm{F}^{t}\bm{H}^{t}-\bm{F}^{\star}\right\Vert _{2,\infty} & \lesssim\kappa^{1.5}\sqrt{r}\left(\frac{\sigma}{\sigma_{\min}}\sqrt{\frac{n}{p}}+\frac{\left\Vert \bm{M}^{\star}\right\Vert _{\infty}}{\sigma_{\min}}\sqrt{\frac{n}{p}}\right)\log n\left\Vert \bm{F}^{\star}\right\Vert _{2,\infty},\label{eq:hyp-2infty}
	\end{align}
\end{subequations}
for all $t\geq0$. With these in hand, Theorem \ref{thm:main} follows
immediately. In what follows, Section \ref{subsec:Local-geometry}
proves the hypothesis \eqref{eq:hyp-frob} by a careful investigation
of the landscape. Section \ref{subsec:Leave-one-out-sequences} is
devoted to justifying \eqref{eq:hyp-2infty} for all $t>0$. Finally,
Section \ref{subsec:Spectral-Initialization} verifies \eqref{eq:hyp-initial}
and \eqref{eq:hyp} for the base case, i.e. $t=0$. \eqref{eq:hyp-initial}, the base case with $t=0$.  

\subsection{Local geometry\label{subsec:Local-geometry}}

In this section, we start from the following lemma which characterizes
the region where the empirical loss function $f(\cdot)$ enjoys both
restricted strong convexity and smoothness, and then establish the
contraction of the error $\Vert\bm{F}^{t}\bm{H}^{t}-\bm{F}^{\star}\Vert_{\mathrm{F}}$
by use of Lemma \ref{lem:strongcvx}.

\begin{lemma}\label{lem:strongcvx} \textbf{\emph{(Restricted strong convexity and smoothness). }}Set $\tau=C_{\tau}\left(\left\Vert \bm{M}^{\star}\right\Vert _{\infty}+\sigma\sqrt{np}\right)$
for some constant $C_{\tau}>0$. Suppose the sample size obeys $n^{2}p\geq C\mu^{2}r^{2}\kappa^{2}n\log n$
for some sufficiently large constant $C>0$ and the noise satisfies
$\frac{\sigma}{\sigma_{\min}}\sqrt{\frac{n\log n}{p}}\leq c$ for
some sufficiently small constant $c>0$. Then with probability exceeding
$1-O(n^{-10})$, one has 
\begin{align*}
\mathsf{vec}\left(\bm{\Delta}\right)^{\top}\nabla^{2}f\left(\bm{X},\bm{Y}\right)\mathsf{vec}\left(\bm{\Delta}\right) & \geq\frac{\sigma_{\min}}{20}\left\Vert \bm{\Delta}\right\Vert _{\mathrm{F}}^{2},\\
\left\Vert \nabla^{2}f\left(\bm{X},\bm{Y}\right)\right\Vert  & \leq10\sigma_{\max},
\end{align*}
hold uniformly over all $\bm{X},\bm{Y}\in\mathbb{R}^{n\times r}$
obeying 
\begin{equation}
\left\Vert \left[\begin{array}{c}
\bm{X}-\bm{X}^{\star}\\
\bm{Y}-\bm{Y}^{\star}
\end{array}\right]\right\Vert _{2,\infty}\leq\frac{c}{\kappa\sqrt{n}}\left\Vert \bm{F}^{\star}\right\Vert ,\label{eq:strongcvx-2infty}
\end{equation}
and all $\bm{\Delta}=\left[\begin{array}{c}
\bm{\Delta}_{\bm{X}}\\
\bm{\Delta}_{\bm{Y}}
\end{array}\right]\in\mathbb{R}^{2n\times r}$ lying in the set 
\begin{equation}
\left\{ \left[\begin{array}{c}
\bm{X}_{1}\\
\bm{Y}_{1}
\end{array}\right]\widehat{\bm{H}}-\left[\begin{array}{c}
\bm{X}_{2}\\
\bm{Y}_{2}
\end{array}\right]:  \quad \left\Vert \left[\begin{array}{c}
\bm{X}_{2}-\bm{X}^{\star}\\
\bm{Y}_{2}-\bm{Y}^{\star}
\end{array}\right]\right\Vert \leq c'\left\Vert \bm{X}^{\star}\right\Vert ,\widehat{\bm{H}}\coloneqq\arg\min_{\bm{R}\in\mathcal{O}^{r\times r}}\left\Vert \left[\begin{array}{c}
\bm{X}_{1}\\
\bm{Y}_{1}
\end{array}\right]\bm{R}-\left[\begin{array}{c}
\bm{X}_{2}\\
\bm{Y}_{2}
\end{array}\right]\right\Vert _{\mathrm{F}} \right\} ,\label{eq:strongcvx-dir}
\end{equation}
where $c$ and $c'$ are some sufficiently small constants.

\end{lemma}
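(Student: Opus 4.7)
The plan is to differentiate $f$ twice, decompose the resulting Hessian into a squared-error piece plus two Huber-specific corrections, and show that the corrections are negligible on the prescribed local region. Explicitly, almost everywhere,
\[
\mathsf{vec}(\bm{\Delta})^{\top}\nabla^{2}f(\bm X,\bm Y)\mathsf{vec}(\bm{\Delta})=\frac{1}{p}\!\!\sum_{(i,j)\in\Omega}\!\!\ind_{\{|r_{ij}|\leq\tau\}}\bigl((\bm{\Delta}_{\bm X}\bm Y^{\top}+\bm X\bm{\Delta}_{\bm Y}^{\top})_{ij}\bigr)^{2}+\frac{2}{p}\!\!\sum_{(i,j)\in\Omega}\!\!\psi_{\tau}(r_{ij})(\bm{\Delta}_{\bm X}\bm{\Delta}_{\bm Y}^{\top})_{ij}+\mathcal{H}_{\mathrm{reg}}(\bm{\Delta}),
\]
where $r_{ij}:=(\bm X\bm Y^{\top}-\bm M^{\star})_{ij}-\varepsilon_{ij}$ and $\mathcal{H}_{\mathrm{reg}}$ is the Hessian of the balancing penalty $\tfrac{1}{8}\|\bm X^{\top}\bm X-\bm Y^{\top}\bm Y\|_{\mathrm F}^{2}$. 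Writing $\ind_{\{|r|\leq\tau\}}=1-\ind_{\{|r|>\tau\}}$ splits the first sum into the familiar square-loss Hessian minus an outlier term $T_{\mathrm{out}}$; the second sum I denote $T_{\mathrm{lin}}$.

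I would then import from the nonconvex matrix completion literature (\citet{tu2016low,zheng2016convergence,chen2020nonconvex}) the already-established RSC/smoothness bounds for the square-loss Hessian together with $\mathcal{H}_{\mathrm{reg}}$: under \eqref{eq:strongcvx-2infty} and for $\bm{\Delta}$ in the directional set \eqref{eq:strongcvx-dir}, this quadratic form lies between $\tfrac{\sigma_{\min}}{4}\|\bm{\Delta}\|_{\mathrm F}^{2}$ and $\tfrac{5}{2}\sigma_{\max}\|\bm{\Delta}\|_{\mathrm F}^{2}$ with probability $\geq 1-O(n^{-10})$. It will then suffice to establish $|T_{\mathrm{out}}|+|T_{\mathrm{lin}}|\leq\tfrac{\sigma_{\min}}{5}\|\bm{\Delta}\|_{\mathrm F}^{2}$ and to supply matching upper contributions of order $\sigma_{\max}$.

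For $T_{\mathrm{out}}$ the crucial observation is that \eqref{eq:strongcvx-2infty} forces $\|\bm X\bm Y^{\top}-\bm M^{\star}\|_{\infty}\lesssim\|\bm F^{\star}\|_{2,\infty}\|\bm F-\bm F^{\star}\|_{2,\infty}\ll\|\bm M^{\star}\|_{\infty}$; combined with $\tau=C_{\tau}(\|\bm M^{\star}\|_{\infty}+\sigma\sqrt{np})$ this gives $\{|r_{ij}|>\tau\}\subseteq\{|\varepsilon_{ij}|>\tau/2\}$, which by Markov has probability $\lesssim\sigma^{2}/\tau^{2}\lesssim 1/(np)$. A matrix-Bernstein bound applied to the sparse random matrix $\tfrac{1}{p}\mathcal P_{\Omega}(\ind_{\{|\bm E|>\tau/2\}})$, together with the entrywise bound $|(\bm{\Delta}_{\bm X}\bm Y^{\top}+\bm X\bm{\Delta}_{\bm Y}^{\top})_{ij}|\leq(\|\bm X\|_{2,\infty}+\|\bm Y\|_{2,\infty})\|\bm{\Delta}\|_{\mathrm F}$, then yields $|T_{\mathrm{out}}|\lesssim(\mu r/\tau^{2})\sigma^{2}(n/p)\|\bm F^{\star}\|^{2}\|\bm{\Delta}\|_{\mathrm F}^{2}$, which is $o(\sigma_{\min}\|\bm{\Delta}\|_{\mathrm F}^{2})$ under the stated noise condition. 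For $T_{\mathrm{lin}}$ I would decompose $\psi_{\tau}(r_{ij})=\psi_{\tau}(-\varepsilon_{ij})+[\psi_{\tau}(r_{ij})-\psi_{\tau}(-\varepsilon_{ij})]$: the second piece is $O(\|\bm X\bm Y^{\top}-\bm M^{\star}\|_{\infty})$ by $1$-Lipschitzness of $\psi_{\tau}$ and feeds back into the same bound as $T_{\mathrm{out}}$; the first piece concentrates via matrix Bernstein with variance proxy $\tau^{2}$ around its mean of size $O(\sigma^{2}/\tau)$ (the Huberization bias), and its cross-product action on $(\bm{\Delta}_{\bm X}\bm{\Delta}_{\bm Y}^{\top})_{ij}$ is absorbed by $\mathcal{H}_{\mathrm{reg}}(\bm{\Delta})\gtrsim\|\bm X^{\top}\bm{\Delta}_{\bm X}-\bm Y^{\top}\bm{\Delta}_{\bm Y}\|_{\mathrm F}^{2}$. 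The upper bound $10\sigma_{\max}$ follows from the same decomposition using $\rho_{\tau}''\leq 1$ and $|\psi_{\tau}|\leq\tau$.

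The main obstacle will be uniformity: the concentration estimates above are pointwise in $(\bm X,\bm Y)$ and $\bm{\Delta}$, while the lemma requires a simultaneous bound over the entire $2,\infty$-ball in \eqref{eq:strongcvx-2infty} and the directional set \eqref{eq:strongcvx-dir}. I would handle this through an $\epsilon$-net argument at scale $n^{-C}$ whose $\widetilde O(nr)$ metric entropy is absorbed by the $\log n$ factors in the sample-size condition, together with the $1$-Lipschitzness of $\psi_{\tau}$ and the a.e.\ constancy of $\ind_{\{|r|\leq\tau\}}$---the latter contributing only a negligible boundary set handled via a small exceptional-event bound using $\tau \gtrsim \|\bm M^\star\|_\infty$. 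The sample-size and noise conditions stated in the lemma are tuned precisely so that this union bound closes at probability $1-O(n^{-10})$.
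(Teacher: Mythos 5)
Your high-level route is the same as the paper's: peel off the square-loss-plus-balancing quadratic form and import its restricted strong convexity/smoothness from \citet{chen2020nonconvex}, then show the Huber-specific corrections are $o(\sigma_{\min})\left\Vert \bm{\Delta}\right\Vert _{\mathrm{F}}^{2}$ using the inclusion $\{|r_{ij}|>\tau\}\subseteq\{|\varepsilon_{ij}|>\tau/2\}$ (valid on \eqref{eq:strongcvx-2infty} since $\left\Vert \bm{X}\bm{Y}^{\top}-\bm{M}^{\star}\right\Vert _{\infty}\ll\mu r\sigma_{\max}/n\leq\tau/2$) plus Markov's inequality for the bounded-variance noise. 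However, two steps in your handling of the corrections would not close as written. First, for the truncated-noise cross term you invoke matrix Bernstein ``with variance proxy $\tau^{2}$''; that yields a spectral-norm bound of order $\tau\sqrt{n/p}\asymp\left\Vert \bm{M}^{\star}\right\Vert _{\infty}\sqrt{n/p}+\sigma n$, and $\sigma n$ is \emph{not} $o(\sigma_{\min})$ under the assumed noise level $\sigma\sqrt{n\log n/p}\lesssim\sigma_{\min}$. The whole point of the truncation analysis is that the per-entry variance stays bounded by $\sigma^{2}$ (giving the $\sigma\sqrt{n/p}$ term), while $\tau$ enters only through the sup-norm/Bernstein tail term of order $\tau\sqrt{\log n}/p$; this is exactly how the paper bounds $\beta_{1}$ and the boundary-layer terms $\theta_{1}$--$\theta_{4}$. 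Second, the claim that the cross-product action on $(\bm{\Delta}_{\bm{X}}\bm{\Delta}_{\bm{Y}}^{\top})_{ij}$ is ``absorbed by $\mathcal{H}_{\mathrm{reg}}(\bm{\Delta})\gtrsim\Vert\bm{X}^{\top}\bm{\Delta}_{\bm{X}}-\bm{Y}^{\top}\bm{\Delta}_{\bm{Y}}\Vert_{\mathrm{F}}^{2}$'' is not a valid mechanism: the regularizer's Hessian contains the possibly negative term $\tfrac{1}{2}\langle\bm{X}^{\top}\bm{X}-\bm{Y}^{\top}\bm{Y},\bm{\Delta}_{\bm{X}}^{\top}\bm{\Delta}_{\bm{X}}-\bm{\Delta}_{\bm{Y}}^{\top}\bm{\Delta}_{\bm{Y}}\rangle$ and bears no relation to the noise matrix. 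The correct (and simpler) step is $|\langle\bm{A},\bm{\Delta}_{\bm{X}}\bm{\Delta}_{\bm{Y}}^{\top}\rangle|\leq\left\Vert \bm{A}\right\Vert \left\Vert \bm{\Delta}_{\bm{X}}\bm{\Delta}_{\bm{Y}}^{\top}\right\Vert _{*}\leq\tfrac{1}{2}\left\Vert \bm{A}\right\Vert \left\Vert \bm{\Delta}\right\Vert _{\mathrm{F}}^{2}$ with $\left\Vert \bm{A}\right\Vert \ll\sigma_{\min}$, so the term is eaten by the RSC margin. Relatedly, your stated bound for $T_{\mathrm{out}}$, $(\mu r/\tau^{2})\sigma^{2}(n/p)\left\Vert \bm{F}^{\star}\right\Vert ^{2}\asymp\mu r\sigma_{\max}/p^{2}$, is also not $o(\sigma_{\min})$ in the sparse regime $p\asymp\mathrm{polylog}(n)/n$; the correct order $\mu r\sigma_{\max}/(np)$ requires exploiting the row/column structure of $\bm{\Delta}_{\bm{X}}\bm{Y}^{\top}$ over the thinned support (the uniform bound of Lemma A.3 in \citet{chen2020nonconvex}, or an equivalent per-row counting argument), not the crude entrywise bound by $\left\Vert \bm{\Delta}\right\Vert _{\mathrm{F}}$.

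On uniformity, the $\epsilon$-net is unnecessary and, as sketched, fragile: the Hessian is discontinuous in $(\bm{X},\bm{Y})$ through the indicators, so controlling the ``boundary set'' along a net requires precisely the boundary-layer events $\{\tau/2\leq|\varepsilon_{ij}|\leq3\tau/2\}$ that the paper introduces. Once every data-dependent indicator is dominated by such $(\bm{X},\bm{Y})$-independent events, all random matrices you need to control (the Bernoulli outlier pattern $\tfrac{1}{p}\mathcal{P}_{\Omega}(\ind_{|\varepsilon_{ij}|>\tau/2})$, the centered truncated noise, $\tfrac{1}{p}\mathcal{P}_{\Omega}(\bm{1}\bm{1}^{\top})$), together with the deterministic-uniform Gauss--Newton bound, are independent of the evaluation point, so the pointwise concentration bounds hold automatically over all of \eqref{eq:strongcvx-2infty} and \eqref{eq:strongcvx-dir}; no covering argument is needed.
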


In words, Lemma \ref{lem:strongcvx} shows that when restricted to
points close to the ground truth $\bm{F}^{\star}$ in the sense of
$\ell_{2}/\ell_{\infty}$ norm, the Hessian $\nabla^{2}f(\cdot)$
is well-conditioned along directions defined in \eqref{eq:strongcvx-dir}.
Armed with this lemma, we are ready to establish the first induction
hypothesis \eqref{eq:hyp-frob} as follows.

\begin{lemma}\label{lem:contraction}\textbf{\emph{(Frobenius and spectral norm errors).}} Set $\tau=C_{\tau}\left(\left\Vert \bm{M}^{\star}\right\Vert _{\infty}+\sigma\sqrt{np}\right)$
for some constant $C_{\tau}>0$. Suppose the sample size obeys $n^{2}p\geq\kappa^{6}\mu^{2}r^{4}n\log^{2}n$
for some sufficiently large constant $C>0$ and the noise satisfies
$\frac{\sigma}{\sigma_{\min}}\sqrt{\frac{n}{p}}\leq\frac{c}{\sqrt{\kappa^{4}\mu r^{2}\log^{2}n}}$
for some sufficiently small constant $c>0$. If the iterates satisfy
at the $t$th iteration, then with probability over $1-O(n^{-100})$,
one has 
\begin{align*}
\left\Vert \bm{F}^{t+1}\bm{H}^{t+1}-\bm{F}^{\star}\right\Vert _{\mathrm{F}} & \leq\left(1-\frac{\sigma_{\min}}{20}\eta\right)\left\Vert \bm{F}^{t}\bm{H}^{t}-\bm{F}^{\star}\right\Vert _{\mathrm{F}}+C\eta\sigma\sqrt{\frac{n}{p}}\left\Vert \bm{F}^{\star}\right\Vert _{\mathrm{F}}\\
 & \leq\left(1-\frac{\sigma_{\min}}{20}\eta\right)^{t+1}\left\Vert \bm{F}^{0}\bm{H}^{0}-\bm{F}^{\star}\right\Vert _{\mathrm{F}}+C_{1}\frac{\sigma}{\sigma_{\min}}\sqrt{\frac{n}{p}}\left\Vert \bm{F}^{\star}\right\Vert _{\mathrm{F}},
\end{align*}
for some given step size $\eta$ such that $0\leq\eta\leq c'/(\mu\kappa^{3}r^{2}\sigma_{\max}\log n)$
with some small constant $c'>0$.

\end{lemma}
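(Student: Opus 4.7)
The plan is to carry out a one-step contraction analysis in the style of standard nonconvex matrix recovery, but with two ingredients tailored to the Huber loss: (a) the noise analysis at the ground truth, where the Huberization bias must be shown to vanish at the right rate, and (b) an $\ell_{2,\infty}$-based verification that Lemma \ref{lem:strongcvx} applies along the line segment from $\bm{F}^\star$ to the current iterate. I will take the induction hypothesis \eqref{eq:hyp-2infty} at iteration $t$ as given, so the entrywise closeness of $\bm{F}^t$ to $\bm{F}^\star$ is available.

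First, since the objective $f$ is invariant under the joint orthogonal action $(\bm{X},\bm{Y})\mapsto(\bm{X}\bm{R},\bm{Y}\bm{R})$, differentiating yields $\nabla f(\bm{F}\bm{R})=\nabla f(\bm{F})\bm{R}$ for any $\bm{R}\in\mathcal{O}^{r\times r}$. Combining this with the optimality of $\bm{H}^{t+1}$ gives $\|\bm{F}^{t+1}\bm{H}^{t+1}-\bm{F}^\star\|_{\mathrm F}\le\|\bm{F}^{t+1}\bm{H}^t-\bm{F}^\star\|_{\mathrm F}$, so writing $\bm{\Delta}^t:=\bm{F}^t\bm{H}^t-\bm{F}^\star$ and using the fundamental theorem of calculus I decompose
\[
\bm{F}^{t+1}\bm{H}^t-\bm{F}^\star \;=\; \bm{\Delta}^t - \eta\,H_t\,\bm{\Delta}^t - \eta\,\nabla f(\bm{F}^\star),\qquad H_t:=\int_0^1\nabla^2 f\bigl(\bm{F}^\star+s\bm{\Delta}^t\bigr)\,ds .
\]

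Next, I will verify that each interpolant $\bm{F}^\star+s\bm{\Delta}^t$ lies in the $\ell_{2,\infty}$-neighborhood \eqref{eq:strongcvx-2infty} of $\bm{F}^\star$ by invoking the induction hypothesis \eqref{eq:hyp-2infty}, and that the ``direction'' $\bm{\Delta}^t$ lies in the set \eqref{eq:strongcvx-dir} (the latter follows from the optimal alignment definition of $\bm{H}^t$). Lemma \ref{lem:strongcvx} then yields $\langle\bm{\Delta}^t,H_t\bm{\Delta}^t\rangle\ge(\sigma_{\min}/20)\|\bm{\Delta}^t\|_{\mathrm F}^2$ and $\|H_t\bm{\Delta}^t\|_{\mathrm F}\le 10\sigma_{\max}\|\bm{\Delta}^t\|_{\mathrm F}$. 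Expanding $\|(\bm I-\eta H_t)\bm{\Delta}^t\|_{\mathrm F}^2$ and choosing $\eta\lesssim 1/\sigma_{\max}$ (absorbed by the stated step size) gives $\|(\bm I-\eta H_t)\bm{\Delta}^t\|_{\mathrm F}\le(1-\sigma_{\min}\eta/20)\|\bm{\Delta}^t\|_{\mathrm F}$. This handles the deterministic part of the update.

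The crux, and the main obstacle, is bounding $\|\nabla f(\bm{F}^\star)\|_{\mathrm F}$ by $C\sigma\sqrt{n/p}\,\|\bm{F}^\star\|_{\mathrm F}$. The regularizer contributes nothing at $\bm{F}^\star$ because $\bm{X}^{\star\top}\bm{X}^\star=\bm{Y}^{\star\top}\bm{Y}^\star$, leaving $\nabla f_{\bm X}(\bm{F}^\star)=-p^{-1}\mathcal P_\Omega(\psi_\tau(\bm E))\bm{Y}^\star$ (and analogously for $\bm Y$), so it suffices to show
\[
\frac1p\bigl\|\mathcal P_\Omega(\psi_\tau(\bm E))\bigr\|\;\lesssim\;\sigma\sqrt{n/p}.
\]
I will split $\psi_\tau(\bm E)=\bigl(\psi_\tau(\bm E)-\mathbb E\psi_\tau(\bm E)\bigr)+\mathbb E\psi_\tau(\bm E)$. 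For the stochastic part, matrix Bernstein applied to $\mathcal P_\Omega(\cdot)$ with entries bounded by $\tau$ and variance $\sigma^2 p$ produces a bound of order $\sigma\sqrt{np}+\tau\log n$, which after division by $p$ is $O(\sigma\sqrt{n/p})$ thanks to the choice $\tau\asymp\|\bm M^\star\|_\infty+\sigma\sqrt{np}$ and the sample size condition. For the bias part, zero-mean of $\varepsilon_{i,j}$ gives $|\mathbb E\psi_\tau(\varepsilon_{i,j})|\le\mathbb E|\varepsilon_{i,j}|\mathbb I(|\varepsilon_{i,j}|>\tau)\le\sigma^2/\tau$; the resulting rank-one-dominated deterministic matrix has spectral norm $O(np\cdot\sigma^2/\tau)$, and the choice of $\tau$ again reduces this to $O(\sigma\sqrt{np})$, i.e. $O(\sigma\sqrt{n/p})$ after the $1/p$ scaling. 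It is precisely this Goldilocks balance --- $\tau$ large enough to kill the truncation bias, small enough to keep the concentration rate sub-Gaussian --- that yields the $\sigma\sqrt{n/p}$ rate without any trailing $\|\bm M^\star\|_\infty$-type term, and thereby distinguishes our bound from the prior works in Table \ref{table:comparison}. Assembling the three pieces gives the one-step recursion $\|\bm{F}^{t+1}\bm H^{t+1}-\bm F^\star\|_{\mathrm F}\le(1-\sigma_{\min}\eta/20)\|\bm{\Delta}^t\|_{\mathrm F}+C\eta\sigma\sqrt{n/p}\,\|\bm F^\star\|_{\mathrm F}$, and summing the geometric series yields the second displayed inequality of the lemma.
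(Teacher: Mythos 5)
Your proposal is correct and follows essentially the same route as the paper's proof: the same reduction via the optimality of $\bm{H}^{t+1}$ and the equivariance $\nabla f(\bm{F}\bm{R})=\nabla f(\bm{F})\bm{R}$, the same split into a contraction term controlled by Lemma \ref{lem:strongcvx} (the paper outsources this step to \citet[Section D.3]{chen2020noisy}, which is precisely the integral-Hessian argument you spell out, with the interpolants kept in the $\ell_{2,\infty}$ neighborhood via the induction hypothesis) plus the statistical term $\eta\left\Vert \nabla f\left(\bm{F}^{\star}\right)\right\Vert _{\mathrm{F}}$, and the same treatment of the latter using that the balancing regularizer vanishes at $\bm{F}^{\star}$, a matrix-Bernstein bound for the centered Huberized noise, and the bias bound $|\mathbb{E}\psi_{\tau}(\varepsilon_{i,j})|\lesssim\sigma^{2}/\tau$ combined with $\left\Vert \mathcal{P}_{\Omega}(\bm{1}\bm{1}^{\top})\right\Vert \lesssim np$ and the choice of $\tau$. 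The only cosmetic difference is that you center $\psi_{\tau}(\bm{E})$ directly, whereas the paper first splits it into the truncated part and the $\tau\,\mathrm{sgn}$ part; your bookkeeping of the residual $\tau\log n/p$ term is no more and no less careful than the paper's own.
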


As an immediate consequence of Lemma \ref{lem:contraction}, one has 
\begin{align}
\left\Vert \bm{F}^{t+1}\bm{H}^{t+1}-\bm{F}^{\star}\right\Vert  & \leq\left\Vert \bm{F}^{t+1}\bm{H}^{t+1}-\bm{F}^{\star}\right\Vert _{\mathrm{F}}\leq\left\Vert \bm{F}^{0}\bm{H}^{0}-\bm{F}^{\star}\right\Vert _{\mathrm{F}}+C_{1}\frac{\sigma}{\sigma_{\min}}\sqrt{\frac{n}{p}}\left\Vert \bm{F}^{\star}\right\Vert _{\mathrm{F}}\nonumber \\
 & \lesssim\sqrt{\kappa}\left(\frac{\sigma}{\sigma_{\min}}\sqrt{\frac{n}{p}}+\frac{\left\Vert \bm{M}^{\star}\right\Vert _{\infty}}{\sigma_{\min}}\sqrt{\frac{n}{p}}\right)\left\Vert \bm{F}^{\star}\right\Vert _{\mathrm{F}},\label{eq:operator-norm}
\end{align}
where we make use of \eqref{eq:thm-initial} in the last inequality.  This crude inequality will be used in our proof.

\subsection{Leave-one-out sequences\label{subsec:Leave-one-out-sequences}}

In this section, we introduce a powerful leave-one-out analysis framework,
which assists us to decouple the dependence between noise and iterates.
This has already been employed to study various statistical problems
\citep{el2018impact,zhong2018near,li2019nonconvex,chen2019gradient,ding2020leave,chen2021convex}.

In what follows, we shall introduce a collection of auxiliary leave-one-out
sequences $\{\bm{F}^{t,(l)}\}_{t\geq0}$ for each $1\leq l\leq2n$
to decouple the complicated dependency structure and thus establish
\eqref{eq:hyp-2infty}. Specifically, for each $1\leq l\leq n$, $\{\bm{F}^{t,(l)}\}_{t\geq0}$
are constructed to be the gradient descent iterates generated by Algorithm
\ref{alg:gd-rmc-loo} with the following auxiliary loss function

\begin{align}
f^{\left(l\right)}\left(\bm{X},\bm{Y}\right) & =\frac{1}{2p}\sum_{(i,j)\in\Omega,i\neq l}\rho_{\tau}\left(\left(\bm{X}\bm{Y}^{\top}\right)_{i,j}-M_{i,j}\right)+\frac{1}{2}\sum_{j=1}^{n}\rho_{\tau}\left(\left(\bm{X}\bm{Y}^{\top}\right)_{l,j}-M_{l,j}^{\star}\right)\nonumber \\
 & \qquad+\frac{1}{8}\left\Vert \bm{X}^{\top}\bm{X}-\bm{Y}^{\top}\bm{Y}\right\Vert _{\mathrm{F}}^{2}.\label{defn-floo-1}
\end{align}
For $n+1\leq l\leq2n$, $\{\bm{F}^{t,(l)}\}_{t\geq0}$ are generated
similarly by running Algorithm \ref{alg:gd-rmc-loo} with the loss
function 
\begin{align}
f^{\left(l\right)}\left(\bm{X},\bm{Y}\right) & =\frac{1}{2p}\sum_{(i,j)\in\Omega,j\neq l-n}\rho_{\tau}\left(\left(\bm{X}\bm{Y}^{\top}\right)_{i,j}-M_{i,j}\right)+\frac{1}{2}\sum_{i=1}^{n}\rho_{\tau}\left(\left(\bm{X}\bm{Y}^{\top}\right)_{i,l-n}-M_{i,l-n}^{\star}\right)\nonumber \\
 & \qquad+\frac{1}{8}\left\Vert \bm{X}^{\top}\bm{X}-\bm{Y}^{\top}\bm{Y}\right\Vert _{\mathrm{F}}^{2}.\label{defn-floo-2}
\end{align}
When constructing the auxiliary loss functions \eqref{defn-floo-1}
and \eqref{defn-floo-2}, we drop the error term in each single row
(or column) respectively, and thus the resulting loss function is
independent of the randomness in that row (or column). In this
way, at the cost of a small perturbation, we are able to eliminate
the dependence between $\{\bm{F}^{t,(l)}\}_{t\geq0}$ and $\bm{F}_{l,\cdot}^{t}$,
which plays a key role in our analysis of $\ell_{2}/\ell_{\infty}$
norm.

\begin{algorithm}[h]
\caption{Construction of the $l$th leave-one-out sequence}

\label{alg:gd-rmc-loo}\begin{algorithmic}

\STATE \textbf{{Input}}: $\bm{M}$, $r$, $p$.

\STATE \textbf{{Spectral initialization}}: let $\bm{U}^{0,\left(l\right)}\bm{\Sigma}^{0,\left(l\right)}\bm{V}^{0,\left(l\right)\top}$
be the top-$r$ SVD of 
\begin{equation}
\bm{M}^{0,\left(l\right)}\coloneqq\frac{1}{2p}\mathcal{P}_{\Omega_{-l}}\left(\psi_{\tau}\left(\bm{M}\right)\right)+\mathcal{P}_{l}\left(\bm{M}^{\star}\right),\label{eq:spectral-method-matrix-1}
\end{equation}
and set $\bm{X}^{0,\left(l\right)}=\bm{U}^{0,\left(l\right)}\left(\bm{\Sigma}^{0,\left(l\right)}\right)^{1/2}$,
$\bm{Y}^{0,\left(l\right)}=\bm{V}^{0,\left(l\right)}\left(\bm{\Sigma}^{0,\left(l\right)}\right)^{1/2}$.

\STATE \textbf{{Gradient updates}}: \textbf{for }$t=0,1,\ldots,t_{0}-1$
\textbf{do} 

\STATE \vspace{-1em}
 
\begin{subequations}
\label{subeq:gradient_update_ncvx-1} 
\begin{align}
\bm{X}^{t+1,\left(l\right)} & =\bm{X}^{t,\left(l\right)}-\eta\nabla f_{\bm{X}}^{\left(l\right)} \left(\bm{X}^{t,\left(l\right)}\right)\label{eq:gd-x-1}\\
\bm{Y}^{t+1,\left(l\right)} & =\bm{Y}^{t,\left(l\right)}-\eta\nabla f_{\bm{Y}}^{\left(l\right)}\left(\bm{Y}^{t,\left(l\right)}\right)\label{eq:gd-y-1}
\end{align}
\end{subequations}
where $\nabla f_{\bm{X}}(\cdot)$ and $\nabla f_{\bm{Y}}(\cdot)$
represent the gradient of $f(\cdot)$ w.r.t.~$\bm{X}$ and $\bm{Y}$,
respectively.

\end{algorithmic} 
\end{algorithm}

To facilitate our analysis, we define the rotation matrices 
\begin{align}
\bm{H}^{t,\left(l\right)} & \triangleq\arg\min_{\bm{R}\in\mathcal{O}^{r\times r}}\left\Vert \bm{F}^{t,\left(l\right)}\bm{R}-\bm{F}^{\star}\right\Vert _{\mathrm{F}},\label{defn-Htl}\\
\bm{R}^{t,\left(l\right)} & \triangleq\arg\min_{\bm{R}\in\mathcal{O}^{r\times r}}\left\Vert \bm{F}^{t,\left(l\right)}\bm{R}-\bm{F}^{t}\bm{H}^{t}\right\Vert _{\mathrm{F}}.\label{defn-Rtl}
\end{align}
In the sequel, in order to justify \eqref{eq:hyp-2infty}, we shall
establish the following set of hypotheses:
\begin{subequations}
\label{subeq:additional}

\begin{align}
\max_{1\leq l\leq2n}\left\Vert \left(\bm{F}^{t,\left(l\right)}\bm{H}^{t,\left(l\right)}-\bm{F}^{\star}\right)_{l,\cdot}\right\Vert _{2} & \lesssim\kappa\sqrt{r}\left\Vert \bm{F}^{\star}\right\Vert _{2,\infty}\left(\frac{\sigma}{\sigma_{\min}}\sqrt{\frac{n\log n}{p}}+\frac{\left\Vert \bm{M}^{\star}\right\Vert _{\infty}}{\sigma_{\min}}\sqrt{\frac{n}{p}}\right),\label{subeq:add-1}\\
\max_{1\leq l\leq2n}\left\Vert \bm{F}^{t}\bm{H}^{t}-\bm{F}^{t,\left(l\right)}\bm{R}^{t,\left(l\right)}\right\Vert _{\mathrm{F}} & \lesssim\sqrt{\kappa}\left(\frac{\sigma}{\sigma_{\min}}\sqrt{\frac{n}{p}}+\frac{\left\Vert \bm{M}^{\star}\right\Vert _{\infty}}{\sigma_{\min}}\sqrt{\frac{n}{p}}\right)\left\Vert \bm{F}^{\star}\right\Vert _{2,\infty}\log n,\label{subeq:add-2}\\
\left\Vert \bm{F}^{t}\bm{H}^{t}-\bm{F}^{\star}\right\Vert _{2,\infty} & \lesssim\kappa^{3/2}\sqrt{r}\left(\frac{\sigma}{\sigma_{\min}}\sqrt{\frac{n}{p}}+\frac{\left\Vert \bm{M}^{\star}\right\Vert _{\infty}}{\sigma_{\min}}\sqrt{\frac{n}{p}}\right)\log n\left\Vert \bm{F}^{\star}\right\Vert _{2,\infty}.\label{subeq:add-3}
\end{align}
\end{subequations}

The results are summarized in the following three lemmas.

\begin{lemma}\label{lem:loo}\textbf{\emph{($\ell_{2}/\ell_{\infty}$
norm error of leave-one-out sequences).}} Set $\tau=C_{\tau}\left(\left\Vert \bm{M}^{\star}\right\Vert _{\infty}+\sigma\sqrt{np}\right)$
for some constant $C_{\tau}>0$. Suppose the sample size obeys $n^{2}p\geq C\kappa^{4}\mu^{2}r^{3}n\log n$
for some sufficiently large constant $C>0$ and the noise satisfies
$\sigma\sqrt{\frac{n}{p}}\leq\frac{c\sigma_{\min}}{\sqrt{\kappa^{2}\log^{2}n}}$
for some sufficiently small constant $c>0$. If the iterates satisfy
at the $t$th iteration, then with probability over $1-O(n^{-100})$,
one has 
\[
\max_{1\leq l\leq2n}\left\Vert \left(\bm{F}^{t+1,\left(l\right)}\bm{H}^{t+1,\left(l\right)}-\bm{F}^{\star}\right)_{l,\cdot}\right\Vert _{2}\lesssim\kappa\sqrt{r}\left\Vert \bm{F}^{\star}\right\Vert _{2,\infty}\left(\frac{\sigma}{\sigma_{\min}}\sqrt{\frac{n\log n}{p}}+\frac{\left\Vert \bm{M}^{\star}\right\Vert _{\infty}}{\sigma_{\min}}\sqrt{\frac{n}{p}}\right).
\]
\end{lemma}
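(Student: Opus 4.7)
} The heart of the argument is that, by construction of $f^{(l)}$, the iterate $\bm{F}^{t,(l)}$ is statistically independent of the randomness in the $l$-th row (i.e.,~$\{\varepsilon_{l,j}\}_{j=1}^{n}$ and $\{\ind_{(l,j)\in\Omega}\}_{j=1}^{n}$ for $1\leq l\leq n$, and analogously for the $(l-n)$-th column when $n+1\leq l\leq 2n$). The proof proceeds by induction on $t$: assuming all hypotheses in \eqref{eq:hyp} and \eqref{subeq:additional} hold at iteration $t$, I derive the stated bound at iteration $t+1$. Fix $1\leq l\leq n$ without loss of generality; since $\mathcal{P}_{\Omega_{-l}}$ annihilates the $l$-th row, the $l$-th row of the gradient of $f^{(l)}$ reduces to
\begin{equation*}
(\nabla_{\bm{X}}f^{(l)}(\bm{X},\bm{Y}))_{l,\cdot}=\tfrac{1}{2}\bigl[\psi_{\tau}(\bm{X}\bm{Y}^{\top}-\bm{M}^{\star})\bigr]_{l,\cdot}\bm{Y}+\tfrac{1}{2}\bm{X}_{l,\cdot}(\bm{X}^{\top}\bm{X}-\bm{Y}^{\top}\bm{Y}),
\end{equation*}
which carries no explicit dependence on noise or sampling in row $l$.

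Using the $\ell_{2}/\ell_{\infty}$ hypothesis \eqref{subeq:add-3}, the incoherence bound $\|\bm{F}^{\star}\|_{2,\infty}\lesssim\sqrt{\mu r/n}\,\sigma_{\max}^{1/2}$, and the choice $\tau\asymp\|\bm{M}^{\star}\|_{\infty}+\sigma\sqrt{np}$, I first verify that $|(\bm{X}^{t,(l)}\bm{Y}^{t,(l)\top}-\bm{M}^{\star})_{l,j}|\leq\tau$ uniformly in $j$, so that $\psi_{\tau}$ acts as the identity on the entries of row $l$ and contributes no Huberization bias. Aligning rotations by $\bm{R}=\bm{H}^{t,(l)}$, I then use the standard decomposition
\begin{equation*}
\bm{X}^{t,(l)}\bm{Y}^{t,(l)\top}-\bm{M}^{\star}=(\bm{X}^{t,(l)}\bm{R}-\bm{X}^{\star})(\bm{Y}^{t,(l)}\bm{R})^{\top}+\bm{X}^{\star}(\bm{Y}^{t,(l)}\bm{R}-\bm{Y}^{\star})^{\top}
\end{equation*}
and substitute into the gradient step. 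The first piece, together with the compatible balance-regularizer correction (which vanishes at the ground truth), produces a factor $1-c\eta\sigma_{\min}$ contraction on $\|(\bm{X}^{t,(l)}\bm{R}-\bm{X}^{\star})_{l,\cdot}\|_{2}$ via a row-wise analogue of Lemma~\ref{lem:strongcvx} (using $\bm{Y}^{t,(l)\top}\bm{Y}^{t,(l)}\succeq(\sigma_{\min}/2)\bm{I}$, a consequence of \eqref{eq:operator-norm}). The second piece is a perturbation bounded, via incoherence, the Frobenius bound \eqref{eq:hyp-frob}, and the operator-norm bound \eqref{eq:operator-norm}, by
\begin{equation*}
\eta\|\bm{X}^{\star}_{l,\cdot}\|_{2}\,\|\bm{Y}^{t,(l)}\bm{R}-\bm{Y}^{\star}\|_{\mathrm{F}}\,\|\bm{Y}^{t,(l)}\|\lesssim\eta\kappa\sqrt{r}\,\|\bm{F}^{\star}\|_{2,\infty}\Bigl(\tfrac{\sigma}{\sigma_{\min}}+\tfrac{\|\bm{M}^{\star}\|_{\infty}}{\sigma_{\min}}\Bigr)\sqrt{n/p}\cdot\sigma_{\max}.
\end{equation*}

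Combining these yields a one-step recursion $a_{t+1}\leq(1-c\eta\sigma_{\min})a_{t}+\eta\cdot(\text{perturbation})$ for $a_{t}\coloneqq\|(\bm{F}^{t,(l)}\bm{H}^{t,(l)}-\bm{F}^{\star})_{l,\cdot}\|_{2}$, whose geometric summation from the spectral-initialization base case produces the stated steady-state bound; the extra $\sqrt{\log n}$ on the $\sigma$-term arises at $t=0$ from a row-wise matrix-Bernstein concentration for $[\mathcal{P}_{\Omega_{-l}}\psi_{\tau}(\bm{E})]_{l,\cdot}\bm{V}^{\star}$ and is inherited by later iterates. The main obstacles I foresee are (i) absorbing the rotation drift $\bm{H}^{t+1,(l)}\neq\bm{H}^{t,(l)}$ without degrading the contraction factor, which requires exploiting that $\|\bm{H}^{t+1,(l)}-\bm{H}^{t,(l)}\|$ is of order $\eta$; (ii) checking the uniform no-Huberization-bias condition across all $j$ under merely the stated conditions on $\tau$ and sample size, which is the step that most strongly couples this lemma to \eqref{subeq:add-3}; and (iii) tracking the dependence on $\kappa,\mu,r$ sharply enough to respect the sample-size requirement $n^{2}p\gtrsim\kappa^{4}\mu^{2}r^{3}n\log n$.
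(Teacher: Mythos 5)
Your plan follows essentially the same route as the paper's proof: fix a row $l$, note that the $l$-th row of $\nabla f^{(l)}$ only involves the noiseless residuals $(\bm{X}\bm{Y}^{\top}-\bm{M}^{\star})_{l,\cdot}$, verify via the induction hypotheses and incoherence that these residuals stay below $\tau$ so the Huber score is linear on that row, extract a $(1-c\,\eta\sigma_{\min})$ contraction from $\bm{I}_{r}-\eta\bigl(\bm{Y}^{t,(l)}\bm{H}^{t,(l)}\bigr)^{\top}\bm{Y}^{t,(l)}\bm{H}^{t,(l)}$, treat the cross term and the balancing regularizer as perturbations, and finally account for the change of alignment matrix (the paper's term $\bm{h}_{2}$, controlled by Claim~\ref{claim:H}).

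However, one step as you wrote it would not close the induction. Your perturbation bound $\eta\|\bm{X}^{\star}_{l,\cdot}\|_{2}\,\|\bm{Y}^{t,(l)}\bm{R}-\bm{Y}^{\star}\|_{\mathrm{F}}\,\|\bm{Y}^{t,(l)}\|\lesssim\eta\kappa\sqrt{r}\,\sigma_{\max}(\cdots)\|\bm{F}^{\star}\|_{2,\infty}$ is too lossy: dividing by the contraction rate $\eta\sigma_{\min}$, the fixed point of your recursion is of order $\kappa^{2}\sqrt{r}(\cdots)\|\bm{F}^{\star}\|_{2,\infty}$, a factor $\kappa$ larger than the target \eqref{subeq:add-1}, so the hypothesis is not preserved at the stated level. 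The paper instead bounds the cross term using the spectral-norm control $\|\bm{F}^{t,(l)}\bm{H}^{t,(l)}-\bm{F}^{\star}\|\lesssim\sqrt{r}(\cdots)\|\bm{F}^{\star}\|$ (inequality \eqref{subeq:aux-3}, which carries no extra $\kappa$), making the perturbation $\eta\sqrt{r}\,\sigma_{\max}(\cdots)\|\bm{F}^{\star}\|_{2,\infty}$ and the fixed point $\kappa\sqrt{r}(\cdots)$, as claimed. Relatedly, for the rotation drift it does not suffice that $\|\bm{H}^{t+1,(l)}-\bm{H}^{t,(l)}\|=O(\eta)$: the drift multiplies $\|\bm{h}_{1}+\bm{X}^{\star}_{l,\cdot}\|_{2}\approx\|\bm{F}^{\star}\|_{2,\infty}$, so it must be $\eta$ times quantities proportional to the statistical error level, which is exactly what Claim~\ref{claim:H} (proved through the auxiliary sequence $\widetilde{\bm{F}}^{t+1,(l)}$) supplies. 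A minor point: the $\sqrt{\log n}$ on the $\sigma$-term is not generated at $t=0$ (Lemma~\ref{lem:4} carries no such factor); it is slack built into \eqref{subeq:add-1} that gets consumed by the $\log n$ factors arising from the drift and Bernstein-type controls in the induction step.
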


This lemma justifies \eqref{subeq:add-1} and establishes the incoherence
of $\{\bm{F}^{t+1,(l)}\}_{l=1}^{2n}$. Next, we turn to show that
up to some orthogonal transformation, $\bm{F}^{t+1}$ can indeed be
well approximated by $\{\bm{F}^{t+1,(l)}\}_{l=1}^{2n}$.

\begin{lemma}\label{lem:loo-1}\textbf{\emph{(Leave-one-out perturbation).}}
Set $\tau=C_{\tau}\left(\left\Vert \bm{M}^{\star}\right\Vert _{\infty}+\sigma\sqrt{np}\right)$
for some constant $C_{\tau}>0$. Suppose the sample size obeys $n^{2}p\geq\kappa^{6}\mu^{2}r^{4}n\log^{2}n$
for some sufficiently large constant $C>0$ and the noise satisfies
$\frac{\sigma}{\sigma_{\min}}\sqrt{\frac{n}{p}}\leq\frac{c}{\sqrt{\kappa^{4}\mu r^{2}\log^{2}n}}$
for some sufficiently small constant $c>0$. If the iterates satisfy
at the $t$th iteration, then with probability over $1-O(n^{-100})$,
one has 
\begin{align*}
 & \max_{1\leq l\leq2n}\left\Vert \bm{F}^{t+1}\bm{H}^{t+1}-\bm{F}^{t+1,\left(l\right)}\bm{R}^{t+1,\left(l\right)}\right\Vert _{\mathrm{F}}\\
 & \leq\left(1-\frac{\sigma_{\min}}{20}\eta\right)^{t+1}\left\Vert \bm{F}^{0}\bm{H}^{0}-\bm{F}^{0,\left(l\right)}\bm{R}^{0,\left(l\right)}\right\Vert _{\mathrm{F}}+C\left(\frac{\sigma}{\sigma_{\min}}\sqrt{\frac{n}{p}}+\frac{\left\Vert \bm{M}^{\star}\right\Vert _{\infty}}{\sigma_{\min}}\sqrt{\frac{n}{p}}\right)\left\Vert \bm{F}^{\star}\right\Vert _{2,\infty}\log n\\
 & \lesssim\sqrt{\kappa}\left(\frac{\sigma}{\sigma_{\min}}\sqrt{\frac{n}{p}}+\frac{\left\Vert \bm{M}^{\star}\right\Vert _{\infty}}{\sigma_{\min}}\sqrt{\frac{n}{p}}\right)\left\Vert \bm{F}^{\star}\right\Vert _{2,\infty}\log n,
\end{align*}
provided that $0\leq\eta\leq c'/(\mu\kappa^{3}r^{2}\sigma_{\max}\log n)$
with some small constant $c'>0$. \end{lemma}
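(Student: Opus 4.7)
My plan is to establish Lemma~\ref{lem:loo-1} by an inductive argument parallel to the proof of Lemma~\ref{lem:contraction}, but applied to the perturbation between the true iterate $\bm{F}^{t+1}$ and each leave-one-out surrogate $\bm{F}^{t+1,(l)}$ rather than to the distance to $\bm{F}^{\star}$. First, I would exploit the optimality of $\bm{R}^{t+1,(l)}$ in \eqref{defn-Rtl} to upper-bound $\|\bm{F}^{t+1}\bm{H}^{t+1}-\bm{F}^{t+1,(l)}\bm{R}^{t+1,(l)}\|_{\mathrm{F}}$ by $\|\bm{F}^{t+1}\bm{H}^{t}-\bm{F}^{t+1,(l)}\bm{R}^{t,(l)}\|_{\mathrm{F}}$, which ports the rotation pair from step $t+1$ back to step $t$ so that the gradient update can be applied in a common frame. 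Since both $f$ and $f^{(l)}$ are invariant under a common orthogonal right multiplication of $(\bm{X},\bm{Y})$, i.e. $\nabla f(\bm{F}\bm{R})=\nabla f(\bm{F})\bm{R}$ for $\bm{R}\in\mathcal{O}^{r\times r}$, the gradient updates rotate consistently and I obtain
\[
\bm{F}^{t+1}\bm{H}^{t}-\bm{F}^{t+1,(l)}\bm{R}^{t,(l)} = \bm{A}^{t,(l)} - \eta\,\bm{B}^{t,(l)},
\]
where $\bm{A}^{t,(l)}\coloneqq (\bm{F}^{t}\bm{H}^{t}-\bm{F}^{t,(l)}\bm{R}^{t,(l)})-\eta\bigl[\nabla f(\bm{F}^{t}\bm{H}^{t})-\nabla f(\bm{F}^{t,(l)}\bm{R}^{t,(l)})\bigr]$ is the self-contraction piece and $\bm{B}^{t,(l)}\coloneqq \nabla f(\bm{F}^{t,(l)}\bm{R}^{t,(l)})-\nabla f^{(l)}(\bm{F}^{t,(l)}\bm{R}^{t,(l)})$ is the leave-one-out perturbation.

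To control $\bm{A}^{t,(l)}$, I would express $\nabla f(\bm{F}^{t}\bm{H}^{t})-\nabla f(\bm{F}^{t,(l)}\bm{R}^{t,(l)})$ via the fundamental theorem of calculus as an integrated Hessian acting on $\bm{F}^{t}\bm{H}^{t}-\bm{F}^{t,(l)}\bm{R}^{t,(l)}$, and invoke Lemma~\ref{lem:strongcvx} to get $\|\bm{A}^{t,(l)}\|_{\mathrm{F}}\le (1-\tfrac{\sigma_{\min}}{20}\eta)\|\bm{F}^{t}\bm{H}^{t}-\bm{F}^{t,(l)}\bm{R}^{t,(l)}\|_{\mathrm{F}}$. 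This requires checking that both endpoints satisfy the $\ell_{2,\infty}$ localization \eqref{eq:strongcvx-2infty} and that the difference falls in the admissible cone \eqref{eq:strongcvx-dir}: the former follows from the inductive hypothesis \eqref{eq:hyp-2infty} applied to $\bm{F}^{t}\bm{H}^{t}$ combined with \eqref{subeq:add-1} applied to the $l$th row of $\bm{F}^{t,(l)}\bm{R}^{t,(l)}$ (rows $\neq l$ of $\bm{F}^{t,(l)}\bm{R}^{t,(l)}$ are close to $\bm{F}^{t}\bm{H}^{t}$ by the induction hypothesis on \eqref{subeq:add-2}), and the latter is automatic by construction of the pairs.

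For the perturbation $\bm{B}^{t,(l)}$, note that for $1\le l\le n$ the loss difference $f-f^{(l)}$ depends only on entries in row $l$, so $\bm{B}^{t,(l)}$ has nonzero rows only in (i) the $l$th row of the $\bm{X}$-block and (ii) all rows of the $\bm{Y}$-block, each scaled by the fixed vector $\bm{X}_{l,\cdot}^{t,(l)}\bm{R}^{t,(l)}$. Crucially, $\bm{F}^{t,(l)}$ is independent of the sampling pattern and noise in row $l$, and the Winsorized score $\psi_{\tau}$ is bounded in magnitude by $\tau$ and has conditional mean that is controlled by a Huberization bias of order $\tau^{-1}\mathbb{E}[\varepsilon_{l,j}^{2}]\le\sigma^{2}/\tau$ thanks to Assumption~\ref{assum}. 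I would apply Bernstein-type concentration over the $n$ summands, using independence of $\bm{F}^{t,(l)}$ from row $l$, to bound $\|\bm{B}^{t,(l)}\|_{\mathrm{F}}$ by roughly $\bigl(\sigma\sqrt{n/p}+\|\bm{M}^{\star}\|_{\infty}\sqrt{n/p}\bigr)\|\bm{F}^{\star}\|_{2,\infty}\sqrt{\log n}$ uniformly in $l$, where the $\sigma$-piece comes from the variance, the $\|\bm{M}^{\star}\|_{\infty}$-piece absorbs the Huberization bias (and the bias from $f^{(l)}$ treating $M^{\star}$ instead of $M$), and the calibration $\tau\asymp\|\bm{M}^{\star}\|_{\infty}+\sigma\sqrt{np}$ balances these two effects. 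A union bound over $1\le l\le 2n$ pays an extra $\log n$, consistent with the statement.

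Combining the contraction and perturbation bounds and iterating yields $\|\bm{F}^{t+1}\bm{H}^{t+1}-\bm{F}^{t+1,(l)}\bm{R}^{t+1,(l)}\|_{\mathrm{F}}\le(1-\tfrac{\sigma_{\min}}{20}\eta)\|\bm{F}^{t}\bm{H}^{t}-\bm{F}^{t,(l)}\bm{R}^{t,(l)}\|_{\mathrm{F}}+C\eta(\sigma+\|\bm{M}^{\star}\|_{\infty})\sqrt{n/p}\,\|\bm{F}^{\star}\|_{2,\infty}\sqrt{\log n}$; summing the geometric series gives the absolute $\log n$ factor in the claim, and the final step combines with the Spectral Initialization result \eqref{eq:hyp-initial} specialized to the leave-one-out matrix. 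The case $n+1\le l\le 2n$ is entirely symmetric by swapping the roles of rows and columns. I expect the main obstacle to be the perturbation bound on $\bm{B}^{t,(l)}$: handling only the bounded-second-moment assumption on $\varepsilon_{l,j}$ forces a careful truncation analysis where the bias from Huberization must not swamp the variance term, and the induction interlocks four hypotheses \eqref{eq:hyp-frob}--\eqref{eq:hyp-2infty} plus \eqref{subeq:additional} simultaneously across all $2n$ leave-one-out sequences, so the log-factor and sample-size/SNR conditions need to be tracked tightly to close the argument.
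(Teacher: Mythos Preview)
Your proposal is correct and follows essentially the same route as the paper: the paper also passes from $\bm{R}^{t+1,(l)}$ to $\bm{R}^{t,(l)}$ via optimality, splits into exactly your $\bm{A}^{t,(l)}$ and $\eta\bm{B}^{t,(l)}$ (called $\bm{D}_1$ and $\bm{D}_2$ there), controls the first by the restricted strong convexity argument of Lemma~\ref{lem:contraction}, and bounds the second by decomposing $\psi_{\tau}$ into its quadratic and saturated pieces and applying matrix Bernstein to each, using independence of $\bm{F}^{t,(l)}$ from row $l$. One small correction: the $\log n$ in the final bound arises from the $L\log n$ term in Bernstein (with $L\asymp\tau/p$), not from the union bound over $l$, and the base case to plug in is the leave-one-out initialization bound $\|\bm{F}^{0}\bm{H}^{0}-\bm{F}^{0,(l)}\bm{R}^{0,(l)}\|_{\mathrm{F}}$ (Lemma~\ref{lem:5}) rather than \eqref{eq:hyp-initial}.
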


Now we are positioned to prove the induction hypothesis \eqref{subeq:add-3}
by combining the previous two lemmas.

\begin{lemma}\label{lem:loo-2}\textbf{\emph{($\ell_{2}/\ell_{\infty}$
norm error).}} Set $\tau=C_{\tau}\left(\left\Vert \bm{M}^{\star}\right\Vert _{\infty}+\sigma\sqrt{np}\right)$
for some constant $C_{\tau}>0$. Suppose the sample size obeys $n^{2}p\geq C\mu^{2}r^{2}\kappa n\log n$
for some sufficiently large constant $C>0$ and the noise satisfies
$\sigma\sqrt{\frac{n}{p}}\leq\frac{c\sigma_{\min}}{\sqrt{\kappa^{3}\mu r\log^{3}n}}$
for some sufficiently small constant $c>0$. If the iterates satisfy
at the $t$th iteration, then with probability over $1-O(n^{-100})$,
one has 
\[
\left\Vert \bm{F}^{t+1}\bm{H}^{t+1}-\bm{F}^{\star}\right\Vert _{2,\infty}\lesssim\kappa^{1.5}\sqrt{r}\left(\frac{\sigma}{\sigma_{\min}}\sqrt{\frac{n}{p}}+\frac{\left\Vert \bm{M}^{\star}\right\Vert _{\infty}}{\sigma_{\min}}\sqrt{\frac{n}{p}}\right)\log n\left\Vert \bm{F}^{\star}\right\Vert _{2,\infty}.
\]
\end{lemma}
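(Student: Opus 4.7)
The plan is to bound $\|(\bm{F}^{t+1}\bm{H}^{t+1}-\bm{F}^{\star})_{l,\cdot}\|_{2}$ for an arbitrary row index $l\in\{1,\ldots,2n\}$ by inserting the $l$th leave-one-out iterate as an intermediary, and then take the max over $l$. Introduce the two natural alignments of $\bm{F}^{t+1,(l)}$: the rotation $\bm{H}^{t+1,(l)}$ defined in \eqref{defn-Htl} that aligns it to $\bm{F}^{\star}$, and $\bm{R}^{t+1,(l)}$ defined in \eqref{defn-Rtl} that aligns it to $\bm{F}^{t+1}\bm{H}^{t+1}$. Adding and subtracting both $\bm{F}^{t+1,(l)}\bm{R}^{t+1,(l)}$ and $\bm{F}^{t+1,(l)}\bm{H}^{t+1,(l)}$ and using $\|\bm{F}^{t+1,(l)}_{l,\cdot}(\bm{R}^{t+1,(l)}-\bm{H}^{t+1,(l)})\|_{2}\leq\|\bm{F}^{t+1,(l)}_{l,\cdot}\|_{2}\|\bm{R}^{t+1,(l)}-\bm{H}^{t+1,(l)}\|$, the triangle inequality gives a three-term decomposition: a leave-one-out perturbation term, a rotation-mismatch term, and a row-wise error term for the leave-one-out sequence itself.

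The first and third pieces are read off existing lemmas: the leave-one-out perturbation is at most $\|\bm{F}^{t+1}\bm{H}^{t+1}-\bm{F}^{t+1,(l)}\bm{R}^{t+1,(l)}\|_{\mathrm{F}}$, which is bounded by Lemma~\ref{lem:loo-1} by $\sqrt{\kappa}(\frac{\sigma}{\sigma_{\min}}\sqrt{n/p}+\frac{\|\bm{M}^{\star}\|_{\infty}}{\sigma_{\min}}\sqrt{n/p})\|\bm{F}^{\star}\|_{2,\infty}\log n$, and the row error of the leave-one-out sequence is bounded by Lemma~\ref{lem:loo} by $\kappa\sqrt{r}(\frac{\sigma}{\sigma_{\min}}\sqrt{n\log n/p}+\frac{\|\bm{M}^{\star}\|_{\infty}}{\sigma_{\min}}\sqrt{n/p})\|\bm{F}^{\star}\|_{2,\infty}$. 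Both fit comfortably under the advertised $\kappa^{1.5}\sqrt{r}\log n$ envelope.

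The real work is the rotation mismatch, which I expect to be the main obstacle and also the term that forces the extra factor $\sqrt{\kappa}$ over Lemma~\ref{lem:loo}. Writing $\bm{R}^{t+1,(l)}=\mathsf{sgn}(\bm{F}^{t+1,(l)\top}\bm{F}^{t+1}\bm{H}^{t+1})$ and $\bm{H}^{t+1,(l)}=\mathsf{sgn}(\bm{F}^{t+1,(l)\top}\bm{F}^{\star})$, the two matrices inside $\mathsf{sgn}(\cdot)$ differ by $\bm{F}^{t+1,(l)\top}(\bm{F}^{t+1}\bm{H}^{t+1}-\bm{F}^{\star})$. A standard polar-factor perturbation bound of the form
\[
\|\mathsf{sgn}(\bm{A}_{1})-\mathsf{sgn}(\bm{A}_{2})\|\lesssim\frac{\|\bm{A}_{1}-\bm{A}_{2}\|}{\sigma_{r}(\bm{A}_{1})+\sigma_{r}(\bm{A}_{2})},
\]
combined with $\|\bm{F}^{t+1,(l)}\|\lesssim\sqrt{\sigma_{\max}}$ and $\sigma_{r}(\bm{F}^{t+1,(l)\top}\bm{F}^{\star})\gtrsim\sigma_{r}(\bm{F}^{t+1,(l)})\sigma_{r}(\bm{F}^{\star})\gtrsim\sigma_{\min}$ (both inherited from closeness of $\bm{F}^{t+1,(l)}$ to $\bm{F}^{\star}$ via the already-established induction hypotheses), then yields
\[
\|\bm{R}^{t+1,(l)}-\bm{H}^{t+1,(l)}\|\lesssim\frac{\sqrt{\sigma_{\max}}\,\|\bm{F}^{t+1}\bm{H}^{t+1}-\bm{F}^{\star}\|_{\mathrm{F}}}{\sigma_{\min}}.
\]
Substituting Lemma~\ref{lem:contraction} and $\|\bm{F}^{\star}\|_{\mathrm{F}}\leq\sqrt{2r\sigma_{\max}}$ turns this into $\kappa^{1.5}\sqrt{r}(\frac{\sigma}{\sigma_{\min}}\sqrt{n/p}+\frac{\|\bm{M}^{\star}\|_{\infty}}{\sigma_{\min}}\sqrt{n/p})$.

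Finally, $\|\bm{F}^{t+1,(l)}_{l,\cdot}\|_{2}=\|\bm{F}^{t+1,(l)}_{l,\cdot}\bm{H}^{t+1,(l)}\|_{2}\leq\|\bm{F}^{\star}_{l,\cdot}\|_{2}+\|(\bm{F}^{t+1,(l)}\bm{H}^{t+1,(l)}-\bm{F}^{\star})_{l,\cdot}\|_{2}\lesssim\|\bm{F}^{\star}\|_{2,\infty}$ under the assumed noise condition, once again via Lemma~\ref{lem:loo}. Multiplying the two factors, the rotation-mismatch piece is bounded by the same $\kappa^{1.5}\sqrt{r}(\cdots)\|\bm{F}^{\star}\|_{2,\infty}$, which dominates the other two terms and establishes the lemma upon taking the maximum over $l$. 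The main obstacle, beyond keeping track of factors, is organizing the argument so that the spectral-gap and row-norm bounds used for $\bm{F}^{t+1,(l)}$ are self-consistently available from the step-$t$ hypotheses together with Lemmas~\ref{lem:contraction}, \ref{lem:loo}, and \ref{lem:loo-1} at step $t+1$.
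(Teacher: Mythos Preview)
Your approach is correct and follows the same leave-one-out strategy as the paper: pass through $\bm{F}^{t+1,(l)}$, control the $l$th-row error of the leave-one-out iterate via Lemma~\ref{lem:loo}, and control the proximity of $\bm{F}^{t+1,(l)}$ to $\bm{F}^{t+1}\bm{H}^{t+1}$ via Lemma~\ref{lem:loo-1}. The only real difference is in how the rotation discrepancy is handled. The paper uses a two-term split through $\bm{F}^{t+1,(l)}\bm{H}^{t+1,(l)}$ and then invokes the prepackaged auxiliary bound \eqref{subeq:aux-4},
\[
\bigl\Vert \bm{F}^{t+1}\bm{H}^{t+1}-\bm{F}^{t+1,(l)}\bm{H}^{t+1,(l)}\bigr\Vert_{\mathrm{F}}\leq 5\kappa\,\bigl\Vert \bm{F}^{t+1}\bm{H}^{t+1}-\bm{F}^{t+1,(l)}\bm{R}^{t+1,(l)}\bigr\Vert_{\mathrm{F}},
\]
which directly supplies the extra factor of $\kappa$ and reduces everything to Lemma~\ref{lem:loo-1}; the $\kappa^{1.5}$ arises as $5\kappa$ times the $\sqrt{\kappa}$ in Lemma~\ref{lem:loo-1}. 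You instead keep a three-term split and bound $\|\bm{R}^{t+1,(l)}-\bm{H}^{t+1,(l)}\|$ from scratch by a polar-factor perturbation argument together with the global Frobenius error from Lemma~\ref{lem:contraction}; your $\kappa^{1.5}$ arises as $\sqrt{\sigma_{\max}}/\sigma_{\min}$ times $\sqrt{\kappa}\,\|\bm{F}^{\star}\|_{\mathrm{F}}$. Both routes are equivalent in spirit and end at the same place; the paper's is shorter because \eqref{subeq:aux-4} is already available, while yours is self-contained and makes the source of the extra $\kappa$ more transparent.
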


The incoherence of $\bm{F}^{t+1}$ has thus been established. Finally,
we introduce another induction hypothesis which demonstrates the approximate
balancedness between $\bm{X}^{t}$ and $\bm{Y}^{t}$ through iterations.

\begin{lemma}\label{lem:bal}\textbf{\emph{(Approximate balancedness).}} 
Set $\tau=C_{\tau}\left(\left\Vert \bm{M}^{\star}\right\Vert +\sigma\sqrt{np}\right)$
for some constant $C_{\tau}>0$. Suppose the sample size obeys $n^{2}p\geq C\mu\kappa^{2}n\log n$
for some sufficiently large constant $C>0$ and the noise satisfies
$\sigma\sqrt{\frac{n}{p}}\leq\frac{c\sigma_{\min}}{\sqrt{\log n}}$
for some sufficiently small constant $c>0$. If the iterates satisfy
at the $t$th iteration, then with probability over $1-O(n^{-100})$,
one has 
\begin{align*}
\left\Vert \bm{X}^{t+1\top}\bm{X}^{t+1}-\bm{Y}^{t+1\top}\bm{Y}^{t+1}\right\Vert _{\mathrm{F}} & \lesssim\eta\mu\kappa^{4}r^{3.5}\sigma_{\max}^{2}\left(\frac{\sigma}{\sigma_{\min}}\sqrt{\frac{n}{p}}+\frac{\left\Vert \bm{M}^{\star}\right\Vert _{\infty}}{\sigma_{\min}}\sqrt{\frac{n}{p}}\right)^{2}\log^{2}n,
\end{align*}
given that $\eta\ll1/(\mu\kappa^{3}r^{2}\sigma_{\max}\log n)$.

\end{lemma}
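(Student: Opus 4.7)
The plan is to control the evolution of $\bm{D}^{t+1} := \bm{X}^{t+1\top}\bm{X}^{t+1} - \bm{Y}^{t+1\top}\bm{Y}^{t+1}$ directly through the gradient update and to exploit the symmetric structure of the loss. First I would compute
\begin{align*}
\nabla f_{\bm{X}} &= \bm{G}\bm{Y}^t + \tfrac{1}{2}\bm{X}^t\bm{D}^t, &
\nabla f_{\bm{Y}} &= \bm{G}^\top\bm{X}^t - \tfrac{1}{2}\bm{Y}^t\bm{D}^t,
\end{align*}
where $\bm{G} := \tfrac{1}{p}\mathcal{P}_{\Omega}(\psi_{\tau}(\bm{X}^t\bm{Y}^{t\top}-\bm{M}))$, and substitute into the definition of $\bm{D}^{t+1}$. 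The key algebraic observation is that the linear-in-$\eta$ ``data'' contributions $-\eta[\bm{X}^{t\top}\bm{G}\bm{Y}^t + \bm{Y}^{t\top}\bm{G}^\top\bm{X}^t]$ appear identically in both $\bm{X}^{t+1\top}\bm{X}^{t+1}$ and $\bm{Y}^{t+1\top}\bm{Y}^{t+1}$ and therefore cancel upon subtraction. What remains is the Lyapunov recursion
\[
\bm{D}^{t+1} = \bm{D}^t - \tfrac{\eta}{2}\bigl(\bm{A}^t\bm{D}^t + \bm{D}^t\bm{A}^t\bigr) + \eta^2\bigl[(\nabla f_{\bm{X}})^{\top}\nabla f_{\bm{X}} - (\nabla f_{\bm{Y}})^{\top}\nabla f_{\bm{Y}}\bigr],
\]
with $\bm{A}^t := \bm{X}^{t\top}\bm{X}^t + \bm{Y}^{t\top}\bm{Y}^t$.

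Next I would verify that the Sylvester-type linear map is a Frobenius-norm contraction. The induction hypothesis together with Lemma~\ref{lem:contraction} gives $\|\bm{F}^t\bm{H}^t - \bm{F}^{\star}\| \ll \sqrt{\sigma_{\min}}$, and Weyl's inequality then places the eigenvalues of $\bm{A}^t$ inside $[\sigma_{\min},\,3\sigma_{\max}]$. Consequently the spectrum of $\bm{D}\mapsto \bm{D} - \tfrac{\eta}{2}(\bm{A}^t\bm{D} + \bm{D}\bm{A}^t)$ lies in $[1-3\eta\sigma_{\max},\,1-\eta\sigma_{\min}]$, and the step-size hypothesis $\eta \lesssim 1/(\mu\kappa^3 r^2\sigma_{\max}\log n)$ keeps both endpoints in $[0,1)$. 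Hence one obtains the one-step inequality $\|\bm{D}^{t+1}\|_{\mathrm{F}} \leq (1-\eta\sigma_{\min})\|\bm{D}^t\|_{\mathrm{F}} + \eta^2\Delta^t$, where $\Delta^t$ denotes the Frobenius norm of the quadratic remainder.

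The main obstacle is bounding $\Delta^t$. Using the identity $\bm{N}^{\top}\bm{N}-\bm{P}^{\top}\bm{P} = \bm{N}^{\top}(\bm{N}-\bm{P}) + (\bm{N}-\bm{P})^{\top}\bm{P}$ reduces $\Delta^t$ to a product of a spectral and a Frobenius norm of the gradients. I would then split $\bm{X}^t\bm{Y}^{t\top}-\bm{M} = (\bm{X}^t\bm{Y}^{t\top}-\bm{M}^{\star}) - \bm{E}$ and exploit the $1$-Lipschitz property of $\psi_{\tau}$: the ``signal'' contribution $\tfrac{1}{p}\mathcal{P}_{\Omega}(\bm{X}^t\bm{Y}^{t\top}-\bm{M}^{\star})$ is controlled via the Frobenius/spectral bound of Lemma~\ref{lem:contraction} together with the $\ell_{2,\infty}$ control from Lemma~\ref{lem:loo-2}, while the ``noise'' contribution $\tfrac{1}{p}\mathcal{P}_{\Omega}(\psi_{\tau}(\bm{E}))$ concentrates at order $\sigma\sqrt{n/p}$ by the same matrix-Bernstein argument used for the spectral initialization. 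Multiplying by $\|\bm{Y}^t\|,\|\bm{X}^t\|\lesssim \sqrt{\sigma_{\max}}$ and absorbing the regularizer term $\tfrac{1}{2}\bm{X}^t\bm{D}^t$ by the inductive $\|\bm{D}^t\|_{\mathrm{F}}$ estimate yields
\[
\Delta^t \lesssim \mu\kappa^{3}r^{3.5}\sigma_{\max}^{3}\Bigl(\tfrac{\sigma}{\sigma_{\min}}\sqrt{\tfrac{n}{p}} + \tfrac{\|\bm{M}^{\star}\|_{\infty}}{\sigma_{\min}}\sqrt{\tfrac{n}{p}}\Bigr)^{2}\log^{2}n.
\]

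Finally, I would unroll the recursion. The spectral initialization sets $\bm{X}^0 = \bm{U}^0(\bm{\Sigma}^0)^{1/2}$ and $\bm{Y}^0 = \bm{V}^0(\bm{\Sigma}^0)^{1/2}$, so $\bm{X}^{0\top}\bm{X}^0 = \bm{\Sigma}^0 = \bm{Y}^{0\top}\bm{Y}^0$ and $\bm{D}^0 = \bm{0}$. The geometric sum $\sum_{s=0}^{t}(1-\eta\sigma_{\min})^{s} \leq 1/(\eta\sigma_{\min})$ then gives $\|\bm{D}^{t+1}\|_{\mathrm{F}} \leq \eta\Delta^t/\sigma_{\min}$, which is the claimed bound after substituting the estimate for $\Delta^t$. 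The most delicate step will be the bookkeeping of $\kappa,\mu,r$ factors in $\Delta^t$, since it requires simultaneously invoking the Frobenius, spectral, and $\ell_{2,\infty}$ guarantees of Lemmas~\ref{lem:contraction}--\ref{lem:loo-2} and carefully separating Huberization bias from concentration fluctuation in the noise term.
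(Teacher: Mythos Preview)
Your proposal is correct and follows essentially the same route as the paper: both derive the Sylvester-type recursion by observing that the linear-in-$\eta$ data terms cancel, establish a contraction via the eigenvalue bounds on $\bm{X}^{t\top}\bm{X}^t+\bm{Y}^{t\top}\bm{Y}^t$, bound the $\eta^2$ remainder through the spectral norm of the Huberized residual matrix (using the $\ell_{2,\infty}$ hypothesis to control $\|\bm{X}^t\bm{Y}^{t\top}-\bm{M}^\star\|_\infty$ and matrix Bernstein for the truncated noise), and unroll from $\bm{D}^0=\bm{0}$. The only cosmetic differences are notation (your $\bm{D}^t,\bm{A}^t,\bm{G}$ correspond to the paper's $\bm{A}^t,\,\bm{X}^{t\top}\bm{X}^t+\bm{Y}^{t\top}\bm{Y}^t,\,2\bm{D}^t$) and that the paper algebraically peels off the $\bm{A}^t$-dependent pieces of the quadratic term before bounding, whereas you bound the full remainder via the $\bm{N}^\top\bm{N}-\bm{P}^\top\bm{P}$ identity; both yield the same order.
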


Until now, we have finished verifying the induction hypotheses for
$t>0$. It remains to justify the base case $t=0$ in the next section.

\subsection{Spectral Initialization\label{subsec:Spectral-Initialization}}

According to Algorithm \ref{alg:gd-rmc}, the robust spectral method
initializes the algorithm by top-$r$ SVD of the matrix 
\begin{equation}
\bm{M}^{0}=\frac{1}{p}\mathcal{P}_{\Omega}\left(\psi_{\tau}\left(\bm{M}\right)\right),\label{eq:psi-tau}
\end{equation}
with $\psi_{\tau}(\cdot)$ is defined in \eqref{defn-psi}. Now we
are ready to present the following several lemmas justifying \eqref{eq:thm-initial}
and \eqref{subeq:additional} with $t=0$.

\begin{lemma}\label{lem:2}Suppose the sample size obeys $n^{2}p\geq C\kappa^{3}\mu^{2}r^{3}n\log n$
for some sufficiently large constant $C>0$, the noise satisfies $\frac{\sigma}{\sigma_{\min}}\sqrt{\frac{\kappa\mu rn\log n}{p}}\leq c$
for some sufficiently small constant $c>0$. Then with probability
over $1-O(n^{-10})$, one has 
\begin{align}
\left\Vert \bm{F}^{0}\bm{H}^{0}-\bm{F}^{\star}\right\Vert  & \lesssim\left(\frac{\sigma}{\sigma_{\min}}\sqrt{\frac{n}{p}}+\frac{\left\Vert \bm{M}^{\star}\right\Vert _{\infty}}{\sigma_{\min}}\sqrt{\frac{n}{p}}\right)\left\Vert \bm{F}^{\star}\right\Vert ,\label{eq:initial-operator}
\end{align}
and 
\begin{align}
\left\Vert \bm{F}^{0}\bm{H}^{0}-\bm{F}^{\star}\right\Vert _{\mathrm{F}} & \lesssim\sqrt{r}\left(\frac{\sigma}{\sigma_{\min}}\sqrt{\frac{n}{p}}+\frac{\left\Vert \bm{M}^{\star}\right\Vert _{\infty}}{\sigma_{\min}}\sqrt{\frac{n}{p}}\right)\left\Vert \bm{F}^{\star}\right\Vert \nonumber \\
 & \lesssim\sqrt{\kappa}\left(\frac{\sigma}{\sigma_{\min}}\sqrt{\frac{n}{p}}+\frac{\left\Vert \bm{M}^{\star}\right\Vert _{\infty}}{\sigma_{\min}}\sqrt{\frac{n}{p}}\right)\left\Vert \bm{F}^{\star}\right\Vert _{\mathrm{F}}.\label{eq:initial-fro}
\end{align}

\end{lemma}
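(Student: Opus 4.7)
The plan is to first establish a spectral-norm bound
$\|\bm{M}^0-\bm{M}^\star\| \lesssim (\|\bm{M}^\star\|_\infty + \sigma)\sqrt{n/p}$
(suppressing logarithmic factors) and then feed it into a standard perturbation bound for the balanced factorization produced by top-$r$ SVD. Decompose the error as
$\bm{M}^0 - \bm{M}^\star = \bm{B} + \bm{Z}$,
with bias $\bm{B}\coloneqq\mathbb{E}[\bm{M}^0]-\bm{M}^\star$ and fluctuation $\bm{Z}\coloneqq\bm{M}^0-\mathbb{E}[\bm{M}^0]$, and bound the two terms separately.

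For the bias, I would exploit that $\tau \geq 2\|\bm{M}^\star\|_\infty$ forces $\{|M_{i,j}|>\tau\}\subseteq\{|\varepsilon_{i,j}|\geq\tau/2\}$, so Markov's inequality together with the bounded second moment gives
$|\mathbb{E}\psi_\tau(M_{i,j})-M^\star_{i,j}| \leq \mathbb{E}[(|M_{i,j}|+\tau)\mathbf{1}\{|M_{i,j}|>\tau\}] \lesssim \sigma^2/\tau$.
With $\tau\asymp \|\bm{M}^\star\|_\infty + \sigma\sqrt{np}$ this yields $\|\bm{B}\|_\infty \lesssim \sigma/\sqrt{np}$, and therefore $\|\bm{B}\|\leq n\|\bm{B}\|_\infty \lesssim \sigma\sqrt{n/p}$. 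For the fluctuation, write $\bm{Z}=\sum_{i,j}\bm{Z}_{i,j}$ as a sum of independent mean-zero random matrices with $\bm{Z}_{i,j} = (\delta_{i,j}/p)\psi_\tau(M_{i,j})\bm{e}_i\bm{e}_j^\top - \mathbb{E}[\cdot]$, where $\delta_{i,j}=\mathbf{1}\{(i,j)\in\Omega\}$. The truncation is precisely what makes $\|\bm{Z}_{i,j}\|\leq 2\tau/p$ deterministic; the matrix-variance proxy is $V \lesssim n(\|\bm{M}^\star\|_\infty^2+\sigma^2)/p$ via $\mathbb{E}[\psi_\tau(M_{i,j})^2]\leq \mathbb{E}[M_{i,j}^2]\leq\|\bm{M}^\star\|_\infty^2+\sigma^2$. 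Matrix Bernstein then yields
$\|\bm{Z}\| \lesssim \sqrt{V\log n} + (\tau\log n)/p \lesssim (\|\bm{M}^\star\|_\infty+\sigma)\sqrt{n/p}\log n$,
where the sample-size hypothesis $np\gtrsim\log^2 n$ ensures that the bounded term does not dominate the sub-Gaussian term.

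Combining gives the target $\|\bm{M}^0-\bm{M}^\star\|\lesssim(\|\bm{M}^\star\|_\infty+\sigma)\sqrt{n/p}$. Using incoherence to write $\|\bm{M}^\star\|_\infty\leq \mu r\sigma^\star_{\max}/n$, together with the stated noise and sample-size conditions, verifies $\|\bm{M}^0-\bm{M}^\star\|\leq c\sigma^\star_{\min}$ for any prescribed small constant $c$. I will then invoke the standard perturbation lemma for the balanced Burer--Monteiro factorization produced by top-$r$ SVD (as in \citet{tu2016low}) to conclude
$\|\bm{F}^0\bm{H}^0-\bm{F}^\star\| \lesssim \|\bm{M}^0-\bm{M}^\star\|/\sqrt{\sigma^\star_{\min}}$.
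Rewriting $1/\sqrt{\sigma^\star_{\min}}\asymp \sqrt{\kappa}/\|\bm{F}^\star\|$ converts this into the form \eqref{eq:initial-operator}, and \eqref{eq:initial-fro} follows because $\bm{F}^0\bm{H}^0-\bm{F}^\star$ has rank at most $2r$, so $\|\cdot\|_{\mathrm{F}}\leq\sqrt{2r}\|\cdot\|$.

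The crucial and most delicate ingredient, and also where the novelty over previous arguments lies, is the tuning of $\tau$. A smaller $\tau$ aggravates the truncation bias of order $\sigma^2/\tau$, whereas a larger $\tau$ inflates the bounded term $\tau\log n/p$ in matrix Bernstein; the prescribed choice $\tau\asymp \|\bm{M}^\star\|_\infty + \sigma\sqrt{np}$ equalizes the two error sources, and it is exactly this balance that permits the argument to close under only a bounded second-moment assumption on the noise, in contrast to the sub-Gaussian tails needed by prior spectral-initialization analyses.
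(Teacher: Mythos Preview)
Your plan is correct and mirrors the paper's approach: bound $\|\bm{M}^0-\bm{M}^\star\|$ by a bias--fluctuation split with matrix Bernstein (this is the content of the paper's Lemma~\ref{lem:1}), then pass to $\|\bm{F}^0\bm{H}^0-\bm{F}^\star\|$ via a factorization perturbation lemma---the paper carries out the latter explicitly through the symmetric dilation and \citet[Lemmas~45--47]{ma2017implicit} rather than citing \citet{tu2016low}, but the two routes are equivalent. One cosmetic fix: your final rewriting should be $1/\sqrt{\sigma^\star_{\min}} \le \|\bm{F}^\star\|/\sigma^\star_{\min}$ to land on the form of \eqref{eq:initial-operator}; the identity you wrote, $1/\sqrt{\sigma^\star_{\min}}\asymp\sqrt{\kappa}/\|\bm{F}^\star\|$, is true but places $\|\bm{F}^\star\|$ on the wrong side---since it is actually $\sqrt{\kappa}$ tighter than the target, the conclusion still follows.
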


This lemma verifies induction hypothesis \eqref{eq:thm-initial}.
Then we focus on establishing the incoherence condition of spectral
initialization.

\begin{lemma}\label{lem:3}Suppose the sample size obeys $n^{2}p\geq C\mu^{2}r^{2}\kappa n\log n$
for some sufficiently large constant $C>0$, the noise satisfies $\sigma\sqrt{\frac{n}{p}}\leq\frac{c\sigma_{\min}}{\sqrt{\kappa^{3}\mu r\log^{3}n}},$for
some sufficiently small constant $c>0$. Then with probability over
$1-O(n^{-10})$, one has

\begin{align}
\left\Vert \bm{F}^{0}\bm{H}^{0}-\bm{F}^{\star}\right\Vert _{2,\infty} & \lesssim\sqrt{\kappa}\left(\frac{\sigma}{\sigma_{\min}}\sqrt{\frac{n}{p}}+\frac{\left\Vert \bm{M}^{\star}\right\Vert _{\infty}}{\sigma_{\min}}\sqrt{\frac{n}{p}}\right)\log n\left\Vert \bm{F}^{\star}\right\Vert _{2,\infty}.\label{eq:initial-incoherence}
\end{align}

\end{lemma}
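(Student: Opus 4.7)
\medskip
\noindent\textbf{Proof plan for Lemma \ref{lem:3}.}

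The plan is to bound each row of $\bm{F}^0\bm{H}^0-\bm{F}^\star$ via the standard leave-one-out triangle inequality, combined with the spectral perturbation control of the initialization matrix $\bm{M}^0$ defined in \eqref{eq:psi-tau}. Fix $l\in\{1,\ldots,2n\}$ and consider the auxiliary initialization $\bm{F}^{0,(l)}$ produced by Algorithm \ref{alg:gd-rmc-loo}, which is statistically independent of the noise in the $l$th row/column. Decompose
\begin{align*}
\left\Vert (\bm{F}^0\bm{H}^0-\bm{F}^\star)_{l,\cdot}\right\Vert_2
&\leq \left\Vert \bm{F}^0\bm{H}^0-\bm{F}^{0,(l)}\bm{R}^{0,(l)}\right\Vert_{\mathrm{F}}
  +\left\Vert (\bm{F}^{0,(l)}\bm{H}^{0,(l)}-\bm{F}^\star)_{l,\cdot}\right\Vert_2 \\
&\qquad +\left\Vert \bm{F}^{0,(l)}_{l,\cdot}\right\Vert_2 \cdot \left\Vert \bm{R}^{0,(l)}-\bm{H}^{0,(l)}\right\Vert.
\end{align*}
The first term is the leave-one-out perturbation, the second exploits independence, and the third is a rotation-alignment correction that will be absorbed once $\bm{F}^{0,(l)}_{l,\cdot}$ is shown to be close to $\bm{F}^\star_{l,\cdot}$.

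First I would control the leave-one-out perturbation by applying the Wedin/Davis--Kahan $\sin\Theta$ theorem to $\bm{M}^0$ versus $\bm{M}^{0,(l)}$. Using $\|\bm{M}^0-\bm{M}^\star\|\lesssim (\sigma+\|\bm{M}^\star\|_\infty)\sqrt{n/p}$ from Lemma \ref{lem:2} as the baseline eigen-gap estimate and the explicit form of the perturbation (supported only in row/column $l$), one obtains
\[
\left\Vert \bm{F}^0\bm{H}^0-\bm{F}^{0,(l)}\bm{R}^{0,(l)}\right\Vert_{\mathrm{F}}
\lesssim \frac{1}{\sigma_{\min}}\left\Vert (\bm{M}^0-\bm{M}^{0,(l)})\bm{U}^\star\right\Vert_{\mathrm{F}}\cdot\sqrt{\sigma_{\max}}+\text{lower order},
\]
and a similar factor for $\bm{V}^\star$. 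The quantity $(\bm{M}^0-\bm{M}^{0,(l)})\bm{U}^\star$ is a single-row sum of independent entries (by the leave-one-out construction) with bounded Winsorized summands of magnitude at most $\tau$, so Bernstein's inequality together with the incoherence of $\bm{U}^\star$ yields the desired $\sqrt{n/p}\log n$ scaling on $\|\bm{F}^\star\|_{2,\infty}$.

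Second, for the independence term $\|(\bm{F}^{0,(l)}\bm{H}^{0,(l)}-\bm{F}^\star)_{l,\cdot}\|_2$, I would combine the unilateral perturbation identity
\[
\bm{F}^{0,(l)}\bm{H}^{0,(l)}-\bm{F}^\star
\approx (\bm{M}^{0,(l)}-\bm{M}^\star)\bm{F}^\star(\bm{\Sigma}^\star)^{-1}\cdot\sqrt{\bm{\Sigma}^\star}+\text{higher order}
\]
with the fact that the $l$th row of $\bm{M}^{0,(l)}-\bm{M}^\star$ is independent of the $l$th row of the noise (for $1\le l\le n$) and consists of independent Winsorized, recentred entries. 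Here the Huberization bias $\mathbb{E}[\psi_\tau(M_{i,j})]-M^\star_{i,j}$ has to be controlled; under only bounded second moments the bias is $O(\sigma^2/\tau)$, so with $\tau\asymp\|\bm{M}^\star\|_\infty+\sigma\sqrt{np}$ the total bias contribution to $\|(\bm{F}^{0,(l)}\bm{H}^{0,(l)}-\bm{F}^\star)_{l,\cdot}\|_2$ is dominated by $\|\bm{F}^\star\|_{2,\infty}$ times the target rate. A Bernstein bound on the independent variance component gives the remaining $\sigma\sqrt{n\log n/p}$ factor. The rotation-alignment correction $\|\bm{R}^{0,(l)}-\bm{H}^{0,(l)}\|$ is handled by a standard Frobenius-to-spectral argument exploiting that both $\bm{H}^{0,(l)}$ and $\bm{R}^{0,(l)}$ align the same sequence $\bm{F}^{0,(l)}$ to two nearby targets $\bm{F}^\star$ and $\bm{F}^0\bm{H}^0$; combined with \eqref{eq:initial-fro} this yields a factor proportional to the initialization error divided by $\sigma_{\min}$.

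The main obstacle will be the simultaneous control of the Huberization bias and the variance: with only bounded second moments one must choose $\tau$ large enough for the bias $O(\sigma^2/\tau)$ to be negligible at row level, yet small enough that the Winsorized variables remain sub-Gaussian in the Bernstein inequality. The choice $\tau=C_\tau(\|\bm{M}^\star\|_\infty+\sigma\sqrt{np})$ specified in the statement is precisely tuned to balance these competing effects and, together with the sample-size and SNR conditions, yields the claimed $\sqrt{\kappa}(\sigma/\sigma_{\min}+\|\bm{M}^\star\|_\infty/\sigma_{\min})\sqrt{n/p}\cdot\log n\cdot\|\bm{F}^\star\|_{2,\infty}$ bound after maximizing over $l$ and taking a union bound over $2n$ leave-one-out instances.
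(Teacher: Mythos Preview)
Your high-level strategy---applying the leave-one-out triangle directly at the level of $\bm{F}^0$---is a legitimate route, and in fact it is how the paper combines Lemmas~\ref{lem:4}, \ref{lem:5} and the rotation bound \eqref{subeq:aux-4} to verify \eqref{subeq:add-3} for later iterates. The paper's own proof of Lemma~\ref{lem:3}, however, is organized differently: it reduces to controlling $\|\bm{U}^0\bm{Q}_{\bm{U}}-\bm{U}^\star\|_{2,\infty}$ via the explicit decomposition \eqref{eq:lem3-5-1} (Lemma~\ref{lem:3-5}), and pushes the leave-one-out machinery down into the control of the cross term $(\bm{M}^0-\bm{M}^\star)(\bm{V}^0\bm{H}_{\bm{V}}-\bm{V}^\star)$ inside Lemma~\ref{lem:3-3}. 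Working at the singular-vector level lets the paper avoid the extra $\kappa$ factor that the rotation-alignment step (your third term, or \eqref{subeq:aux-4}) would introduce, and thus achieve the sharper $\sqrt{\kappa}$ prefactor stated in \eqref{eq:initial-incoherence} rather than the $\kappa^{3/2}$ that appears in \eqref{subeq:add-3}.

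There is, however, a genuine gap in your plan for the independence term $\|(\bm{F}^{0,(l)}\bm{H}^{0,(l)}-\bm{F}^\star)_{l,\cdot}\|_2$. In the paper's construction \eqref{eq:spectral-method-matrix-1}, the $l$th row of $\bm{M}^{0,(l)}$ is replaced by the $l$th row of $\bm{M}^\star$, so $(\bm{M}^{0,(l)}-\bm{M}^\star)_{l,\cdot}=\bm{0}$ exactly. Hence your first-order expansion evaluated at row $l$ is identically zero, and there is no ``independent Winsorized, recentred'' sum on which to run Bernstein. The entire contribution sits in what you dismiss as ``higher order''. The correct mechanism (see the proof of Lemma~\ref{lem:4}) is the exact eigenvector identity $\bm{F}^{0,(l)}_{l,\cdot}=\widetilde{\bm{M}}^{0,(l)}_{l,\cdot}\,\bm{Z}^{0,(l)}(\bm{\Sigma}^{0,(l)})^{-1/2}=\widetilde{\bm{M}}^\star_{l,\cdot}\,\bm{Z}^{0,(l)}(\bm{\Sigma}^{0,(l)})^{-1/2}$, followed by an operator-norm spectral perturbation bound on $\bm{Z}^{0,(l)}$ and $\bm{\Sigma}^{0,(l)}$; there is no random-sum concentration at this step.

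A smaller issue is in your perturbation term: Wedin's theorem applied to $\bm{M}^0$ versus $\bm{M}^{0,(l)}$ produces $\|(\bm{M}^0-\bm{M}^{0,(l)})\bm{V}^{0,(l)}\|_{\mathrm{F}}$ and $\|(\bm{M}^0-\bm{M}^{0,(l)})^\top\bm{U}^{0,(l)}\|_{\mathrm{F}}$, not the analogous quantities with $\bm{U}^\star,\bm{V}^\star$. Converting these into row-wise bounds requires an a-priori $\ell_2/\ell_\infty$ control on $\bm{U}^{0,(l)},\bm{V}^{0,(l)}$, which is itself nontrivial---the paper supplies it by rerunning the Lemma~\ref{lem:3-5}-type argument for the leave-one-out sequences (see the end of the proof of Lemma~\ref{lem:5}). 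Simply substituting $\bm{U}^\star$ as you propose does not close the argument without handling the cross term $(\bm{M}^0-\bm{M}^{0,(l)})(\bm{V}^{0,(l)}-\bm{V}^\star)$.
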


Next, we move on to proving the incoherence property of spectral initialization
in the leave-one-out algorithm (cf.~Algorithm \ref{alg:gd-rmc-loo}).

\begin{lemma}\label{lem:4}Suppose the sample size obeys $n^{2}p\geq C\mu^{2}r^{2}\kappa^{2}n\log n$
for some sufficiently large constant $C>0$, the noise satisfies $\frac{\sigma}{\sigma_{\min}}\sqrt{\frac{n}{p}}\leq c,$for
some sufficiently small constant $c>0$. Then with probability over
$1-O(n^{-10})$, one has 
\begin{equation}
\left\Vert \left(\bm{F}^{0,\left(l\right)}\bm{H}^{0,\left(l\right)}-\bm{F}^{\star}\right)_{l,\cdot}\right\Vert _{2}\lesssim\sqrt{\kappa}\left(\frac{\sigma}{\sigma_{\min}}\sqrt{\frac{n}{p}}+\frac{\left\Vert \bm{M}^{\star}\right\Vert _{\infty}}{\sigma_{\min}}\sqrt{\frac{n}{p}}\right)\left\Vert \bm{F}^{\star}\right\Vert _{2,\infty}.\label{eq:initial-loo-1}
\end{equation}

\end{lemma}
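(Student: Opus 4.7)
The plan is to exploit the leave-one-out construction of $\bm{M}^{0,(l)}$ in \eqref{eq:spectral-method-matrix-1}, which is specifically designed so that (for $1 \leq l \leq n$) the $l$-th row of $\bm{M}^{0,(l)}$ coincides exactly with the noiseless row $\bm{M}^{\star}_{l,\cdot}$, independent of the sampling randomness in row $l$. Using the SVD identity $\bm{M}^{0,(l)} \bm{V}^{0,(l)} = \bm{U}^{0,(l)} \bm{\Sigma}^{0,(l)}$ and extracting the $l$-th row, I obtain
$$\bm{X}^{0,(l)}_{l,\cdot} = \bm{M}^{0,(l)}_{l,\cdot}\,\bm{V}^{0,(l)} (\bm{\Sigma}^{0,(l)})^{-1/2} = \bm{M}^{\star}_{l,\cdot}\,\bm{V}^{0,(l)} (\bm{\Sigma}^{0,(l)})^{-1/2},$$
which, paired with $\bm{X}^{\star}_{l,\cdot} = \bm{M}^{\star}_{l,\cdot}\,\bm{V}^{\star} (\bm{\Sigma}^{\star})^{-1/2}$ and the alignment rotation $\bm{H}^{0,(l)}$, yields the clean factorization
$$\bm{X}^{0,(l)}_{l,\cdot}\bm{H}^{0,(l)} - \bm{X}^{\star}_{l,\cdot} = \bm{M}^{\star}_{l,\cdot}\bigl[\bm{V}^{0,(l)}(\bm{\Sigma}^{0,(l)})^{-1/2}\bm{H}^{0,(l)} - \bm{V}^{\star}(\bm{\Sigma}^{\star})^{-1/2}\bigr].$$
Bounding $\|\bm{M}^{\star}_{l,\cdot}\|_{2} \leq \|\bm{X}^{\star}_{l,\cdot}\|_{2}\cdot\|\bm{Y}^{\star}\| \leq \sqrt{\sigma_{\max}}\,\|\bm{F}^{\star}\|_{2,\infty}$ immediately extracts the desired $\|\bm{F}^{\star}\|_{2,\infty}$ factor.

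The remaining task is to bound the operator-norm perturbation $\|\bm{V}^{0,(l)}(\bm{\Sigma}^{0,(l)})^{-1/2}\bm{H}^{0,(l)} - \bm{V}^{\star}(\bm{\Sigma}^{\star})^{-1/2}\|$ by $O\bigl((\sigma + \|\bm{M}^{\star}\|_{\infty})\sqrt{n/p}\,/\sigma_{\min}^{3/2}\bigr)$. I would first establish the spectral-norm closeness $\|\bm{M}^{0,(l)} - \bm{M}^{\star}\| \lesssim (\sigma + \|\bm{M}^{\star}\|_{\infty})\sqrt{n/p}$ via the matrix Bernstein and Huberization-bias arguments already underlying Lemma \ref{lem:2}, benefiting from the fact that the noiseless $l$-th row eliminates one summand in the concentration. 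Then Wedin's $\sin\Theta$ theorem gives $\|\bm{V}^{0,(l)}\bm{H}^{0,(l)} - \bm{V}^{\star}\| \lesssim \|\bm{M}^{0,(l)} - \bm{M}^{\star}\|/\sigma_{\min}$, while Weyl's inequality combined with operator-Lipschitzness of $x \mapsto x^{-1/2}$ on $[\sigma_{\min}/2, 3\sigma_{\max}/2]$ controls $\|\bm{H}^{0,(l),\top}(\bm{\Sigma}^{0,(l)})^{-1/2}\bm{H}^{0,(l)} - (\bm{\Sigma}^{\star})^{-1/2}\|$ at the rate $\|\bm{M}^{0,(l)} - \bm{M}^{\star}\|/\sigma_{\min}^{3/2}$. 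Splitting the target expression into a ``subspace'' piece and a ``scaling'' piece via the insertion of $\bm{V}^{\star}\bm{H}^{0,(l),\top}(\bm{\Sigma}^{0,(l)})^{-1/2}\bm{H}^{0,(l)}$ and bounding each by the above delivers the desired inequality. Multiplying through by $\sqrt{\sigma_{\max}}\|\bm{F}^{\star}\|_{2,\infty}$ and using $\sqrt{\sigma_{\max}}/\sigma_{\min}^{3/2} = \sqrt{\kappa}/\sigma_{\min}$ produces the claimed bound. For $n+1 \leq l \leq 2n$, a symmetric argument (with rows and columns swapped) applies to $\bm{Y}^{0,(l)}_{l-n,\cdot}$.

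The main obstacle is controlling the commutator-like term $\bm{H}^{0,(l),\top}(\bm{\Sigma}^{0,(l)})^{-1/2}\bm{H}^{0,(l)} - (\bm{\Sigma}^{\star})^{-1/2}$, since the rotation $\bm{H}^{0,(l)}$ need not commute with the diagonal $\bm{\Sigma}^{0,(l)}$. This requires showing that $\bm{H}^{0,(l),\top}\bm{\Sigma}^{0,(l)}\bm{H}^{0,(l)}$ is itself operator-close to $\bm{\Sigma}^{\star}$ (by combining the Procrustes-optimality of $\bm{H}^{0,(l)}$ with Weyl applied to $\bm{M}^{0,(l)}$ and $\bm{M}^{\star}$) and then invoking a matrix-function perturbation bound; an alternative route that sidesteps the commutator is to compare $\bm{Y}^{0,(l)}\bm{H}^{0,(l)}$ with $\bm{Y}^{\star}$ directly and absorb the rescaling into a pseudo-inverse. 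Either way, the whole argument rests on the spectral concentration of $\bm{M}^{0,(l)}$, which in the heavy-tailed regime depends on a delicate balancing of Huberization bias against sampling variance under the choice $\tau \asymp \|\bm{M}^{\star}\|_{\infty} + \sigma\sqrt{np}$ inherited from Lemma \ref{lem:2}.
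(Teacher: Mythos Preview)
Your core insight—that the leave-one-out construction makes $\bm{M}^{0,(l)}_{l,\cdot}=\bm{M}^{\star}_{l,\cdot}$, so the $l$-th row of $\bm{X}^{0,(l)}$ can be written as $\bm{M}^{\star}_{l,\cdot}$ times a perturbed inverse square-root factor—is exactly the idea the paper uses. The decomposition into a ``subspace piece'' and a ``scaling piece'' is also the right shape. However, there is a genuine gap in how you handle the rotation.

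You invoke Wedin to bound $\|\bm{V}^{0,(l)}\bm{H}^{0,(l)}-\bm{V}^{\star}\|$, but $\bm{H}^{0,(l)}$ is the Procrustes rotation for $\bm{F}^{0,(l)}=[\bm{X}^{0,(l)};\bm{Y}^{0,(l)}]$, not for $\bm{V}^{0,(l)}$. Wedin only controls $\|\bm{V}^{0,(l)}\bm{Q}_{\bm{V}}^{(l)}-\bm{V}^{\star}\|$ for the $\bm{V}$-optimal rotation, and passing from one rotation to the other is precisely the ``commutator'' difficulty you flag but do not resolve. Your suggested fix—showing $\bm{H}^{0,(l)\top}\bm{\Sigma}^{0,(l)}\bm{H}^{0,(l)}$ is close to $\bm{\Sigma}^{\star}$ via Procrustes optimality—does not follow directly, because $\bm{H}^{0,(l)}$ is optimal for the $(\bm{\Sigma})^{1/2}$-weighted stack, not for the singular vectors themselves.

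The paper resolves this cleanly by working with the symmetric dilation $\widetilde{\bm{M}}^{0,(l)}$, whose top eigenspace is spanned by $\frac{1}{\sqrt{2}}[\bm{U}^{0,(l)};\bm{V}^{0,(l)}]$. A single auxiliary rotation $\bm{Q}^{(l)}$ (the Procrustes alignment of this eigenspace to $\frac{1}{\sqrt{2}}[\bm{U}^{\star};\bm{V}^{\star}]$) simultaneously serves both $\bm{U}$ and $\bm{V}$. The proof first swaps $\bm{H}^{0,(l)}$ for $\bm{Q}^{(l)}$ via the triangle inequality at cost $\|\bm{H}^{0,(l)}-\bm{Q}^{(l)}\|\cdot\|\bm{F}^{0,(l)}\|_{2,\infty}$, and then in the $\bm{Q}^{(l)}$ frame the ``commutator'' becomes $(\bm{\Sigma}^{0,(l)})^{-1/2}\bm{Q}^{(l)}-\bm{Q}^{(l)}(\bm{\Sigma}^{\star})^{-1/2}$, which together with $\|\bm{H}^{0,(l)}-\bm{Q}^{(l)}\|$ and $\|[\bm{U}^{0,(l)};\bm{V}^{0,(l)}]\bm{Q}^{(l)}-[\bm{U}^{\star};\bm{V}^{\star}]\|$ are all bounded by $\|\widetilde{\bm{M}}^{0,(l)}-\widetilde{\bm{M}}^{\star}\|/\sigma_{\min}$ via Lemmas~45--47 of \citet{ma2017implicit}. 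So the ``main obstacle'' you identify is not dealt with by a matrix-function Lipschitz argument but by the dilation trick, which makes the rotation compatible with both factors at once.
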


Finally, we turn to justify \eqref{subeq:add-2} for $t=0$, demonstrating
the proximity between $\bm{F}^{0}$ and $\{\bm{F}^{0,(l)}\}_{l=1}^{2n}$
up to some orthonormal transformation.

\begin{lemma}\label{lem:5}Suppose the sample size obeys $n^{2}p\geq C\kappa^{2}\mu^{2}r^{3}n\log n$
for some sufficiently large constant $C>0$, the noise satisfies $\frac{\sigma}{\sigma_{\min}}\sqrt{\frac{\kappa rn\log^{2}n}{p}}\leq c$
for some sufficiently small constant $c>0$. Then with probability
over $1-O(n^{-10})$, one has 
\begin{equation}
\left\Vert \bm{F}^{0}\bm{H}^{0}-\bm{F}^{0,\left(l\right)}\bm{R}^{0,\left(l\right)}\right\Vert _{\mathrm{F}}\lesssim\sqrt{\kappa}\left(\frac{\sigma}{\sigma_{\min}}\sqrt{\frac{n}{p}}+\frac{\left\Vert \bm{M}^{\star}\right\Vert _{\infty}}{\sigma_{\min}}\sqrt{\frac{n}{p}}\right)\log n\left\Vert \bm{F}^{\star}\right\Vert _{2,\infty}.\label{eq:initial-loo-2}
\end{equation}

\end{lemma}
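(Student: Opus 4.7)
The goal is a Frobenius-norm bound on the discrepancy between the spectral-initialization factor and its leave-one-out counterpart, using the structural fact that the two data matrices differ only on a single row (or column). The plan is to apply a Wedin/Davis--Kahan-type perturbation lemma for the balanced factors $\bm{F}=[\bm{X};\bm{Y}]=[\bm{U}(\bm{\Sigma})^{1/2};\bm{V}(\bm{\Sigma})^{1/2}]$ of the truncated observation matrices, and then exploit (i) the row-support structure of the perturbation $\bm{\Delta}^{(l)}\coloneqq\bm{M}^{0}-\bm{M}^{0,(l)}$, and (ii) the independence of $\bm{F}^{0,(l)}$ from the observed entries in that row. This is parallel to the leave-one-out spectral analyses in earlier matrix-completion literature, with the new ingredient being heavy-tailed concentration after Huberization.

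\textbf{Steps.} First, I would apply a balanced-factor perturbation bound (of the form used, e.g., for noisy matrix completion in \citet{chen2020noisy} and \citet{ma2017implicit}) to obtain
\[
\|\bm{F}^{0}\bm{H}^{0}-\bm{F}^{0,(l)}\bm{R}^{0,(l)}\|_{\mathrm{F}}\lesssim\frac{1}{\sqrt{\sigma_{\min}^{\star}}}\left(\|\bm{\Delta}^{(l)}\bm{F}^{0,(l)}\|_{\mathrm{F}}+\|\bm{\Delta}^{(l)}\|\cdot\|\bm{F}^{0,(l)}\bm{H}^{0,(l)}-\bm{F}^{\star}\|_{\mathrm{F}}\right),
\]
valid because the singular-value gap at rank $r$ is of order $\sigma_{\min}^{\star}$ (a consequence of Lemma \ref{lem:2} and Weyl's inequality). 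Second, for $l\le n$ I would write $\bm{\Delta}^{(l)}=\bm{e}_{l}\bm{w}^{\top}$ with
\[
w_{j}=\tfrac{1}{p}\mathds{1}\{(l,j)\in\Omega\}\psi_{\tau}(M_{l,j})-M^{\star}_{l,j},\qquad j\in[n],
\]
so that $\|\bm{\Delta}^{(l)}\|=\|\bm{w}\|_{2}$ and $\bm{\Delta}^{(l)}\bm{F}^{0,(l)}=\bm{e}_{l}(\bm{F}^{0,(l)})^{\top}\bm{w}$ (the case $l>n$ is symmetric). Third, I would split
\[
(\bm{F}^{0,(l)})^{\top}\bm{w}=(\bm{F}^{0,(l)}-\bm{F}^{\star}\bm{H}^{0,(l)\top})^{\top}\bm{w}+(\bm{F}^{\star}\bm{H}^{0,(l)\top})^{\top}\bm{w},
\]
and bound each summand by exploiting the independence of $\bm{F}^{0,(l)}$ from $\bm{w}$: the first term is controlled by $\|\bm{F}^{0,(l)}\bm{H}^{0,(l)}-\bm{F}^{\star}\|_{\mathrm{F}}\cdot\|\bm{w}\|_{2}$ (already bounded via Lemma \ref{lem:2}), and the second by a Bernstein-type bound on a linear form with deterministic coefficients $\bm{F}^{\star}$. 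Finally, I would invoke Lemma \ref{lem:4} to bound the $l$-th row of $\bm{F}^{0,(l)}$ by $\|\bm{F}^{\star}\|_{2,\infty}$ (up to lower-order terms), absorb it into the $\|\bm{F}^{\star}\|_{2,\infty}$ factor on the right-hand side, and take a union bound over $l\in[2n]$ to produce the extra $\log n$ factor.

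\textbf{Main obstacle.} The bulk of the work lies in the concentration step: bounding $\|\bm{w}\|_{2}$ and the linear form $(\bm{F}^{\star})^{\top}\bm{w}$ under \emph{only} a bounded-second-moment assumption on the noise, possibly asymmetric. The choice $\tau\asymp\|\bm{M}^{\star}\|_{\infty}+\sigma\sqrt{np}$ is critical here: it is large enough to make the Huberization bias $|\mathbb{E}\psi_{\tau}(\varepsilon_{l,j}+M^{\star}_{l,j})-M^{\star}_{l,j}|$ of order $\sigma^{2}/\tau$, and when summed against the row of $\bm{F}^{\star}$ (of $\ell_{2}$ norm at most $\sqrt{r}\|\bm{F}^{\star}\|_{2,\infty}$) produces a bias contribution comparable to the desired error floor; it is also small enough that the $\psi_{\tau}$-truncated summands obey a Bernstein-type tail with effective variance controlled by $\sigma^{2}$. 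Matching the adaptive-Huber analysis of \citet{fan2017estimation} and \citet{sun2020adaptive} to our vectorized form, with the additional sampling indicator $\mathds{1}\{(l,j)\in\Omega\}$ contributing the factor $1/p$, is the main technical chore. A secondary issue is the perturbation bound in Step 1: because $\bm{F}$ is a balanced factor, one has to pass through the $\bm{U},\bm{V}$ perturbations separately and then recombine using $\bm{\Sigma}^{1/2}$, but this is routine once the requisite singular-value gap is in hand.
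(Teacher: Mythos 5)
Your overall architecture (a Wedin/Davis--Kahan step reducing everything to the single-row perturbation $\bm{\Delta}^{(l)}=\bm{e}_{l}\bm{w}^{\top}$, followed by concentration of $\bm{w}$ against leave-one-out quantities) is the right one and close in spirit to the paper's, but the way you handle the random-coefficient piece breaks the bound quantitatively. The target \eqref{eq:initial-loo-2} scales with $\|\bm{F}^{\star}\|_{2,\infty}$, i.e.\ it is smaller by roughly a $\sqrt{n}$ factor than anything proportional to $\|\bm{F}^{\star}\|$ or $\|\bm{F}^{\star}\|_{\mathrm{F}}$; the only way the paper recovers this row-wise gain on the term $\bm{\Delta}^{(l)}\bm{V}^{0,(l)}$ (equivalently $\bm{V}^{0,(l)\top}\bm{w}$) is to exploit that $\bm{V}^{0,(l)}$ is independent of the row-$l$ data and apply matrix Bernstein conditionally, so that the result involves $\sqrt{(\sigma^{2}+\|\bm{M}^{\star}\|_{\infty}^{2})\log n/p}\,\|\bm{V}^{0,(l)}\|_{\mathrm{F}}$ plus $\tfrac{\tau+\|\bm{M}^{\star}\|_{\infty}}{p}\log n\,\|\bm{V}^{0,(l)}\|_{2,\infty}$, combined with the incoherence $\|\bm{V}^{0,(l)}\|_{2,\infty}\lesssim\|\bm{V}^{\star}\|_{2,\infty}$. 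Your split of $(\bm{F}^{0,(l)})^{\top}\bm{w}$ is fine for the $\bm{F}^{\star}$ piece (deterministic coefficients, Bernstein), but bounding the deviation piece by $\|\bm{F}^{0,(l)}\bm{H}^{0,(l)}-\bm{F}^{\star}\|_{\mathrm{F}}\cdot\|\bm{w}\|_{2}$ throws the gain away: with $\|\bm{w}\|_{2}\asymp(\sigma+\|\bm{M}^{\star}\|_{\infty})\sqrt{n/p}$ and $\|\bm{F}^{0,(l)}\bm{H}^{0,(l)}-\bm{F}^{\star}\|_{\mathrm{F}}\asymp\sqrt{\kappa}\,\tfrac{\sigma+\|\bm{M}^{\star}\|_{\infty}}{\sigma_{\min}}\sqrt{n/p}\,\|\bm{F}^{\star}\|_{\mathrm{F}}$, this contribution (after the $1/\sqrt{\sigma_{\min}}$ prefactor) is of order $(\sigma+\|\bm{M}^{\star}\|_{\infty})^{2}/p$ in the benchmark regime $\kappa,\mu,r\asymp1$, $\|\bm{M}^{\star}\|_{\infty}\asymp1$, $\sigma_{\min}\asymp n$, whereas the claimed bound is of order $(\sigma+\|\bm{M}^{\star}\|_{\infty})\log n/\sqrt{np}$ --- an excess factor of about $(\sigma+\|\bm{M}^{\star}\|_{\infty})\sqrt{n/p}/\log n$ that the lemma's hypotheses do not absorb. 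The same defect afflicts the cross term $\|\bm{\Delta}^{(l)}\|\cdot\|\bm{F}^{0,(l)}\bm{H}^{0,(l)}-\bm{F}^{\star}\|_{\mathrm{F}}/\sqrt{\sigma_{\min}}$ that you build into your step-1 perturbation inequality.

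The fix is the route the paper takes: use the alignment and subspace perturbation bounds (\citet{ma2017implicit}, Davis--Kahan) in a form where the perturbation enters only through $\|\bm{\Delta}^{(l)}\bm{V}^{0,(l)}\|_{\mathrm{F}}$ and $\|\bm{\Delta}^{(l)\top}\bm{U}^{0,(l)}\|_{\mathrm{F}}$ with no raw $\|\bm{\Delta}^{(l)}\|$ cross term; bound the $\bm{U}$-type term by Cauchy--Schwarz, since $\|\bm{\Delta}^{(l)\top}\bm{U}^{0,(l)}\|_{\mathrm{F}}=\|\bm{w}\|_{2}\|\bm{U}^{0,(l)}_{l,\cdot}\|_{2}$ already carries a $2{,}\infty$-norm factor; and bound the $\bm{V}$-type term by conditional matrix Bernstein with the random but independent coefficients $\bm{V}^{0,(l)}$ (if you keep your split, apply that same conditional Bernstein to the deviation coefficients instead of Cauchy--Schwarz). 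Two smaller points: the $\log n$ in \eqref{eq:initial-loo-2} comes from the Bernstein deviation terms (notably $\tau\log n/p$ with $\tau\asymp\|\bm{M}^{\star}\|_{\infty}+\sigma\sqrt{np}$), not from the union bound over $l$; and you need an all-rows incoherence bound $\|\bm{U}^{0,(l)}\|_{2,\infty}\vee\|\bm{V}^{0,(l)}\|_{2,\infty}\lesssim\|\bm{F}^{\star}\|_{2,\infty}/\sqrt{\sigma_{\min}}$ (obtained by rerunning the Lemma \ref{lem:3}-type argument for $\bm{M}^{0,(l)}$), which Lemma \ref{lem:4}, controlling only the $l$-th row, does not supply.
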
 

\section{Discussion\label{sec:Discussion}}

This paper establishes the convergence guarantees for gradient descent
of robust matrix completion with second moment condition. Going beyond
this, there are a few interesting directions for future study, as
elaborated below. 
\begin{itemize}
\item \emph{Tightening the dependency on $r$ and $\kappa$. }As discussed
below Theorem \ref{thm:main-1}, the dependence of the sample size
requirement on $r$ and $\kappa$ is sub-optimal, calling for the
application of novel analysis techniques for improvement. 
\item \emph{Incorporating outliers. }As stated in Section \ref{sec:Prior-arts},
outlier is another important source of contamination, while as far
as we are concerned, there is no existing theory of model incorporating
heavy-tailed noise and outliers simultaneously. The most similar setting
might be robust PCA which includes outliers and sub-Gaussian noise
\citet{agarwal2012noisy,klopp2017robust,chen2021bridging}. The techniques
and insights of this paper may enlighten studies on the more general
setting. 
\item \emph{Approximate low-rank structure. }This present paper requires
the matrix of interest $\bm{M}^{\star}$ to be exactly low-rank, while
in many real applications, $\bm{M}^{\star}$ may be only approximately
low-rank. Specifically, papers \citet{fan2021shrinkage,elsener2018robust}
allowing for approximate low-rank structure typically suppose the
$\ell_{q}$ norm of the singular values of $\bm{M}^{\star}$ is bounded,
which reduces to Assumption \ref{assum} by setting $q=0$. It remains
largely unclear whether the nonconvex approach still works in the
approximate low-rank scenario. 
\item \emph{Convex estimator for robust matrix completion. }As elucidated
in Table \ref{table:comparison}, existing theoretical guarantees
of convex approach have a trailing term which is not proportional
to noise, creating a considerable gap from optimal results when noise
level vanishes. To handle this problem, the idea of connecting convex
relaxation and nonconvex optimization in \citet{chen2020noisy} might
be inspiring and worth future exploration. 
\item \emph{Valid inference procedures. }This present paper focuses on estimation
of robust matrix completion and establishes a minimax optimal statistical
error. To move forward, we note it is vastly under-explored how to
assess the uncertainty of the estimates obtained from Algorithm \ref{alg:gd-rmc}.\emph{
}Methods and techniques in \citet{chen2019inference,xia2021statistical,yan2021inference}
may shed light on the procedure to perform valid inference on these
matrices. However, the focal point of \citet{chen2019inference,xia2021statistical}
is matrix completion with sub-Gaussian noise, while inference in the
presence of heavy-tailed noise has been a long-standing open question. 
\end{itemize}

\section*{Acknowledgements}

J.~Fan is supported by the NSF grants  DMS-1712591, DMS-2052926,  DMS-2053832, and DMS-2210833. B.~Wang	
is supported in part by Gordon Y.~S.~Wu Fellowships in Engineering from Princeton University.

\bibliographystyle{apalike}
\bibliography{bibfile}

\appendix

\section{Proofs for gradient descent}

It is straightforward to obtain the gradients of $f(\cdot)$: 
\begin{align}
\nabla_{\bm{X}}f\left(\bm{X},\bm{Y}\right) & =\frac{1}{2p}\mathcal{P}_{\Omega}\left(\left\{ \psi_{\tau}\left(\left(\bm{X}\bm{Y}^{\top}\right)_{i,j}-M_{i,j}\right)\right\} _{i,j}\right)\bm{Y}+\frac{1}{2}\bm{X}\left(\bm{X}^{\top}\bm{X}-\bm{Y}^{\top}\bm{Y}\right),\label{eq:gradient-x}\\
\nabla_{\bm{Y}}f\left(\bm{X},\bm{Y}\right) & =\frac{1}{2p}\mathcal{P}_{\Omega}\left(\left\{ \psi_{\tau}\left(\left(\bm{X}\bm{Y}^{\top}\right)_{i,j}-M_{i,j}\right)\right\} _{i,j}\right)^{\top}\bm{X}+\frac{1}{2}\bm{Y}\left(\bm{Y}^{\top}\bm{Y}-\bm{X}^{\top}\bm{X}\right).\label{eq:gradient-y}
\end{align}

We start with an auxiliary lemma which is immediate consequence of
Lemma \ref{lem:contraction}-\ref{lem:loo-2}.

\begin{lemma}\label{lem:aux}Instate the notation and assumptions
in Theorem \ref{thm:main}. For an integer $t>0$, suppose that the
hypotheses \eqref{eq:hyp} and \eqref{subeq:additional} hold in the
$t$th iteration. Then under the assumptions that $\frac{\sigma}{\sigma_{\min}}\sqrt{\frac{n}{p}}\ll1/\sqrt{\kappa r\log^{2}n}$
and $np\gg\mu^{2}\kappa^{3}r^{3}\log^{2}n$, one has 
\begin{subequations}
\label{subeq:aux} 
\begin{align}
\left\Vert \bm{F}^{t,\left(l\right)}\bm{R}^{t,\left(l\right)}-\bm{F}^{\star}\right\Vert _{2,\infty} & \lesssim\kappa^{3/2}\left(\frac{\sigma}{\sigma_{\min}}\sqrt{\frac{n}{p}}+\frac{\left\Vert \bm{M}^{\star}\right\Vert _{\infty}}{\sigma_{\min}}\sqrt{\frac{n}{p}}\right)\log n\left\Vert \bm{F}^{\star}\right\Vert _{2,\infty},\label{subeq:aux-1}\\
\left\Vert \bm{F}^{t,\left(l\right)}\bm{R}^{t,\left(l\right)}-\bm{F}^{\star}\right\Vert  & \lesssim\sqrt{r}\left(\frac{\sigma}{\sigma_{\min}}\sqrt{\frac{n}{p}}+\frac{\left\Vert \bm{M}^{\star}\right\Vert _{\infty}}{\sigma_{\min}}\sqrt{\frac{n}{p}}\right)\left\Vert \bm{F}^{\star}\right\Vert ,\label{subeq:aux-2}\\
\left\Vert \bm{F}^{t,\left(l\right)}\bm{H}^{t,\left(l\right)}-\bm{F}^{\star}\right\Vert  & \leq\left(\frac{\sigma}{\sigma_{\min}}\sqrt{\frac{n}{p}}+\frac{\left\Vert \bm{M}^{\star}\right\Vert _{\infty}}{\sigma_{\min}}\sqrt{\frac{n}{p}}\right)\sqrt{r}\left\Vert \bm{F}^{\star}\right\Vert ,\label{subeq:aux-3}\\
\left\Vert \bm{F}^{t,\left(l\right)}\bm{H}^{t,\left(l\right)}-\bm{F}^{t}\bm{H}^{t}\right\Vert _{\mathrm{F}} & \leq5\kappa\left\Vert \bm{F}^{t,\left(l\right)}\bm{R}^{t,\left(l\right)}-\bm{F}^{t}\bm{H}^{t}\right\Vert _{\mathrm{F}},\label{subeq:aux-4}\\
\left\Vert \bm{F}^{t}\right\Vert \leq2\left\Vert \bm{F}^{\star}\right\Vert ,\quad & \left\Vert \bm{F}^{t}\right\Vert _{\mathrm{F}}\leq2\left\Vert \bm{F}^{\star}\right\Vert _{\mathrm{F}},\quad\left\Vert \bm{F}^{t}\right\Vert _{2,\infty}\leq2\left\Vert \bm{F}^{\star}\right\Vert _{2,\infty},\label{subeq:aux-5}\\
\left\Vert \bm{F}^{t,\left(l\right)}\right\Vert \leq2\left\Vert \bm{F}^{\star}\right\Vert ,\quad & \left\Vert \bm{F}^{t,\left(l\right)}\right\Vert _{\mathrm{F}}\leq2\left\Vert \bm{F}^{\star}\right\Vert _{\mathrm{F}},\quad\left\Vert \bm{F}^{t,\left(l\right)}\right\Vert _{2,\infty}\leq2\left\Vert \bm{F}^{\star}\right\Vert _{2,\infty},\label{subeq:aux-6}\\
\frac{\sigma_{\min}}{2}\leq\sigma_{\min}\bigg[\left(\bm{Y}^{t,\left(l\right)}\bm{H}^{t,\left(l\right)}\right)^{\top}&\bm{Y}^{t,\left(l\right)}\bm{H}^{t,\left(l\right)}\bigg]  \leq\sigma_{\max}\left[\left(\bm{Y}^{t,\left(l\right)}\bm{H}^{t,\left(l\right)}\right)^{\top}\bm{Y}^{t,\left(l\right)}\bm{H}^{t,\left(l\right)}\right]\leq2\sigma_{\max}.\label{subeq:aux-7}
\end{align}
\end{subequations}

\end{lemma}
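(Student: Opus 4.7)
Lemma~\ref{lem:aux} is announced as an immediate consequence of Lemmas~\ref{lem:contraction}--\ref{lem:loo-2}, so the plan is essentially bookkeeping: combine the Frobenius, spectral, and $\ell_2/\ell_\infty$ bounds supplied by the induction hypotheses \eqref{eq:hyp-frob}--\eqref{eq:hyp-2infty} and \eqref{subeq:add-1}--\eqref{subeq:add-3} via triangle inequalities, norm comparisons, and the defining minimality of the best-alignment rotations $\bm{H}^t$, $\bm{H}^{t,(l)}$, $\bm{R}^{t,(l)}$. The only genuinely technical step is \eqref{subeq:aux-4}, which swaps two different Procrustes rotations of the same leave-one-out iterate.

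For \eqref{subeq:aux-1}, \eqref{subeq:aux-2}, and \eqref{subeq:aux-3} I would insert $\bm{F}^t\bm{H}^t$ as a pivot and decompose
\[
\bm{F}^{t,(l)}\bm{R}^{t,(l)}-\bm{F}^\star=\bigl(\bm{F}^{t,(l)}\bm{R}^{t,(l)}-\bm{F}^t\bm{H}^t\bigr)+\bigl(\bm{F}^t\bm{H}^t-\bm{F}^\star\bigr).
\]
The $\ell_2/\ell_\infty$ bound \eqref{subeq:aux-1} then follows by applying \eqref{subeq:add-2} (combined with $\|\cdot\|_{2,\infty}\leq\|\cdot\|_F$) to the first summand and \eqref{subeq:add-3} to the second; \eqref{subeq:aux-2} is the Frobenius analogue, using \eqref{eq:hyp-frob} in place of \eqref{subeq:add-3} and $\|\bm{F}^\star\|_F\leq\sqrt{r}\|\bm{F}^\star\|$ to match the stated right-hand side. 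Statement \eqref{subeq:aux-3} is then immediate: because $\bm{H}^{t,(l)}$ minimizes $\|\bm{F}^{t,(l)}\bm{R}-\bm{F}^\star\|_F$ over orthogonal $\bm{R}$, one has $\|\bm{F}^{t,(l)}\bm{H}^{t,(l)}-\bm{F}^\star\|\leq\|\bm{F}^{t,(l)}\bm{H}^{t,(l)}-\bm{F}^\star\|_F\leq\|\bm{F}^{t,(l)}\bm{R}^{t,(l)}-\bm{F}^\star\|_F$, and \eqref{subeq:aux-2} takes over. The size bounds \eqref{subeq:aux-5}--\eqref{subeq:aux-6} follow from $\|\bm{F}^t\|_\bullet\leq\|\bm{F}^\star\|_\bullet+\|\bm{F}^t\bm{H}^t-\bm{F}^\star\|_\bullet$ applied in each of the three norms; under the standing hypotheses $\frac{\sigma}{\sigma_{\min}}\sqrt{n/p}\ll 1/\sqrt{\kappa r\log^2 n}$ and $np\gg\mu^2\kappa^3 r^3\log^2 n$ the residual is $\ll\|\bm{F}^\star\|_\bullet$, yielding the factor $2$; the leave-one-out variants use \eqref{subeq:aux-2} and \eqref{subeq:aux-1} in place of \eqref{eq:hyp-frob} and \eqref{subeq:add-3}. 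Finally, \eqref{subeq:aux-7} is Weyl's inequality applied to $(\bm{Y}^{t,(l)}\bm{H}^{t,(l)})^\top\bm{Y}^{t,(l)}\bm{H}^{t,(l)}-\bm{\Sigma}^\star$, whose spectral-norm perturbation is $\lesssim\sqrt{\sigma_{\max}}\,\|\bm{Y}^{t,(l)}\bm{H}^{t,(l)}-\bm{Y}^\star\|\ll\sigma_{\min}$ via \eqref{subeq:aux-3}.

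The main obstacle is \eqref{subeq:aux-4}, where the factor $5\kappa$ must be extracted when trading $\bm{H}^{t,(l)}$ (aligned to $\bm{F}^\star$) for $\bm{R}^{t,(l)}$ (aligned to $\bm{F}^t\bm{H}^t$). Writing
\[
\bm{F}^{t,(l)}\bm{H}^{t,(l)}-\bm{F}^t\bm{H}^t = \bm{F}^{t,(l)}\bigl(\bm{H}^{t,(l)}-\bm{R}^{t,(l)}\bigr)+\bigl(\bm{F}^{t,(l)}\bm{R}^{t,(l)}-\bm{F}^t\bm{H}^t\bigr),
\]
the task reduces to bounding $\|\bm{F}^{t,(l)}(\bm{H}^{t,(l)}-\bm{R}^{t,(l)})\|_F$ by $O(\kappa)\cdot\|\bm{F}^{t,(l)}\bm{R}^{t,(l)}-\bm{F}^t\bm{H}^t\|_F$. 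The plan is to use the Procrustes identities $\bm{H}^{t,(l)}=\mathsf{sgn}(\bm{F}^{t,(l)\top}\bm{F}^\star)$ and $\bm{R}^{t,(l)}=\mathsf{sgn}(\bm{F}^{t,(l)\top}\bm{F}^t\bm{H}^t)$ together with a perturbation bound for $\mathsf{sgn}$ of the form $\|\mathsf{sgn}(\bm{A})-\mathsf{sgn}(\bm{B})\|_F\leq\frac{2}{\sigma_r(\bm{A})+\sigma_r(\bm{B})}\|\bm{A}-\bm{B}\|_F$. Because \eqref{subeq:aux-6}--\eqref{subeq:aux-7} give $\sigma_r(\bm{F}^{t,(l)\top}\bm{F}^\star),\sigma_r(\bm{F}^{t,(l)\top}\bm{F}^t\bm{H}^t)\asymp\sigma_{\min}$ while $\|\bm{F}^{t,(l)}\|^2\asymp\sigma_{\max}$, the ratio $\sigma_{\max}/\sigma_{\min}=\kappa$ appears multiplicatively when $\|\bm{F}^{t,(l)}\|$ is pulled in front of $\|\bm{H}^{t,(l)}-\bm{R}^{t,(l)}\|_F$; the lower-order term $\|\bm{F}^t\bm{H}^t-\bm{F}^\star\|_F$ generated by the triangle inequality is absorbed using \eqref{eq:hyp-frob} and the smallness assumptions. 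Nailing the absolute constant $5$ rather than a generic $C$ requires careful bookkeeping along this chain of perturbation estimates, exactly as in the analogous rotation-comparison argument in \citet{chen2020nonconvex}; this is the only place where serious technical care is needed, and no new idea beyond the standard leave-one-out toolbox is required.
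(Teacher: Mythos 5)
Your plan for \eqref{subeq:aux-1}, \eqref{subeq:aux-2}, \eqref{subeq:aux-5}--\eqref{subeq:aux-6} (pivot on $\bm{F}^t\bm{H}^t$, use Lemmas \ref{lem:contraction}, \ref{lem:loo-1}, \ref{lem:loo-2} and the triangle inequality) is exactly what the paper does. The problem is the step you yourself single out as the only technical one, \eqref{subeq:aux-4}, and there your mechanism does not work. You reduce to bounding $\|\bm{F}^{t,(l)}(\bm{H}^{t,(l)}-\bm{R}^{t,(l)})\|_{\mathrm{F}}$ and propose the $\mathsf{sgn}$-perturbation bound applied to the pair $\bm{F}^{t,(l)\top}\bm{F}^{\star}$ versus $\bm{F}^{t,(l)\top}\bm{F}^{t}\bm{H}^{t}$. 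The difference of these two arguments is $\bm{F}^{t,(l)\top}(\bm{F}^{\star}-\bm{F}^{t}\bm{H}^{t})$, so what comes out is a term of order $\kappa\|\bm{F}^{t}\bm{H}^{t}-\bm{F}^{\star}\|_{\mathrm{F}}$, not a multiple of $\|\bm{F}^{t,(l)}\bm{R}^{t,(l)}-\bm{F}^{t}\bm{H}^{t}\|_{\mathrm{F}}$. Your claim that this term is ``lower-order'' and can be absorbed via \eqref{eq:hyp-frob} is backwards: by \eqref{eq:hyp-frob} it scales like $\|\bm{F}^{\star}\|_{\mathrm{F}}$, whereas the right-hand side of \eqref{subeq:aux-4} scales, via \eqref{subeq:add-2}, like $\sqrt{\kappa}\,\|\bm{F}^{\star}\|_{2,\infty}\log n$, smaller by roughly a factor $\sqrt{n/(\mu\kappa)}$. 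So your argument only yields $\|\bm{F}^{t,(l)}\bm{H}^{t,(l)}-\bm{F}^{t}\bm{H}^{t}\|_{\mathrm{F}}\le\|\bm{F}^{t,(l)}\bm{R}^{t,(l)}-\bm{F}^{t}\bm{H}^{t}\|_{\mathrm{F}}+O(\kappa)\|\bm{F}^{t}\bm{H}^{t}-\bm{F}^{\star}\|_{\mathrm{F}}$, and feeding that into the places where \eqref{subeq:aux-4} is used (e.g.\ Lemma \ref{lem:loo-2}) destroys the $\ell_{2}/\ell_{\infty}$ scale. The correct argument (the one behind the cited result of \citet{chen2020noisy}) exploits that $\bm{F}^{t}\bm{H}^{t}$ is already optimally aligned with $\bm{F}^{\star}$, i.e.\ $\mathsf{sgn}\bigl((\bm{F}^{t}\bm{H}^{t})^{\top}\bm{F}^{\star}\bigr)=\bm{I}_{r}$: write $\bm{F}^{t,(l)}\bm{H}^{t,(l)}=(\bm{F}^{t,(l)}\bm{R}^{t,(l)})\bm{O}$ with $\bm{O}=\mathsf{sgn}\bigl((\bm{F}^{t,(l)}\bm{R}^{t,(l)})^{\top}\bm{F}^{\star}\bigr)$, and apply the $\mathsf{sgn}$-perturbation bound to the pair $(\bm{F}^{t,(l)}\bm{R}^{t,(l)})^{\top}\bm{F}^{\star}$ versus $(\bm{F}^{t}\bm{H}^{t})^{\top}\bm{F}^{\star}$, whose difference is $(\bm{F}^{t,(l)}\bm{R}^{t,(l)}-\bm{F}^{t}\bm{H}^{t})^{\top}\bm{F}^{\star}$; this produces exactly the quantity on the right of \eqref{subeq:aux-4}, and the factors $\|\bm{F}^{\star}\|\approx\sqrt{\sigma_{\max}}$ and $\sigma_{r}\bigl((\bm{F}^{t}\bm{H}^{t})^{\top}\bm{F}^{\star}\bigr)\gtrsim\sigma_{\min}$ give the $5\kappa$.

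Two smaller points. For \eqref{subeq:aux-3} you invoke the minimality of $\bm{H}^{t,(l)}$ to pass to $\|\bm{F}^{t,(l)}\bm{R}^{t,(l)}-\bm{F}^{\star}\|_{\mathrm{F}}$ and then say ``\eqref{subeq:aux-2} takes over,'' but \eqref{subeq:aux-2} is a spectral-norm bound and cannot be cited for a Frobenius-norm quantity without paying an extra $\sqrt{r}$ (or redoing the pivot decomposition in Frobenius norm, which costs a $\sqrt{\kappa}$ through \eqref{eq:hyp-initial}); the paper instead derives \eqref{subeq:aux-3} from \eqref{subeq:aux-4} together with Lemmas \ref{lem:loo-1} and \ref{lem:contraction}, which is another reason \eqref{subeq:aux-4} cannot be left in its weakened form. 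Your treatments of \eqref{subeq:aux-5}--\eqref{subeq:aux-7} are fine in spirit and consistent with the paper.
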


\subsection{Proof of Lemma \ref{lem:strongcvx}\label{subsec:Proof-of-Lemma-strongcvx}}

In view of \eqref{eq:obj}, simple calculation yields 
\begin{align*}
 & \mathsf{vec}\left(\bm{\Delta}\right)^{\top}\nabla^{2}f\left(\bm{X},\bm{Y}\right)\mathsf{vec}\left(\bm{\Delta}\right)\\
 & =\frac{2}{p}\left\langle \mathcal{P}_{\Omega}\left(\left(\bm{X}\bm{Y}^{\top}-\bm{M}\right)_{i,j}\ind_{\left|\left(\bm{X}\bm{Y}^{\top}\right)_{i,j}-M_{i,j}\right|\leq\tau}\right),\mathcal{P}_{\Omega}\left(\bm{\Delta}_{\bm{X}}\bm{\Delta}_{\bm{Y}}^{\top}\right)\right\rangle \\
 & \qquad+\frac{1}{p}\left\Vert \mathcal{P}_{\Omega}\left(\left(\bm{\Delta}_{\bm{X}}\bm{Y}^{\top}+\bm{X}\bm{\Delta}_{\bm{Y}}^{\top}\right)_{i,j}\ind_{\left|\left(\bm{X}\bm{Y}^{\top}\right)_{i,j}-M_{i,j}\right|\leq\tau}\right)\right\Vert _{\mathrm{F}}^{2}\\
 & \qquad+\frac{1}{2}\left\langle \bm{X}^{\top}\bm{X}-\bm{Y}^{\top}\bm{Y},\bm{\Delta}_{\bm{X}}^{\top}\bm{\Delta}_{\bm{X}}-\bm{\Delta}_{\bm{Y}}^{\top}\bm{\Delta}_{\bm{Y}}\right\rangle +\frac{1}{4}\left\Vert \bm{\Delta}_{\bm{X}}^{\top}\bm{X}+\bm{X}^{\top}\bm{\Delta}_{\bm{X}}-\bm{Y}^{\top}\bm{\Delta}_{\bm{Y}}-\bm{\Delta}_{\bm{Y}}^{\top}\bm{Y}\right\Vert _{\mathrm{F}}^{2}\\
 & \qquad+\frac{2}{p}\left\langle \mathcal{P}_{\Omega}\left(\left\{ \tau\mathrm{sgn}\left(\left(\bm{X}\bm{Y}^{\top}\right)_{i,j}\right)\ind_{\left|\left(\bm{X}\bm{Y}^{\top}\right)_{i,j}-M_{i,j}\right|>\tau}\right\} _{i,j}\right),\mathcal{P}_{\Omega}\left(\bm{\Delta}_{\bm{X}}\bm{\Delta}_{\bm{Y}}^{\top}\right)\right\rangle .
\end{align*}
\citet[Lemma 3.1]{chen2020nonconvex} has proved that 
\begin{align}
\mathcal{E}_{0}\coloneqq & \frac{2}{p}\left\langle \mathcal{P}_{\Omega}\left(\bm{X}\bm{Y}^{\top}-\bm{M}^{\star}\right),\mathcal{P}_{\Omega}\left(\bm{\Delta}_{\bm{X}}\bm{\Delta}_{\bm{Y}}^{\top}\right)\right\rangle +\frac{1}{p}\left\Vert \mathcal{P}_{\Omega}\left(\bm{\Delta}_{\bm{X}}\bm{Y}^{\top}+\bm{X}\bm{\Delta}_{\bm{Y}}^{\top}\right)\right\Vert _{\mathrm{F}}^{2}\nonumber \\
 & +\frac{1}{2}\left\langle \bm{X}^{\top}\bm{X}-\bm{Y}^{\top}\bm{Y},\bm{\Delta}_{\bm{X}}^{\top}\bm{\Delta}_{\bm{X}}-\bm{\Delta}_{\bm{Y}}^{\top}\bm{\Delta}_{\bm{Y}}\right\rangle +\frac{1}{4}\left\Vert \bm{\Delta}_{\bm{X}}^{\top}\bm{X}+\bm{X}^{\top}\bm{\Delta}_{\bm{X}}-\bm{Y}^{\top}\bm{\Delta}_{\bm{Y}}-\bm{\Delta}_{\bm{Y}}^{\top}\bm{Y}\right\Vert _{\mathrm{F}}^{2}\nonumber \\
\geq & \frac{\sigma_{\min}}{10}\left\Vert \bm{\Delta}\right\Vert _{\mathrm{F}}^{2}.\label{eq:strongcvx-E0}
\end{align}
We are left with considering 
\begin{align}
 & \mathsf{vec}\left(\bm{\Delta}\right)^{\top}\nabla^{2}f\left(\bm{X},\bm{Y}\right)\mathsf{vec}\left(\bm{\Delta}\right)-\mathcal{E}_{0}\nonumber \\
= & -\underbrace{\frac{2}{p}\left\langle \mathcal{P}_{\Omega}\left(\left(\bm{X}\bm{Y}^{\top}-\bm{M}^{\star}\right)_{i,j}\ind_{\left|\left(\bm{X}\bm{Y}^{\top}\right)_{i,j}-M_{i,j}\right|>\tau}\right),\mathcal{P}_{\Omega}\left(\bm{\Delta}_{\bm{X}}\bm{\Delta}_{\bm{Y}}^{\top}\right)\right\rangle }_{\eqqcolon\alpha_{1}}\nonumber \\
 & \qquad-\underbrace{\frac{2}{p}\left\langle \mathcal{P}_{\Omega}\left(\varepsilon_{i,j}\ind_{\left|\left(\bm{X}\bm{Y}^{\top}\right)_{i,j}-M_{i,j}\right|\leq\tau}\right),\mathcal{P}_{\Omega}\left(\bm{\Delta}_{\bm{X}}\bm{\Delta}_{\bm{Y}}^{\top}\right)\right\rangle }_{\eqqcolon\alpha_{2}}\nonumber \\
 & \qquad+\underbrace{\frac{1}{p}\left\Vert \mathcal{P}_{\Omega}\left(\left(\bm{\Delta}_{\bm{X}}\bm{Y}^{\top}+\bm{X}\bm{\Delta}_{\bm{Y}}^{\top}\right)_{i,j}\ind_{\left|\left(\bm{X}\bm{Y}^{\top}\right)_{i,j}-M_{i,j}\right|>\tau}\right)\right\Vert _{\mathrm{F}}^{2}}_{\eqqcolon\alpha_{3}}\nonumber \\
 & \qquad+\underbrace{\frac{2}{p}\left\langle \mathcal{P}_{\Omega}\left(\left\{ \tau\mathrm{sgn}\left(\left(\bm{X}\bm{Y}^{\top}\right)_{i,j}\right)\ind_{\left|\left(\bm{X}\bm{Y}^{\top}\right)_{i,j}-M_{i,j}\right|>\tau}\right\} _{i,j}\right),\mathcal{P}_{\Omega}\left(\bm{\Delta}_{\bm{X}}\bm{\Delta}_{\bm{Y}}^{\top}\right)\right\rangle }_{\eqqcolon\alpha_{4}}.\label{eq:strongcvx-E}
\end{align}

\begin{enumerate}
\item Regarding $\alpha_{1}$, it can be further decomposed as 
\begin{align}
\alpha_{1} & =\left|\frac{2}{p}\left\langle \mathcal{P}_{\Omega}\left(\left(\bm{X}\bm{Y}^{\top}-\bm{X}^{\star}\bm{Y}^{\star}\right)_{i,j}\ind_{\left|\left(\bm{X}\bm{Y}^{\top}\right)_{i,j}-M_{i,j}\right|>\tau}\right),\mathcal{P}_{\Omega}\left(\bm{\Delta}_{\bm{X}}\bm{\Delta}_{\bm{Y}}^{\top}\right)\right\rangle \right|\nonumber \\
 & \leq\underbrace{\left|\frac{2}{p}\left\langle \mathcal{P}_{\Omega}\left(\left(\bm{X}\left(\bm{Y}-\bm{Y}^{\star}\right)^{\top}\right)_{i,j}\ind_{\left|\left(\bm{X}\bm{Y}^{\top}\right)_{i,j}-M_{i,j}\right|>\tau}\right),\mathcal{P}_{\Omega}\left(\bm{\Delta}_{\bm{X}}\bm{\Delta}_{\bm{Y}}^{\top}\right)\right\rangle \right|}_{\eqqcolon\beta_{1}}\nonumber \\
 & \qquad+\underbrace{\left|\frac{2}{p}\left\langle \mathcal{P}_{\Omega}\left(\left(\left(\bm{X}-\bm{X}^{\star}\right)\bm{Y}^{\star}{}^{\top}\right)_{i,j}\ind_{\left|\left(\bm{X}\bm{Y}^{\top}\right)_{i,j}-M_{i,j}\right|>\tau}\right),\mathcal{P}_{\Omega}\left(\bm{\Delta}_{\bm{X}}\bm{\Delta}_{\bm{Y}}^{\top}\right)\right\rangle \right|}_{\eqqcolon\beta_{2}}.\label{eq:strongcvx-alpha1}
\end{align}
For $\beta_{1}$, we have 
\begin{align}
 & \left|\frac{1}{p}\left\langle \mathcal{P}_{\Omega}\left(\left(\bm{X}\left(\bm{Y}-\bm{Y}^{\star}\right)^{\top}\right)_{i,j}\ind_{\left|\left(\bm{X}\bm{Y}^{\top}\right)_{i,j}-M_{i,j}\right|>\tau}\right),\mathcal{P}_{\Omega}\left(\bm{\Delta}_{\bm{X}}\bm{\Delta}_{\bm{Y}}^{\top}\right)\right\rangle \right|\nonumber \\
 & =\left|\left\langle \frac{1}{p}\left\{ \delta_{i,j}\ind_{\left|\left(\bm{X}\bm{Y}^{\top}\right)_{i,j}-M_{i,j}\right|>\tau}\right\} _{i,j},\bm{X}\left(\bm{Y}-\bm{Y}^{\star}\right)^{\top}\circ\bm{\Delta}_{\bm{X}}\bm{\Delta}_{\bm{Y}}^{\top}\right\rangle \right|\nonumber \\
 & \leq\left\Vert \frac{1}{p}\left\{ \delta_{i,j}\ind_{\left|\left(\bm{X}\bm{Y}^{\top}\right)_{i,j}-M_{i,j}\right|>\tau}\right\} _{i,j}\right\Vert \left\Vert \bm{X}\left(\bm{Y}-\bm{Y}^{\star}\right)^{\top}\circ\bm{\Delta}_{\bm{X}}\bm{\Delta}_{\bm{Y}}^{\top}\right\Vert _{*}\nonumber \\
 & \overset{\text{(i)}}{\leq}\left\Vert \frac{1}{p}\left\{ \delta_{i,j}\ind_{\left|\varepsilon_{i,j}\right|>\tau/2}\right\} _{i,j}\right\Vert \left\Vert \bm{X}\right\Vert _{2,\infty}\left\Vert \bm{Y}-\bm{Y}^{\star}\right\Vert _{2,\infty}\left\Vert \bm{\Delta}\right\Vert _{\mathrm{F}}^{2}\nonumber \\
 & \overset{\text{(ii)}}{\lesssim}n\mathbb{P}\left(\left|\varepsilon_{i,j}\right|>\tau/2\right)\frac{c\mu r\sigma_{\max}}{n}\left\Vert \bm{\Delta}\right\Vert _{\mathrm{F}}^{2}\nonumber \\
 & \overset{\text{(iii)}}{\lesssim}\frac{\mu r\sigma_{\max}}{np}\left\Vert \bm{\Delta}\right\Vert _{\mathrm{F}}^{2}.\label{eq:beta1}
\end{align}
Here (i) comes from the fact that 
\[
\ind_{\left|\left(\bm{X}\bm{Y}^{\top}\right)_{i,j}-M_{i,j}\right|>\tau}=\ind_{\left|\left(\bm{X}\bm{Y}^{\top}\right)_{i,j}-M_{i,j}^{\star}-\varepsilon_{i,j}\right|>\tau}\leq\ind_{\left|\varepsilon_{i,j}\right|>\tau-\left|\left(\bm{X}\bm{Y}^{\top}\right)_{i,j}-M_{i,j}^{\star}\right|}\leq\ind_{\left|\varepsilon_{i,j}\right|>\tau/2},
\]
where the last inequality is due to 
\begin{equation}
\max_{i,j}\left|\left(\bm{X}\bm{Y}^{\top}\right)_{i,j}-M_{i,j}^{\star}\right|\leq\left\Vert \bm{F}-\bm{F}^{\star}\right\Vert _{2,\infty}\left\Vert \bm{F}^{\star}\right\Vert _{2,\infty}\ll\left\Vert \bm{F}^{\star}\right\Vert _{2,\infty}^{2}\leq\frac{\mu r\sigma_{\max}}{n}\leq\frac{\tau}{2};\label{eq:strongcvx-error}
\end{equation}
(ii) comes from Lemma \ref{lem:bernoulli}, \eqref{eq:strongcvx-2infty}
and the fact that $\left\Vert \bm{F}^{\star}\right\Vert _{2,\infty}\leq\sqrt{\mu r\sigma_{\max}/n}$;
(iii) applies Markov inequality to obtain that 
\[
\mathbb{P}\left(\left|\varepsilon_{i,j}\right|>\frac{\tau}{2}\right)\leq\frac{\sigma_{i,j}^{2}}{\left(\tau/2\right)^{2}}\leq\frac{\sigma^{2}}{\left(\tau/2\right)^{2}}.
\]
Analogously, one has 
\begin{equation}
\beta_{2}\lesssim\frac{\mu r\sigma_{\max}}{np}\left\Vert \bm{\Delta}\right\Vert _{\mathrm{F}}^{2}.\label{eq:beta2}
\end{equation}
Hence, plugging \eqref{eq:beta1} and \eqref{eq:beta2} into \eqref{eq:strongcvx-alpha1}
yields 
\begin{equation}
\alpha_{1}\lesssim\frac{\mu r\sigma_{\max}}{np}\left\Vert \bm{\Delta}\right\Vert _{\mathrm{F}}^{2}.\label{eq:strongcvx-alpha1-bound}
\end{equation}
\item Turning attention to $\alpha_{2}$, one has 
\begin{align*}
 & \left|\frac{2}{p}\left\langle \mathcal{P}_{\Omega}\left(\varepsilon_{i,j}\ind_{\left|\left(\bm{X}\bm{Y}^{\top}\right)_{i,j}-M_{i,j}\right|\leq\tau}\right),\mathcal{P}_{\Omega}\left(\bm{\Delta}_{\bm{X}}\bm{\Delta}_{\bm{Y}}^{\top}\right)\right\rangle \right|\\
 & \leq\frac{2}{p}\left\Vert \mathcal{P}_{\Omega}\left(\varepsilon_{i,j}\ind_{\left|\left(\bm{X}\bm{Y}^{\top}\right)_{i,j}-M_{i,j}\right|\leq\tau}\right)\right\Vert \left\Vert \bm{\Delta}_{\bm{X}}\bm{\Delta}_{\bm{Y}}^{\top}\right\Vert _{*}\\
 & \leq\frac{2}{p}\left\Vert \mathcal{P}_{\Omega}\left(\varepsilon_{i,j}\ind_{\left|\left(\bm{X}\bm{Y}^{\top}\right)_{i,j}-M_{i,j}\right|\leq\tau}\right)\right\Vert \left\Vert \bm{\Delta}\right\Vert _{\mathrm{F}}^{2},
\end{align*}
where the last inequality can be easily obtained from the elementary
fact of the nuclear norm that 
\begin{equation}
\left\Vert \bm{Z}\right\Vert _{*}=\inf_{\bm{U},\bm{V}\in\mathbb{R}^{n\times r},\bm{U}\bm{V}^{\top}=\bm{Z}}\left\{ \frac{1}{2}\left\Vert \bm{U}\right\Vert _{\mathrm{F}}^{2}+\frac{1}{2}\left\Vert \bm{V}\right\Vert _{\mathrm{F}}^{2}\right\} .\label{eq:nuclear}
\end{equation}
To bound $\frac{2}{p}\Vert\mathcal{P}_{\Omega}(\varepsilon_{i,j}\ind_{|(\bm{X}\bm{Y}^{\top})_{i,j}-M_{i,j}|\leq\tau})\Vert$,
one has 
\begin{align}
 & \frac{2}{p}\left\Vert \mathcal{P}_{\Omega}\left(\varepsilon_{i,j}\ind_{\left|\left(\bm{X}\bm{Y}^{\top}\right)_{i,j}-M_{i,j}\right|\leq\tau}\right)\right\Vert \nonumber \\
 & \leq\underbrace{\frac{2}{p}\left\Vert \mathcal{P}_{\Omega}\left(\varepsilon_{i,j}\ind_{\left|\varepsilon_{i,j}\right|\leq\tau}-\mathbb{E}\left[\varepsilon_{i,j}\ind_{\left|\varepsilon_{i,j}\right|\leq\tau}\right]\right)\right\Vert }_{\eqqcolon\beta_{1}}\nonumber \\
 & \qquad+\underbrace{\frac{2}{p}\left\Vert \mathcal{P}_{\Omega}\left(\left|\varepsilon_{i,j}\left(\ind_{\left|\left(\bm{X}\bm{Y}^{\top}\right)_{i,j}-M_{i,j}\right|\leq\tau}-\ind_{\left|\varepsilon_{i,j}\right|\leq\tau}\right)\right|\right)\right\Vert }_{\eqqcolon\beta_{2}}+\underbrace{\frac{2}{p}\left\Vert \mathcal{P}_{\Omega}\left(\left|\mathbb{E}\left[\varepsilon_{i,j}\ind_{\left|\varepsilon_{i,j}\right|\leq\tau}\right]\right|\right)\right\Vert }_{\eqqcolon\beta_{3}},\label{eq:strongcvx-alpha2}
\end{align}
To bound $\beta_{1}$, we intend to apply \eqref{lem:noise}, which needs the following quantities:
\begin{align*}
		\mathbb{V}\left[\frac{1}{p}\left(\varepsilon_{i,j}\ind_{\left|\varepsilon_{i,j}\right|\leq\tau}-\mathbb{E}\left[\varepsilon_{i,j}\ind_{\left|\varepsilon_{i,j}\right|\leq\tau}\right]\right)\right]&=\mathbb{V}\left[\frac{1}{p}\varepsilon_{i,j}\ind_{\left|\varepsilon_{i,j}\right|\leq\tau}\right]\leq\mathbb{E}\left[\frac{1}{p^2}\varepsilon^2_{i,j}\ind_{\left|\varepsilon_{i,j}\right|\leq\tau}\right]\\
		&\leq\mathbb{E}\left[\frac{1}{p^2}\varepsilon^2_{i,j}\right] \leq\frac{\sigma^2}{p^2}\eqqcolon\widetilde{\sigma}^2,
\end{align*}
and \begin{align*}
	B&\coloneqq \max_{i,j}\left|\frac{1}{p}\left(\varepsilon_{i,j}\ind_{\left|\varepsilon_{i,j}\right|\leq\tau}-\mathbb{E}\left[\varepsilon_{i,j}\ind_{\left|\varepsilon_{i,j}\right|\leq\tau}\right]\right)\right|\leq\frac{2\tau}{p}.
\end{align*}

Therefore, applying \eqref{lem:noise} to $\beta_{1}$ yields
\begin{equation}
\beta_{1}\lesssim\widetilde{\sigma}\sqrt{n}+B\sqrt{\log n}\lesssim\frac{\sigma\sqrt{n}+\sqrt{\log n}\left\Vert \bm{M}^{\star}\right\Vert _{\infty}}{\sqrt{p}}.\label{eq:strongcvx-beta1}
\end{equation}
For $\beta_{2}$, one has 
\begin{align*}
\beta_{2} & \overset{\text{(i)}}{\leq}\frac{2}{p}\left\Vert \mathcal{P}_{\Omega}\left(\left|\varepsilon_{i,j}\ind_{\tau-\left\Vert \bm{X}\bm{Y}^{\top}-\bm{M}^{\star}\right\Vert _{\infty}\leq\varepsilon_{i,j}\leq\tau+\left\Vert \bm{X}\bm{Y}^{\top}-\bm{M}^{\star}\right\Vert _{\infty}}\right|\right)\right\Vert \\
 & \qquad+\frac{2}{p}\left\Vert \mathcal{P}_{\Omega}\left(\left|\varepsilon_{i,j}\ind_{-\tau-\left\Vert \bm{X}\bm{Y}^{\top}-\bm{M}^{\star}\right\Vert _{\infty}\leq\varepsilon_{i,j}\leq-\tau+\left\Vert \bm{X}\bm{Y}^{\top}-\bm{M}^{\star}\right\Vert _{\infty}}\right|\right)\right\Vert \\
 & \overset{\text{(ii)}}{\leq}\frac{2}{p}\left\Vert \mathcal{P}_{\Omega}\left(\left|\varepsilon_{i,j}\ind_{\tau/2\leq\varepsilon_{i,j}\leq3\tau/2}\right|\right)\right\Vert +\frac{2}{p}\left\Vert \mathcal{P}_{\Omega}\left(\left|\varepsilon_{i,j}\ind_{-3\tau/2\leq\varepsilon_{i,j}\leq-\tau/2}\right|\right)\right\Vert \\
 & \leq\underbrace{\frac{2}{p}\left\Vert \mathcal{P}_{\Omega}\left(\left|\varepsilon_{i,j}\ind_{\tau/2\leq\varepsilon_{i,j}\leq3\tau/2}\right|-\mathbb{E}\left[\left|\varepsilon_{i,j}\ind_{\tau/2\leq\varepsilon_{i,j}\leq3\tau/2}\right|\right]\right)\right\Vert }_{\eqqcolon\theta_{1}}\\
 & \qquad+\underbrace{\frac{2}{p}\left\Vert \mathcal{P}_{\Omega}\left(\left|\varepsilon_{i,j}\ind_{-3\tau/2\leq\varepsilon_{i,j}\leq-\tau/2}\right|-\mathbb{E}\left[\left|\varepsilon_{i,j}\ind_{-3\tau/2\leq\varepsilon_{i,j}\leq-\tau/2}\right|\right]\right)\right\Vert }_{\eqqcolon\theta_{2}}\\
 & \qquad+\underbrace{\frac{2}{p}\left\Vert \mathcal{P}_{\Omega}\left(\mathbb{E}\left[\left|\varepsilon_{i,j}\ind_{\tau/2\leq\varepsilon_{i,j}\leq3\tau/2}\right|\right]\right)\right\Vert }_{\eqqcolon\theta_{3}}+\underbrace{\frac{2}{p}\left\Vert \mathcal{P}_{\Omega}\left(\mathbb{E}\left[\left|\varepsilon_{i,j}\ind_{-3\tau/2\leq\varepsilon_{i,j}\leq-\tau/2}\right|\right]\right)\right\Vert }_{\eqqcolon\theta_{4}},
\end{align*}
where (i) is due to 
\begin{align*}
 & \left|\varepsilon_{i,j}\left(\ind_{\left|\left(\bm{X}\bm{Y}^{\top}\right)_{i,j}-M_{i,j}\right|>\tau}-\ind_{\left|\varepsilon_{i,j}\right|>\tau}\right)\right|=\left|\varepsilon_{i,j}\left(\ind_{\left|\left(\bm{X}\bm{Y}^{\top}\right)_{i,j}-M_{i,j}\right|\leq\tau}-\ind_{\left|\varepsilon_{i,j}\right|\leq\tau}\right)\right|\\
 & \leq\left|\varepsilon_{i,j}\left(\ind_{\left|\left(\bm{X}\bm{Y}^{\top}\right)_{i,j}-M_{i,j}\right|\leq\tau}-\ind_{\left|\varepsilon_{i,j}\right|\leq\tau}\right)\ind_{\varepsilon_{i,j}\geq0}\right|+\left|\varepsilon_{i,j}\left(\ind_{\left|\left(\bm{X}\bm{Y}^{\top}\right)_{i,j}-M_{i,j}\right|\leq\tau}-\ind_{\left|\varepsilon_{i,j}\right|\leq\tau}\right)\ind_{\varepsilon_{i,j}<0}\right|\\
 & \leq\left|\varepsilon_{i,j}\ind_{\tau-\left\Vert \bm{X}\bm{Y}^{\top}-\bm{M}^{\star}\right\Vert _{\infty}\leq\varepsilon_{i,j}\leq\tau+\left\Vert \bm{X}\bm{Y}^{\top}-\bm{M}^{\star}\right\Vert _{\infty}}\right|+\left|\varepsilon_{i,j}\ind_{-\tau-\left\Vert \bm{X}\bm{Y}^{\top}-\bm{M}^{\star}\right\Vert _{\infty}\leq\varepsilon_{i,j}\leq-\tau+\left\Vert \bm{X}\bm{Y}^{\top}-\bm{M}^{\star}\right\Vert _{\infty}}\right|,
\end{align*}
and (ii) comes from \eqref{eq:strongcvx-error}. Another application
of \eqref{lem:noise} gives rise to 
\begin{align*}
\frac{2}{p}\left\Vert \mathcal{P}_{\Omega}\left(\left|\varepsilon_{i,j}\ind_{\tau/2\leq\varepsilon_{i,j}\leq3\tau/2}\right|-\mathbb{E}\left[\left|\varepsilon_{i,j}\ind_{\tau/2\leq\varepsilon_{i,j}\leq3\tau/2}\right|\right]\right)\right\Vert  & \lesssim\frac{\sigma\sqrt{n}+\sqrt{\log n}\left\Vert \bm{M}^{\star}\right\Vert _{\infty}}{\sqrt{p}},\\
\frac{2}{p}\left\Vert \mathcal{P}_{\Omega}\left(\left|\varepsilon_{i,j}\ind_{-3\tau/2\leq\varepsilon_{i,j}\leq-\tau/2}\right|-\mathbb{E}\left[\left|\varepsilon_{i,j}\ind_{-3\tau/2\leq\varepsilon_{i,j}\leq-\tau/2}\right|\right]\right)\right\Vert  & \lesssim\frac{\sigma\sqrt{n}+\sqrt{\log n}\left\Vert \bm{M}^{\star}\right\Vert _{\infty}}{\sqrt{p}}.
\end{align*}
Regarding $\theta_{3}$, we have 
\begin{align}
\theta_{3} & \overset{\text{(i)}}{\lesssim}\frac{2}{p}\left\Vert \mathcal{P}_{\Omega}\left(\bm{1}\bm{1}^{\top}\right)\right\Vert \cdot\max_{i,j}\mathbb{E}\left[\left|\varepsilon_{i,j}\ind_{\tau/2\leq\varepsilon_{i,j}\leq3\tau/2}\right|\right]\nonumber \\
 & \overset{\text{(ii)}}{\lesssim}\frac{2}{p}\left\Vert \mathcal{P}_{\Omega}\left(\bm{1}\bm{1}^{\top}\right)\right\Vert \sqrt{\mathbb{E}\left[\varepsilon_{i,j}^{2}\right]\mathbb{E}\left[\ind_{\tau/2\leq\varepsilon_{i,j}\leq3\tau/2}\right]}\nonumber \\
 & \overset{\text{(iii)}}{\lesssim}\sigma\sqrt{\frac{n}{p}},\label{eq:strongcvx-beta2}
\end{align}
where (i) applies the observation that for any matrix $\bm{W}$, it
holds that 
\begin{equation}
\left\Vert \bm{W}\right\Vert \leq\left\Vert \left\{ \left|W_{i,j}\right|\right\} _{i,j}\right\Vert \leq\left\Vert \left\{ \ind_{\left\{ W_{i,j}\neq0\right\} }\right\} _{i,j}\right\Vert \max_{i,j}\left|W_{i,j}\right|;\label{eq:operator-ob}
\end{equation}
(ii) arises from the Cauchy-Schwartz inequality; (iii) follows from
Lemma \ref{lem:bernoulli} and the application of Markov inequality
\[
\mathbb{E}\left[\ind_{\tau/2\leq\varepsilon_{i,j}\leq3\tau/2}\right]\leq\mathbb{P}\left[\tau/2\leq\varepsilon_{i,j}\right]\leq\frac{\sigma_{i,j}^{2}}{\left(\tau/2\right)^{2}}\leq\frac{\sigma^{2}}{\left(\tau/2\right)^{2}}.
\]
The bound of $\theta_{4}$ follows analogously from \eqref{eq:beta2}.
Finally, we have 
\begin{align}
\beta_{3} & \overset{\text{(i)}}{\leq}\frac{2}{p}\left\Vert \mathcal{P}_{\Omega}\left(\bm{1}\bm{1}^{\top}\right)\right\Vert \cdot\max_{i,j}\left|\mathbb{E}\left[\varepsilon_{i,j}\ind_{\left|\varepsilon_{i,j}\right|>\tau}\right]\right|\nonumber \\
 & \overset{\text{(ii)}}{\lesssim}n\sqrt{\mathbb{E}\left[\varepsilon_{i,j}^{2}\right]\mathbb{E}\left[\ind_{\left|\varepsilon_{i,j}\right|>\tau}\right]}\nonumber \\
 & \overset{\text{(iii)}}{\lesssim}\sigma\sqrt{\frac{n}{p}},\label{eq:strongcvx-beta4}
\end{align}
where (i) holds due to \eqref{eq:operator-ob}; (ii) comes from Lemma
\eqref{lem:bernoulli} and the Cauchy-Schwartz inequality; (iii) relies
on Markov inequality 
\[
\mathbb{E}\left[\ind_{\left|\varepsilon_{i,j}\right|>\tau}\right]\leq\mathbb{P}\left[\left|\varepsilon_{i,j}\right|>\tau\right]\leq\frac{\sigma^{2}}{\tau^{2}}.
\]
Plugging the bounds of $\{\beta_{i}\}_{i=1}^{3}$ into \eqref{eq:strongcvx-alpha2}
yields 
\begin{equation}
\left\Vert \mathcal{P}_{\Omega}\left(\varepsilon_{i,j}\ind_{\left|\left(\bm{X}\bm{Y}^{\top}\right)_{i,j}-M_{i,j}\right|\leq\tau}\right)\right\Vert \lesssim\sigma\sqrt{\frac{n}{p}},\label{eq:noise}
\end{equation}
and therefore, 
\begin{equation}
\alpha_{2}\leq\frac{2}{p}\left\Vert \mathcal{P}_{\Omega}\left(\varepsilon_{i,j}\ind_{\left|\left(\bm{X}\bm{Y}^{\top}\right)_{i,j}-M_{i,j}\right|\leq\tau}\right)\right\Vert \left\Vert \bm{\Delta}\right\Vert _{\mathrm{F}}^{2}\lesssim\left(\frac{\sigma\sqrt{n}+\sqrt{\log n}\left\Vert \bm{M}^{\star}\right\Vert _{\infty}}{\sqrt{p}}\right)\left\Vert \bm{\Delta}\right\Vert _{\mathrm{F}}^{2}.\label{eq:strongcvx-alpha2-bound}
\end{equation}
\item Next, $\alpha_{3}$ can be decomposed as 
\begin{align}
 & \frac{1}{p}\left\Vert \mathcal{P}_{\Omega}\left(\left(\bm{\Delta}_{\bm{X}}\bm{Y}^{\top}+\bm{X}\bm{\Delta}_{\bm{Y}}^{\top}\right)_{i,j}\ind_{\left|\left(\bm{X}\bm{Y}^{\top}\right)_{i,j}-M_{i,j}\right|>\tau}\right)\right\Vert _{\mathrm{F}}^{2}\nonumber \\
 & \leq\frac{2}{p}\left\Vert \mathcal{P}_{\Omega}\left(\left\{ \left(\bm{\Delta}_{\bm{X}}\bm{Y}^{\top}\right)_{i,j}\ind_{\left|\left(\bm{X}\bm{Y}^{\top}\right)_{i,j}-M_{i,j}\right|>\tau}\right\} _{i,j}\right)\right\Vert _{\mathrm{F}}^{2}\nonumber \\
 &\qquad +\frac{2}{p}\left\Vert \mathcal{P}_{\Omega}\left(\left\{ \left(\bm{X}\bm{\Delta}_{\bm{Y}}^{\top}\right)_{i,j}\ind_{\left|\left(\bm{X}\bm{Y}^{\top}\right)_{i,j}-M_{i,j}\right|>\tau}\right\} _{i,j}\right)\right\Vert _{\mathrm{F}}^{2}.\label{eq:strongcvx-alpha3}
\end{align}
The first term on the right-hand side can be bounded by 
\begin{align}
 & \frac{2}{p}\left\Vert \mathcal{P}_{\Omega}\left(\left\{ \left(\bm{\Delta}_{\bm{X}}\bm{Y}^{\top}\right)_{i,j}\ind_{\left|\left(\bm{X}\bm{Y}^{\top}\right)_{i,j}-M_{i,j}\right|>\tau}\right\} _{i,j}\right)\right\Vert _{\mathrm{F}}^{2}\nonumber \\
 & \overset{\text{(i)}}{\leq}\frac{2}{p}\left\Vert \mathcal{P}_{\Omega}\left(\left\{ \left(\bm{\Delta}_{\bm{X}}\bm{Y}^{\top}\right)_{i,j}\ind_{\left|\varepsilon_{i,j}\right|>\tau/2}\right\} _{i,j}\right)\right\Vert _{\mathrm{F}}^{2}\nonumber \\
 & \overset{\text{(ii)}}{\leq}4n\min\left\{ \left\Vert \bm{\Delta}_{\bm{X}}\right\Vert _{\mathrm{F}}^{2}\left\Vert \bm{Y}\right\Vert _{2,\infty}^{2},\left\Vert \bm{Y}\right\Vert _{\mathrm{F}}^{2}\left\Vert \bm{\Delta}_{\bm{X}}\right\Vert _{2,\infty}^{2}\right\} \mathbb{P}\left(\left|\varepsilon_{i,j}\right|>\frac{\tau}{2}\right)\nonumber \\
 & \overset{\text{(iii)}}{\lesssim}\frac{\mu r\sigma_{\max}}{np}\left\Vert \bm{\Delta}\right\Vert _{\mathrm{F}}^{2}.\label{eq:strongcvx-alpha3-1}
\end{align}
Here (i) comes from the fact that 
\begin{equation}
\ind_{\left|\left(\bm{X}\bm{Y}^{\top}\right)_{i,j}-M_{i,j}\right|>\tau}=\ind_{\left|\left(\bm{X}\bm{Y}^{\top}\right)_{i,j}-M_{i,j}^{\star}-\varepsilon_{i,j}\right|>\tau}\leq\ind_{\left|\varepsilon_{i,j}\right|>\tau-\left|\left(\bm{X}\bm{Y}^{\top}\right)_{i,j}-M_{i,j}^{\star}\right|}\leq\ind_{\left|\varepsilon_{i,j}\right|>\tau/2},\label{eq:ind}
\end{equation}
where the last inequality is due to 
\[
\max_{i,j}\left|\left(\bm{X}\bm{Y}^{\top}\right)_{i,j}-M_{i,j}^{\star}\right|\leq\left\Vert \bm{F}-\bm{F}^{\star}\right\Vert _{2,\infty}\left\Vert \bm{F}^{\star}\right\Vert _{2,\infty}\ll\left\Vert \bm{F}^{\star}\right\Vert _{2,\infty}^{2}\leq\frac{\mu r\sigma_{\max}}{n}\leq\frac{\tau}{2};
\]
(ii) invokes \citet[Lemma A.3]{chen2020nonconvex} and holds uniformly
for all matrices $\bm{\Delta}_{\bm{X}}\in\mathbb{R}^{n\times r}$
and $\bm{Y}\in\mathbb{R}^{n\times r}$ with probability over $1-O(n^{-10})$;
(iii) applies Markov inequality to obtain that 
\begin{equation}
\mathbb{P}\left(\left|\varepsilon_{i,j}\right|>\frac{\tau}{2}\right)\leq\frac{\sigma_{i,j}^{2}}{\left(\tau/2\right)^{2}}\leq\frac{\sigma^{2}}{\left(\tau/2\right)^{2}}.\label{eq:markov}
\end{equation}
Analogously, one has 
\begin{equation}
\frac{2}{p}\left\Vert \mathcal{P}_{\Omega}\left(\left\{ \left(\bm{\Delta}_{\bm{X}}\bm{Y}^{\top}\right)_{i,j}\ind_{\left|\left(\bm{X}\bm{Y}^{\top}\right)_{i,j}-M_{i,j}\right|>\tau}\right\} _{i,j}\right)\right\Vert _{\mathrm{F}}^{2}\lesssim\frac{\mu r\sigma_{\max}}{np}\left\Vert \bm{\Delta}\right\Vert _{\mathrm{F}}^{2}.\label{eq:strongcvx-alpha3-2}
\end{equation}
Taking \eqref{eq:strongcvx-alpha3-1}, \eqref{eq:strongcvx-alpha3-2}
together with \eqref{eq:strongcvx-alpha3} yields 
\begin{align*}
 & \frac{1}{p}\left\Vert \mathcal{P}_{\Omega}\left(\left(\bm{\Delta}_{\bm{X}}\bm{Y}^{\top}+\bm{X}\bm{\Delta}_{\bm{Y}}^{\top}\right)_{i,j}\ind_{\left|\left(\bm{X}\bm{Y}^{\top}\right)_{i,j}-M_{i,j}\right|>\tau}\right)\right\Vert _{\mathrm{F}}^{2}\lesssim\frac{\mu r\sigma_{\max}}{np}\left\Vert \bm{\Delta}\right\Vert _{\mathrm{F}}^{2},
\end{align*}
and hence, 
\begin{equation}
\alpha_{3}\lesssim\frac{\mu r\sigma_{\max}}{np}\left\Vert \bm{\Delta}\right\Vert _{\mathrm{F}}^{2}.\label{eq:strongcvx-alpha3-bound}
\end{equation}
\item Finally, we turn to consider $\alpha_{4}$. Simple calculation reveals
that 
\end{enumerate}
\begin{align*}
 & \left|\frac{1}{p}\left\langle \mathcal{P}_{\Omega}\left(\left\{ \tau\mathrm{sgn}\left(\left(\left(\bm{X}\bm{Y}^{\top}\right)_{i,j}-M_{i,j}\right)_{i,j}\right)\ind_{\left|\left(\bm{X}\bm{Y}^{\top}\right)_{i,j}-M_{i,j}\right|>\tau}\right\} _{i,j}\right),\mathcal{P}_{\Omega}\left(\bm{\Delta}_{\bm{X}}\bm{\Delta}_{\bm{Y}}^{\top}\right)\right\rangle \right|\\
 & =\left|\frac{1}{p}\left\langle \mathcal{P}_{\Omega}\left(\left\{ \tau\mathrm{sgn}\left(\left(\left(\bm{X}\bm{Y}^{\top}\right)_{i,j}-M_{i,j}\right)_{i,j}\right)\ind_{\left|\left(\bm{X}\bm{Y}^{\top}\right)_{i,j}-M_{i,j}\right|>\tau}\right\} _{i,j}\right),\bm{\Delta}_{\bm{X}}\bm{\Delta}_{\bm{Y}}^{\top}\right\rangle \right|\\
 & \leq\left\Vert \frac{1}{p}\mathcal{P}_{\Omega}\left(\left\{ \tau\mathrm{sgn}\left(\left(\left(\bm{X}\bm{Y}^{\top}\right)_{i,j}-M_{i,j}\right)_{i,j}\right)\ind_{\left|\left(\bm{X}\bm{Y}^{\top}\right)_{i,j}-M_{i,j}\right|>\tau}\right\} _{i,j}\right)\right\Vert \left\Vert \bm{\Delta}_{\bm{X}}\bm{\Delta}_{\bm{Y}}^{\top}\right\Vert _{*}.
\end{align*}
One has \eqref{eq:nuclear} suggests that $\Vert\bm{\Delta}_{\bm{X}}\bm{\Delta}_{\bm{Y}}^{\top}\Vert_{*}\leq\Vert\bm{\Delta}\Vert_{\mathrm{F}}^{2}$.
In addition, we have

\begin{align}
 & \left\Vert \frac{1}{p}\mathcal{P}_{\Omega}\left(\left\{ \tau\mathrm{sgn}\left(\left(\left(\bm{X}\bm{Y}^{\top}\right)_{i,j}-M_{i,j}\right)_{i,j}\right)\ind_{\left|\left(\bm{X}\bm{Y}^{\top}\right)_{i,j}-M_{i,j}\right|>\tau}\right\} _{i,j}\right)\right\Vert \nonumber \\
 & \overset{\text{(i)}}{\leq}\frac{\tau}{p}\left\Vert \mathcal{P}_{\Omega}\left(\left\{ \ind_{\left|\left(\bm{X}\bm{Y}^{\top}\right)_{i,j}-M_{i,j}\right|>\tau}\right\} _{i,j}\right)\right\Vert \\
 & \overset{\text{(ii)}}{\leq}\frac{\tau}{p}\left\Vert \left\{ \delta_{i,j}\ind_{\left|\varepsilon_{i,j}\right|>\tau/2}\right\} _{i,j}\right\Vert \nonumber \\
 & \overset{\text{(iii)}}{\lesssim}\tau n\mathbb{P}\left(\left|\varepsilon_{i,j}\right|>\tau/2\right)\overset{\text{(iv)}}{\lesssim}\tau n\frac{\sigma^{2}}{\left(\tau/2\right)^{2}}\lesssim\sigma\sqrt{\frac{n}{p}}.\label{eq:tau}
\end{align}

Here, (i) holds due to \eqref{eq:operator-ob}; (ii) follows from
\eqref{eq:ind}; (iii) applies Lemma \ref{lem:bernoulli}; (iv) arises
from \eqref{eq:markov}. Therefore, one has 
\begin{equation}
\alpha_{4}\lesssim\sigma\sqrt{\frac{n}{p}}\left\Vert \bm{\Delta}\right\Vert _{\mathrm{F}}^{2}.\label{eq:strongcvx-alpha4-bound}
\end{equation}

Plugging \eqref{eq:strongcvx-alpha1-bound}, \eqref{eq:strongcvx-alpha2-bound},
\eqref{eq:strongcvx-alpha3-bound}, and \eqref{eq:strongcvx-alpha4-bound}
into \eqref{eq:strongcvx-E} yields 
\[
\left|\mathsf{vec}\left(\bm{\Delta}\right)^{\top}\nabla^{2}f\left(\bm{X},\bm{Y}\right)\mathsf{vec}\left(\bm{\Delta}\right)-\mathcal{E}_{0}\right|\lesssim\left(\frac{\sigma\sqrt{n}+\sqrt{\log n}\left\Vert \bm{M}^{\star}\right\Vert _{\infty}}{\sqrt{p}}\right)\left\Vert \bm{\Delta}\right\Vert _{\mathrm{F}}^{2}.
\]
Combining this with \eqref{eq:strongcvx-E0} gives 
\begin{align*}
\mathsf{vec}\left(\bm{\Delta}\right)^{\top}\nabla^{2}f\left(\bm{X},\bm{Y}\right)\mathsf{vec}\left(\bm{\Delta}\right)\sigma & \geq\frac{\sigma_{\min}}{10}\left\Vert \bm{\Delta}\right\Vert _{\mathrm{F}}^{2}-\left(\frac{\sigma\sqrt{n}+\sqrt{\log n}\left\Vert \bm{M}^{\star}\right\Vert _{\infty}}{\sqrt{p}}\right)\left\Vert \bm{\Delta}\right\Vert _{\mathrm{F}}^{2}\\
 & \ge\frac{\sigma_{\min}}{20}\left\Vert \bm{\Delta}\right\Vert _{\mathrm{F}}^{2},
\end{align*}
where the last inequality holds provided that $\frac{\sigma}{\sigma_{\min}}\sqrt{\frac{n}{p}}\ll1$
and $np\gg\mu^{2}r^{2}\kappa^{2}$.

\subsection{Proof of Lemma \ref{lem:contraction}\label{subsec:Proof-of-Lemma-contraction}}

The definition of $\bm{H}^{t+1}$ (cf.~\eqref{eq:def-ht}) and the
update rule \eqref{subeq:gradient_update_ncvx} give

\begin{align}
\left\Vert \bm{F}^{t+1}\bm{H}^{t+1}-\bm{F}^{\star}\right\Vert _{\mathrm{F}} & \leq\left\Vert \bm{F}^{t+1}\bm{H}^{t}-\bm{F}^{\star}\right\Vert _{\mathrm{F}}=\left\Vert \left[\bm{F}^{t}-\eta\nabla f\left(\bm{F}^{t}\right)\right]\bm{H}^{t}-\bm{F}^{\star}\right\Vert _{\mathrm{F}}\nonumber \\
 & \overset{\text{(i)}}{=}\left\Vert \bm{F}^{t}\bm{H}^{t}-\eta\nabla f\left(\bm{F}^{t}\bm{H}^{t}\right)-\bm{F}^{\star}\right\Vert _{\mathrm{F}}\nonumber \\
 & \leq\underbrace{\left\Vert \bm{F}^{t}\bm{H}^{t}-\eta\nabla f\left(\bm{F}^{t}\bm{H}^{t}\right)-\left[\bm{F}^{\star}-\eta\nabla f\left(\bm{F}^{\star}\right)\right]\right\Vert _{\mathrm{F}}}_{\eqqcolon\alpha_{1}}+\underbrace{\eta\left\Vert \nabla f\left(\bm{F}^{\star}\right)\right\Vert _{\mathrm{F}}}_{\eqqcolon\alpha_{2}},\label{eq:fro}
\end{align}
where (i) holds due to the fact that $\nabla f(\bm{F}\bm{R})=\nabla f(\bm{F})\bm{R}$
for all $\bm{R}\in\mathcal{O}^{r\times r}$. $\alpha_{1}$ is exactly
the term $\alpha_{1}$ in \citet[Section D.3]{chen2020noisy} with
$f_{\mathsf{aug}}$ replaced by $f$. Reusing the results therein,
we obtain 
\[
\alpha_{1}\leq\left(1-\frac{\sigma_{\min}}{20}\eta\right)\left\Vert \bm{F}^{t}\bm{H}^{t}-\bm{F}^{\star}\right\Vert _{\mathrm{F}},
\]
holds as long as $\frac{\sigma}{\sigma_{\min}}\sqrt{\frac{n}{p}}\ll\frac{1}{\sqrt{\kappa^{4}\mu r^{2}\log^{2}n}}$
and $np\gg\kappa^{6}\mu^{2}r^{4}\log^{2}n$. Regarding $\alpha_{2}$,
one has 
\begin{align*}
\left\Vert \nabla f\left(\bm{F}^{\star}\right)\right\Vert _{\mathrm{F}} & =\left\Vert \left[\begin{array}{c}
\frac{1}{2p}\mathcal{P}_{\Omega}\left(\left\{ \psi_{\tau}\left(-\varepsilon_{i,j}\right)\right\} _{i,j}\right)\bm{Y}^{\star}\\
\frac{1}{2p}\mathcal{P}_{\Omega}\left(\left\{ \psi_{\tau}\left(-\varepsilon_{i,j}\right)\right\} _{i,j}\right)^{\top}\bm{X}^{\star}
\end{array}\right]\right\Vert _{\mathrm{F}}\leq\frac{1}{2p}\left\Vert \mathcal{P}_{\Omega}\left(\left\{ \psi_{\tau}\left(-\varepsilon_{i,j}\right)\right\} _{i,j}\right)\right\Vert \left\Vert \bm{F}^{\star}\right\Vert _{\mathrm{F}}.
\end{align*}
Substitution of the definition of $\psi_{\tau}(\cdot)$ (cf.~\eqref{eq:psi-tau})
into the equation above gives 
\begin{align}
&	\frac{1}{2p}\left\Vert \mathcal{P}_{\Omega}\left(\left\{ \psi_{\tau}\left(-\varepsilon_{i,j}\right)\right\} _{i,j}\right)\right\Vert  \\
&\quad =\frac{1}{p}\left\Vert \mathcal{P}_{\Omega}\left(\left\{ 2\varepsilon_{i,j}\ind_{\left|\varepsilon_{i,j}\right|\leq\tau}+\tau\mathrm{sgn}\left(\varepsilon_{i,j}\right)\ind_{\left|\varepsilon_{i,j}\right|>\tau}\right\} _{i,j}\right)\right\Vert \nonumber \\
 &\quad \leq\underbrace{\frac{1}{p}\left\Vert \mathcal{P}_{\Omega}\left(\left\{ \varepsilon_{i,j}\ind_{\left|\varepsilon_{i,j}\right|\leq\tau}\right\} _{i,j}\right)\right\Vert }_{\eqqcolon\beta_{1}}+\underbrace{\frac{1}{2p}\left\Vert \mathcal{P}_{\Omega}\left(\left\{ \tau\mathrm{sgn}\left(\varepsilon_{i,j}\right)\ind_{\left|\varepsilon_{i,j}\right|>\tau}\right\} _{i,j}\right)\right\Vert }_{\eqqcolon\beta_{2}}.\label{eq:psi-tau-epsilon}
\end{align}
$\beta_{1}$ can be controlled as 
\begin{align}
\frac{1}{p}\left\Vert \mathcal{P}_{\Omega}\left(\left\{ \varepsilon_{i,j}\ind_{\left|\varepsilon_{i,j}\right|\leq\tau}\right\} _{i,j}\right)\right\Vert  & \leq\frac{1}{p}\left\Vert \mathcal{P}_{\Omega}\left(\left\{ \varepsilon_{i,j}\ind_{\left|\varepsilon_{i,j}\right|\leq\tau}-\mathbb{E}\left[\varepsilon_{i,j}\ind_{\left|\varepsilon_{i,j}\right|\leq\tau}\right]\right\} _{i,j}\right)\right\Vert \nonumber \\
 & \qquad+\frac{1}{p}\left\Vert \mathcal{P}_{\Omega}\left(\left\{ \mathbb{E}\left[\varepsilon_{i,j}\ind_{\left|\varepsilon_{i,j}\right|\leq\tau}\right]\right\} _{i,j}\right)\right\Vert \\
 & \overset{\mathrm{(i)}}{\lesssim}\sigma\sqrt{\frac{n}{p}},\label{eq:epsilon-tau}
\end{align}
where (i) comes from the bounds of $\beta_{1}$ and $\beta_{4}$ in
\eqref{eq:strongcvx-beta1} and \eqref{eq:strongcvx-beta4}.

For the next, one has 
\begin{align}
\frac{1}{2p}\left\Vert \mathcal{P}_{\Omega}\left(\left\{ \tau\mathrm{sgn}\left(\varepsilon_{i,j}\right)\ind_{\left|\varepsilon_{i,j}\right|>\tau}\right\} _{i,j}\right)\right\Vert  & \overset{\text{(i)}}{\leq}\frac{\tau}{2p}\left\Vert \mathcal{P}_{\Omega}\left(\left\{ \ind_{\left|\varepsilon_{i,j}\right|>\tau}\right\} _{i,j}\right)\right\Vert \nonumber \\
 & \overset{\text{(ii)}}{\lesssim}\tau n\mathbb{P}\left(\left|\varepsilon_{i,j}\right|>\tau\right)\nonumber \\
 & \overset{\text{(iii)}}{\lesssim}\sigma\sqrt{\frac{n}{p}},\label{eq:epsilon-ind}
\end{align}
where (i) is due to \eqref{eq:ind}; (ii) comes from Lemma \ref{lem:bernoulli};
(iii) comes from \eqref{eq:markov}. Therefore, combining \eqref{eq:psi-tau-epsilon},
\eqref{eq:epsilon-tau} and \eqref{eq:epsilon-ind} yields 
\[
\left\Vert \nabla f\left(\bm{F}^{\star}\right)\right\Vert _{\mathrm{F}}\lesssim\frac{1}{2p}\left\Vert \mathcal{P}_{\Omega}\left(\left\{ \psi_{\tau}\left(-\varepsilon_{i,j}\right)\right\} _{i,j}\right)\right\Vert \left\Vert \bm{F}^{\star}\right\Vert _{\mathrm{F}}\lesssim\sigma\sqrt{\frac{n}{p}}\left\Vert \bm{F}^{\star}\right\Vert _{\mathrm{F}}.
\]
Plugging this result into \eqref{eq:fro} reveals that 
\begin{align*}
\left\Vert \bm{F}^{t+1}\bm{H}^{t+1}-\bm{F}^{\star}\right\Vert _{\mathrm{F}} & \leq\left(1-\frac{\sigma_{\min}}{20}\eta\right)\left\Vert \bm{F}^{t}\bm{H}^{t}-\bm{F}^{\star}\right\Vert _{\mathrm{F}}+\widetilde{C}\eta\sigma\sqrt{\frac{n}{p}}\left\Vert \bm{F}^{\star}\right\Vert _{\mathrm{F}}\\
 & \overset{\text{(i)}}{\lesssim}\left(1-\frac{\sigma_{\min}}{20}\eta\right)^{t+1}C\sqrt{\kappa}\left(\frac{\sigma}{\sigma_{\min}}\sqrt{\frac{n}{p}}+\frac{\left\Vert \bm{M}^{\star}\right\Vert _{\infty}}{\sigma_{\min}}\sqrt{\frac{n}{p}}\right)\left\Vert \bm{F}^{\star}\right\Vert _{\mathrm{F}}+C_{1}\frac{\sigma}{\sigma_{\min}}\sqrt{\frac{n}{p}}\left\Vert \bm{F}^{\star}\right\Vert _{\mathrm{F}}\\
 & \lesssim\left(\frac{\sigma}{\sigma_{\min}}\sqrt{\frac{n}{p}}+\frac{\left\Vert \bm{M}^{\star}\right\Vert _{\infty}}{\sigma_{\min}}\sqrt{\frac{n}{p}}\right)\sqrt{\kappa}\left\Vert \bm{F}^{\star}\right\Vert _{\mathrm{F}},
\end{align*}
where (i) arises from the induction hypothesis. 

\subsection{Proof of Lemma \ref{lem:loo}}

For $1\leq l\leq n$, the update rule \eqref{subeq:gradient_update_ncvx-1}
implies the decomposition

\begin{align}
 & \left(\bm{F}^{t+1,\left(l\right)}\bm{H}^{t+1,\left(l\right)}-\bm{F}^{\star}\right)_{l,\cdot}\nonumber \\
 & =\left\{ \bm{X}^{t,\left(l\right)}-\eta\nabla f_{\bm{X}}\left(\bm{X}^{t,\left(l\right)}\right)\right\} _{l,\cdot}\bm{H}^{t+1,\left(l\right)}-\bm{X}_{l,\cdot}^{\star}\nonumber \\
 & =\underbrace{\left\{ \bm{X}^{t,\left(l\right)}-\eta\nabla f_{\bm{X}}\left(\bm{X}^{t,\left(l\right)}\right)\right\} _{l,\cdot}\bm{H}^{t,\left(l\right)}-\bm{X}_{l,\cdot}^{\star}}_{\eqqcolon\bm{h}_{1}}\nonumber \\
 & \qquad+\underbrace{\left\{ \bm{X}^{t,\left(l\right)}-\eta\nabla f_{\bm{X}}\left(\bm{X}^{t,\left(l\right)}\right)\right\} _{l,\cdot}\left[\left(\bm{H}^{t,\left(l\right)}\right)^{-1}\bm{H}^{t+1,\left(l\right)}-\bm{I}_{r}\right]}_{\eqqcolon\bm{h}_{2}},\label{eq:decompose-loo}
\end{align}
where the gradient is

\begin{align}
\left\{ \nabla f_{\bm{X}}\left(\bm{X}^{t,\left(l\right)}\right)\right\} _{l,\cdot}&=\left\{ \sum_{j=1}^{n}\psi_{\tau}\left(\left(\bm{X}^{t,\left(l\right)}\bm{Y}^{t,\left(l\right)\top}\right)_{l,j}-M_{l,j}^{\star}\right)\right\} _{j}\bm{Y}^{t,\left(l\right)}\nonumber \\
&\qquad+\frac{1}{2}\bm{X}_{l,\cdot}^{t,\left(l\right)}\left(\bm{X}^{t,\left(l\right)\top}\bm{X}^{t,\left(l\right)}-\bm{Y}^{t,\left(l\right)\top}\bm{Y}^{t,\left(l\right)}\right).
\end{align}
Then we proceed by controlling $\bm{h}_{1}$ and $\bm{h}_{2}$ separately.
For notational simplicity we denote 
\[
\bm{\Delta}^{t,\left(l\right)}\coloneqq\left[\begin{array}{c}
\bm{\Delta}_{\bm{X}}^{t,\left(l\right)}\\
\bm{\Delta}_{\bm{Y}}^{t,\left(l\right)}
\end{array}\right]=\left[\begin{array}{c}
\bm{X}^{t,\left(l\right)}\bm{H}^{t,\left(l\right)}-\bm{X}^{\star}\\
\bm{Y}^{t,\left(l\right)}\bm{H}^{t,\left(l\right)}-\bm{Y}^{\star}
\end{array}\right].
\]

\begin{enumerate}
\item Regarding the first term $\bm{h}_{1}$, one has 
\begin{align}
\bm{h}_{1} & =\bm{X}_{l,\cdot}^{t,\left(l\right)}\bm{H}^{t,\left(l\right)}-\bm{X}_{l,\cdot}^{\star}-\eta\left\{ \psi_{\tau}\left(\left(\bm{X}^{t,\left(l\right)}\bm{Y}^{t,\left(l\right)\top}\right)_{l,j}-M_{l,j}^{\star}\right)\right\} _{j}\bm{Y}^{t,\left(l\right)}\bm{H}^{t,\left(l\right)}\nonumber \\
 & \qquad-\eta\frac{1}{2}\bm{X}_{l,\cdot}^{t,\left(l\right)}\left(\bm{X}^{t,\left(l\right)\top}\bm{X}^{t,\left(l\right)}-\bm{Y}^{t,\left(l\right)\top}\bm{Y}^{t,\left(l\right)}\right)\bm{H}^{t,\left(l\right)}\nonumber \\
 & \overset{\text{(i)}}{=}\bm{X}_{l,\cdot}^{t,\left(l\right)}\bm{H}^{t,\left(l\right)}-\bm{X}_{l,\cdot}^{\star}-\eta\left\{ \left(\left(\bm{X}^{t,\left(l\right)}\bm{Y}^{t,\left(l\right)\top}\right)_{l,j}-M_{l,j}^{\star}\right)\ind_{\left|\left(\bm{X}^{t,\left(l\right)}\bm{Y}^{t,\left(l\right)\top}\right)_{l,j}-M_{l,j}^{\star}\right|\leq\tau}\right\} _{j}\bm{Y}^{t,\left(l\right)}\bm{H}^{t,\left(l\right)}\nonumber \\
 & \qquad-\eta\left\{ \tau\mathrm{sgn}\left(\left(\bm{X}^{t,\left(l\right)}\bm{Y}^{t,\left(l\right)\top}\right)_{l,j}-M_{l,j}^{\star}\right)\ind_{\left|\left(\bm{X}^{t,\left(l\right)}\bm{Y}^{t,\left(l\right)\top}\right)_{l,j}-M_{l,j}^{\star}\right|>\tau}\right\} _{j}\bm{Y}^{t,\left(l\right)}\bm{H}^{t,\left(l\right)}\nonumber \\
 & \qquad-\eta\frac{1}{2}\bm{X}_{l,\cdot}^{t,\left(l\right)}\left(\bm{X}^{t,\left(l\right)\top}\bm{X}^{t,\left(l\right)}-\bm{Y}^{t,\left(l\right)\top}\bm{Y}^{t,\left(l\right)}\right)\bm{H}^{t,\left(l\right)}\nonumber \\
 & \overset{\text{(ii)}}{=}\bm{X}_{l,\cdot}^{t,\left(l\right)}\bm{H}^{t,\left(l\right)}-\bm{X}_{l,\cdot}^{\star}-\eta\left(\bm{X}^{t,\left(l\right)}\bm{Y}^{t,\left(l\right)\top}-\bm{M}^{\star}\right)_{l,\cdot}\bm{Y}^{t,\left(l\right)}\bm{H}^{t,\left(l\right)}\nonumber \\
 & \qquad-\eta\frac{1}{2}\bm{X}_{l,\cdot}^{t,\left(l\right)}\left(\bm{X}^{t,\left(l\right)\top}\bm{X}^{t,\left(l\right)}-\bm{Y}^{t,\left(l\right)\top}\bm{Y}^{t,\left(l\right)}\right)\bm{H}^{t,\left(l\right)}\nonumber \\
 & =\left(\bm{X}_{l,\cdot}^{t,\left(l\right)}\bm{H}^{t,\left(l\right)}-\bm{X}_{l,\cdot}^{\star}\right)\left(\bm{I}_{r}-\eta\left(\bm{Y}^{t,\left(l\right)}\bm{H}^{t,\left(l\right)}\right)^{\top}\bm{Y}^{t,\left(l\right)}\bm{H}^{t,\left(l\right)}\right)\nonumber \\
 & \qquad-\eta\bm{X}_{l,\cdot}^{\star}\left(\bm{Y}^{t,\left(l\right)}\bm{H}^{t,\left(l\right)}-\bm{Y}^{\star}\right)^{\top}\bm{Y}^{t,\left(l\right)}\bm{H}^{t,\left(l\right)}\nonumber \\
 &\qquad-\eta\frac{1}{2}\bm{X}_{l,\cdot}^{t,\left(l\right)}\left(\bm{X}^{t,\left(l\right)\top}\bm{X}^{t,\left(l\right)}-\bm{Y}^{t,\left(l\right)\top}\bm{Y}^{t,\left(l\right)}\right)\bm{H}^{t,\left(l\right)}.\label{eq:h1-decompose}
\end{align}
Here (i) makes use of the definition of $\psi_{\tau}(\cdot)$ (cf.~\eqref{defn-psi});
(ii) follows from the fact that 
\[
\ind_{\left|\left(\bm{X}^{t,\left(l\right)}\bm{Y}^{t,\left(l\right)\top}\right)_{l,j}-M_{l,j}^{\star}\right|>\tau}=0,
\]
which can be verified by 
\begin{align*}
 & \max_{j}\left|\left(\bm{X}^{t,\left(l\right)}\bm{Y}^{t,\left(l\right)\top}\right)_{l,j}-M_{l,j}^{\star}\right|\\
 & =\max_{i,j}\left|\left(\bm{X}^{t,\left(l\right)}\bm{H}^{t,\left(l\right)}\right)_{l,\cdot}\left(\bm{Y}^{t,\left(l\right)}\bm{H}^{t,\left(l\right)}\right)_{j,\cdot}^{\top}-\bm{X}_{l,\cdot}^{\star}\left(\bm{Y}_{j,\cdot}^{\star}\right)^{\top}\right|\\
 & \leq\max_{i,j}\left|\left(\bm{X}^{t,\left(l\right)}\bm{H}^{t,\left(l\right)}\right)_{l,\cdot}\left(\bm{Y}^{t,\left(l\right)}\bm{H}^{t,\left(l\right)}\right)_{j,\cdot}^{\top}-\bm{X}_{l,\cdot}^{\star}\left(\bm{Y}^{t,\left(l\right)}\bm{H}^{t,\left(l\right)}\right)_{j,\cdot}^{\top}\right|\\
 & \qquad+\max_{i,j}\left|\bm{X}_{l,\cdot}^{\star}\left(\bm{Y}^{t,\left(l\right)}\bm{H}^{t,\left(l\right)}\right)_{j,\cdot}^{\top}-\bm{X}_{l,\cdot}^{\star}\left(\bm{Y}_{j,\cdot}^{\star}\right)^{\top}\right|\\
 & \leq2\left\Vert \left(\bm{F}^{t,\left(l\right)}\bm{H}^{t,\left(l\right)}-\bm{F}^{\star}\right)_{l,\cdot}\right\Vert _{2}\left\Vert \bm{F}^{t,\left(l\right)}\right\Vert _{2,\infty}+2\left\Vert \bm{F}^{\star}\right\Vert _{2,\infty}\left\Vert \bm{F}^{t,\left(l\right)}\bm{H}^{t,\left(l\right)}-\bm{F}^{\star}\right\Vert _{2,\infty}\\
 & \overset{\text{(i)}}{\lesssim}\kappa\sqrt{r}\left(\frac{\sigma}{\sigma_{\min}}\sqrt{\frac{n\log n}{p}}+\frac{\left\Vert \bm{M}^{\star}\right\Vert _{\infty}}{\sigma_{\min}}\sqrt{\frac{n}{p}}\right)\left\Vert \bm{F}^{\star}\right\Vert _{2,\infty}^{2}\\
 & \ll\tau,
\end{align*}
where (i) relies on Lemma \ref{lem:loo} and the result holds provided
$np\gg\mu^{2}\kappa^{4}r^{3}\log n$. Hence, the bound of $\bm{h}_{1}$
follows from \eqref{eq:h1-decompose} that 
\begin{align}
\left\Vert \bm{h}_{1}\right\Vert _{2} & \leq\left\Vert \bm{I}_{r}-2\eta\left(\bm{Y}^{t,\left(l\right)}\bm{H}^{t,\left(l\right)}\right)^{\top}\bm{Y}^{t,\left(l\right)}\bm{H}^{t,\left(l\right)}\right\Vert \left\Vert \left(\bm{\Delta}_{\bm{X}}^{t,\left(l\right)}\right)_{l,\cdot}\right\Vert _{2}+2\eta\left\Vert \bm{X}_{l,\cdot}^{\star}\right\Vert _{2}\left\Vert \bm{\Delta}_{\bm{Y}}^{t,\left(l\right)}\right\Vert \left\Vert \bm{Y}^{t,\left(l\right)}\bm{H}^{t,\left(l\right)}\right\Vert \nonumber \\
 & \qquad+\frac{\eta}{2}\left\Vert \bm{X}_{l,\cdot}^{t,\left(l\right)}\left(\bm{X}^{t,\left(l\right)\top}\bm{X}^{t,\left(l\right)}-\bm{Y}^{t,\left(l\right)\top}\bm{Y}^{t,\left(l\right)}\right)\bm{H}^{t,\left(l\right)}\right\Vert \nonumber \\
 & \leq\left(1-\eta\sigma_{\min}\right)\left\Vert \left(\bm{\Delta}_{\bm{X}}^{t,\left(l\right)}\right)_{l,\cdot}\right\Vert _{2}+8\eta\left\Vert \bm{\Delta}^{t,\left(l\right)}\right\Vert \left\Vert \bm{F}^{\star}\right\Vert \left\Vert \bm{X}^{\star}\right\Vert _{2,\infty},\label{eq:h1}
\end{align}
where the last line utilizes \eqref{subeq:aux-7} and the fact that
\begin{align*}
 & \left\Vert \bm{X}_{l,\cdot}^{t,\left(l\right)}\left(\bm{X}^{t,\left(l\right)\top}\bm{X}^{t,\left(l\right)}-\bm{Y}^{t,\left(l\right)\top}\bm{Y}^{t,\left(l\right)}\right)\bm{H}^{t,\left(l\right)}\right\Vert _{2}\\
 & \overset{\text{(i)}}{\leq}2\left\Vert \bm{X}^{\star}\right\Vert _{2,\infty}\left\Vert \bm{X}^{t,\left(l\right)\top}\bm{X}^{t,\left(l\right)}-\bm{Y}^{t,\left(l\right)\top}\bm{Y}^{t,\left(l\right)}\right\Vert \\
 & \overset{\text{(ii)}}{\leq}\left\Vert \bm{X}^{\star}\right\Vert _{2,\infty}\left(\left\Vert \bm{X}^{t,\left(l\right)\top}\bm{X}^{t,\left(l\right)}-\bm{X}^{\star\top}\bm{X}^{\star}\right\Vert +\left\Vert \bm{Y}^{\star\top}\bm{Y}^{\star}-\bm{Y}^{t,\left(l\right)\top}\bm{Y}^{t,\left(l\right)}\right\Vert \right)\\
 & \leq\left\Vert \bm{X}^{\star}\right\Vert _{2,\infty}\left(\left\Vert \bm{X}^{t,\left(l\right)}\bm{H}^{t,\left(l\right)}-\bm{X}^{\star}\right\Vert \left\Vert \bm{X}^{t,\left(l\right)}\right\Vert +\left\Vert \bm{X}^{t,\left(l\right)}\bm{H}^{t,\left(l\right)}-\bm{X}^{\star}\right\Vert \left\Vert \bm{X}^{\star}\right\Vert \right)\\
 & \qquad+\left\Vert \bm{X}^{\star}\right\Vert _{2,\infty}\left(\left\Vert \bm{Y}^{t,\left(l\right)}\bm{H}^{t,\left(l\right)}-\bm{Y}^{\star}\right\Vert \left\Vert \bm{Y}^{t,\left(l\right)}\right\Vert +\left\Vert \bm{Y}^{t,\left(l\right)}\bm{H}^{t,\left(l\right)}-\bm{Y}^{\star}\right\Vert \left\Vert \bm{Y}^{\star}\right\Vert \right)\\
 & \overset{\text{(iii)}}{\leq}\left\Vert \bm{X}^{\star}\right\Vert _{2,\infty}\left(3\left\Vert \bm{X}^{t,\left(l\right)}\bm{H}^{t,\left(l\right)}-\bm{X}^{\star}\right\Vert \left\Vert \bm{X}^{\star}\right\Vert +3\left\Vert \bm{Y}^{t,\left(l\right)}\bm{H}^{t,\left(l\right)}-\bm{Y}^{\star}\right\Vert \left\Vert \bm{Y}^{\star}\right\Vert \right)\\
 & \leq6\left\Vert \bm{\Delta}^{t,\left(l\right)}\right\Vert \left\Vert \bm{F}^{\star}\right\Vert \left\Vert \bm{X}^{\star}\right\Vert _{2,\infty},
\end{align*}
where (i) and (iii) are due to \eqref{subeq:aux-6}; (ii) relies on
the fact that $\bm{X}^{\star\top}\bm{X}^{\star}=\bm{Y}^{\star\top}\bm{Y}^{\star}$.
Consequently, it is easy to obtain that 
\begin{align}
\left\Vert \bm{h}_{1}\right\Vert _{2} & \leq\left(1-\eta\sigma_{\min}\right)C\kappa\sqrt{r}\left\Vert \bm{F}^{\star}\right\Vert _{2,\infty}\left(\frac{\sigma}{\sigma_{\min}}\sqrt{\frac{n\log n}{p}}+\frac{\left\Vert \bm{M}^{\star}\right\Vert _{\infty}}{\sigma_{\min}}\sqrt{\frac{n}{p}}\right)\nonumber \\
 & \qquad+8\eta C\left(\frac{\sigma}{\sigma_{\min}}\sqrt{\frac{n}{p}}+\frac{\left\Vert \bm{M}^{\star}\right\Vert _{\infty}}{\sigma_{\min}}\sqrt{\frac{n}{p}}\right)\sqrt{r}\left\Vert \bm{F}^{\star}\right\Vert \left\Vert \bm{X}^{\star}\right\Vert \left\Vert \bm{X}\right\Vert _{2,\infty}\nonumber \\
 & \leq\left\Vert \bm{F}^{\star}\right\Vert _{2,\infty},\label{eq:h1-coarse}
\end{align}
where the last link utilizes the induction hypothesis \eqref{subeq:add-1}
and \eqref{subeq:aux-3}. 
\item For the next, making use of \eqref{eq:h1-coarse} reveals that 
\begin{align}
\left\Vert \bm{h}_{2}\right\Vert _{2} & \leq\left\Vert \left(\bm{H}^{t,\left(l\right)}\right)^{-1}\bm{H}^{t+1,\left(l\right)}-\bm{I}_{r}\right\Vert \left\Vert \bm{h}_{1}+\bm{X}_{l,\cdot}^{\star}\right\Vert _{2}\leq2\left\Vert \left(\bm{H}^{t,\left(l\right)}\right)^{-1}\bm{H}^{t+1,\left(l\right)}-\bm{I}_{r}\right\Vert \left\Vert \bm{F}^{\star}\right\Vert _{2,\infty}.\label{eq:h2}
\end{align}
To bound the right-hand side, we invoke the following claim whose
proof is deferred to Section \ref{subsec:Proof-of-Claim-H}. 
\begin{claim}
\label{claim:H}With probability exceeding $1-O(n^{-100})$, one has
\begin{align*}
\left\Vert \left(\bm{H}^{t,\left(l\right)}\right)^{-1}\bm{H}^{t+1,\left(l\right)}-\bm{I}_{r}\right\Vert  & \lesssim\eta\sqrt{r}\left(\frac{\sigma}{\sigma_{\min}}\sqrt{\frac{n}{p}}+\frac{\left\Vert \bm{M}^{\star}\right\Vert _{\infty}}{\sigma_{\min}}\sqrt{\frac{n}{p}}\right)^{2}\left\Vert \bm{F}^{\star}\right\Vert ^{2}\log n\\
 & \qquad+\frac{\eta}{2}\kappa^{1.5}\left(\frac{\sigma}{\sigma_{\min}}\sqrt{\frac{n}{p}}+\frac{\left\Vert \bm{M}^{\star}\right\Vert _{\infty}}{\sigma_{\min}}\sqrt{\frac{n}{p}}\right)\left\Vert \bm{F}^{\star}\right\Vert _{2,\infty}\left\Vert \bm{F}^{\star}\right\Vert \log n,
\end{align*}
as long as $0\leq\eta\leq c/(\mu\kappa^{5}r^{3}\sigma_{\max}\log n)$
for some sufficiently small constant $c>0$. 
\end{claim}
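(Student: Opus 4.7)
The plan is to express $(\bm{H}^{t,(l)})^{\top}\bm{H}^{t+1,(l)}$ as the polar factor $\mathsf{sgn}(\bm{C})$ of the $r\times r$ Gram matrix
\[
\bm{C}\coloneqq (\bm{F}^{t+1,(l)}\bm{H}^{t,(l)})^{\top}\bm{F}^{\star},
\]
and then to estimate $\|\mathsf{sgn}(\bm{C})-\bm{I}_{r}\|$ via a polar-decomposition perturbation bound. The Procrustes characterization \eqref{defn-Htl}--\eqref{defn-sgn} of $\bm{H}^{t,(l)}$ ensures that the reference matrix $\bm{B}\coloneqq (\bm{F}^{t,(l)}\bm{H}^{t,(l)})^{\top}\bm{F}^{\star}$ is symmetric positive semidefinite, so $\mathsf{sgn}(\bm{B})=\bm{I}_{r}$; Weyl's inequality together with Lemma~\ref{lem:aux} supplies $\sigma_{\min}(\bm{B})\gtrsim\sigma_{\min}$. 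The gradient recursion then gives $\bm{C}-\bm{B}=-\eta(\bm{H}^{t,(l)})^{\top}\nabla f^{(l)}(\bm{F}^{t,(l)})^{\top}\bm{F}^{\star}$, and a standard polar-decomposition perturbation inequality, in the spirit of the leave-one-out analyses of \citet{ma2017implicit,chen2020noisy}, yields $\|\mathsf{sgn}(\bm{C})-\bm{I}_{r}\|\lesssim\|\bm{C}-\bm{B}\|/\sigma_{\min}(\bm{B})$, provided the perturbation is small (a condition to be verified a posteriori from the final bound). The task therefore reduces to upper bounding $\eta\,\|\nabla f^{(l)}(\bm{F}^{t,(l)})^{\top}\bm{F}^{\star}\|/\sigma_{\min}$.

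The two terms in the stated bound will arise from splitting the gradient about the ground truth,
\[
\nabla f^{(l)}(\bm{F}^{t,(l)})^{\top}\bm{F}^{\star}=\bigl[\nabla f^{(l)}(\bm{F}^{t,(l)})-\nabla f^{(l)}(\bm{F}^{\star})\bigr]^{\top}\bm{F}^{\star}+\nabla f^{(l)}(\bm{F}^{\star})^{\top}\bm{F}^{\star}.
\]
For the first summand I would integrate the restricted smoothness estimate of Lemma~\ref{lem:strongcvx} along the segment $[\bm{F}^{\star},\bm{F}^{t,(l)}\bm{H}^{t,(l)}]$, whose $\ell_{2,\infty}$-proximity to $\bm{F}^{\star}$ is furnished by \eqref{subeq:add-3} and \eqref{subeq:aux-1}. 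This produces a bound of order $\sigma_{\max}\|\bm{F}^{t,(l)}\bm{H}^{t,(l)}-\bm{F}^{\star}\|_{\mathrm{F}}\|\bm{F}^{\star}\|$; inserting the Frobenius-error estimate \eqref{subeq:aux-2} and dividing by $\sigma_{\min}$ will give the first bullet, once the $\log n$ introduced by the noise in that estimate is accounted for. The second summand simplifies to
\[
\tfrac{1}{2p}\bm{Y}^{\star\top}\mathcal{P}_{\Omega_{-l}}(\psi_{\tau}(-\varepsilon))^{\top}\bm{X}^{\star}+\tfrac{1}{2p}\bm{X}^{\star\top}\mathcal{P}_{\Omega_{-l}}(\psi_{\tau}(-\varepsilon))\bm{Y}^{\star},
\]
and its spectral norm will be controlled by combining the Huberization/truncation bounds already derived in the proof of Lemma~\ref{lem:contraction} (cf.~\eqref{eq:noise}, \eqref{eq:epsilon-tau} and \eqref{eq:epsilon-ind}) with a matrix-Bernstein argument that exploits the incoherence of $\bm{F}^{\star}$.

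The hard part will be obtaining the sharp $\|\bm{F}^{\star}\|_{2,\infty}\|\bm{F}^{\star}\|$ scaling in the second summand: a naive split $\|\nabla f^{(l)}(\bm{F}^{\star})\|\cdot\|\bm{F}^{\star}\|$ would inflate the bound by a factor of $\sqrt{n}$, since the operator-norm bound on $\nabla f^{(l)}(\bm{F}^{\star})$ does not exploit incoherence. To circumvent this I would view the second summand directly as a sum, over $(i,j)\in[n]\times[n]$ with $i\neq l$, of independent random rank-at-most-one $r\times r$ matrices
\[
\tfrac{1}{p}\delta_{i,j}\psi_{\tau}(-\varepsilon_{i,j})\bigl((\bm{Y}^{\star}_{j,\cdot})^{\top}\bm{X}^{\star}_{i,\cdot}+(\bm{X}^{\star}_{i,\cdot})^{\top}\bm{Y}^{\star}_{j,\cdot}\bigr),
\]
each of spectral norm at most $\tau\|\bm{F}^{\star}\|_{2,\infty}^{2}/p$ and with matrix-variance proxy at most $\sigma^{2}\|\bm{F}^{\star}\|_{2,\infty}^{2}\|\bm{F}^{\star}\|^{2}/p$. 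Matrix Bernstein will then deliver the desired $\sigma\sqrt{n\log n/p}\cdot\|\bm{F}^{\star}\|_{2,\infty}\|\bm{F}^{\star}\|$ scaling, while the Huberization bias $\mathbb{E}[\psi_{\tau}(-\varepsilon_{i,j})]$ is absorbed by the choice $\tau\asymp\|\bm{M}^{\star}\|_{\infty}+\sigma\sqrt{np}$, producing the $\|\bm{M}^{\star}\|_{\infty}\sqrt{n/p}$ piece of the second bullet. The residual $\kappa^{1.5}$ then emerges when the $\sigma_{\max}$-scale operator-norm bounds supplied by Lemma~\ref{lem:aux} are passed through the $\sigma_{\min}$-scale denominator $\sigma_{\min}(\bm{B})$ in the final perturbation estimate.
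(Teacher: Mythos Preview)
Your overall framework---reduce to a polar-decomposition perturbation of the Gram matrix and bound the perturbation via the gradient step---is the same as the paper's, and your treatment of the second summand $\nabla f^{(l)}(\bm{F}^{\star})^{\top}\bm{F}^{\star}$ via a direct matrix-Bernstein bound on the $r\times r$ sum is sound (and would in fact give the second bullet without the $\kappa^{1.5}$).

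The gap is in the first summand. Applying the smoothness bound of Lemma~\ref{lem:strongcvx} to $[\nabla f^{(l)}(\bm{F}^{t,(l)})-\nabla f^{(l)}(\bm{F}^{\star})]^{\top}\bm{F}^{\star}$ only produces
\[
\frac{\eta}{\sigma_{\min}}\cdot\sigma_{\max}\,\bigl\|\bm{F}^{t,(l)}\bm{H}^{t,(l)}-\bm{F}^{\star}\bigr\|_{\mathrm{F}}\,\|\bm{F}^{\star}\|
\;\asymp\;\eta\kappa\sqrt{r}\Bigl(\tfrac{\sigma}{\sigma_{\min}}\sqrt{\tfrac{n}{p}}+\tfrac{\|\bm{M}^{\star}\|_{\infty}}{\sigma_{\min}}\sqrt{\tfrac{n}{p}}\Bigr)\|\bm{F}^{\star}\|^{2},
\]
which is \emph{linear} in the bracketed error, whereas the first bullet of the claim is \emph{quadratic}. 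This is not a cosmetic loss: the linear term dominates both bullets by a factor $\sqrt{n/(\mu r)}$, and if you carry it into \eqref{eq:h2} the induction for \eqref{subeq:add-1} requires $\kappa\sqrt{r}\lesssim 1$ to close. So the claim as stated does not follow from your argument.

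The paper obtains the quadratic scaling by comparing $\bm{F}^{t+1,(l)}\bm{H}^{t,(l)}$ not to $\bm{F}^{t,(l)}\bm{H}^{t,(l)}$ but to a carefully designed auxiliary iterate $\widetilde{\bm{F}}^{t+1,(l)}$ (the construction of \citet[Claim~4]{chen2020noisy}): one replaces the right multiplier $\bm{Y}^{t,(l)}\bm{H}^{t,(l)}$ (resp.\ $\bm{X}^{t,(l)}\bm{H}^{t,(l)}$) in the gradient step by $\bm{Y}^{\star}$ (resp.\ $\bm{X}^{\star}$). This has two effects. First, $(\widetilde{\bm{F}}^{t+1,(l)})^{\top}\bm{F}^{\star}$ is symmetric positive semidefinite, so $\mathsf{sgn}((\widetilde{\bm{F}}^{t+1,(l)})^{\top}\bm{F}^{\star})=\bm{I}_{r}$ exactly, just as in your $\bm{B}$. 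Second---and this is the point your decomposition misses---the residual $\bm{F}^{t+1,(l)}\bm{H}^{t,(l)}-\widetilde{\bm{F}}^{t+1,(l)}$ factors as $\eta\bm{B}\bm{\Delta}^{t,(l)}$ plus balancing terms, where $\bm{B}$ is the Huberized residual matrix $p^{-1}\mathcal{P}_{\Omega}(\psi_{\tau}(\bm{X}^{t,(l)}\bm{Y}^{t,(l)\top}-\bm{M}))$ (plus row-$l$ corrections). Since $\|\bm{B}\|$ is itself $O\bigl(n\|\bm{F}^{t,(l)}\bm{R}^{t,(l)}-\bm{F}^{\star}\|_{2,\infty}\|\bm{F}^{\star}\|_{2,\infty}+\sigma\sqrt{n/p}\log n\bigr)$, the product $\|\bm{B}\|\,\|\bm{\Delta}^{t,(l)}\|$ is second-order, yielding the $(\cdots)^{2}$ bullet. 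The $\kappa^{1.5}(\cdots)\|\bm{F}^{\star}\|_{2,\infty}\|\bm{F}^{\star}\|$ bullet then comes from the balancing term $\bm{X}^{t,(l)\top}\bm{X}^{t,(l)}-\bm{Y}^{t,(l)\top}\bm{Y}^{t,(l)}$, controlled via Lemma~\ref{lem:bal} and \eqref{subeq:add-2}, not from a noise-at-truth computation. In short: the large ``signal'' part of the gradient must be absorbed into the reference point of the polar-perturbation argument, not bounded away by smoothness.
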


Finally plugging \eqref{eq:h1} and \eqref{eq:h2} into \eqref{eq:decompose-loo}
gives 
\begin{align*}
\left\Vert \left(\bm{\Delta}_{\bm{X}}^{t+1,\left(l\right)}\right)_{l,\cdot}\right\Vert _{2} & \leq\left(1-\eta\sigma_{\min}\right)\left\Vert \left(\bm{\Delta}_{\bm{X}}^{t,\left(l\right)}\right)_{l,\cdot}\right\Vert _{2}+8\eta\left\Vert \bm{\Delta}^{t,\left(l\right)}\right\Vert \left\Vert \bm{F}^{\star}\right\Vert \left\Vert \bm{X}^{\star}\right\Vert _{2,\infty}\\
 & \qquad+\widetilde{C}\eta\sqrt{r}\left(\frac{\sigma}{\sigma_{\min}}\sqrt{\frac{n}{p}}+\frac{\left\Vert \bm{M}^{\star}\right\Vert _{\infty}}{\sigma_{\min}}\sqrt{\frac{n}{p}}\right)^{2}\left\Vert \bm{F}^{\star}\right\Vert ^{2}\left\Vert \bm{F}^{\star}\right\Vert _{2,\infty}\log n\\
 & \qquad+\widetilde{C}\frac{\eta}{2}\kappa^{1.5}\left(\frac{\sigma}{\sigma_{\min}}\sqrt{\frac{n}{p}}+\frac{\left\Vert \bm{M}^{\star}\right\Vert _{\infty}}{\sigma_{\min}}\sqrt{\frac{n}{p}}\right)\left\Vert \bm{F}^{\star}\right\Vert _{2,\infty}^{2}\left\Vert \bm{F}^{\star}\right\Vert \log n\\
 & \lesssim\kappa\sqrt{r}\left(\frac{\sigma}{\sigma_{\min}}\sqrt{\frac{n}{p}}+\frac{\left\Vert \bm{M}^{\star}\right\Vert _{\infty}}{\sigma_{\min}}\sqrt{\frac{n}{p}}\right)\left\Vert \bm{F}^{\star}\right\Vert _{2,\infty},
\end{align*}
where the last line utilizes the hypothesis \eqref{subeq:add-1} and
\eqref{subeq:aux-3}. 
\end{enumerate}

\subsubsection{Proof of Claim \ref{claim:H}\label{subsec:Proof-of-Claim-H}}

To start with, we introduce an auxiliary sequence 
\[
\widetilde{\bm{F}}^{t+1,\left(l\right)}=\left[\begin{array}{c}
\widetilde{\bm{X}}^{t+1,\left(l\right)}\\
\widetilde{\bm{Y}}^{t+1,\left(l\right)}
\end{array}\right],
\]
with

\begin{align*}
\widetilde{\bm{X}}^{t+1,\left(l\right)} & =\bm{X}^{t,\left(l\right)}\bm{H}^{t,\left(l\right)}-\eta\frac{1}{p}\mathcal{P}_{\Omega_{-l},\cdot}\left(\psi_{\tau}\left(\left(\bm{X}^{t,\left(l\right)}\bm{Y}^{t,\left(l\right)\top}\right)_{i,j}-M_{i,j}\right)\right)\bm{Y}^{\star}\\
 & \qquad-\eta\mathcal{P}_{l,\cdot}\left(\psi_{\tau}\left(\left(\bm{X}^{t,\left(l\right)}\bm{Y}^{t,\left(l\right)\top}\right)_{i,j}-M_{i,j}^{\star}\right)\right)\bm{Y}^{\star}\\
 & \qquad-\frac{\eta}{2}\bm{X}^{\star}\bm{H}^{t,\left(l\right)\top}\left(\bm{X}^{t,\left(l\right)\top}\bm{X}^{t,\left(l\right)}-\bm{Y}^{t,\left(l\right)\top}\bm{Y}^{t,\left(l\right)}\right)\bm{H}^{t,\left(l\right)},\\
\widetilde{\bm{Y}}^{t+1,\left(l\right)} & =\bm{Y}^{t,\left(l\right)}\bm{H}^{t,\left(l\right)}-\eta\left[\frac{1}{p}\mathcal{P}_{\Omega_{-l},\cdot}\left(\psi_{\tau}\left(\left(\bm{X}^{t,\left(l\right)}\bm{Y}^{t,\left(l\right)\top}\right)_{i,j}-M_{i,j}\right)\right)\right]^{\top}\bm{X}^{\star}\\
 & \qquad-\eta\left[\mathcal{P}_{l,\cdot}\left(\psi_{\tau}\left(\left(\bm{X}^{t,\left(l\right)}\bm{Y}^{t,\left(l\right)\top}\right)_{i,j}-M_{i,j}^{\star}\right)\right)\right]^{\top}\bm{X}^{\star}\\
 & \qquad-\frac{\eta}{2}\bm{Y}^{\star}\bm{H}^{t,\left(l\right)\top}\left(\bm{Y}^{t,\left(l\right)\top}\bm{Y}^{t,\left(l\right)}-\bm{X}^{t,\left(l\right)\top}\bm{X}^{t,\left(l\right)}\right)\bm{H}^{t,\left(l\right)}.
\end{align*}

Then we turn attention to $\Vert(\bm{H}^{t,(l)})^{-1}\bm{H}^{t+1,(l)}-\bm{I}_{r}\Vert$.
We begin with a claim \citep[Claim 4]{chen2020noisy} showing that
$\bm{I}_{r}$ aligns $\widetilde{\bm{F}}^{t+1,(l)}$ with $\bm{F}^{\star}$.
\begin{claim}
One has 
\[
\bm{I}_{r}=\arg\min_{\bm{R}\in\mathcal{O}^{r\times r}}\left\Vert \widetilde{\bm{F}}^{t+1,\left(l\right)}\bm{R}-\bm{F}^{\star}\right\Vert _{\mathrm{F}}\qquad\text{and}\qquad\sigma_{\min}\left(\widetilde{\bm{F}}^{t+1,\left(l\right)\top}\bm{F}^{\star}\right)\geq\sigma_{\min}/2.
\]
\end{claim}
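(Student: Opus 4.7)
The plan is to use the Procrustes characterization: for $\bm{A},\bm{B}\in\mathbb{R}^{n\times r}$, the minimizer of $\|\bm{A}\bm{R}-\bm{B}\|_{\mathrm{F}}$ over $\bm{R}\in\mathcal{O}^{r\times r}$ equals $\mathsf{sgn}(\bm{A}^{\top}\bm{B})$ (cf.~\eqref{defn-sgn}). Hence $\bm{I}_r$ is the optimal rotation exactly when $\widetilde{\bm{F}}^{t+1,(l)\top}\bm{F}^{\star}$ is symmetric positive semi-definite, and the claim reduces to (i) verifying symmetry of this matrix, and (ii) establishing $\sigma_{\min}(\widetilde{\bm{F}}^{t+1,(l)\top}\bm{F}^{\star})\ge\sigma_{\min}/2$, from which PSD-ness (hence the minimizing property of $\bm{I}_r$) and the second assertion both follow.

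For symmetry, I would expand
\[
\widetilde{\bm{F}}^{t+1,(l)\top}\bm{F}^{\star}=\widetilde{\bm{X}}^{t+1,(l)\top}\bm{X}^{\star}+\widetilde{\bm{Y}}^{t+1,(l)\top}\bm{Y}^{\star}
\]
and check three groups of contributions separately. The leading pieces produce $\bm{H}^{t,(l)\top}\bm{F}^{t,(l)\top}\bm{F}^{\star}$, which is symmetric because $\bm{H}^{t,(l)}=\mathsf{sgn}(\bm{F}^{t,(l)\top}\bm{F}^{\star})$. Writing $\bm{S}$ for the common sampled/Huberized matrix appearing in both $\widetilde{\bm{X}}^{t+1,(l)}$ and $\widetilde{\bm{Y}}^{t+1,(l)}$, the gradient pieces contribute $-\eta\bigl(\bm{Y}^{\star\top}\bm{S}^{\top}\bm{X}^{\star}+\bm{X}^{\star\top}\bm{S}\bm{Y}^{\star}\bigr)$, whose two summands are transposes of each other. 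The balance-regularization pieces combine into $(\eta/2)\bm{B}\bigl(\bm{Y}^{\star\top}\bm{Y}^{\star}-\bm{X}^{\star\top}\bm{X}^{\star}\bigr)$ with $\bm{B}=\bm{H}^{t,(l)\top}(\bm{X}^{t,(l)\top}\bm{X}^{t,(l)}-\bm{Y}^{t,(l)\top}\bm{Y}^{t,(l)})\bm{H}^{t,(l)}$; this vanishes outright because $\bm{X}^{\star\top}\bm{X}^{\star}=\bm{Y}^{\star\top}\bm{Y}^{\star}=\bm{\Sigma}^{\star}$ by the balanced factorization.

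For the singular value lower bound, I would write
\[
\widetilde{\bm{F}}^{t+1,(l)\top}\bm{F}^{\star}=\bm{F}^{\star\top}\bm{F}^{\star}+\bigl(\widetilde{\bm{F}}^{t+1,(l)}-\bm{F}^{\star}\bigr)^{\top}\bm{F}^{\star},
\]
using $\sigma_{\min}(\bm{F}^{\star\top}\bm{F}^{\star})=2\sigma_{\min}$. It suffices to show that $\|\widetilde{\bm{F}}^{t+1,(l)}-\bm{F}^{\star}\|\cdot\|\bm{F}^{\star}\|\le(3/2)\sigma_{\min}$, which I would obtain by splitting $\widetilde{\bm{F}}^{t+1,(l)}-\bm{F}^{\star}$ into the deviation $\bm{F}^{t,(l)}\bm{H}^{t,(l)}-\bm{F}^{\star}$ (controlled by \eqref{subeq:aux-3}) plus one gradient-style step whose norm I would bound by reusing the noise estimate \eqref{eq:noise}, the truncation-tail bound \eqref{eq:tau}, and the balance control from Lemma \ref{lem:bal}. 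Weyl's inequality then yields the asserted singular-value lower bound under the standing step-size choice $\eta\le c'/(\mu\kappa^{3}r^{2}\sigma_{\max}\log n)$ and the sample-size/noise hypotheses.

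The main obstacle is the bookkeeping of the perturbation in step (ii): the modified gradient inside $\widetilde{\bm{F}}^{t+1,(l)}$ pairs the sampled matrix with $\bm{F}^{\star}$ (rather than $\bm{F}^{t,(l)}$) and replaces the noisy $l$th row/column by the clean $\bm{M}^{\star}$, so one cannot simply cite the iterative contraction Lemma \ref{lem:contraction}. The Huberization bias must also be kept below the $\sigma_{\min}$ margin, which is exactly where the choice $\tau\asymp\|\bm{M}^{\star}\|_{\infty}+\sigma\sqrt{np}$ becomes essential. All remaining ingredients are either purely algebraic (for symmetry) or direct applications of previously established lemmas.
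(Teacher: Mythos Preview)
The paper does not supply its own proof of this claim; it simply invokes \citep[Claim 4]{chen2020noisy}. Your approach---using the Procrustes characterization to reduce the first assertion to showing that $\widetilde{\bm{F}}^{t+1,(l)\top}\bm{F}^{\star}$ is symmetric positive definite, verifying symmetry term-by-term (with the balance term vanishing because $\bm{X}^{\star\top}\bm{X}^{\star}=\bm{Y}^{\star\top}\bm{Y}^{\star}$), and then obtaining the singular-value lower bound via Weyl's inequality and the perturbation bounds already established---is precisely the standard argument used for such claims in that line of work, and the details you sketch are correct.
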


Invoking \citet[Lemma 36]{ma2017implicit} yields 
\begin{align}
\left\Vert \left(\bm{H}^{t,\left(l\right)}\right)^{-1}\bm{H}^{t+1,\left(l\right)}-\bm{I}_{r}\right\Vert  & =\left\Vert \mathsf{sgn}\left(\left(\bm{F}^{t+1,\left(l\right)}\bm{H}^{t,\left(l\right)}\right)^{\top}\bm{F}^{\star}\right)-\mathsf{sgn}\left(\widetilde{\bm{F}}^{t+1,\left(l\right)\top}\bm{F}^{\star}\right)\right\Vert \nonumber \\
 & \leq\frac{1}{\sigma_{\min}\left(\widetilde{\bm{F}}^{t+1,\left(l\right)\top}\bm{F}^{\star}\right)}\left\Vert \left(\bm{F}^{t+1,\left(l\right)}\bm{H}^{t,\left(l\right)}-\widetilde{\bm{F}}^{t+1,\left(l\right)}\right)^{\top}\bm{F}^{\star}\right\Vert \nonumber \\
 & \leq\frac{2}{\sigma_{\min}}\left\Vert \bm{F}^{t+1,\left(l\right)}\bm{H}^{t,\left(l\right)}-\widetilde{\bm{F}}^{t+1,\left(l\right)}\right\Vert \left\Vert \bm{F}^{\star}\right\Vert .\label{eq:Htl}
\end{align}
To control $\bm{F}^{t+1,(l)}\bm{H}^{t,(l)}-\widetilde{\bm{F}}^{t+1,(l)}$,
we can decompose it into the sum of three terms as follows, 
\begin{align}
\bm{F}^{t+1,\left(l\right)}\bm{H}^{t,\left(l\right)}-\widetilde{\bm{F}}^{t+1,\left(l\right)} & =\eta\left[\begin{array}{cc}
\bm{B} & \bm{0}\\
\bm{0} & \bm{B}^{\top}
\end{array}\right]\left[\begin{array}{c}
\bm{\Delta}_{\bm{Y}}^{t,\left(l\right)}\\
\bm{\Delta}_{\bm{X}}^{t,\left(l\right)}
\end{array}\right]+\frac{\eta}{2}\left[\begin{array}{c}
\bm{X}^{\star}\\
-\bm{Y}^{\star}
\end{array}\right]\bm{H}^{t,\left(l\right)\top}\bm{C}\bm{H}^{t,\left(l\right)}\nonumber \\
 & \qquad+\frac{\eta}{2}\left[\begin{array}{c}
\bm{\Delta}_{\bm{X}}^{t,\left(l\right)}\\
-\bm{\Delta}_{\bm{Y}}^{t,\left(l\right)}
\end{array}\right]\bm{H}^{t,\left(l\right)\top}\bm{C}\bm{H}^{t,\left(l\right)},\label{eq:Ftl}
\end{align}
where 
\begin{align*}
\bm{B} & \coloneqq-\frac{1}{p}\mathcal{P}_{\Omega_{-l},\cdot}\left(\psi_{\tau}\left(\left(\bm{X}^{t,\left(l\right)}\bm{Y}^{t,\left(l\right)\top}\right)_{i,j}-M_{i,j}\right)\right)-\mathcal{P}_{l,\cdot}\left(\psi_{\tau}\left(\left(\bm{X}^{t,\left(l\right)}\bm{Y}^{t,\left(l\right)\top}\right)_{i,j}-M_{i,j}\right)\right)\\
\bm{C} & \coloneqq\bm{X}^{t,\left(l\right)\top}\bm{X}^{t,\left(l\right)}-\bm{Y}^{t,\left(l\right)\top}\bm{Y}^{t,\left(l\right)}.
\end{align*}
Regarding $\bm{B}$, one has 
\begin{align*}
\bm{B} & =-\frac{1}{p}\mathcal{P}_{\Omega}\left(\psi_{\tau}\left(\left(\bm{X}^{t,\left(l\right)}\bm{Y}^{t,\left(l\right)\top}\right)_{i,j}-M_{i,j}\right)\right)+\frac{1}{p}\mathcal{P}_{\Omega_{l}}\left(\psi_{\tau}\left(\left(\bm{X}^{t,\left(l\right)}\bm{Y}^{t,\left(l\right)\top}\right)_{i,j}-M_{i,j}\right)\right)\\
 & \qquad-\mathcal{P}_{l,\cdot}\left(\psi_{\tau}\left(\left(\bm{X}^{t,\left(l\right)}\bm{Y}^{t,\left(l\right)\top}\right)_{i,j}-M_{i,j}^{\star}\right)\right)\\
 & =\bm{\Theta}_{1}+\bm{\Theta}_{2}+\bm{\Theta}_{3}+\bm{\Theta}_{4}+\bm{\Theta}_{5},
\end{align*}
where 
\begin{align*}
\bm{\Theta}_{1}\coloneqq & -\frac{1}{p}\mathcal{P}_{\Omega}\left(\left(\left(\bm{X}^{t,\left(l\right)}\bm{Y}^{t,\left(l\right)\top}\right)_{i,j}-M_{i,j}^{\star}\right)\ind_{\left|\left(\bm{X}^{t,\left(l\right)}\bm{Y}^{t,\left(l\right)\top}\right)_{i,j}-M_{i,j}\right|\leq\tau}\right),\\
\bm{\Theta}_{2}\coloneqq & \frac{1}{p}\mathcal{P}_{\Omega_{l}}\left(\left(\left(\bm{X}^{t,\left(l\right)}\bm{Y}^{t,\left(l\right)\top}\right)_{i,j}-M_{i,j}^{\star}\right)\ind_{\left|\left(\bm{X}^{t,\left(l\right)}\bm{Y}^{t,\left(l\right)\top}\right)_{i,j}-M_{i,j}\right|\leq\tau}\right)\\
 & \quad-\mathcal{P}_{l,\cdot}\left(\left(\left(\bm{X}^{t,\left(l\right)}\bm{Y}^{t,\left(l\right)\top}\right)_{i,j}-M_{i,j}^{\star}\right)\ind_{\left|\left(\bm{X}^{t,\left(l\right)}\bm{Y}^{t,\left(l\right)\top}\right)_{i,j}-M_{i,j}^{\star}\right|\leq\tau}\right),\\
\bm{\Theta}_{3}\coloneqq & -\frac{1}{p}\mathcal{P}_{\Omega}\left(\tau\mathrm{sgn}\left(\left(\bm{X}^{t,\left(l\right)}\bm{Y}^{t,\left(l\right)\top}\right)_{i,j}-M_{i,j}\right)\ind_{\left|\left(\bm{X}^{t,\left(l\right)}\bm{Y}^{t,\left(l\right)\top}\right)_{i,j}-M_{i,j}\right|>\tau}\right),\\
\bm{\Theta}_{4}\coloneqq & \frac{1}{p}\mathcal{P}_{\Omega_{l}}\left(\tau\mathrm{sgn}\left(\left(\bm{X}^{t,\left(l\right)}\bm{Y}^{t,\left(l\right)\top}\right)_{i,j}-M_{i,j}\right)\ind_{\left|\left(\bm{X}^{t,\left(l\right)}\bm{Y}^{t,\left(l\right)\top}\right)_{i,j}-M_{i,j}\right|>\tau}\right)\\
 & \quad-\mathcal{P}_{l,\cdot}\left(\tau\mathrm{sgn}\left(\left(\bm{X}^{t,\left(l\right)}\bm{Y}^{t,\left(l\right)\top}\right)_{i,j}-M_{i,j}^{\star}\right)\ind_{\left|\left(\bm{X}^{t,\left(l\right)}\bm{Y}^{t,\left(l\right)\top}\right)_{i,j}-M_{i,j}^{\star}\right|>\tau}\right),\\
\bm{\Theta}_{5}\coloneqq & \frac{1}{p}\mathcal{P}_{\Omega_{-l}}\left(\varepsilon_{i,j}\ind_{\left|\left(\bm{X}^{t,\left(l\right)}\bm{Y}^{t,\left(l\right)\top}\right)_{i,j}-M_{i,j}\right|\leq\tau}\right).
\end{align*}
We shall bound these terms separately as follows. 
\begin{enumerate}
\item In terms of $\bm{\Theta}_{1}$, one has 
\begin{align*}
 & \left\Vert \frac{1}{p}\mathcal{P}_{\Omega}\left(\left(\left(\bm{X}^{t,\left(l\right)}\bm{Y}^{t,\left(l\right)\top}\right)_{i,j}-M_{i,j}^{\star}\right)\ind_{\left|\left(\bm{X}^{t,\left(l\right)}\bm{Y}^{t,\left(l\right)\top}\right)_{i,j}-M_{i,j}\right|\leq\tau}\right)\right\Vert \\
 & \overset{\text{(i)}}{\leq}\left\Vert \frac{1}{p}\mathcal{P}_{\Omega}\left(\bm{1}\bm{1}^{\top}\right)\right\Vert \cdot\max_{i,j}\left|\left(\left(\bm{X}^{t,\left(l\right)}\bm{Y}^{t,\left(l\right)\top}\right)_{i,j}-M_{i,j}^{\star}\right)\ind_{\left|\left(\bm{X}^{t,\left(l\right)}\bm{Y}^{t,\left(l\right)\top}\right)_{i,j}-M_{i,j}\right|\leq\tau}\right|\\
 & \overset{\text{(ii)}}{\lesssim}n\left\Vert \bm{X}^{t,\left(l\right)}\bm{Y}^{t,\left(l\right)\top}-\bm{M}^{\star}\right\Vert _{\infty}\\
 & \lesssim n\left\Vert \bm{F}^{t,\left(l\right)}\bm{R}^{t,\left(l\right)}-\bm{F}^{\star}\right\Vert _{2,\infty}\left(\left\Vert \bm{F}^{t,\left(l\right)}\right\Vert _{2,\infty}+\left\Vert \bm{F}^{\star}\right\Vert _{2,\infty}\right)\\
 & \overset{\text{(iii)}}{\lesssim}n\left\Vert \bm{F}^{t,\left(l\right)}\bm{R}^{t,\left(l\right)}-\bm{F}^{\star}\right\Vert _{2,\infty}\left\Vert \bm{F}^{\star}\right\Vert _{2,\infty}
\end{align*}
where (i) is due to \eqref{eq:operator-ob}; (ii) comes from Lemma
\ref{lem:bernoulli}; (iii) utilizes \eqref{subeq:aux-6}. 
\item Next, we turn attention to $\bm{\Theta}_{3}$, which can be bounded
as 
\begin{align*}
 & \left\Vert \frac{1}{p}\mathcal{P}_{\Omega}\left(\tau\mathrm{sgn}\left(\left(\bm{X}^{t,\left(l\right)}\bm{Y}^{t,\left(l\right)\top}\right)_{i,j}-M_{i,j}\right)\ind_{\left|\left(\bm{X}^{t,\left(l\right)}\bm{Y}^{t,\left(l\right)\top}\right)_{i,j}-M_{i,j}\right|>\tau}\right)\right\Vert \\
 & \overset{\text{(i)}}{\leq}\tau\left\Vert \frac{1}{p}\mathcal{P}_{\Omega}\left(\ind_{\left|\left(\bm{X}^{t,\left(l\right)}\bm{Y}^{t,\left(l\right)\top}\right)_{i,j}-M_{i,j}\right|>\tau}\right)\right\Vert \\
 & \overset{\text{(ii)}}{\leq}\tau\left\Vert \frac{1}{p}\mathcal{P}_{\Omega}\left(\ind_{\left|\varepsilon_{i,j}\right|>\tau/2}\right)\right\Vert \\
 & \overset{\text{(iii)}}{\lesssim}\tau n\frac{\sigma^{2}}{\left(\tau/2\right)^{2}}\lesssim\sigma\sqrt{\frac{n}{p}}.
\end{align*}
Here (i) arises from \eqref{eq:operator-ob}; (ii) comes from the
observation that 
\begin{align}
\ind_{\left|\left(\bm{X}^{t,\left(l\right)}\bm{Y}^{t,\left(l\right)\top}\right)_{i,j}-M_{i,j}\right|>\tau} & =\ind_{\left|\left(\bm{X}^{t,\left(l\right)}\bm{Y}^{t,\left(l\right)\top}\right)_{i,j}-M_{i,j}^{\star}-\varepsilon_{i,j}\right|>\tau}\nonumber \\
 & \leq\ind_{\left|\varepsilon_{i,j}\right|>\tau-\left|\left(\bm{X}^{t,\left(l\right)}\bm{Y}^{t,\left(l\right)\top}\right)_{i,j}-M_{i,j}^{\star}\right|}\leq\ind_{\left|\varepsilon_{i,j}\right|>\tau/2},\label{eq:ind-1}
\end{align}
where the last inequality is due to \eqref{subeq:aux-1} resulting
in 
\begin{align}
\max_{i,j}\left|\left(\bm{X}^{t,\left(l\right)}\bm{Y}^{t,\left(l\right)\top}\right)_{i,j}-M_{i,j}^{\star}\right| & \leq\left\Vert \bm{F}^{t,\left(l\right)}\bm{R}^{t,\left(l\right)}-\bm{F}^{\star}\right\Vert _{2,\infty}\left\Vert \bm{F}^{\star}\right\Vert _{2,\infty}\nonumber \\
 & \ll\left\Vert \bm{F}^{\star}\right\Vert _{2,\infty}^{2}\leq\frac{\mu r\sigma_{\max}}{n}\leq\frac{\tau}{2};\label{eq:tau-tl}
\end{align}
(iii) follows from Lemma \ref{lem:bernoulli}. 
\item Next we turn attention to $\bm{\Theta}_{5}$ which clearly obeys 
\[
\left\Vert \frac{1}{p}\mathcal{P}_{\Omega_{-l}}\left(\varepsilon_{i,j}\ind_{\left|\left(\bm{X}^{t,\left(l\right)}\bm{Y}^{t,\left(l\right)\top}\right)_{i,j}-M_{i,j}\right|\leq\tau}\right)\right\Vert \leq\left\Vert \frac{1}{p}\mathcal{P}_{\Omega}\left(\varepsilon_{i,j}\ind_{\left|\left(\bm{X}^{t,\left(l\right)}\bm{Y}^{t,\left(l\right)\top}\right)_{i,j}-M_{i,j}\right|\leq\tau}\right)\right\Vert .
\]
Similarly as \eqref{eq:strongcvx-alpha2-bound}, one has 
\[
\left\Vert \frac{1}{p}\mathcal{P}_{\Omega}\left(\varepsilon_{i,j}\ind_{\left|\left(\bm{X}^{t,\left(l\right)}\bm{Y}^{t,\left(l\right)\top}\right)_{i,j}-M_{i,j}\right|\leq\tau}\right)\right\Vert \lesssim\sigma\sqrt{\frac{n}{p}}.
\]
\item Regarding $\bm{\Theta}_{2}$, it can be expressed as 
\[
\bm{\Theta}_{2}=\frac{1}{p}\sum_{j=1}^{p}\bm{v}_{j},
\]
where 
\begin{align*}
\bm{v}_{j} & \coloneqq\delta_{l,j}\left(\left(\bm{X}^{t,\left(l\right)}\bm{Y}^{t,\left(l\right)\top}\right)_{l,j}-M_{l,j}^{\star}\right)\ind_{\left|\left(\bm{X}^{t,\left(l\right)}\bm{Y}^{t,\left(l\right)\top}\right)_{l,j}-M_{l,j}\right|\leq\tau}\bm{e}_{j}\\
 & \qquad-p\left(\left(\bm{X}^{t,\left(l\right)}\bm{Y}^{t,\left(l\right)\top}\right)_{l,j}-M_{l,j}^{\star}\right)\ind_{\left|\left(\bm{X}^{t,\left(l\right)}\bm{Y}^{t,\left(l\right)\top}\right)_{l,j}-M_{l,j}^{\star}\right|\leq\tau}\bm{e}_{j}.
\end{align*}
It is easy to verify that 
\begin{align*}
L & \coloneqq\max_{1\leq j\leq n}\left\Vert \bm{v}_{j}\right\Vert _{2}\leq\left\Vert \bm{X}^{t,\left(l\right)}\bm{Y}^{t,\left(l\right)\top}-\bm{M}^{\star}\right\Vert _{\infty},\\
V & \coloneqq\left\Vert \sum_{j=1}^{n}\mathbb{E}\left[\bm{v}_{j}^{\top}\bm{v}_{j}\right]\right\Vert \leq np\left\Vert \bm{X}^{t,\left(l\right)}\bm{Y}^{t,\left(l\right)\top}-\bm{M}^{\star}\right\Vert _{\infty}^{2}.
\end{align*}
We are positioned to apply the matrix Bernstein inequality \citep[Theorem 6.1.1]{tropp2015introduction}.
One has 
\begin{align*}
\frac{1}{p}\left\Vert \sum_{j=1}^{n}\bm{v}_{j}\right\Vert _{2} & \lesssim\frac{1}{p}\left(\sqrt{V\log n}+L\log n\right)\\
 & \lesssim\frac{1}{p}\sqrt{np\left\Vert \bm{X}^{t,\left(l\right)}\bm{Y}^{t,\left(l\right)\top}-\bm{M}^{\star}\right\Vert _{\infty}^{2}\log n}+\frac{1}{p}\left\Vert \bm{X}^{t,\left(l\right)}\bm{Y}^{t,\left(l\right)\top}-\bm{M}^{\star}\right\Vert _{\infty}\log n\\
 & \lesssim\sqrt{\frac{n\log n}{p}}\left\Vert \bm{F}^{t,\left(l\right)}\bm{R}^{t,\left(l\right)}-\bm{F}^{\star}\right\Vert _{2,\infty}\left\Vert \bm{Y}^{\star}\right\Vert _{2,\infty}+\frac{1}{p}\left\Vert \bm{F}^{t,\left(l\right)}\bm{R}^{t,\left(l\right)}-\bm{F}^{\star}\right\Vert _{2,\infty}\left\Vert \bm{Y}^{\star}\right\Vert _{2,\infty}\log n\\
 & \lesssim\sqrt{\frac{\mu r\log n}{p}\sigma_{\max}}\left\Vert \bm{F}^{t,\left(l\right)}\bm{R}^{t,\left(l\right)\top}-\bm{F}^{\star}\right\Vert _{2,\infty},
\end{align*}
where the last inequality holds as long as $np\geq1$. 
\item Finally for $\bm{\Theta}_{4}$, since \eqref{eq:tau-tl} implies 
\[
\ind_{\left|\left(\bm{X}^{t,\left(l\right)}\bm{Y}^{t,\left(l\right)\top}\right)_{l,j}-M_{l,j}^{\star}\right|>\tau}=0,
\]
we arrive at 
\begin{align*}
\bm{\Theta}_{4} & =\frac{1}{p}\mathcal{P}_{\Omega_{l}}\left(\tau\mathrm{sgn}\left(\left(\bm{X}^{t,\left(l\right)}\bm{Y}^{t,\left(l\right)\top}\right)_{i,j}-M_{i,j}\right)\ind_{\left|\left(\bm{X}^{t,\left(l\right)}\bm{Y}^{t,\left(l\right)\top}\right)_{i,j}-M_{i,j}\right|>\tau}\right)\\
 & =\frac{\tau}{p}\sum_{j=1}^{n}\underbrace{\delta_{l,j}\mathrm{sgn}\left(\left(\bm{X}^{t,\left(l\right)}\bm{Y}^{t,\left(l\right)\top}\right)_{l,j}-M_{l,j}\right)\ind_{\left|\left(\bm{X}^{t,\left(l\right)}\bm{Y}^{t,\left(l\right)\top}\right)_{l,j}-M_{l,j}\right|>\tau}\bm{e}_{j}}_{\eqqcolon\bm{u}_{j}}.
\end{align*}
It is easy to check that 
\begin{align*}
L & \coloneqq\max_{1\leq j\leq n}\left\Vert \bm{u}_{j}\right\Vert \leq1\\
V & \coloneqq\left\Vert \sum_{j=1}^{n}\mathbb{E}\left[\bm{u}_{j}^{\top}\bm{u}_{j}\right]\right\Vert =\left\Vert \sum_{j=1}^{n}\mathbb{E}\left[\delta_{l,j}\mathrm{sgn}\left(\left(\bm{X}^{t,\left(l\right)}\bm{Y}^{t,\left(l\right)\top}\right)_{l,j}-M_{l,j}\right)\ind_{\left|\left(\bm{X}^{t,\left(l\right)}\bm{Y}^{t,\left(l\right)\top}\right)_{l,j}-M_{l,j}\right|>\tau}\right]\bm{e}_{j}^{\top}\bm{e}_{j}\right\Vert \\
 & \leq\max_{j}\mathbb{E}\left[\delta_{l,j}\ind_{\left|\left(\bm{X}^{t,\left(l\right)}\bm{Y}^{t,\left(l\right)\top}\right)_{l,j}-M_{l,j}\right|>\tau}\right]\left\Vert \sum_{j=1}^{n}\bm{e}_{j}^{\top}\bm{e}_{j}\right\Vert \\
 & \overset{\text{(i)}}{\lesssim}\max_{j}\mathbb{E}\left[\delta_{l,j}\ind_{\left|\varepsilon_{l,j}\right|>\tau/2}\right]\left\Vert \sum_{j=1}^{n}\bm{e}_{j}^{\top}\bm{e}_{j}\right\Vert \overset{\text{(ii)}}{\lesssim}np\frac{\sigma^{2}}{\left(\tau/2\right)^{2}}\ll1,
\end{align*}
where (i) applies \eqref{eq:ind-1} and (ii) invokes Markov inequality.
Apply the matrix Bernstein inequality \citep[Theorem 6.1.1]{tropp2015introduction}
to discover that with probability over $1-O(n^{-100})$, 
\begin{align*}
\frac{\tau}{p}\left\Vert \sum_{j=1}^{n}\bm{u}_{j}\right\Vert  & \lesssim\frac{\tau}{p}\left(\sqrt{V\log n}+L\log n\right)\lesssim\sigma\sqrt{\frac{n}{p}}\log n+\frac{\left\Vert \bm{M}^{\star}\right\Vert _{\infty}}{p}\log n.
\end{align*}
\end{enumerate}
Combining all the bounds above gives 
\begin{align}
\left\Vert \bm{B}\right\Vert  & \lesssim n\left\Vert \bm{F}^{t,\left(l\right)}\bm{R}^{t,\left(l\right)}-\bm{F}^{\star}\right\Vert _{2,\infty}\left\Vert \bm{F}^{\star}\right\Vert _{2,\infty}+\sigma\sqrt{\frac{n}{p}}+\sqrt{\frac{\mu r\log n}{p}\sigma_{\max}}\left\Vert \bm{F}^{t,\left(l\right)}\bm{R}^{t,\left(l\right)}-\bm{F}^{\star}\right\Vert _{2,\infty}\nonumber \\
 & \qquad+\sigma\sqrt{\frac{n}{p}}\log n+\frac{\left\Vert \bm{M}^{\star}\right\Vert _{\infty}}{p}\log n\nonumber \\
 & \lesssim\left\Vert \bm{F}^{t,\left(l\right)}\bm{R}^{t,\left(l\right)}-\bm{F}^{\star}\right\Vert _{2,\infty}\sqrt{\mu rn\sigma_{\max}}+\sigma\sqrt{\frac{n}{p}}\log n+\frac{\left\Vert \bm{M}^{\star}\right\Vert _{\infty}}{p}\log n,\label{eq:B}
\end{align}
where the last line invokes Lemma \ref{lem:aux} and holds provided
that $np\gg\log n$.

Turning attention to $\bm{C}$, we have 
\begin{align}
 & \left\Vert \bm{X}^{t,\left(l\right)\top}\bm{X}^{t,\left(l\right)}-\bm{Y}^{t,\left(l\right)\top}\bm{Y}^{t,\left(l\right)}\right\Vert _{\mathrm{F}}\nonumber \\
 & \overset{\text{(i)}}{=}\left\Vert \left(\bm{R}^{t,\left(l\right)}\right)^{\top}\left(\bm{X}^{t,\left(l\right)\top}\bm{X}^{t,\left(l\right)}-\bm{Y}^{t,\left(l\right)\top}\bm{Y}^{t,\left(l\right)}\right)\bm{R}^{t,\left(l\right)}\right\Vert _{\mathrm{F}}\nonumber \\
 & \overset{\text{(ii)}}{\leq}\left\Vert \bm{R}^{t,\left(l\right)\top}\bm{X}^{t,\left(l\right)\top}\bm{X}^{t,\left(l\right)}\bm{R}^{t,\left(l\right)}-\bm{H}^{t\top}\bm{X}^{t\top}\bm{X}^{t}\bm{H}^{t}\right\Vert _{\mathrm{F}}+\left\Vert \bm{H}^{t\top}\bm{X}^{t\top}\bm{X}^{t}\bm{H}^{t}-\bm{H}^{t\top}\bm{Y}^{t\top}\bm{Y}^{t}\bm{H}^{t}\right\Vert _{\mathrm{F}}\nonumber \\
 & \qquad+\left\Vert \bm{H}^{t\top}\bm{Y}^{t\top}\bm{Y}^{t}\bm{H}^{t}-\bm{R}^{t,\left(l\right)\top}\bm{Y}^{t,\left(l\right)\top}\bm{Y}^{t,\left(l\right)}\bm{R}^{t,\left(l\right)}\right\Vert _{\mathrm{F}}\nonumber \\
 & \overset{\text{(iii)}}{\leq}\left\Vert \bm{R}^{t,\left(l\right)\top}\bm{X}^{t,\left(l\right)\top}\bm{X}^{t,\left(l\right)}\bm{R}^{t,\left(l\right)}-\bm{H}^{t\top}\bm{X}^{t\top}\bm{X}^{t,\left(l\right)}\bm{R}^{t,\left(l\right)}\right\Vert _{\mathrm{F}}\nonumber \\
 & \qquad+\left\Vert \bm{H}^{t\top}\bm{X}^{t\top}\bm{X}^{t,\left(l\right)}\bm{R}^{t,\left(l\right)}-\bm{H}^{t\top}\bm{X}^{t\top}\bm{X}^{t}\bm{H}^{t}\right\Vert _{\mathrm{F}}+\left\Vert \bm{X}^{t\top}\bm{X}^{t}-\bm{Y}^{t\top}\bm{Y}^{t}\right\Vert _{\mathrm{F}}\nonumber \\
 & \qquad+\left\Vert \bm{R}^{t,\left(l\right)\top}\bm{Y}^{t,\left(l\right)\top}\bm{Y}^{t,\left(l\right)}\bm{R}^{t,\left(l\right)}-\bm{H}^{t\top}\bm{Y}^{t\top}\bm{Y}^{t,\left(l\right)}\bm{R}^{t,\left(l\right)}\right\Vert _{\mathrm{F}}\nonumber \\
 & \qquad+\left\Vert \bm{H}^{t\top}\bm{Y}^{t\top}\bm{Y}^{t,\left(l\right)}\bm{R}^{t,\left(l\right)}-\bm{H}^{t\top}\bm{Y}^{t\top}\bm{Y}^{t}\bm{H}^{t}\right\Vert _{\mathrm{F}}\nonumber \\
 & \leq\left(\left\Vert \bm{X}^{\star}\right\Vert +\left\Vert \bm{X}^{t,\left(l\right)}\right\Vert \right)\left\Vert \bm{X}^{t,\left(l\right)}\bm{R}^{t,\left(l\right)}-\bm{X}^{t}\bm{H}^{t}\right\Vert _{\mathrm{F}}+\left\Vert \bm{X}^{t\top}\bm{X}^{t}-\bm{Y}^{t\top}\bm{Y}^{t}\right\Vert _{\mathrm{F}}\nonumber \\
 & \qquad+\left(\left\Vert \bm{Y}^{\star}\right\Vert +\left\Vert \bm{Y}^{t,\left(l\right)}\right\Vert \right)\left\Vert \bm{Y}^{t,\left(l\right)}\bm{R}^{t,\left(l\right)}-\bm{Y}^{t}\bm{H}^{t}\right\Vert _{\mathrm{F}}\nonumber \\
 & \overset{\text{(iv)}}{\lesssim}\sqrt{\kappa}\left(\frac{\sigma}{\sigma_{\min}}\sqrt{\frac{n}{p}}+\frac{\left\Vert \bm{M}^{\star}\right\Vert _{\infty}}{\sigma_{\min}}\sqrt{\frac{n}{p}}\right)\left\Vert \bm{F}^{\star}\right\Vert _{2,\infty}\left\Vert \bm{F}^{\star}\right\Vert \log n\nonumber \\
 & \qquad+\eta\mu\kappa^{4}r^{3.5}\sigma_{\max}^{2}\left(\frac{\sigma}{\sigma_{\min}}\sqrt{\frac{n}{p}}+\frac{\left\Vert \bm{M}^{\star}\right\Vert _{\infty}}{\sigma_{\min}}\sqrt{\frac{n}{p}}\right)^{2}\log^{2}n,\label{eq:C}
\end{align}
where (i) follows from the fact that $\Vert\bm{X}\Vert_{\mathrm{F}}=\Vert\bm{X}\bm{O}\Vert_{\mathrm{F}}$
for any $\bm{O}\in\mathcal{O}^{r\times r}$; (ii) and (iii) come from
the triangle inequality; (iv) arises from Lemma \ref{lem:aux}, \ref{lem:bal}
and \eqref{subeq:add-2}. where (i) follows from the fact that $\Vert\bm{X}\Vert_{\mathrm{F}}=\Vert\bm{X}\bm{O}\Vert_{\mathrm{F}}$
for any $\bm{O}\in\mathcal{O}^{r\times r}$; (ii) is due to $\bm{X}^{\star\top}\bm{X}^{\star}=\bm{Y}^{\star\top}\bm{Y}^{\star}$;
(iii) comes from \eqref{subeq:aux-6}; (iv) arises from Lemma \ref{lem:loo-1}
and \ref{lem:contraction}.

Combining \eqref{eq:B} and \eqref{eq:C} with \eqref{eq:Ftl} yields
\begin{align*}
 & \left\Vert \bm{F}^{t+1,\left(l\right)}\bm{H}^{t,\left(l\right)}-\widetilde{\bm{F}}^{t+1,\left(l\right)}\right\Vert \\
 & \leq\eta\left\Vert \bm{B}\right\Vert \left\Vert \bm{\Delta}^{t,\left(l\right)}\right\Vert +\frac{\eta}{2}\left\Vert \bm{F}^{\star}\right\Vert \left\Vert \bm{C}\right\Vert _{\mathrm{F}}+\frac{\eta}{2}\left\Vert \bm{\Delta}^{t,\left(l\right)}\right\Vert \left\Vert \bm{C}\right\Vert \\
 & \overset{\text{(i)}}{\lesssim}\eta\left(\left\Vert \bm{F}^{t,\left(l\right)}\bm{R}^{t,\left(l\right)}-\bm{F}^{\star}\right\Vert _{2,\infty}\sqrt{\mu rn\sigma_{\max}}+\sigma\sqrt{\frac{n}{p}}\log n+\frac{\left\Vert \bm{M}^{\star}\right\Vert _{\infty}}{p}\log n\right)\\
 &\qquad\times\sqrt{r}\left(\frac{\sigma}{\sigma_{\min}}\sqrt{\frac{n}{p}}+\frac{\left\Vert \bm{M}^{\star}\right\Vert _{\infty}}{\sigma_{\min}}\sqrt{\frac{n}{p}}\right)\left\Vert \bm{F}^{\star}\right\Vert \\
 & \qquad+\frac{\eta^{2}}{2}\mu\kappa^{4}r^{3.5}\sigma_{\max}^{2}\left(\frac{\sigma}{\sigma_{\min}}\sqrt{\frac{n}{p}}+\frac{\left\Vert \bm{M}^{\star}\right\Vert _{\infty}}{\sigma_{\min}}\sqrt{\frac{n}{p}}\right)^{2}\left\Vert \bm{F}^{\star}\right\Vert \log^{2}n\\
 & \qquad+\frac{\eta}{2}\sqrt{\kappa}\left(\frac{\sigma}{\sigma_{\min}}\sqrt{\frac{n}{p}}+\frac{\left\Vert \bm{M}^{\star}\right\Vert _{\infty}}{\sigma_{\min}}\sqrt{\frac{n}{p}}\right)\left\Vert \bm{F}^{\star}\right\Vert _{2,\infty}\left\Vert \bm{F}^{\star}\right\Vert ^{2}\log n\\
 & \lesssim\eta\sqrt{r}\sigma_{\min}\left(\frac{\sigma}{\sigma_{\min}}\sqrt{\frac{n}{p}}+\frac{\left\Vert \bm{M}^{\star}\right\Vert _{\infty}}{\sigma_{\min}}\sqrt{\frac{n}{p}}\right)^{2}\left\Vert \bm{F}^{\star}\right\Vert \log n\\
 & \qquad+\frac{\eta}{2}\sqrt{\kappa}\left(\frac{\sigma}{\sigma_{\min}}\sqrt{\frac{n}{p}}+\frac{\left\Vert \bm{M}^{\star}\right\Vert _{\infty}}{\sigma_{\min}}\sqrt{\frac{n}{p}}\right)\left\Vert \bm{F}^{\star}\right\Vert _{2,\infty}\left\Vert \bm{F}^{\star}\right\Vert ^{2}\log n,
\end{align*}
where (i) is due to \ref{subeq:aux-5} and the last inequality holds
provided that $0\leq\eta\leq c/(\mu\kappa^{5}r^{3}\sigma_{\max}\log n)$
for some small constant $c>0$. Plugging this inequality into \eqref{eq:Htl}
gives 
\begin{align*}
 & \left\Vert \left(\bm{H}^{t,\left(l\right)}\right)^{-1}\bm{H}^{t+1,\left(l\right)}-\bm{I}_{r}\right\Vert \\
 & \leq\frac{2}{\sigma_{\min}}\left\Vert \bm{F}^{t+1,\left(l\right)}\bm{H}^{t,\left(l\right)}-\widetilde{\bm{F}}^{t+1,\left(l\right)}\right\Vert \left\Vert \bm{F}^{\star}\right\Vert \\
 & \lesssim\eta\sqrt{r}\left(\frac{\sigma}{\sigma_{\min}}\sqrt{\frac{n}{p}}+\frac{\left\Vert \bm{M}^{\star}\right\Vert _{\infty}}{\sigma_{\min}}\sqrt{\frac{n}{p}}\right)^{2}\left\Vert \bm{F}^{\star}\right\Vert ^{2}\log n\\
 & \qquad+\frac{\eta}{2}\kappa^{1.5}\left(\frac{\sigma}{\sigma_{\min}}\sqrt{\frac{n}{p}}+\frac{\left\Vert \bm{M}^{\star}\right\Vert _{\infty}}{\sigma_{\min}}\sqrt{\frac{n}{p}}\right)\left\Vert \bm{F}^{\star}\right\Vert _{2,\infty}\left\Vert \bm{F}^{\star}\right\Vert \log n.
\end{align*}

\subsection{Proof of Lemma \ref{lem:loo-1}}

We only consider $1\leq l\leq n$ here. When $n+1\leq l\leq2n$, the
bound can be derived analogously.

The definition of $\bm{R}^{t,\left(l\right)}$ (cf.~\eqref{defn-Rtl})
implies

\[
\left\Vert \bm{F}^{t+1}\bm{H}^{t+1}-\bm{F}^{t+1,\left(l\right)}\bm{R}^{t+1,\left(l\right)}\right\Vert _{\mathrm{F}}\leq\left\Vert \bm{F}^{t+1}\bm{H}^{t}-\bm{F}^{t+1,\left(l\right)}\bm{R}^{t,\left(l\right)}\right\Vert _{\mathrm{F}}.
\]
Then the gradient update rules \eqref{subeq:gradient_update_ncvx}
reveal that 
\begin{align}
 & \bm{F}^{t+1}\bm{H}^{t}-\bm{F}^{t+1,\left(l\right)}\bm{R}^{t,\left(l\right)}\nonumber \\
 & =\underbrace{\bm{F}^{t}\bm{H}^{t}-\bm{F}^{t,\left(l\right)}\bm{R}^{t,\left(l\right)}-\eta\left[\nabla f\left(\bm{F}^{t}\bm{H}^{t}\right)-\nabla f\left(\bm{F}^{t,\left(l\right)}\bm{R}^{t,\left(l\right)}\right)\right]}_{\eqqcolon\bm{D}_{1}}\nonumber \\
 & \qquad+\underbrace{\eta\left[\nabla f^{\left(l\right)}\left(\bm{F}^{t,\left(l\right)}\bm{R}^{t,\left(l\right)}\right)-\nabla f\left(\bm{F}^{t,\left(l\right)}\bm{R}^{t,\left(l\right)}\right)\right]}_{\eqqcolon\bm{D}_{2}}.\label{eq:decompose-loo-2}
\end{align}
To start with, $\bm{D}_{1}$ can be controlled similarly as the proof
of Lemma \ref{lem:contraction} and gives 
\[
\left\Vert \bm{D}_{1}\right\Vert _{\mathrm{F}}\leq\left(1-\frac{\sigma_{\min}}{20}\eta\right)\left\Vert \bm{F}^{t}\bm{H}^{t}-\bm{F}^{t,\left(l\right)}\bm{R}^{t,\left(l\right)}\right\Vert _{\mathrm{F}},
\]
provided that $ $Then we are left to consider $\bm{D}_{2}$. In view
of the definition of gradients (cf.~\eqref{eq:obj} and \eqref{defn-floo-1})
and \eqref{eq:tau-tl}, one has 
\begin{align*}
 & \nabla f^{\left(l\right)}\left(\bm{F}^{t,\left(l\right)}\bm{R}^{t,\left(l\right)}\right)-\nabla f\left(\bm{F}^{t,\left(l\right)}\bm{R}^{t,\left(l\right)}\right)\\
 & =\left[\begin{array}{c}
\left[\mathcal{P}_{l,\cdot}\left(\psi_{\tau}\left(\bm{X}^{t,\left(l\right)}\bm{Y}^{t,\left(l\right)\top}-\bm{M}^{\star}\right)\right)-\frac{1}{p}\mathcal{P}_{\Omega_{l},\cdot}\left(\psi_{\tau}\left(\bm{X}^{t,\left(l\right)}\bm{Y}^{t,\left(l\right)\top}-\bm{M}\right)\right)\right]\bm{Y}^{t,\left(l\right)}\bm{R}^{t,\left(l\right)}\\
\left[\mathcal{P}_{l,\cdot}\left(\psi_{\tau}\left(\bm{X}^{t,\left(l\right)}\bm{Y}^{t,\left(l\right)\top}-\bm{M}^{\star}\right)\right)-\frac{1}{p}\mathcal{P}_{\Omega_{l},\cdot}\left(\psi_{\tau}\left(\bm{X}^{t,\left(l\right)}\bm{Y}^{t,\left(l\right)\top}-\bm{M}\right)\right)\right]^{\top}\bm{X}^{t,\left(l\right)}\bm{R}^{t,\left(l\right)}
\end{array}\right]\\
 & =\left[\begin{array}{c}
\bm{W}_{1}\\
\bm{W}_{2}
\end{array}\right]+\left[\begin{array}{c}
\bm{Z}_{1}\\
\bm{Z}_{2}
\end{array}\right]+\left[\begin{array}{c}
\bm{V}_{1}\\
\bm{V}_{2}
\end{array}\right],
\end{align*}
where 
\begin{align*}
\bm{W}_{1} & \coloneqq\mathcal{P}_{l,\cdot}\left(\left(\left(\bm{X}^{t,\left(l\right)}\bm{Y}^{t,\left(l\right)\top}\right)_{i,j}-M_{i,j}^{\star}\right)\right)\bm{Y}^{t,\left(l\right)}\bm{R}^{t,\left(l\right)}\\
 & \qquad-\frac{1}{p}\mathcal{P}_{\Omega_{l},\cdot}\left(\left(\left(\bm{X}^{t,\left(l\right)}\bm{Y}^{t,\left(l\right)\top}\right)_{i,j}-M_{i,j}^{\star}\right)\ind_{\left|\left(\bm{X}^{t,\left(l\right)}\bm{Y}^{t,\left(l\right)\top}\right)_{i,j}-M_{i,j}\right|\leq\tau}\right)\bm{Y}^{t,\left(l\right)}\bm{R}^{t,\left(l\right)},\\
\bm{W}_{2} & \coloneqq\left[\mathcal{P}_{l,\cdot}\left(\left(\left(\bm{X}^{t,\left(l\right)}\bm{Y}^{t,\left(l\right)\top}\right)_{i,j}-M_{i,j}^{\star}\right)\right)\right]^{\top}\bm{X}_{l,\cdot}^{t,\left(l\right)}\bm{R}^{t,\left(l\right)}\\
 & \qquad-\left[\frac{1}{p}\mathcal{P}_{\Omega_{l},\cdot}\left(\left(\left(\bm{X}^{t,\left(l\right)}\bm{Y}^{t,\left(l\right)\top}\right)_{i,j}-M_{i,j}^{\star}\right)\ind_{\left|\left(\bm{X}^{t,\left(l\right)}\bm{Y}^{t,\left(l\right)\top}\right)_{i,j}-M_{i,j}\right|\leq\tau}\right)\right]^{\top}\bm{X}_{l,\cdot}^{t,\left(l\right)}\bm{R}^{t,\left(l\right)},\\
\bm{Z}_{1} & \coloneqq-\frac{1}{p}\mathcal{P}_{\Omega_{l},\cdot}\left(\tau\mathrm{sgn}\left(\left(\bm{X}^{t,\left(l\right)}\bm{Y}^{t,\left(l\right)\top}\right)_{i,j}-M_{i,j}\right)\ind_{\left|\left(\bm{X}^{t,\left(l\right)}\bm{Y}^{t,\left(l\right)\top}\right)_{i,j}-M_{i,j}\right|>\tau}\right)\bm{Y}^{t,\left(l\right)}\bm{R}^{t,\left(l\right)},\\
\bm{Z}_{2} & \coloneqq-\left[\frac{1}{p}\mathcal{P}_{\Omega_{l},\cdot}\left(\tau\mathrm{sgn}\left(\left(\bm{X}^{t,\left(l\right)}\bm{Y}^{t,\left(l\right)\top}\right)_{i,j}-M_{i,j}\right)\ind_{\left|\left(\bm{X}^{t,\left(l\right)}\bm{Y}^{t,\left(l\right)\top}\right)_{i,j}-M_{i,j}\right|>\tau}\right)\right]^{\top}\bm{X}_{l,\cdot}^{t,\left(l\right)}\bm{R}^{t,\left(l\right)},\\
\bm{V}_{1} & \coloneqq\frac{1}{p}\mathcal{P}_{\Omega_{l},\cdot}\left(\varepsilon_{i,j}\ind_{\left|\left(\bm{X}^{t,\left(l\right)}\bm{Y}^{t,\left(l\right)\top}\right)_{i,j}-M_{i,j}\right|\leq\tau}\right)\bm{Y}^{t,\left(l\right)}\bm{R}^{t,\left(l\right)},\\
\bm{V}_{2} & \coloneqq\frac{1}{p}\mathcal{P}_{\Omega_{l},\cdot}\left(\varepsilon_{i,j}\ind_{\left|\left(\bm{X}^{t,\left(l\right)}\bm{Y}^{t,\left(l\right)\top}\right)_{i,j}-M_{i,j}\right|\leq\tau}\right)^{\top}\bm{X}_{l,\cdot}^{t,\left(l\right)}\bm{R}^{t,\left(l\right)}.
\end{align*}
We would control these terms one by one. 
\begin{enumerate}
\item The first term $\bm{W}_{1}$ obeys 
\[
p\left\Vert \bm{W}_{1}\right\Vert _{\mathrm{F}}=\left\Vert \sum_{j=1}^{n}\bm{u}_{j}\right\Vert _{\mathrm{F}},
\]
with 
\[
\bm{u}_{j}\coloneqq\left(p-\delta_{l,j}\ind_{\left|\left(\bm{X}^{t,\left(l\right)}\bm{Y}^{t,\left(l\right)\top}\right)_{l,j}-M_{l,j}\right|\leq\tau}\right)\left(\left(\bm{X}^{t,\left(l\right)}\bm{Y}^{t,\left(l\right)\top}\right)_{l,j}-M_{l,j}^{\star}\right)\bm{Y}_{j,\cdot}^{t,\left(l\right)}.
\]
We note that $\{\bm{u}_{j}\}_{j=1}^{n}$ is a set of independent vectors
when conditional on $\bm{X}^{t,(l)}$ and $\bm{Y}^{t,(l)}$. It follows
that 
\begin{align*}
L & \triangleq\max_{1\leq j\leq n}\left\Vert \bm{u}_{j}\right\Vert _{2}\leq\left\Vert \bm{X}^{t,\left(l\right)}\bm{Y}^{t,\left(l\right)\top}-\bm{M}^{\star}\right\Vert _{\infty}\left\Vert \bm{Y}^{t,\left(l\right)}\right\Vert _{2,\infty},
\end{align*}
and 
\begin{align*}
V & \triangleq\left\Vert \sum_{j=1}^{n}\mathbb{E}\left[\bm{u}_{j}^{\top}\bm{u}_{j}\right]\right\Vert \\
 & =\left\Vert \sum_{j=1}^{n}\mathbb{E}\left[\left(p-\delta_{l,j}\ind_{\left|\left(\bm{X}^{t,\left(l\right)}\bm{Y}^{t,\left(l\right)\top}\right)_{i,j}-M_{i,j}\right|\leq\tau}\right)^{2}\right]\left(\left(\bm{X}^{t,\left(l\right)}\bm{Y}^{t,\left(l\right)\top}\right)_{l,j}-M_{l,j}^{\star}\right)^{2}\bm{Y}_{j,\cdot}^{t,\left(l\right)}\bm{Y}_{j,\cdot}^{t,\left(l\right)\top}\right\Vert \\
 & \leq2\left\Vert \sum_{j=1}^{n}\mathbb{E}\left[p^{2}+\left(\delta_{l,j}\ind_{\left|\left(\bm{X}^{t,\left(l\right)}\bm{Y}^{t,\left(l\right)\top}\right)_{i,j}-M_{i,j}\right|\leq\tau}\right)^{2}\right]\left(\left(\bm{X}^{t,\left(l\right)}\bm{Y}^{t,\left(l\right)\top}\right)_{l,j}-M_{l,j}^{\star}\right)^{2}\bm{Y}_{j,\cdot}^{t,\left(l\right)}\bm{Y}_{j,\cdot}^{t,\left(l\right)\top}\right\Vert \\
 & \leq2\left\Vert \sum_{j=1}^{n}\mathbb{E}\left[p^{2}+\delta_{l,j}\right]\left(\left(\bm{X}^{t,\left(l\right)}\bm{Y}^{t,\left(l\right)\top}\right)_{l,j}-M_{l,j}^{\star}\right)^{2}\bm{Y}_{j,\cdot}^{t,\left(l\right)}\bm{Y}_{j,\cdot}^{t,\left(l\right)\top}\right\Vert \\
 & \leq4p\left\Vert \bm{X}^{t,\left(l\right)}\bm{Y}^{t,\left(l\right)\top}-\bm{M}^{\star}\right\Vert _{\infty}^{2}\left\Vert \bm{Y}^{t,\left(l\right)}\right\Vert _{\mathrm{F}}^{2}.
\end{align*}
The matrix Bernstein's inequality gives 
\begin{align*}
\left\Vert \bm{W}_{1}\right\Vert _{\mathrm{F}} & \lesssim\frac{1}{p}\left(\sqrt{V\log n}+L\log n\right)\\
 & \lesssim\sqrt{\frac{1}{p}\left\Vert \bm{X}^{t,\left(l\right)}\bm{Y}^{t,\left(l\right)\top}-\bm{M}^{\star}\right\Vert _{\infty}^{2}\left\Vert \bm{Y}^{t,\left(l\right)}\right\Vert _{\mathrm{F}}^{2}\log n}+\frac{1}{p}\left\Vert \bm{X}^{t,\left(l\right)}\bm{Y}^{t,\left(l\right)\top}-\bm{M}^{\star}\right\Vert _{\infty}\left\Vert \bm{Y}^{t,\left(l\right)}\right\Vert _{2,\infty}\log n\\
 & \lesssim\sqrt{\frac{\log n}{p}}\left\Vert \bm{F}^{t,\left(l\right)}\bm{R}^{t,\left(l\right)}-\bm{F}^{\star}\right\Vert _{2,\infty}\left\Vert \bm{F}^{\star}\right\Vert _{2,\infty}\left\Vert \bm{F}^{\star}\right\Vert _{\mathrm{F}}+\frac{1}{p}\left\Vert \bm{F}^{t,\left(l\right)}\bm{R}^{t,\left(l\right)}-\bm{F}^{\star}\right\Vert _{2,\infty}\left\Vert \bm{F}^{\star}\right\Vert _{2,\infty}^{2}\log n\\
 & \lesssim\sqrt{\frac{\mu r^{2}\log n}{np}}\sigma_{\max}\left\Vert \bm{F}^{t,\left(l\right)}\bm{R}^{t,\left(l\right)}-\bm{F}^{\star}\right\Vert _{2,\infty},
\end{align*}
where the second line follows from Lemma \ref{lem:aux}. 
\item Regarding $\bm{W}_{2}$, one has 
\[
\frac{p}{2}\left\Vert \bm{W}_{2}\right\Vert _{\mathrm{F}}=\left\Vert \left[\sum_{j=1}^{n}\bm{v}_{j}\right]^{\top}\bm{X}_{l,\cdot}^{t,\left(l\right)}\right\Vert _{\mathrm{F}}\le\left\Vert \sum_{j=1}^{n}\bm{v}_{j}\right\Vert _{2}\left\Vert \bm{X}_{l,\cdot}^{t,\left(l\right)}\right\Vert _{2},
\]
with 
\begin{align*}
\bm{v}_{j} & \coloneqq\bm{e}_{j}\left(p-\delta_{l,j}\ind_{\left|\left(\bm{X}^{t,\left(l\right)}\bm{Y}^{t,\left(l\right)\top}\right)_{l,j}-M_{l,j}\right|\leq\tau}\right)\left(\left(\bm{X}^{t,\left(l\right)}\bm{Y}^{t,\left(l\right)\top}\right)_{l,j}-M_{l,j}^{\star}\right).
\end{align*}
It is easy to obtain that 
\begin{align*}
L & \triangleq\max_{1\leq j\leq n}\left\Vert \bm{v}_{j}\right\Vert _{2}\leq\left\Vert \bm{X}^{t,\left(l\right)}\bm{Y}^{t,\left(l\right)\top}-\bm{M}^{\star}\right\Vert _{\infty},\\
V & \triangleq\left\Vert \sum_{j=1}^{n}\mathbb{E}\left[\left(p-\delta_{l,j}\ind_{\left|\left(\bm{X}^{t,\left(l\right)}\bm{Y}^{t,\left(l\right)\top}\right)_{l,j}-M_{l,j}\right|\leq\tau}\right)^{2}\left(\left(\bm{X}^{t,\left(l\right)}\bm{Y}^{t,\left(l\right)\top}\right)_{l,j}-M_{l,j}^{\star}\right)^{2}\bm{e}_{j}^{\top}\bm{e}_{j}\right]\right\Vert \\
 & \leq2\left\Vert \sum_{j=1}^{n}\mathbb{E}\left[p^{2}+\left(\delta_{l,j}\ind_{\left|\left(\bm{X}^{t,\left(l\right)}\bm{Y}^{t,\left(l\right)\top}\right)_{l,j}-M_{l,j}\right|\leq\tau}\right)^{2}\right]\left(\left(\bm{X}^{t,\left(l\right)}\bm{Y}^{t,\left(l\right)\top}\right)_{l,j}-M_{l,j}^{\star}\right)^{2}\bm{e}_{j}^{\top}\bm{e}_{j}\right\Vert \\
 & \le4p\left\Vert \bm{X}^{t,\left(l\right)}\bm{Y}^{t,\left(l\right)\top}-\bm{M}^{\star}\right\Vert _{\infty}^{2}\left\Vert \sum_{j=1}^{n}\bm{e}_{j}^{\top}\bm{e}_{j}\right\Vert =4np\left\Vert \bm{X}^{t,\left(l\right)}\bm{Y}^{t,\left(l\right)\top}-\bm{M}^{\star}\right\Vert _{\infty}^{2}.
\end{align*}
Invoke matrix Bernstein's inequality \citep[Theorem 6.1.1]{tropp2015introduction}
gives 
\begin{align*}
\left\Vert \bm{W}_{2}\right\Vert _{\mathrm{F}} & \lesssim\frac{1}{p}\left(\sqrt{V\log n}+L\log n\right)\left\Vert \bm{X}_{l,\cdot}^{t,\left(l\right)}\right\Vert _{2}\\
 & \lesssim\frac{1}{p}\left(\sqrt{np\left\Vert \bm{X}^{t,\left(l\right)}\bm{Y}^{t,\left(l\right)\top}-\bm{M}^{\star}\right\Vert _{\infty}^{2}\log n}+\left\Vert \bm{X}^{t,\left(l\right)}\bm{Y}^{t,\left(l\right)\top}-\bm{M}^{\star}\right\Vert _{\infty}\log n\right)\left\Vert \bm{X}_{l,\cdot}^{t,\left(l\right)}\right\Vert _{2}\\
 & \lesssim\sqrt{\frac{n\log n}{p}}\left\Vert \bm{X}^{t,\left(l\right)}\bm{Y}^{t,\left(l\right)\top}-\bm{M}^{\star}\right\Vert _{\infty}\left\Vert \bm{X}_{l,\cdot}^{t,\left(l\right)}\right\Vert _{2}\\
 & \lesssim\sqrt{\frac{\mu^{2}r^{2}\sigma_{\max}^{2}\log n}{np}}\left\Vert \bm{F}^{t,\left(l\right)}\bm{R}^{t,\left(l\right)}-\bm{F}^{\star}\right\Vert _{2,\infty},
\end{align*}
where the third line holds as long as $np\gg\log n$ and the last
inequality holds due to Lemma \ref{lem:aux}. 
\item Turn attention to $\bm{Z}_{1}$, we have 
\[
p\left\Vert \bm{Z}_{1}\right\Vert _{\mathrm{F}}=\tau\left\Vert \sum_{j=1}^{n}\bm{r}_{j}\right\Vert _{\mathrm{F}},
\]
with 
\begin{align*}
\bm{r}_{j} & \coloneqq-\delta_{l,j}\mathrm{sgn}\left(\left(\bm{X}^{t,\left(l\right)}\bm{Y}^{t,\left(l\right)\top}\right)_{l,j}-M_{l,j}\right)\ind_{\left|\left(\bm{X}^{t,\left(l\right)}\bm{Y}^{t,\left(l\right)\top}\right)_{l,j}-M_{l,j}\right|>\tau}\bm{Y}_{j,\cdot}^{t,\left(l\right)},
\end{align*}
where the last equality is due to \eqref{eq:tau-tl}. Conditional
on $\bm{X}^{t,(l)}$ and $\bm{Y}^{t,(l)}$, we can easily show that
\begin{align*}
L & \triangleq\max_{1\leq j\leq n}\left\Vert \bm{r}_{j}\right\Vert _{2}\leq\left\Vert \bm{Y}^{t,\left(l\right)}\right\Vert _{2,\infty},\\
V & \triangleq\left\Vert \sum_{j=1}^{n}\mathbb{E}\left[\bm{r}_{j}\bm{r}_{j}^{\top}\right]\right\Vert \\
 & =\left\Vert \sum_{j=1}^{n}\mathbb{E}\left[\delta_{l,j}\mathrm{sgn}\left(\left(\bm{X}^{t,\left(l\right)}\bm{Y}^{t,\left(l\right)\top}\right)_{l,j}-M_{l,j}\right)\ind_{\left|\left(\bm{X}^{t,\left(l\right)}\bm{Y}^{t,\left(l\right)\top}\right)_{l,j}-M_{l,j}\right|>\tau}\bm{Y}_{j,\cdot}^{t,\left(l\right)}\left(\bm{Y}_{j,\cdot}^{t,\left(l\right)}\right)^{\top}\right]\right\Vert \\
 & \leq\max_{j}\mathbb{E}\left[\left(\delta_{l,j}\mathrm{sgn}\left(\left(\bm{X}^{t,\left(l\right)}\bm{Y}^{t,\left(l\right)\top}\right)_{l,j}-M_{l,j}\right)\ind_{\left|\left(\bm{X}^{t,\left(l\right)}\bm{Y}^{t,\left(l\right)\top}\right)_{l,j}-M_{l,j}\right|>\tau}\right)^{2}\right]\left\Vert \bm{Y}^{t,\left(l\right)}\right\Vert _{\mathrm{F}}^{2}\\
 & \overset{\text{(i)}}{\leq}\max_{j}\mathbb{E}\left[\delta_{l,j}\ind_{\left|\varepsilon_{l,j}\right|>\tau/2}\right]\left\Vert \bm{Y}^{t,\left(l\right)}\right\Vert _{\mathrm{F}}^{2}\\
 & \overset{\text{(ii)}}{\lesssim}\frac{p\sigma^{2}}{\left(\tau/2\right)^{2}}\left\Vert \bm{Y}^{t,\left(l\right)}\right\Vert _{\mathrm{F}}^{2},
\end{align*}
where (i) follows from \eqref{eq:ind-1} and \eqref{eq:tau-tl} and
(ii) applies Markov inequality. The matrix Bernstein inequality \citep[Theorem 6.1.1]{tropp2015introduction}
implies that 
\[
\left\Vert \bm{Z}_{1}\right\Vert _{\mathrm{F}}\lesssim\frac{\tau}{p}\left(\sqrt{V\log n}+L\log n\right)\lesssim\frac{\tau}{p}\left(\sqrt{\frac{p\sigma^{2}}{\tau^{2}}\log n}\left\Vert \bm{Y}^{t,\left(l\right)}\right\Vert _{\mathrm{F}}+\left\Vert \bm{Y}^{t,\left(l\right)}\right\Vert _{2,\infty}\log n\right).
\]
\item Similarly, in view of \eqref{eq:tau-tl}, we have 
\begin{align}
\left\Vert \bm{Z}_{2}\right\Vert _{\mathrm{F}} & =\left\Vert \left[-\frac{1}{p}\mathcal{P}_{\Omega_{l},\cdot}\left(\tau\mathrm{sgn}\left(\left(\bm{X}^{t,\left(l\right)}\bm{Y}^{t,\left(l\right)\top}\right)_{i,j}-M_{i,j}\right)\ind_{\left|\left(\bm{X}^{t,\left(l\right)}\bm{Y}^{t,\left(l\right)\top}\right)_{i,j}-M_{i,j}\right|>\tau}\right)\right]^{\top}\bm{X}_{l,\cdot}^{t,\left(l\right)}\bm{R}^{t,\left(l\right)}\right\Vert _{\mathrm{F}}\nonumber \\
 & \lesssim\tau\left\Vert \sum_{j=1}^{n}\bm{b}_{j}\right\Vert _{2}\left\Vert \bm{X}_{l,\cdot}^{t,\left(l\right)}\right\Vert _{2},\label{eq:Z2}
\end{align}
with 
\[
\bm{b}_{j}\coloneqq\bm{e}_{j}\left(\frac{1}{p}\delta_{l,j}\mathrm{sgn}\left(\left(\bm{X}^{t,\left(l\right)}\bm{Y}^{t,\left(l\right)\top}\right)_{l,j}-M_{l,j}\right)\ind_{\left|\left(\bm{X}^{t,\left(l\right)}\bm{Y}^{t,\left(l\right)\top}\right)_{l,j}-M_{l,j}\right|>\tau}\right).
\]
It is easy to verify that 
\begin{align*}
L & \triangleq\max_{1\leq j\leq n}\left\Vert \bm{b}_{j}\right\Vert _{2}\leq\frac{1}{p},\\
V & \triangleq\left\Vert \sum_{j=1}^{n}\mathbb{E}\left[\left(\frac{1}{p}\delta_{l,j}\mathrm{sgn}\left(\left(\bm{X}^{t,\left(l\right)}\bm{Y}^{t,\left(l\right)\top}\right)_{l,j}-M_{l,j}\right)\ind_{\left|\left(\bm{X}^{t,\left(l\right)}\bm{Y}^{t,\left(l\right)\top}\right)_{l,j}-M_{l,j}\right|>\tau}\right)^{2}\bm{e}_{j}^{\top}\bm{e}_{j}\right]\right\Vert \\
 & \leq2\left\Vert \sum_{j=1}^{n}\mathbb{E}\left[\frac{1}{p^{2}}\delta_{l,j}\ind_{\left|\left(\bm{X}^{t,\left(l\right)}\bm{Y}^{t,\left(l\right)\top}\right)_{l,j}-M_{l,j}\right|>\tau}\right]\bm{e}_{j}^{\top}\bm{e}_{j}\right\Vert \\
 & \le2\max_{j}\mathbb{E}\left[\frac{1}{p^{2}}\delta_{l,j}\ind_{\left|\left(\bm{X}^{t,\left(l\right)}\bm{Y}^{t,\left(l\right)\top}\right)_{l,j}-M_{l,j}\right|>\tau}\right]\left\Vert \sum_{j=1}^{n}\bm{e}_{j}^{\top}\bm{e}_{j}\right\Vert \\
 & \overset{\text{(i)}}{\leq}2\max_{j}\mathbb{E}\left[\frac{1}{p^{2}}\delta_{l,j}\ind_{\left|\varepsilon_{l,j}\right|>\tau/2}\right]\left\Vert \sum_{j=1}^{n}\bm{e}_{j}^{\top}\bm{e}_{j}\right\Vert \overset{\text{(ii)}}{\leq}2n\left(\frac{1}{p}\cdot\frac{\sigma^{2}}{\left(\tau/2\right)^{2}}\right)\lesssim\frac{1}{p^{2}},
\end{align*}
where (i) arises from \eqref{eq:ind-1} and (ii) is due to Markov
inequality. The matrix Bernstein inequality \citep[Theorem 6.1.1]{tropp2015introduction}
reveals that 
\[
\left\Vert \sum_{j=1}^{n}\bm{b}_{j}\right\Vert _{\mathrm{F}}\lesssim\sqrt{V\log n}+L\log n\lesssim\frac{\log n}{p}.
\]
Plugging this bound into \eqref{eq:ind-1} yields 
\[
\left\Vert \bm{Z}_{2}\right\Vert _{\mathrm{F}}\lesssim\frac{\tau\log n}{p}\left\Vert \bm{X}_{l,\cdot}^{t,\left(l\right)}\right\Vert _{2}\lesssim\frac{\tau\log n}{p}\left\Vert \bm{F}^{\star}\right\Vert _{2,\infty},
\]
where the last inequality comes from \eqref{subeq:aux-5}. 
\item In terms of $\bm{V}_{1}$, one has 
\begin{align*}
\frac{p}{2}\left\Vert \bm{V}_{1}\right\Vert _{\mathrm{F}} & =\left\Vert \mathcal{P}_{\Omega_{l},\cdot}\left(\varepsilon_{i,j}\ind_{\left|\left(\bm{X}^{t,\left(l\right)}\bm{Y}^{t,\left(l\right)\top}\right)_{i,j}-M_{i,j}\right|\leq\tau}\right)\bm{Y}^{t,\left(l\right)}\right\Vert _{\mathrm{F}}\\
 & =\left\Vert \sum_{j=1}^{n}\delta_{l,j}\varepsilon_{l,j}\ind_{\left|\left(\bm{X}^{t,\left(l\right)}\bm{Y}^{t,\left(l\right)\top}\right)_{l,j}-M_{l,j}\right|\leq\tau}\bm{Y}_{j,\cdot}^{t,\left(l\right)}\right\Vert _{2}.
\end{align*}
Conditional on $\bm{Y}^{t,(l)}$, one has 
\begin{align*}
\left\Vert \left\Vert \delta_{l,j}\varepsilon_{l,j}\ind_{\left|\left(\bm{X}^{t,\left(l\right)}\bm{Y}^{t,\left(l\right)\top}\right)_{l,j}-M_{l,j}\right|\leq\tau}\bm{Y}_{j,\cdot}^{t,\left(l\right)}\right\Vert _{2}\right\Vert _{\psi_{1}} & \leq\left\Vert \bm{Y}^{t,\left(l\right)}\right\Vert _{2,\infty}\left\Vert \delta_{l,j}\varepsilon_{l,j}\ind_{\left|\left(\bm{X}^{t,\left(l\right)}\bm{Y}^{t,\left(l\right)\top}\right)_{l,j}-M_{l,j}\right|\leq\tau}\right\Vert _{\psi_{1}}\\
 & \overset{\text{(i)}}{\lesssim}\left\Vert \bm{Y}^{\star}\right\Vert _{2,\infty}\left\Vert \delta_{l,j}\varepsilon_{l,j}\ind_{\left|\varepsilon_{l,j}\right|\leq2\tau}\right\Vert _{\psi_{1}}\\
 & \lesssim\tau\left\Vert \bm{Y}^{\star}\right\Vert _{2,\infty}.
\end{align*}
Here (i) uses \eqref{subeq:aux-6} and the fact that 
\begin{align*}
\ind_{\left|\left(\bm{X}^{t,\left(l\right)}\bm{Y}^{t,\left(l\right)\top}\right)_{l,j}-M_{l,j}\right|\leq\tau} & =\ind_{\left|\left(\bm{X}^{t,\left(l\right)}\bm{Y}^{t,\left(l\right)\top}\right)_{l,j}-M_{l,j}^{\star}-\varepsilon_{l,j}\right|\leq\tau}\\
 & \leq\ind_{\left|\varepsilon_{l,j}\right|\leq\tau+\left|\left(\bm{X}^{t,\left(l\right)}\bm{Y}^{t,\left(l\right)\top}\right)_{l,j}-M_{l,j}^{\star}\right|}\leq\ind_{\left|\varepsilon_{i,j}\right|\leq2\tau},
\end{align*}
where the last inequality is due to the consequence of \eqref{subeq:add-1}
and \eqref{subeq:aux-5} as below, 
\begin{align*}
\max_{j}\left|\left(\bm{X}^{t,\left(l\right)}\bm{Y}^{t,\left(l\right)\top}\right)_{l,j}-M_{l,j}^{\star}\right| & \leq\left\Vert \left(\bm{F}^{t,\left(l\right)}\bm{H}^{t,\left(l\right)}-\bm{F}^{\star}\right)_{l,\cdot}\right\Vert _{2}\left(\left\Vert \bm{F}^{t,\left(l\right)}\right\Vert _{2,\infty}+\left\Vert \bm{F}^{\star}\right\Vert _{2,\infty}\right)\\
 & \leq3\left\Vert \left(\bm{F}^{t,\left(l\right)}\bm{H}^{t,\left(l\right)}-\bm{F}^{\star}\right)_{l,\cdot}\right\Vert _{2}\left\Vert \bm{F}^{\star}\right\Vert _{2,\infty}\\
 & \lesssim\kappa\sqrt{r}\left\Vert \bm{F}^{\star}\right\Vert _{2,\infty}^{2}\left(\frac{\sigma}{\sigma_{\min}}\sqrt{\frac{n\log n}{p}}+\frac{\left\Vert \bm{M}^{\star}\right\Vert _{\infty}}{\sigma_{\min}}\sqrt{\frac{n}{p}}\right)\ll\tau.
\end{align*}
In addition, one has 
\[
V\triangleq\left\Vert \mathbb{E}\left[\sum_{j=1}^{n}\delta_{l,j}\varepsilon_{l,j}^{2}\ind_{\left|\left(\bm{X}^{t,\left(l\right)}\bm{Y}^{t,\left(l\right)\top}\right)_{l,j}-M_{l,j}\right|\leq\tau}\bm{Y}_{j,\cdot}^{t,\left(l\right)}\bm{Y}_{j,\cdot}^{t,\left(l\right)\top}\right]\right\Vert \leq p\sigma^{2}\left\Vert \bm{Y}^{t,\left(l\right)}\right\Vert _{\mathrm{F}}^{2}.
\]
Hence, invoking the matrix Bernstein inequality \citep[Proposition 2]{koltchinskii2011nuclear}
gives that 
\begin{align*}
\left\Vert \sum_{j=1}^{n}\delta_{l,j}\varepsilon_{l,j}\ind_{\left|\left(\bm{X}^{t,\left(l\right)}\bm{Y}^{t,\left(l\right)\top}\right)_{l,j}-M_{l,j}\right|\leq\tau}\bm{Y}_{j,\cdot}^{t,\left(l\right)}\right\Vert _{2} & \lesssim\sqrt{V\log n}+\tau\left\Vert \bm{Y}^{\star}\right\Vert _{2,\infty}\log n\\
 & \lesssim\tau\left\Vert \bm{Y}^{\star}\right\Vert _{2,\infty}\log n.
\end{align*}
\item For the last term $\bm{V}_{2}$, one has 
\begin{align*}
\frac{p}{2}\left\Vert \bm{V}_{2}\right\Vert _{\mathrm{F}} & =\left\Vert \mathcal{P}_{\Omega_{l},\cdot}\left(\varepsilon_{i,j}\ind_{\left|\left(\bm{X}^{t,\left(l\right)}\bm{Y}^{t,\left(l\right)\top}\right)_{i,j}-M_{i,j}\right|\leq\tau}\right)^{\top}\bm{X}_{l,\cdot}^{t,\left(l\right)}\bm{R}^{t,\left(l\right)}\right\Vert _{\mathrm{F}}\\
 & =\left\Vert \mathcal{P}_{\Omega_{l},\cdot}\left(\varepsilon_{i,j}\ind_{\left|\left(\bm{X}^{t,\left(l\right)}\bm{Y}^{t,\left(l\right)\top}\right)_{i,j}-M_{i,j}\right|\leq\tau}\right)\right\Vert _{2}\left\Vert \bm{X}_{l,\cdot}^{t,\left(l\right)}\right\Vert _{2}\\
 & =\left\Vert \sum_{j=1}^{n}\bm{e}_{j}\delta_{i,j}\varepsilon_{l,j}\ind_{\left|\left(\bm{X}^{t,\left(l\right)}\bm{Y}^{t,\left(l\right)\top}\right)_{l,j}-M_{l,j}\right|\leq\tau}\right\Vert _{2}\left\Vert \bm{X}_{l,\cdot}^{t,\left(l\right)}\right\Vert _{2}.
\end{align*}
Similar to the bound of $\bm{V}_{1}$, one has 
\begin{align*}
L & \coloneqq\left\Vert \left\Vert \bm{e}_{j}\delta_{i,j}\varepsilon_{l,j}\ind_{\left|\left(\bm{X}^{t,\left(l\right)}\bm{Y}^{t,\left(l\right)\top}\right)_{l,j}-M_{l,j}\right|\leq\tau}\right\Vert _{2}\right\Vert _{\psi_{1}}\lesssim\tau,\\
V & =\left\Vert \sum_{j=1}^{n}\mathbb{E}\left[\delta_{i,j}\varepsilon_{l,j}^{2}\ind_{\left|\left(\bm{X}^{t,\left(l\right)}\bm{Y}^{t,\left(l\right)\top}\right)_{l,j}-M_{l,j}\right|\leq\tau}\right]\bm{e}_{j}^{\top}\bm{e}_{j}\right\Vert \le\sigma^{2}pn.
\end{align*}
The matrix Bernstein inequality \citep[Proposition 2]{koltchinskii2011nuclear}
reveals that 
\[
\left\Vert \sum_{j=1}^{n}\bm{e}_{j}\delta_{i,j}\varepsilon_{l,j}\ind_{\left|\left(\bm{X}^{t,\left(l\right)}\bm{Y}^{t,\left(l\right)\top}\right)_{l,j}-M_{l,j}\right|\leq\tau}\right\Vert _{2}\lesssim\sigma\sqrt{pn\log n}+\tau\log n\lesssim\tau\log n.
\]
and consequently 
\[
\left\Vert \bm{V}_{2}\right\Vert _{\mathrm{F}}\lesssim\frac{\tau\log n}{p}\left\Vert \bm{X}_{l,\cdot}^{t,\left(l\right)}\right\Vert _{2}\lesssim\frac{\tau\log n}{p}\left\Vert \bm{F}^{\star}\right\Vert _{2,\infty},
\]
where the last inequality is due to \eqref{subeq:aux-6}. 
\end{enumerate}
Taking all the bounds above together, we arrive at 
\begin{align*}
 & \left\Vert \nabla f^{\left(l\right)}\left(\bm{F}^{t,\left(l\right)}\bm{R}^{t,\left(l\right)}\right)-\nabla f\left(\bm{F}^{t,\left(l\right)}\bm{R}^{t,\left(l\right)}\right)\right\Vert _{\mathrm{F}}\\
 & \lesssim\sqrt{\frac{\mu r^{2}\log n}{np}}\sigma_{\max}\left\Vert \bm{F}^{t,\left(l\right)}\bm{R}^{t,\left(l\right)}-\bm{F}^{\star}\right\Vert _{2,\infty}+\sqrt{\frac{\mu^{2}r^{2}\sigma_{\max}^{2}\log n}{np}}\left\Vert \bm{F}^{t,\left(l\right)}\bm{R}^{t,\left(l\right)}-\bm{F}^{\star}\right\Vert _{2,\infty}\\
 & \qquad+\frac{\tau}{p}\left(\sqrt{\frac{p\sigma^{2}}{\tau^{2}}\log n}\left\Vert \bm{Y}^{t,\left(l\right)}\right\Vert _{\mathrm{F}}+\left\Vert \bm{Y}^{t,\left(l\right)}\right\Vert _{2,\infty}\log n\right)+\frac{\tau}{p}\left\Vert \bm{F}^{\star}\right\Vert _{2,\infty}\log n\\
 & \lesssim\left(\sigma\sqrt{\frac{n}{p}}+\left\Vert \bm{M}^{\star}\right\Vert _{\infty}\sqrt{\frac{n}{p}}\right)\log n\left\Vert \bm{F}^{\star}\right\Vert _{2,\infty},
\end{align*}
where the second line uses Lemma \ref{lem:aux}. Plugging the equation
above into \eqref{eq:decompose-loo-2} yields 
\begin{align*}
 & \left\Vert \bm{F}^{t+1}\bm{H}^{t}-\bm{F}^{t+1,\left(l\right)}\bm{R}^{t,\left(l\right)}\right\Vert _{\mathrm{F}}\\
 & \leq\left(1-\frac{\sigma_{\min}}{20}\eta\right)\left\Vert \bm{F}^{t}\bm{H}^{t}-\bm{F}^{t,\left(l\right)}\bm{R}^{t,\left(l\right)}\right\Vert _{\mathrm{F}}+C\left(\frac{\sigma}{\sigma_{\min}}\sqrt{\frac{n}{p}}+\frac{\left\Vert \bm{M}^{\star}\right\Vert _{\infty}}{\sigma_{\min}}\sqrt{\frac{n}{p}}\right)\left\Vert \bm{F}^{\star}\right\Vert _{2,\infty}\log n\\
 & \lesssim\sqrt{\kappa}\left(\frac{\sigma}{\sigma_{\min}}\sqrt{\frac{n}{p}}+\frac{\left\Vert \bm{M}^{\star}\right\Vert _{\infty}}{\sigma_{\min}}\sqrt{\frac{n}{p}}\right)\left\Vert \bm{F}^{\star}\right\Vert _{2,\infty}\log n.
\end{align*}

\subsection{Proof of Lemma \ref{lem:loo-2}}

For any $1\leq l\leq2n$, we have

\begin{align*}
 & \left\Vert \left(\bm{F}^{t+1}\bm{H}^{t+1}-\bm{F}^{\star}\right)_{l,\cdot}\right\Vert _{2}\\
 & \leq\left\Vert \left(\bm{F}^{t+1}\bm{H}^{t+1}-\bm{F}^{t+1,\left(l\right)}\bm{H}^{t+1,\left(l\right)}\right)_{l,\cdot}\right\Vert _{2}+\left\Vert \left(\bm{F}^{t+1,\left(l\right)}\bm{H}^{t+1,\left(l\right)}-\bm{F}^{\star}\right)_{l,\cdot}\right\Vert _{2}\\
 & \leq\left\Vert \bm{F}^{t+1}\bm{H}^{t+1}-\bm{F}^{t+1,\left(l\right)}\bm{H}^{t+1,\left(l\right)}\right\Vert _{\mathrm{F}}+\left\Vert \left(\bm{F}^{t+1,\left(l\right)}\bm{H}^{t+1,\left(l\right)}-\bm{F}^{\star}\right)_{l,\cdot}\right\Vert _{2}\\
 & \overset{\mathrm{(i)}}{\leq}5\kappa\left\Vert \bm{F}^{t+1}\bm{H}^{t+1}-\bm{F}^{t+1,\left(l\right)}\bm{R}^{t+1,\left(l\right)}\right\Vert _{\mathrm{F}}+\left\Vert \left(\bm{F}^{t+1,\left(l\right)}\bm{H}^{t+1,\left(l\right)}-\bm{F}^{\star}\right)_{l,\cdot}\right\Vert _{2}\\
 & \overset{\mathrm{(ii)}}{\lesssim}\kappa^{1.5}\left(\frac{\sigma}{\sigma_{\min}}\sqrt{\frac{n}{p}}+\frac{\left\Vert \bm{M}^{\star}\right\Vert _{\infty}}{\sigma_{\min}}\sqrt{\frac{n}{p}}\right)\left\Vert \bm{F}^{\star}\right\Vert _{2,\infty}\log n+\kappa\sqrt{r}\left\Vert \bm{F}^{\star}\right\Vert _{2,\infty}\left(\frac{\sigma}{\sigma_{\min}}\sqrt{\frac{n\log n}{p}}+\frac{\left\Vert \bm{M}^{\star}\right\Vert _{\infty}}{\sigma_{\min}}\sqrt{\frac{n}{p}}\right)\\
 & \lesssim\kappa^{1.5}\sqrt{r}\left(\frac{\sigma}{\sigma_{\min}}\sqrt{\frac{n}{p}}+\frac{\left\Vert \bm{M}^{\star}\right\Vert _{\infty}}{\sigma_{\min}}\sqrt{\frac{n}{p}}\right)\left\Vert \bm{F}^{\star}\right\Vert _{2,\infty}\log n,
\end{align*}
where (i) follows from \eqref{subeq:aux-4} and (ii) is due to Lemma
\ref{lem:loo} and \ref{lem:loo-1}.

\subsection{Proof of Lemma \ref{lem:bal}\label{subsec:Proof-of-Lemma-bal}}

For notational simplicity, we define

\begin{equation}
\bm{A}^{t}\coloneqq\bm{X}^{t\top}\bm{X}^{t}-\bm{Y}^{t\top}\bm{Y}^{t},\qquad\mathrm{and}\qquad\bm{D}^{t}\coloneqq\frac{1}{2p}\mathcal{P}_{\Omega}\left(\left\{ \psi_{\tau}\left(\left(\bm{X}\bm{Y}^{\top}\right)_{i,j}-M_{i,j}\right)\right\} _{i,j}\right).\label{defn-D}
\end{equation}
The gradient descent update rules \eqref{subeq:gradient_update_ncvx}
reveals that 
\begin{align}
\bm{A}^{t+1} & =\bm{A}^{t}-\eta\bm{B}^{t}+\eta^{2}\bm{C}^{t}\label{eq:A-update}
\end{align}
where 
\begin{align*}
\bm{B}^{t} & \coloneqq\bm{X}^{t\top}\nabla_{\bm{X}}f\left(\bm{X}^{t},\bm{Y}^{t}\right)+\nabla_{\bm{X}}f\left(\bm{X}^{t},\bm{Y}^{t}\right)^{\top}\bm{X}^{t}-\bm{Y}^{t\top}\nabla_{\bm{Y}}f\left(\bm{X}^{t},\bm{Y}^{t}\right)-\nabla_{\bm{Y}}f\left(\bm{X}^{t},\bm{Y}^{t}\right)^{\top}\bm{Y}^{t}\\
\bm{C}^{t} & \coloneqq\nabla_{\bm{X}}f\left(\bm{X}^{t},\bm{Y}^{t}\right)^{\top}\nabla_{\bm{X}}f\left(\bm{X}^{t},\bm{Y}^{t}\right)-\nabla_{\bm{Y}}f\left(\bm{X}^{t},\bm{Y}^{t}\right)^{\top}\nabla_{\bm{Y}}f\left(\bm{X}^{t},\bm{Y}^{t}\right).
\end{align*}
Due to the definition of $\bm{D}^{t}$ (cf.~\eqref{defn-D}), one
has 
\begin{align*}
\nabla_{\bm{X}}f\left(\bm{X}^{t},\bm{Y}^{t}\right) & =\bm{D}^{t}\bm{Y}^{t}+\frac{1}{2}\bm{X}^{t}\left(\bm{X}^{t\top}\bm{X}^{t}-\bm{Y}^{t\top}\bm{Y}^{t}\right),\\
\nabla_{\bm{Y}}f\left(\bm{X}^{t},\bm{Y}^{t}\right) & =\bm{D}^{t\top}\bm{X}^{t}+\frac{1}{2}\bm{Y}^{t}\left(\bm{Y}^{t\top}\bm{Y}^{t}-\bm{X}^{t\top}\bm{X}^{t}\right).
\end{align*}
Simple calculation gives that 
\begin{align*}
\bm{B}^{t} & =\bm{X}^{t\top}\left[\bm{D}^{t}\bm{Y}^{t}+\frac{1}{2}\bm{X}^{t}\left(\bm{X}^{t\top}\bm{X}^{t}-\bm{Y}^{t\top}\bm{Y}^{t}\right)\right]+\left[\bm{D}^{t}\bm{Y}^{t}+\frac{1}{2}\bm{X}^{t}\left(\bm{X}^{t\top}\bm{X}^{t}-\bm{Y}^{t\top}\bm{Y}^{t}\right)\right]^{\top}\bm{X}^{t}\\
 & \qquad-\bm{Y}^{t\top}\left[\bm{D}^{t\top}\bm{X}^{t}+\frac{1}{2}\bm{Y}^{t}\left(\bm{Y}^{t\top}\bm{Y}^{t}-\bm{X}^{t\top}\bm{X}^{t}\right)\right]-\left[\bm{D}^{t\top}\bm{X}^{t}+\frac{1}{2}\bm{Y}^{t}\left(\bm{Y}^{t\top}\bm{Y}^{t}-\bm{X}^{t\top}\bm{X}^{t}\right)\right]^{\top}\bm{Y}^{t}\\
 & =\frac{1}{2}\left(\bm{X}^{t\top}\bm{X}^{t}+\bm{Y}^{t\top}\bm{Y}^{t}\right)\left(\bm{X}^{t\top}\bm{X}^{t}-\bm{Y}^{t\top}\bm{Y}^{t}\right)+\frac{1}{2}\left(\bm{X}^{t\top}\bm{X}^{t}-\bm{Y}^{t\top}\bm{Y}^{t}\right)\left(\bm{X}^{t\top}\bm{X}^{t}+\bm{Y}^{t\top}\bm{Y}^{t}\right),\\
\bm{C}^{t} & =\left[\bm{D}^{t}\bm{Y}^{t}+\frac{1}{2}\bm{X}^{t}\left(\bm{X}^{t\top}\bm{X}^{t}-\bm{Y}^{t\top}\bm{Y}^{t}\right)\right]^{\top}\left[\bm{D}^{t}\bm{Y}^{t}+\frac{1}{2}\bm{X}^{t}\left(\bm{X}^{t\top}\bm{X}^{t}-\bm{Y}^{t\top}\bm{Y}^{t}\right)\right]\\
 & \qquad-\left[\bm{D}^{t\top}\bm{X}^{t}+\frac{1}{2}\bm{Y}^{t}\left(\bm{Y}^{t\top}\bm{Y}^{t}-\bm{X}^{t\top}\bm{X}^{t}\right)\right]^{\top}\left[\bm{D}^{t\top}\bm{X}^{t}+\frac{1}{2}\bm{Y}^{t}\left(\bm{Y}^{t\top}\bm{Y}^{t}-\bm{X}^{t\top}\bm{X}^{t}\right)\right]\\
 & =\bm{Y}^{t\top}\bm{D}^{t\top}\bm{D}^{t}\bm{Y}^{t}-\bm{X}^{t\top}\bm{D}^{t\top}\bm{D}^{t}\bm{X}^{t}+\frac{1}{4}\left(\bm{A}^{t}\right)^{3}\\
 & \qquad+\frac{1}{2}\left(\bm{Y}^{t\top}\bm{D}^{t\top}\bm{X}^{t}+\bm{X}^{t\top}\bm{D}^{t}\bm{Y}^{t}\right)\bm{A}^{t}+\frac{1}{2}\bm{A}^{t}\left(\bm{X}^{t\top}\bm{D}^{t}\bm{Y}^{t}+\bm{Y}^{t\top}\bm{D}^{t\top}\bm{X}^{t}\right).
\end{align*}
Plugging these into \eqref{eq:A-update}, one has 
\begin{align*}
\bm{A}^{t+1} & =\bm{A}^{t}-\eta\bm{B}^{t}+\eta^{2}\bm{C}^{t}\\
 & =\left[\frac{1}{2}-\frac{\eta}{2}\left(\bm{X}^{t\top}\bm{X}^{t}+\bm{Y}^{t\top}\bm{Y}^{t}\right)+\frac{\eta^{2}}{8}\left(\bm{A}^{t}\right)^{2}+\frac{\eta^{2}}{2}\left(\bm{Y}^{t\top}\bm{D}^{t\top}\bm{X}^{t}+\bm{X}^{t\top}\bm{D}^{t}\bm{Y}^{t}\right)\right]\bm{A}^{t}\\
 & \qquad+\bm{A}^{t}\left[\frac{1}{2}-\frac{\eta}{2}\left(\bm{X}^{t\top}\bm{X}^{t}+\bm{Y}^{t\top}\bm{Y}^{t}\right)+\frac{\eta^{2}}{8}\left(\bm{A}^{t}\right)^{2}+\frac{\eta^{2}}{2}\left(\bm{Y}^{t\top}\bm{D}^{t\top}\bm{X}^{t}+\bm{X}^{t\top}\bm{D}^{t}\bm{Y}^{t}\right)\right]+\eta^{2}\bm{E}^{t},
\end{align*}
and thus 
\begin{align}
\left\Vert \bm{A}^{t+1}\right\Vert _{\mathrm{F}} & \leq2\left\Vert \frac{1}{2}\bm{I}_{r}-\frac{\eta}{2}\left(\bm{X}^{t\top}\bm{X}^{t}+\bm{Y}^{t\top}\bm{Y}^{t}\right)+\frac{\eta^{2}}{8}\left(\bm{A}^{t}\right)^{2}+\frac{\eta^{2}}{2}\left(\bm{Y}^{t\top}\bm{D}^{t\top}\bm{X}^{t}+\bm{X}^{t\top}\bm{D}^{t}\bm{Y}^{t}\right)\right\Vert \left\Vert \bm{A}^{t}\right\Vert _{\mathrm{F}}\nonumber\\
&\qquad+\eta^{2}\left\Vert \bm{E}^{t}\right\Vert _{\mathrm{F}}.\label{eq:A-upper}
\end{align}
To control this upper bound, one has Lemma \ref{lem:aux} shows that
\[
\sigma_{\min}\left(\bm{X}^{t\top}\bm{X}^{t}+\bm{Y}^{t\top}\bm{Y}^{t}\right)\geq\sigma_{\min}\left(\bm{X}^{t\top}\bm{X}^{t}\right)\geq\frac{3\sigma_{\min}}{4}.
\]
Furthermore, one has 
\begin{align*}
\left\Vert \bm{D}^{t}\right\Vert  & =\left\Vert \frac{1}{2p}\mathcal{P}_{\Omega}\left(\left\{ \psi_{\tau}\left(\left(\bm{X}\bm{Y}^{\top}\right)_{i,j}-M_{i,j}\right)\right\} _{i,j}\right)\right\Vert \\
 & \leq\left\Vert \frac{1}{p}\mathcal{P}_{\Omega}\left(\left(\left(\bm{X}\bm{Y}^{\top}\right)_{i,j}-M_{i,j}^{\star}\right)\ind_{\left|\left(\bm{X}\bm{Y}^{\top}\right)_{i,j}-M_{i,j}\right|\leq\tau}\right)\right\Vert +\left\Vert \frac{1}{p}\mathcal{P}_{\Omega}\left(\varepsilon_{i,j}\ind_{\left|\left(\bm{X}\bm{Y}^{\top}\right)_{i,j}-M_{i,j}\right|\leq\tau}\right)\right\Vert \\
 & \qquad+\left\Vert \frac{1}{2p}\mathcal{P}_{\Omega}\left(\tau\mathrm{sgn}\left(\left(\bm{X}\bm{Y}^{\top}\right)_{i,j}-M_{i,j}\right)\ind_{\left|\left(\bm{X}\bm{Y}^{\top}\right)_{i,j}-M_{i,j}\right|>\tau}\right)\right\Vert \\
 & \overset{\text{(i)}}{\lesssim}\left\Vert \frac{1}{p}\mathcal{P}_{\Omega}\left(\bm{1}\bm{1}^{\top}\right)\right\Vert \max_{i,j}\left|\left(\left(\bm{X}\bm{Y}^{\top}\right)_{i,j}-M_{i,j}^{\star}\right)\ind_{\left|\left(\bm{X}\bm{Y}^{\top}\right)_{i,j}-M_{i,j}\right|\leq\tau}\right|+\sigma\sqrt{\frac{n}{p}}+\sigma\sqrt{\frac{n}{p}}\\
 & \overset{\text{(ii)}}{\lesssim}n\left\Vert \bm{X}\bm{Y}^{\top}-\bm{M}^{\star}\right\Vert _{\infty}+\sigma\sqrt{\frac{n}{p}}\\
 & \lesssim n\left\Vert \bm{F}^{t}\bm{H}^{t}-\bm{F}^{\star}\right\Vert _{2,\infty}\left\Vert \bm{F}^{\star}\right\Vert _{2,\infty}+\sigma\sqrt{\frac{n}{p}}\\
 & \lesssim\mu\kappa^{3/2}r^{3/2}\sigma_{\max}\left(\frac{\sigma}{\sigma_{\min}}\sqrt{\frac{n}{p}}+\frac{\left\Vert \bm{M}^{\star}\right\Vert _{\infty}}{\sigma_{\min}}\sqrt{\frac{n}{p}}\right)\log n,
\end{align*}
where (i) is due to \eqref{eq:operator-ob}, \eqref{eq:noise} and
\eqref{eq:tau}; (ii) comes from Lemma \ref{lem:bernoulli}; the last
line relies on Lemma \ref{lem:loo-2}. Then we turn back to consider
\eqref{eq:A-upper}. We have

\begin{align*}
 & \left\Vert \frac{\eta^{2}}{8}\left(\bm{A}^{t}\right)^{2}+\frac{\eta^{2}}{2}\left(\bm{Y}^{t\top}\bm{D}^{t\top}\bm{X}^{t}+\bm{X}^{t\top}\bm{D}^{t}\bm{Y}^{t}\right)\right\Vert \\
 & \lesssim\eta^{2}\left(\left\Vert \bm{A}^{t}\right\Vert ^{2}+\left\Vert \bm{F}^{t}\right\Vert ^{2}\left\Vert \bm{D}^{t}\right\Vert \right)\\
 & \lesssim\eta^{2}\left\{ \left[\eta\sqrt{r}\kappa\sigma_{\max}^{2}\left(\frac{\sigma}{\sigma_{\min}}\sqrt{\frac{n\log n}{p}}+\frac{\mu r}{\sqrt{np}}\right)\right]^{2}+\mu\kappa^{3/2}r^{3/2}\sigma_{\max}^{2}\left(\frac{\sigma}{\sigma_{\min}}\sqrt{\frac{n}{p}}+\frac{\left\Vert \bm{M}^{\star}\right\Vert _{\infty}}{\sigma_{\min}}\sqrt{\frac{n}{p}}\right)\log n\right\} \\
 & \ll\eta\sigma_{\min},
\end{align*}
provided $\eta\mu\kappa^{3}r^{2}\sigma_{\max}\log n\ll1$, and then
we arrive at 
\begin{align*}
 & \sigma_{\min}\left(\frac{\eta}{2}\left(\bm{X}^{t\top}\bm{X}^{t}+\bm{Y}^{t\top}\bm{Y}^{t}\right)-\frac{\eta^{2}}{8}\left(\bm{A}^{t}\right)^{2}-\frac{\eta^{2}}{2}\left(\bm{Y}^{t\top}\bm{D}^{t\top}\bm{X}^{t}+\bm{X}^{t\top}\bm{D}^{t}\bm{Y}^{t}\right)\right)\\
 & \geq\sigma_{\min}\left(\frac{\eta}{2}\left(\bm{X}^{t\top}\bm{X}^{t}+\bm{Y}^{t\top}\bm{Y}^{t}\right)\right)-\left\Vert \frac{\eta^{2}}{8}\left(\bm{A}^{t}\right)^{2}+\frac{\eta^{2}}{2}\left(\bm{Y}^{t\top}\bm{D}^{t\top}\bm{X}^{t}+\bm{X}^{t\top}\bm{D}^{t}\bm{Y}^{t}\right)\right\Vert \\
 & \geq\frac{3\eta\sigma_{\min}}{8}-\frac{\eta\sigma_{\min}}{8}\geq\frac{\eta\sigma_{\min}}{4}.
\end{align*}
Combine \eqref{eq:A-upper} with the previous bound to obtain 
\begin{align}
\left\Vert \bm{A}^{t+1}\right\Vert _{\mathrm{F}} & \leq2\left(\frac{1}{2}-\frac{\eta}{4}\sigma_{\min}\right)\left\Vert \bm{A}^{t}\right\Vert _{\mathrm{F}}+\eta^{2}\left\Vert \bm{E}^{t}\right\Vert _{\mathrm{F}}.\label{eq:A}
\end{align}
It remains to bound $\bm{E}^{t}$. We have 
\begin{align*}
\left\Vert \bm{E}^{t}\right\Vert _{\mathrm{F}} & \leq\left\Vert \bm{Y}^{t\top}\bm{D}^{t\top}\bm{D}^{t}\bm{Y}^{t}-\bm{X}^{t\top}\bm{D}^{t\top}\bm{D}^{t}\bm{X}^{t}\right\Vert _{\mathrm{F}}\\
 & \leq8\left\Vert \bm{D}^{t}\right\Vert ^{2}\left\Vert \bm{F}^{\star}\right\Vert \left\Vert \bm{F}^{\star}\right\Vert _{\mathrm{F}},
\end{align*}
which utilizes \eqref{subeq:aux-5}. Plugging this back into \eqref{eq:A}
yields 
\begin{align*}
\left\Vert \bm{A}^{t+1}\right\Vert _{\mathrm{F}} & \lesssim\frac{\eta}{\sigma_{\min}}\left\Vert \bm{D}^{t}\right\Vert ^{2}\left\Vert \bm{F}^{\star}\right\Vert \left\Vert \bm{F}^{\star}\right\Vert _{\mathrm{F}}\lesssim\eta\sqrt{r}\kappa\left\Vert \bm{D}^{t}\right\Vert ^{2}\\
 & \lesssim\eta\mu\kappa^{4}r^{3.5}\sigma_{\max}^{2}\left(\frac{\sigma}{\sigma_{\min}}\sqrt{\frac{n}{p}}+\frac{\left\Vert \bm{M}^{\star}\right\Vert _{\infty}}{\sigma_{\min}}\sqrt{\frac{n}{p}}\right)^{2}\log^{2}n.
\end{align*}

\section{Proofs for spectral initialization}

Before embarking on the proof, let us introduce a few matrices as follows

\begin{align}
\bm{H}_{\bm{X}}^{0} & \coloneqq\mathsf{sgn}\left(\left(\bm{X}^{0}\right)^{\top}\bm{X}^{\star}\right)\qquad\text{and}\qquad\bm{H}_{\bm{Y}}^{0}\coloneqq\mathsf{sgn}\left(\left(\bm{Y}^{0}\right)^{\top}\bm{Y}^{\star}\right),\nonumber \\
\bm{Q}_{\bm{U}} & \coloneqq\mathsf{sgn}\left(\left(\bm{U}^{0}\right)^{\top}\bm{U}^{\star}\right)\qquad\text{and}\qquad\bm{Q}_{\bm{V}}\coloneqq\mathsf{sgn}\left(\left(\bm{V}^{0}\right)^{\top}\bm{V}^{\star}\right),\label{defn-Quv}\\
\bm{H}_{\bm{U}} & \coloneqq\left(\bm{U}^{0}\right)^{\top}\bm{U}^{\star}\qquad\text{and}\qquad\bm{H}_{\bm{V}}\coloneqq\left(\bm{V}^{0}\right)^{\top}\bm{V}^{\star}.\label{defn-Huv}
\end{align}
In addition, we give an useful lemma which will facilitate our proof.
\begin{lemma}\label{lem:1}Suppose the sample size obeys $n^{2}p\geq C\mu^{2}r^{2}\kappa^{2}n\log n$
for some sufficiently large constant $C>0$, the noise satisfies $\frac{\sigma}{\sigma_{\min}}\sqrt{\frac{n}{p}}\leq c,$for
some sufficiently small constant $c>0$. Then with probability at
least $1-O(n^{-10})$, one has

\begin{subequations}
\label{subeq:lem1} 
\begin{align}
\left\Vert \bm{M}^{0}-\bm{M}^{\star}\right\Vert  & \lesssim\sigma\sqrt{\frac{n}{p}}+\sqrt{\frac{n}{p}}\left\Vert \bm{M}^{\star}\right\Vert _{\infty},\label{subeq:lem1-1}\\
\max_{1\leq l\leq n}\left\Vert \bm{M}^{0,\left(l\right)}-\bm{M}^{\star}\right\Vert  & \lesssim\sigma\sqrt{\frac{n}{p}}+\sqrt{\frac{n}{p}}\left\Vert \bm{M}^{\star}\right\Vert _{\infty}\label{subeq:lem1-1l}
\end{align}
\begin{equation}
\max\left\{ \left\Vert \bm{U}\bm{Q}_{\bm{U}}-\bm{U}^{\star}\right\Vert ,\left\Vert \bm{V}\bm{Q}_{\bm{V}}-\bm{V}^{\star}\right\Vert \right\} \lesssim\frac{\sigma}{\sigma_{\min}}\sqrt{\frac{n}{p}}+\frac{\left\Vert \bm{M}^{\star}\right\Vert _{\infty}}{\sigma_{\min}}\sqrt{\frac{n}{p}},\label{subeq:lem1-2}
\end{equation}
\begin{equation}
\max\left\{ \left\Vert \bm{Q}_{\bm{U}}-\bm{H}_{\bm{U}}\right\Vert ,\left\Vert \bm{Q}_{\bm{V}}-\bm{H}_{\bm{V}}\right\Vert \right\} \lesssim\frac{n}{\sigma_{\min}^{2}}\frac{\left\Vert \bm{M}^{\star}\right\Vert _{\infty}^{2}+\sigma^{2}}{p},\label{subeq:lem1-3}
\end{equation}
\begin{equation}
\frac{1}{2}\leq\min\left\{ \left\Vert \bm{H}_{\bm{U}}\right\Vert ,\left\Vert \bm{H}_{\bm{V}}\right\Vert \right\} \leq\max\left\{ \left\Vert \bm{H}_{\bm{U}}\right\Vert ,\left\Vert \bm{H}_{\bm{V}}\right\Vert \right\} \leq2.\label{subeq:lem1-4}
\end{equation}
\end{subequations}

\end{lemma}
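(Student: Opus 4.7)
The plan is to establish Lemma \ref{lem:1} in two stages: first, bound the operator-norm perturbation $\|\bm{M}^0 - \bm{M}^\star\|$ (and similarly $\|\bm{M}^{0,(l)} - \bm{M}^\star\|$); then, translate these bounds into singular-subspace bounds via Wedin's theorem together with a second-order refinement for the rotation/Gram difference.

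For the operator-norm bound, I will decompose $\bm{M}^0 - \bm{M}^\star = T_1 + T_2 + T_3$, where
\begin{align*}
T_1 &\coloneqq \tfrac{1}{p}\mathcal{P}_\Omega(\bm{M}^\star) - \bm{M}^\star,\\
T_2 &\coloneqq \tfrac{1}{p}\mathcal{P}_\Omega\bigl(\widetilde{\bm{M}} - \bm{M}^\star - \bm{\mu}\bigr),\\
T_3 &\coloneqq \tfrac{1}{p}\mathcal{P}_\Omega(\bm{\mu}),
\end{align*}
with $\widetilde{M}_{i,j} \coloneqq \psi_\tau(M_{i,j})$ and $\mu_{i,j} \coloneqq \mathbb{E}[\widetilde{M}_{i,j} - M^\star_{i,j}]$. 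The sampling term $T_1$ is classical; matrix Bernstein (as encapsulated in Lemma~\ref{lem:bernoulli}) yields $\|T_1\| \lesssim \sqrt{n/p}\,\|\bm{M}^\star\|_\infty$. For $T_2$, every entry of $\widetilde{\bm{M}} - \bm{M}^\star - \bm{\mu}$ is zero-mean, uniformly bounded by $2\tau$, and has second moment at most $O(\sigma^2)$; matrix Bernstein then gives $\|T_2\| \lesssim \sigma\sqrt{n/p}$, with the sub-leading $\tau(\log n)/p$ term absorbed by the sample-size condition. For the Huberization bias $T_3$, the calibration $\tau \gg \|\bm{M}^\star\|_\infty$ forces $\{|M^\star_{i,j}+\varepsilon_{i,j}|>\tau\} \subset \{|\varepsilon_{i,j}|>\tau/2\}$, and the Markov / Cauchy--Schwarz argument already used for the term $\beta_3$ in Section~\ref{subsec:Proof-of-Lemma-strongcvx} bounds $|\mu_{i,j}| \leq \mathbb{E}[|\varepsilon_{i,j}|\ind_{|\varepsilon_{i,j}|\geq\tau/2}] + |M^\star_{i,j}|\mathbb{P}(|\varepsilon_{i,j}|\geq\tau/2) \lesssim \sigma^2/\tau \lesssim \sigma/\sqrt{np}$ under the prescribed $\tau$. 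Combining with $\|\bm{A}\| \leq n\|\bm{A}\|_\infty$ for $n\times n$ matrices gives $\|T_3\| \lesssim \sigma\sqrt{n/p}$, completing \eqref{subeq:lem1-1}. The leave-one-out bound \eqref{subeq:lem1-1l} follows from the same three-term decomposition applied to $\bm{M}^{0,(l)}$; replacing the $l$-th row's sampled contribution by the deterministic $\mathcal{P}_l(\bm{M}^\star)$ only removes pieces whose norms are already controlled above.

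For the subspace perturbation bounds, the noise hypothesis ensures $\|\bm{M}^0 - \bm{M}^\star\| \ll \sigma_{\min}$, so Wedin's $\sin\Theta$ theorem directly yields $\|\bm{U}^0\bm{Q}_{\bm{U}} - \bm{U}^\star\| \lesssim \|\bm{M}^0 - \bm{M}^\star\|/\sigma_{\min}$ and analogously for $\bm{V}$, establishing \eqref{subeq:lem1-2}. For the sharper bound \eqref{subeq:lem1-3}, I will invoke the identity $\bm{I} - \bm{H}_{\bm{U}}^\top\bm{H}_{\bm{U}} = (\bm{U}^0)^\top(\bm{I} - \bm{U}^\star\bm{U}^{\star\top})\bm{U}^0$, whose operator norm equals $\|\sin\Theta(\bm{U}^0,\bm{U}^\star)\|^2 \lesssim (\|\bm{M}^0 - \bm{M}^\star\|/\sigma_{\min})^2$; writing $\bm{H}_{\bm{U}} = \bm{U}_H\bm{\Sigma}_H\bm{V}_H^\top$ then gives $\bm{Q}_{\bm{U}} - \bm{H}_{\bm{U}} = \bm{U}_H(\bm{I} - \bm{\Sigma}_H)\bm{V}_H^\top$ with $\|\bm{I} - \bm{\Sigma}_H\| \leq \|\bm{I} - \bm{\Sigma}_H^2\| = \|\bm{I} - \bm{H}_{\bm{U}}^\top\bm{H}_{\bm{U}}\|$, yielding the quadratic-order bound \eqref{subeq:lem1-3}. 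Finally, \eqref{subeq:lem1-4} is an immediate corollary since $\|\bm{H}_{\bm{U}}\|^2 = \|\bm{H}_{\bm{U}}^\top\bm{H}_{\bm{U}}\| \in [1-o(1),\,1+o(1)]$.

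The main obstacle will be the Huberization bias $T_3$. This is the term that does not arise in previous works which assumed symmetric noise, so the sub-Gaussian analysis cannot be borrowed verbatim. The choice $\tau \asymp \|\bm{M}^\star\|_\infty + \sigma\sqrt{np}$ is delicately calibrated to make $\tau$ simultaneously large enough that the Huberization bias $|\mu_{i,j}|$ sits at scale $\sigma/\sqrt{np}$ (so that $\|T_3\|$ lands at $\sigma\sqrt{n/p}$) and small enough that the uniform bound $\tau/p$ appearing in matrix Bernstein for $T_2$ does not pollute the variance term. Balancing these two competing sources of error at the common optimal scale $\sigma\sqrt{n/p}$ is the technical crux of the lemma and, downstream, is what enables the minimax-optimal rate stated in Theorems~\ref{thm:main} and~\ref{thm:main-1}.
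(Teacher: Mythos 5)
Your proposal is correct and follows essentially the same route as the paper's own proof: you bound $\left\Vert \bm{M}^{0}-\bm{M}^{\star}\right\Vert$ by centering and a matrix-Bernstein-type tail bound plus the Markov-type Huberization-bias estimate $\left|\mathbb{E}[\widetilde{M}_{i,j}-M^{\star}_{i,j}]\right|\lesssim\sigma^{2}/\tau\lesssim\sigma/\sqrt{np}$, then obtain \eqref{subeq:lem1-2} via Weyl and Wedin, \eqref{subeq:lem1-3} via a quadratic principal-angle argument, and \eqref{subeq:lem1-4} as an immediate corollary. The only differences are cosmetic: the paper lumps the sampled signal and truncated noise into one zero-mean matrix with variance proxy $\widetilde{\sigma}^{2}\asymp(\left\Vert \bm{M}^{\star}\right\Vert _{\infty}^{2}+\sigma^{2})/p$ instead of your three-term split, and proves \eqref{subeq:lem1-3} through the SVD $\bm{H}_{\bm{V}}=\bm{L}_{1}\cos\bm{\Theta}\,\bm{L}_{2}^{\top}$ rather than your identity for $\bm{I}-\bm{H}_{\bm{U}}^{\top}\bm{H}_{\bm{U}}$, which amounts to the same estimate.
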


\subsection{Proof of Lemma \ref{lem:2}}

Let us introduce the symmetric versions of $\bm{M}^{\star}$ and $\bm{M}^{0}$,
denoted by $\widetilde{\bm{M}}^{\star}$ and $\widetilde{\bm{M}}^{0}$
\[
\widetilde{\bm{M}}^{\star}\coloneqq\left[\begin{array}{cc}
\bm{0} & \bm{M}^{\star}\\
\left(\bm{M}^{\star}\right)^{\top} & \bm{0}
\end{array}\right],\qquad\widetilde{\bm{M}}^{0}\coloneqq\left[\begin{array}{cc}
\bm{0} & \bm{M}^{0}\\
\left(\bm{M}^{0}\right)^{\top} & \bm{0}
\end{array}\right].
\]
Recall that the top-$r$ SVD of $\bm{M}^{0}$ is $\bm{U}^{0}\bm{\Sigma}^{0}\bm{V}^{0\top}$.
It follows that the top-$r$ SVD of $\widetilde{\bm{M}}^{0}$ would
be 
\[
\left(\frac{1}{\sqrt{2}}\left[\begin{array}{c}
\bm{U}^{0}\\
\bm{V}^{0}
\end{array}\right]\right)\bm{\Sigma}^{0}\left(\frac{1}{\sqrt{2}}\left[\begin{array}{c}
\bm{U}^{0}\\
\bm{V}^{0}
\end{array}\right]\right)^{\top}.
\]
Define 
\begin{align*}
\bm{Z}^{0} & \coloneqq\frac{1}{\sqrt{2}}\left[\begin{array}{c}
\bm{U}^{0}\\
\bm{V}^{0}
\end{array}\right],\qquad\bm{Z}^{\star}\coloneqq\frac{1}{\sqrt{2}}\left[\begin{array}{c}
\bm{U}^{\star}\\
\bm{V}^{\star}
\end{array}\right],\\
\bm{Q} & \coloneqq\arg\min_{\bm{R}\in\mathcal{O}^{r\times r}}\left\Vert \bm{Z}^{0}\bm{R}-\bm{Z}^{\star}\right\Vert _{\mathrm{F}}.
\end{align*}

To prove Lemma \ref{lem:2}, we start from an application of \citet[Lemma 45, 46, 47]{ma2017implicit}
on $\widetilde{\bm{M}}^{0}$ and obtain:

\begin{subequations}
\label{eq:aux} 
\begin{align}
\left\Vert \bm{H}^{0}-\bm{Q}\right\Vert  & \lesssim\frac{1}{\sigma_{\min}}\left\Vert \widetilde{\bm{M}}^{0}-\widetilde{\bm{M}}^{\star}\right\Vert ,\label{eq:aux-1}\\
\left\Vert \left(\bm{\Sigma}^{0}\right)^{1/2}\bm{Q}-\bm{Q}\left(\bm{\Sigma}^{\star}\right)^{1/2}\right\Vert  & \lesssim\frac{1}{\sqrt{\sigma_{\min}}}\left\Vert \widetilde{\bm{M}}^{0}-\widetilde{\bm{M}}^{\star}\right\Vert ,\label{eq:aux-2}\\
\left\Vert \bm{Z}^{0}\bm{Q}-\bm{Z}^{\star}\right\Vert  & \leq\frac{1}{\sigma_{\min}}\left\Vert \widetilde{\bm{M}}^{0}-\widetilde{\bm{M}}^{\star}\right\Vert .\label{eq:aux-3}
\end{align}
\end{subequations}
Then, we turn attention to the decomposition 
\begin{align}
\left[\begin{array}{c}
\bm{X}^{0}\bm{H}^{0}-\bm{X}^{\star}\\
\bm{Y}^{0}\bm{H}^{0}-\bm{Y}^{\star}
\end{array}\right] & =\left[\begin{array}{c}
\bm{U}^{0}\\
\bm{V}^{0}
\end{array}\right]\left(\bm{\Sigma}^{0}\right)^{1/2}\left(\bm{H}^{0}-\bm{Q}\right)+\left[\begin{array}{c}
\bm{U}^{0}\\
\bm{V}^{0}
\end{array}\right]\left[\left(\bm{\Sigma}^{0}\right)^{1/2}\bm{Q}-\bm{Q}\left(\bm{\Sigma}^{\star}\right)^{1/2}\right]\nonumber \\
 & \qquad+\left(\left[\begin{array}{c}
\bm{U}^{0}\\
\bm{V}^{0}
\end{array}\right]\bm{Q}-\left[\begin{array}{c}
\bm{U}^{\star}\\
\bm{V}^{\star}
\end{array}\right]\right)\left(\bm{\Sigma}^{\star}\right)^{1/2}.\label{eq:decomp}
\end{align}
Taking \eqref{eq:decomp} collectively with \eqref{eq:aux} reveals
that 
\begin{align*}
\left\Vert \left[\begin{array}{c}
\bm{X}^{0}\bm{H}^{0}-\bm{X}^{\star}\\
\bm{Y}^{0}\bm{H}^{0}-\bm{Y}^{\star}
\end{array}\right]\right\Vert  & \leq\left\Vert \left(\bm{\Sigma}^{0}\right)^{1/2}\right\Vert \left\Vert \bm{H}^{0}-\bm{Q}\right\Vert +\left\Vert \left(\bm{\Sigma}^{0}\right)^{1/2}\bm{Q}-\bm{Q}\left(\bm{\Sigma}^{\star}\right)^{1/2}\right\Vert \\
 & \qquad+\left\Vert \left[\begin{array}{c}
\bm{U}^{0}\\
\bm{V}^{0}
\end{array}\right]\bm{Q}-\left[\begin{array}{c}
\bm{U}^{\star}\\
\bm{V}^{\star}
\end{array}\right]\right\Vert \left\Vert \left(\bm{\Sigma}^{\star}\right)^{1/2}\right\Vert \\
 & \lesssim\sqrt{\sigma_{\max}}\left\Vert \bm{H}^{0}-\bm{Q}\right\Vert +\left\Vert \left(\bm{\Sigma}^{0}\right)^{1/2}\bm{Q}-\bm{Q}\left(\bm{\Sigma}^{\star}\right)^{1/2}\right\Vert +\sqrt{\sigma_{\max}}\left\Vert \bm{Z}^{0}\bm{Q}-\bm{Z}^{\star}\right\Vert \\
 & \lesssim\left(\frac{\sqrt{\sigma_{\max}}}{\sigma_{\min}}+\frac{1}{\sqrt{\sigma_{\min}}}+\frac{\sqrt{\sigma_{\max}}}{\sigma_{\min}}\right)\left\Vert \widetilde{\bm{M}}^{0}-\widetilde{\bm{M}}^{\star}\right\Vert \\
 & \lesssim\left(\frac{\sigma}{\sigma_{\min}}\sqrt{\frac{n}{p}}+\frac{\left\Vert \bm{M}^{\star}\right\Vert _{\infty}}{\sigma_{\min}}\sqrt{\frac{n}{p}}\right)\left\Vert \bm{X}^{\star}\right\Vert ,
\end{align*}
where the last line utilizes the immediate consequence of Lemma \ref{lem:1}
that 
\begin{equation}
\left\Vert \widetilde{\bm{M}}^{0}-\widetilde{\bm{M}}^{\star}\right\Vert =\left\Vert \bm{M}^{0}-\bm{M}^{\star}\right\Vert \lesssim\sigma\sqrt{\frac{n}{p}}+\sqrt{\frac{n}{p}}\left\Vert \bm{M}^{\star}\right\Vert _{\infty}.\label{eq:tilde-M}
\end{equation}

\subsection{Proof of Lemma \ref{lem:3}}

To start with, we prove an useful lemma.

\begin{lemma}\label{lem:3-5}Suppose $\frac{\sigma}{\sigma_{\min}}\sqrt{\frac{\kappa\mu rn\log n}{p}}\leq c$
for some small enough constant $c>0$ and $n^{2}p\geq C\kappa^{3}\mu^{2}r^{3}n\log n$
for some large enough constant $C>0$. Then with probability at least
$1-O(n^{-10})$, one has 
\begin{align*}
\left\Vert \bm{U}^{0}\bm{Q}_{\bm{U}}-\bm{U}^{\star}\right\Vert _{2,\infty} & \lesssim\left\Vert \bm{U}^{\star}\right\Vert _{2,\infty}\left[\kappa\left(\frac{\sigma}{\sigma_{\min}}\sqrt{\frac{n}{p}}+\frac{\left\Vert \bm{M}^{\star}\right\Vert _{\infty}}{\sigma_{\min}}\sqrt{\frac{n}{p}}\right)^{2}+\sqrt{\frac{1}{p}\left(\left\Vert \bm{M}^{\star}\right\Vert _{\infty}^{2}+\sigma^{2}\right)}\frac{\sqrt{r\log n}}{\sigma_{\min}}\right]\\
 & \qquad+\frac{\left\Vert \bm{M}^{\star}\right\Vert _{\infty}}{\sigma_{\min}}\sqrt{\frac{r\log n}{p}}+\frac{\sigma}{\sigma_{\min}}\sqrt{\frac{\mu r}{p}}\log n
\end{align*}

\end{lemma}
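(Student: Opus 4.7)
The plan is to combine a standard triangle-inequality split with a careful row-wise decomposition exploiting the identity $\bm{U}^{0}=\bm{M}^{0}\bm{V}^{0}(\bm{\Sigma}^{0})^{-1}$, reducing the $\ell_{2}/\ell_{\infty}$ control to (i) operator-norm perturbation bounds already available from Lemma~\ref{lem:1}, and (ii) row-wise concentration for the truncated noise, decoupled by a leave-one-out argument.

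First I would split the error via $\bm{H}_{\bm{U}}$ as an intermediate rotation:
\[
\|\bm{U}^{0}\bm{Q}_{\bm{U}}-\bm{U}^{\star}\|_{2,\infty}\leq\|\bm{U}^{0}\bm{H}_{\bm{U}}-\bm{U}^{\star}\|_{2,\infty}+\|\bm{U}^{0}\|_{2,\infty}\,\|\bm{Q}_{\bm{U}}-\bm{H}_{\bm{U}}\|.
\]
The second summand is handled directly by \eqref{subeq:lem1-3}, together with the elementary bound $\|\bm{U}^{0}\|_{2,\infty}\leq\|\bm{U}^{\star}\|_{2,\infty}+\|\bm{U}^{0}\bm{H}_{\bm{U}}-\bm{U}^{\star}\|_{2,\infty}$, which (after a bootstrap step) produces the first term $\kappa(\sigma\sqrt{n/p}/\sigma_{\min}+\|\bm{M}^\star\|_\infty\sqrt{n/p}/\sigma_{\min})^{2}\|\bm{U}^{\star}\|_{2,\infty}$ in the target bound.

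Next, for the principal term $\|\bm{U}^{0}\bm{H}_{\bm{U}}-\bm{U}^{\star}\|_{2,\infty}$, I would use $\bm{U}^{0}=\bm{M}^{0}\bm{V}^{0}(\bm{\Sigma}^{0})^{-1}$ together with $\bm{M}^{0}=\bm{M}^{\star}+(\bm{M}^{0}-\bm{M}^{\star})$ and $\bm{M}^{\star}=\bm{U}^{\star}\bm{\Sigma}^{\star}\bm{V}^{\star\top}$ to get the row-wise decomposition
\begin{align*}
(\bm{U}^{0}\bm{H}_{\bm{U}}-\bm{U}^{\star})_{l,\cdot} &= \bm{U}^{\star}_{l,\cdot}\bigl[\bm{\Sigma}^{\star}\bm{V}^{\star\top}\bm{V}^{0}(\bm{\Sigma}^{0})^{-1}\bm{H}_{\bm{U}}-\bm{I}_{r}\bigr]\\
&\qquad+\bigl[(\bm{M}^{0}-\bm{M}^{\star})\bm{V}^{0}(\bm{\Sigma}^{0})^{-1}\bm{H}_{\bm{U}}\bigr]_{l,\cdot}.
\end{align*}
The bracket in the first piece is a spectral-perturbation quantity whose operator norm is controlled by $\|\bm{M}^{0}-\bm{M}^{\star}\|/\sigma_{\min}$ via \eqref{subeq:lem1-1} and \eqref{subeq:lem1-2}, producing a term proportional to $\|\bm{U}^{\star}\|_{2,\infty}\sqrt{(\|\bm{M}^{\star}\|_{\infty}^{2}+\sigma^{2})/p}\cdot\sqrt{r\log n}/\sigma_{\min}$, exactly matching the second term in the statement (after absorbing lower-order contributions).

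For the residual row $[(\bm{M}^{0}-\bm{M}^{\star})\bm{V}^{0}(\bm{\Sigma}^{0})^{-1}\bm{H}_{\bm{U}}]_{l,\cdot}$, the dependence of $\bm{V}^{0}$ on the $l$th row of the noise is broken by passing to the leave-one-out surrogate $\bm{V}^{0,(l)}$ (from Algorithm~\ref{alg:gd-rmc-loo}), using \eqref{subeq:lem1-1l} and standard perturbation together with the incoherence of $\bm{V}^{0,(l)}$ as a preparatory step. Conditional on $\bm{V}^{0,(l)}$, the $l$th row becomes a sum of independent vectors, which I would decompose further into three pieces: the Huberization-bias piece $\mathbb{E}[\psi_{\tau}(M_{l,j})]-M^{\star}_{l,j}$ (bounded by $\sigma^{2}/\tau$ using the zero-mean and second-moment assumption, Markov, and Cauchy--Schwarz), the clipped-tail sign piece $\tau\,\mathrm{sgn}(\cdot)\mathbf{1}_{|\varepsilon_{l,j}|>\tau}$ (bounded via $\mathbb{P}(|\varepsilon|>\tau)\leq\sigma^{2}/\tau^{2}$), and the centered truncated noise $\psi_{\tau}(M_{l,j})-\mathbb{E}[\psi_{\tau}(M_{l,j})]$. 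Matrix Bernstein applied to the third piece (with per-summand bound $\tau\|\bm{V}^{\star}\|_{2,\infty}/p$ and variance $\sigma^{2}\|\bm{V}^{\star}\|_{\mathrm{F}}^{2}/p$) yields the $\sigma\sqrt{\mu r/p}\log n/\sigma_{\min}$ term; the bias pieces, summed against $\bm{V}^{0,(l)}$ and multiplied by $(\bm{\Sigma}^{0})^{-1}\bm{H}_{\bm{U}}$, contribute the $\|\bm{M}^{\star}\|_{\infty}\sqrt{r\log n/p}/\sigma_{\min}$ term (via incoherence of the perturbed right factor).

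The main obstacle is the final leave-one-out step: one must show that replacing $\bm{V}^{0}$ by $\bm{V}^{0,(l)}$ costs only a lower-order term, which in turn requires a bound on $\|\bm{V}^{0}-\bm{V}^{0,(l)}\bm{R}\|_{\mathrm{F}}$ analogous to Lemma~\ref{lem:5} at the initialization and, simultaneously, an $\ell_{2}/\ell_{\infty}$ bound on $\bm{V}^{0,(l)}$ to use its row norms. Threading these together so that the $\tau$-dependence in the Bernstein bound exactly cancels the truncation bias (and so the resulting rate matches the sub-Gaussian minimax rate, not a trailing $\tau$-term as in prior work) is the delicate part, and explains the choice $\tau\asymp\|\bm{M}^{\star}\|_{\infty}+\sigma\sqrt{np}$ in the hypothesis.
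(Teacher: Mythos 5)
Your overall architecture is reasonable and, in its second half, essentially re-derives what the paper does: the row of $(\bm{M}^{0}-\bm{M}^{\star})$ hit against the random right factor is decoupled by the initialization leave-one-out matrices, a Frobenius proximity bound of the type in Lemma \ref{lem:5} is needed, and the centered/truncated/bias split with matrix Bernstein and Markov produces the $\sigma\sqrt{\mu r/p}\log n/\sigma_{\min}$ and bias terms (this is exactly the content of Lemmas \ref{lem:3-3} and \ref{lem:3-4}, including the self-bounding rearrangement in $\|\bm{U}^{0}\bm{H}_{\bm{U}}-\bm{U}^{\star}\|_{2,\infty}$ and $\|\bm{V}^{0}\bm{H}_{\bm{V}}-\bm{V}^{\star}\|_{2,\infty}$ that you acknowledge but do not carry out). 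The paper's master decomposition is different from yours in one useful way: it writes $\bm{U}^{0}\bm{Q}_{\bm{U}}-\bm{U}^{\star}=(\bm{M}^{0}-\bm{M}^{\star})\bm{V}^{\star}(\bm{\Sigma}^{\star})^{-1}+\bm{\Delta}_{\bm{U}}(\bm{\Sigma}^{\star})^{-1}+\bm{U}^{0}\bm{Q}_{\bm{U}}\bm{\Delta}_{\bm{\Sigma}}(\bm{\Sigma}^{\star})^{-1}$, so the noise is first contracted against the \emph{deterministic} $\bm{V}^{\star}$ (no decoupling needed there), and all of the coupling with the random factors is isolated in $\bm{\Delta}_{\bm{U}}=\bm{U}^{0}\bm{\Sigma}^{0}\bm{Q}_{\bm{V}}-\bm{M}^{0}\bm{V}^{\star}$ and $\bm{\Delta}_{\bm{\Sigma}}$, which are then controlled by Lemmas \ref{lem:3-2}--\ref{lem:3-4}.

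The genuine gap is in your first piece, $\bm{U}^{\star}_{l,\cdot}\bigl[\bm{\Sigma}^{\star}\bm{V}^{\star\top}\bm{V}^{0}(\bm{\Sigma}^{0})^{-1}\bm{H}_{\bm{U}}-\bm{I}_{r}\bigr]$. You assert that the bracket is "controlled by $\|\bm{M}^{0}-\bm{M}^{\star}\|/\sigma_{\min}$ via \eqref{subeq:lem1-1} and \eqref{subeq:lem1-2}" and that this yields the term $\|\bm{U}^{\star}\|_{2,\infty}\sqrt{(\|\bm{M}^{\star}\|_{\infty}^{2}+\sigma^{2})/p}\,\sqrt{r\log n}/\sigma_{\min}$. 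That does not follow: a crude operator-norm bound gives $\|\bm{E}\|/\sigma_{\min}\asymp(\sigma+\|\bm{M}^{\star}\|_{\infty})\sqrt{n/p}/\sigma_{\min}$, which is larger than the term you claim by roughly $\sqrt{n/(r\log n)}$, and after multiplying by $\|\bm{U}^{\star}\|_{2,\infty}\le\sqrt{\mu r/n}$ it produces $(\sigma+\|\bm{M}^{\star}\|_{\infty})\sqrt{\mu r/p}/\sigma_{\min}$, which is not dominated by the lemma's right-hand side unless $\mu\lesssim\log n$. To make this piece work you must exploit cancellation: since $\bm{U}^{\star\top}\bm{U}^{0}\bm{H}_{\bm{U}}=\bm{H}_{\bm{U}}^{\top}\bm{H}_{\bm{U}}$, the bracket equals $(\bm{H}_{\bm{U}}^{\top}\bm{H}_{\bm{U}}-\bm{I}_{r})-\bm{U}^{\star\top}\bm{E}\,\bm{V}^{0}(\bm{\Sigma}^{0})^{-1}\bm{H}_{\bm{U}}$, whose first part is second order ($\lesssim(\|\bm{E}\|/\sigma_{\min})^{2}$, a $\sin\bm{\Theta}$-squared quantity) and whose second part requires a Bernstein bound on the \emph{projected} noise $\bm{U}^{\star\top}\bm{E}\bm{V}^{\star}$ of size $\widetilde{\sigma}\sqrt{r\log n}$ plus a separate (again decoupled or operator-norm, second-order) treatment of $\bm{U}^{\star\top}\bm{E}(\bm{V}^{0}\bm{H}_{\bm{V}}-\bm{V}^{\star})$ and of the Huberization bias $\mathbb{E}[\bm{E}]$. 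This is precisely what the paper's Lemma \ref{lem:3-2} (the $\alpha_{3}$ term) and its use inside Lemmas \ref{lem:3-4}--\ref{lem:3-5} supply; without it, your "exactly matching" claim for the $\sqrt{r\log n}$ term is unjustified and the resulting bound is strictly weaker than the statement.
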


The decomposition \eqref{eq:decomp} together with \eqref{eq:aux},
\eqref{eq:tilde-M} and Lemma \ref{lem:3-5} yields

\begin{align*}
\left\Vert \bm{X}^{0}\bm{H}^{0}-\bm{X}^{\star}\right\Vert _{2,\infty} & \leq\left\Vert \bm{U}^{0}\right\Vert _{2,\infty}\left\{ \left\Vert \left(\bm{\Sigma}^{0}\right)^{1/2}\right\Vert \left\Vert \bm{H}^{0}-\bm{Q}_{\bm{U}}\right\Vert +\left\Vert \left(\bm{\Sigma}^{0}\right)^{1/2}\bm{Q}_{\bm{U}}-\bm{Q}_{\bm{U}}\left(\bm{\Sigma}^{\star}\right)^{1/2}\right\Vert \right\} \\
 & \qquad+\sqrt{\sigma_{\max}}\left\Vert \bm{U}^{0}\bm{Q}_{\bm{U}}-\bm{U}^{\star}\right\Vert _{2,\infty}\\
 & \lesssim\left\Vert \bm{U}^{\star}\right\Vert _{2,\infty}\left(\sqrt{\sigma_{\max}}\frac{1}{\sigma_{\min}}\left\Vert \widetilde{\bm{M}}^{0}-\widetilde{\bm{M}}^{\star}\right\Vert +\frac{1}{\sqrt{\sigma_{\min}}}\left\Vert \widetilde{\bm{M}}^{0}-\widetilde{\bm{M}}^{\star}\right\Vert \right)\\
 & \qquad+\sqrt{\sigma_{\max}}\left\Vert \bm{U}^{0}\bm{Q}_{\bm{U}}-\bm{U}^{\star}\right\Vert _{2,\infty}\\
 & \lesssim\left\Vert \bm{U}^{\star}\right\Vert _{2,\infty}\sqrt{\sigma_{\max}}\left(\frac{\sigma}{\sigma_{\min}}\sqrt{\frac{n}{p}}+\frac{\left\Vert \bm{M}^{\star}\right\Vert _{\infty}}{\sigma_{\min}}\sqrt{\frac{n}{p}}\right)\\
 & \qquad+\sqrt{\sigma_{\max}}\left(\frac{\left\Vert \bm{M}^{\star}\right\Vert _{\infty}}{\sigma_{\min}}\sqrt{\frac{r\log n}{p}}+\frac{\sigma}{\sigma_{\min}}\sqrt{\frac{\mu r}{p}}\log n\right).
\end{align*}

\subsubsection{Proof of Lemma \ref{lem:3-5}}

To prove this, we begin from a collection of useful lemmas.

\begin{lemma}\label{lem:3-2}Suppose that $n^{2}p\geq C\mu^{2}rn\log n$
for some sufficiently large constant $C>0$. Then with probability
at least $1-O(n^{-10})$, one has 
\begin{align*}
\left\Vert \bm{Q}_{\bm{U}}^{\top}\bm{\Sigma}^{0}\bm{Q}_{\bm{V}}-\bm{\Sigma}^{\star}\right\Vert  & \lesssim\sigma_{\max}\left(\frac{n}{\sigma_{\min}^{2}}\frac{\left\Vert \bm{M}^{\star}\right\Vert _{\infty}^{2}+\sigma^{2}}{p}\right)+\sqrt{\frac{1}{p}\left(\left\Vert \bm{M}^{\star}\right\Vert _{\infty}^{2}+\sigma^{2}\right)}\sqrt{r\log n}+n\frac{\sigma^{2}}{\tau},\\
\left\Vert \bm{H}_{\bm{U}}^{\top}\bm{\Sigma}^{0}\bm{H}_{\bm{V}}-\bm{\Sigma}^{\star}\right\Vert  & \lesssim\left(\sigma\sqrt{\frac{n}{p}}+\sqrt{\frac{n}{p}}\left\Vert \bm{M}^{\star}\right\Vert _{\infty}\right)^{3}\frac{1}{\sigma_{\min}^{2}}+\sqrt{\frac{1}{p}\left(\left\Vert \bm{M}^{\star}\right\Vert _{\infty}^{2}+\sigma^{2}\right)}\sqrt{r\log n}+n\frac{\sigma^{2}}{\tau}.
\end{align*}

\end{lemma}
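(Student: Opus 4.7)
}
The plan is to exploit the identity $\bm{Q}_{\bm{U}}^{\top}\bm{\Sigma}^{0}\bm{Q}_{\bm{V}}=(\bm{U}^{0}\bm{Q}_{\bm{U}})^{\top}\bm{M}^{0}(\bm{V}^{0}\bm{Q}_{\bm{V}})$ and expand around the ground truth using the residuals $\bm{E}_{\bm{U}}\coloneqq\bm{U}^{0}\bm{Q}_{\bm{U}}-\bm{U}^{\star}$ and $\bm{E}_{\bm{V}}\coloneqq\bm{V}^{0}\bm{Q}_{\bm{V}}-\bm{V}^{\star}$, whose spectral norms are controlled by Lemma \ref{lem:1} via $\|\bm{E}_{\bm{U}}\|,\|\bm{E}_{\bm{V}}\|\lesssim(\sigma+\|\bm{M}^{\star}\|_{\infty})\sqrt{n/p}/\sigma_{\min}$. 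The standard expansion yields
\[
\bm{Q}_{\bm{U}}^{\top}\bm{\Sigma}^{0}\bm{Q}_{\bm{V}}-\bm{\Sigma}^{\star}=(\bm{U}^{\star})^{\top}(\bm{M}^{0}-\bm{M}^{\star})\bm{V}^{\star}+\bm{E}_{\bm{U}}^{\top}\bm{M}^{0}\bm{V}^{\star}+(\bm{U}^{\star})^{\top}\bm{M}^{0}\bm{E}_{\bm{V}}+\bm{E}_{\bm{U}}^{\top}\bm{M}^{0}\bm{E}_{\bm{V}}.
\]
The apparently first-order middle terms should turn out to be second order; the key is the Procrustes identity. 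Since $\bm{Q}_{\bm{U}}=\mathsf{sgn}((\bm{U}^{0})^{\top}\bm{U}^{\star})$, the matrix $(\bm{U}^{0}\bm{Q}_{\bm{U}})^{\top}\bm{U}^{\star}$ is symmetric positive semi-definite, so $\bm{E}_{\bm{U}}^{\top}\bm{U}^{\star}=\bm{U}^{\star\top}\bm{E}_{\bm{U}}$; combined with the orthogonality relation $(\bm{U}^{\star}+\bm{E}_{\bm{U}})^{\top}(\bm{U}^{\star}+\bm{E}_{\bm{U}})=\bm{I}$, one obtains $\bm{E}_{\bm{U}}^{\top}\bm{U}^{\star}=-\tfrac{1}{2}\bm{E}_{\bm{U}}^{\top}\bm{E}_{\bm{U}}$, and analogously for $\bm{V}$. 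Therefore, writing $\bm{E}_{\bm{U}}^{\top}\bm{M}^{0}\bm{V}^{\star}=\bm{E}_{\bm{U}}^{\top}\bm{U}^{\star}\bm{\Sigma}^{\star}+\bm{E}_{\bm{U}}^{\top}(\bm{M}^{0}-\bm{M}^{\star})\bm{V}^{\star}$, both pieces are controlled by $\sigma_{\max}\|\bm{E}_{\bm{U}}\|^{2}$ (using $\|\bm{M}^{0}-\bm{M}^{\star}\|\lesssim(\sigma+\|\bm{M}^{\star}\|_{\infty})\sqrt{n/p}$ from Lemma \ref{lem:1} and $1/\sigma_{\min}\leq\sigma_{\max}/\sigma_{\min}^{2}$), which matches the first displayed term.

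Next I would handle the term $(\bm{U}^{\star})^{\top}(\bm{M}^{0}-\bm{M}^{\star})\bm{V}^{\star}$ by decomposing $\bm{M}^{0}-\bm{M}^{\star}=(\bm{M}^{0}-\mathbb{E}\bm{M}^{0})+(\mathbb{E}\bm{M}^{0}-\bm{M}^{\star})$. The mean-zero summand can be written as $\sum_{i,j}(M^{0}_{i,j}-\mathbb{E}[M^{0}_{i,j}])\bm{U}^{\star\top}_{i,\cdot}\bm{V}^{\star}_{j,\cdot}$, a sum of independent rank-$1$ matrices. Each has bounded norm $L\lesssim \tau\mu r/(pn)$ since $|\psi_\tau|\leq \tau$ and by incoherence, and the matrix variance satisfies $V\lesssim r(\|\bm{M}^{\star}\|_{\infty}^{2}+\sigma^{2})/p$ after using $\mathbb{E}[\psi_{\tau}(M_{i,j})^{2}]\lesssim\|\bm{M}^{\star}\|_{\infty}^{2}+\sigma^{2}$. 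Matrix Bernstein then delivers the $\sqrt{(\|\bm{M}^{\star}\|_{\infty}^{2}+\sigma^{2})r\log n/p}$ term. The bias summand is the Huberization bias: for each $(i,j)$, $|\mathbb{E}[\psi_{\tau}(M_{i,j})]-M^{\star}_{i,j}|\leq\mathbb{E}[|M_{i,j}|\mathbf{1}_{|M_{i,j}|>\tau}]\lesssim\sigma^{2}/\tau$ under $\tau\gtrsim\|\bm{M}^{\star}\|_{\infty}$ by Markov applied to $|\varepsilon_{i,j}|>\tau/2$; hence $\|\mathbb{E}\bm{M}^{0}-\bm{M}^{\star}\|\leq n\cdot\sigma^{2}/\tau$, giving the third term $n\sigma^{2}/\tau$.

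For the second inequality involving $\bm{H}_{\bm{U}},\bm{H}_{\bm{V}}$, I would rewrite $\bm{H}_{\bm{U}}^{\top}\bm{\Sigma}^{0}\bm{H}_{\bm{V}}=(\bm{U}^{\star})^{\top}\bm{M}^{0}_{r}\bm{V}^{\star}$ where $\bm{M}^{0}_{r}\coloneqq\bm{U}^{0}\bm{\Sigma}^{0}\bm{V}^{0\top}$ is the best rank-$r$ approximation of $\bm{M}^{0}$, and split $\bm{M}^{0}_{r}-\bm{M}^{\star}=(\bm{M}^{0}-\bm{M}^{\star})-(\bm{M}^{0}-\bm{M}^{0}_{r})$. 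The first piece is handled exactly as above (Bernstein plus bias). For the second piece, sandwiching by $\bm{U}^{\star}$ and $\bm{V}^{\star}$ gives $\sum_{k>r}\sigma_{k}(\bm{M}^{0})(\bm{U}^{\star})^{\top}\bm{u}_{k}\bm{v}_{k}^{\top}\bm{V}^{\star}$; by Weyl $\sigma_{k}(\bm{M}^{0})\leq\|\bm{M}^{0}-\bm{M}^{\star}\|$ for $k>r$, while Davis--Kahan gives $\|(\bm{U}^{\star})^{\top}\bm{u}_{k}\|,\|(\bm{V}^{\star})^{\top}\bm{v}_{k}\|\lesssim\|\bm{M}^{0}-\bm{M}^{\star}\|/\sigma_{\min}$, producing the cubic term $\|\bm{M}^{0}-\bm{M}^{\star}\|^{3}/\sigma_{\min}^{2}$ that appears in the statement.

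The main obstacles are two-fold. First, getting the \emph{second-order} cancellation in the middle terms requires carefully invoking the symmetric-PSD property of the Procrustes alignment --- this is subtle because a naive expansion only gives first-order bounds through $\|\bm{M}^{0}\|\cdot\|\bm{E}\|$, which would be much worse than the stated bound. Second, the Bernstein calculation needs a careful variance estimate that separates the truncation level $\tau$ (controlling $L$) from the intrinsic $\|\bm{M}^{\star}\|_{\infty}^{2}+\sigma^{2}$ variance bound, and the Huberization bias analysis must handle asymmetric noise, where the first-moment cancellation of symmetric noise no longer applies and one must control $\mathbb{E}[\psi_{\tau}(M)]-M^{\star}$ directly via truncated moments. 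A minor bookkeeping issue is ensuring that all error events in Lemmas \ref{lem:1} and the Bernstein application hold simultaneously with probability $1-O(n^{-10})$ under the stated sample-size condition $n^{2}p\geq C\mu^{2}rn\log n$.
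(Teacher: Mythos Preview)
Your proposal is correct and complete. The $\alpha_3$-type term $\bm{U}^{\star\top}(\bm{M}^{0}-\bm{M}^{\star})\bm{V}^{\star}$ is handled exactly as in the paper (centered part via matrix Bernstein giving the $\sqrt{(\|\bm{M}^{\star}\|_{\infty}^{2}+\sigma^{2})r\log n/p}$ term, Huberization bias giving $n\sigma^{2}/\tau$), but the remaining structure is genuinely different from the paper's argument.

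The paper does not expand $(\bm{U}^{0}\bm{Q}_{\bm{U}})^{\top}\bm{M}^{0}(\bm{V}^{0}\bm{Q}_{\bm{V}})$ directly. Instead it inserts the intermediate point $\bm{H}_{\bm{U}}^{\top}\bm{\Sigma}^{0}\bm{H}_{\bm{V}}$ and splits into $\alpha_{1}=\|\bm{Q}_{\bm{U}}^{\top}\bm{\Sigma}^{0}\bm{Q}_{\bm{V}}-\bm{H}_{\bm{U}}^{\top}\bm{\Sigma}^{0}\bm{H}_{\bm{V}}\|$, $\alpha_{2}=\|\bm{H}_{\bm{U}}^{\top}\bm{\Sigma}^{0}\bm{H}_{\bm{V}}-\bm{U}^{\star\top}\bm{M}^{0}\bm{V}^{\star}\|$, and $\alpha_{3}=\|\bm{U}^{\star\top}(\bm{M}^{0}-\bm{M}^{\star})\bm{V}^{\star}\|$. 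The second-order nature of $\alpha_{1}$ comes ``for free'' from the prepackaged $\sin^{2}$-type bound $\|\bm{Q}_{\bm{U}}-\bm{H}_{\bm{U}}\|\lesssim n(\|\bm{M}^{\star}\|_{\infty}^{2}+\sigma^{2})/(p\sigma_{\min}^{2})$ of Lemma~\ref{lem:1}, rather than from your Procrustes identity $\bm{E}_{\bm{U}}^{\top}\bm{U}^{\star}=-\tfrac{1}{2}\bm{E}_{\bm{U}}^{\top}\bm{E}_{\bm{U}}$. For $\alpha_{2}$, the paper simply cites an external reference to obtain the product bound $\|\bm{E}_{\bm{U}}\|\cdot\|\bm{M}^{0}-\bm{M}^{\star}\|\cdot\|\bm{E}_{\bm{V}}\|$; since $\alpha_{2}=\|(\bm{U}^{\star})^{\top}(\bm{M}^{0}_{r}-\bm{M}^{0})\bm{V}^{\star}\|$, this is precisely the quantity you bound by your Weyl plus Davis--Kahan argument, so your treatment is a self-contained substitute for that citation. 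The second displayed inequality in the lemma is then just $\alpha_{2}+\alpha_{3}$ in the paper, whereas you derive it independently. What your route buys is a cleaner, reference-free derivation that makes the second-order cancellation explicit; what the paper's route buys is modularity, since it reuses Lemma~\ref{lem:1} rather than redoing the Procrustes algebra.
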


\begin{lemma}\label{lem:3-3}Suppose the sample size obeys $n^{2}p\geq C\mu\kappa r\log n$
for some sufficiently large constant $C>0$, the noise satisfies $\frac{\sigma}{\sigma_{\min}}\sqrt{\frac{n}{p}}\leq\frac{c}{\log n}$
for some sufficiently small constant $c>0$. Then with probability
at least $1-O(n^{-10})$, one has 
\begin{align*}
\left\Vert \bm{U}^{0}\bm{\Sigma}^{0}\bm{H}_{\bm{V}}-\bm{M}^{0}\bm{V}^{\star}\right\Vert _{2,\infty} & \lesssim\frac{\log^{2}n}{\sigma_{\min}}\sqrt{\mu rn}\frac{\sigma^{2}+\left\Vert \bm{M}^{\star}\right\Vert _{\infty}^{2}}{p}+\frac{\tau+\left\Vert \bm{M}^{\star}\right\Vert _{\infty}}{p}\log n\left\Vert \bm{V}^{0}\bm{H}_{\bm{V}}-\bm{V}^{\star}\right\Vert _{2,\infty}\\
 & \qquad+\frac{n}{\sigma_{\min}}\frac{\sigma^{2}+\left\Vert \bm{M}^{\star}\right\Vert _{\infty}^{2}}{p}\left\Vert \bm{U}^{0}\bm{H}_{\bm{U}}-\bm{U}^{\star}\right\Vert _{2,\infty}\\
 & \qquad+\left\Vert \bm{U}^{\star}\right\Vert _{2,\infty}\sigma_{\max}\left(\frac{\sigma}{\sigma_{\min}}\sqrt{\frac{n}{p}}+\frac{\left\Vert \bm{M}^{\star}\right\Vert _{\infty}}{\sigma_{\min}}\sqrt{\frac{n}{p}}\right)^{2}.
\end{align*}
\end{lemma}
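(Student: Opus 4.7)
My starting point is the top-$r$ SVD identity $\bm{U}^0\bm{\Sigma}^0 = \bm{M}^0\bm{V}^0$, which rewrites the left-hand side as
\[
\bm{U}^0 \bm{\Sigma}^0 \bm{H}_{\bm{V}} - \bm{M}^0 \bm{V}^\star \;=\; \bm{M}^0\bigl(\bm{V}^0 \bm{H}_{\bm{V}} - \bm{V}^\star\bigr).
\]
Because $\bm{V}^0 \bm{H}_{\bm{V}} - \bm{V}^\star = -(\bm{I}_n-\bm{V}^0\bm{V}^{0\top})\bm{V}^\star$ is a second-order residual lying in the orthogonal complement of the column span of $\bm{V}^0$, the bound we seek is essentially a ``leakage'' estimate. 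Splitting $\bm{M}^0 = \bm{M}^\star + (\bm{M}^0-\bm{M}^\star)$ produces a \emph{signal} piece and a \emph{noise} piece, and the four terms on the right-hand side of Lemma~\ref{lem:3-3} will arise from their respective $\ell_{2,\infty}$-estimates.

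For the signal piece $\bm{M}^\star(\bm{V}^0 \bm{H}_{\bm{V}} - \bm{V}^\star)$, I would use the identity $\bm{V}^{\star\top}(\bm{V}^0 \bm{H}_{\bm{V}} - \bm{V}^\star) = \bm{H}_{\bm{V}}^\top \bm{H}_{\bm{V}} - \bm{I}_r$ together with the standard fact $\|\bm{H}_{\bm{V}}^\top \bm{H}_{\bm{V}} - \bm{I}_r\| \lesssim \|\bm{V}^0 \bm{Q}_{\bm{V}} - \bm{V}^\star\|^2$ and the spectral estimate \eqref{subeq:lem1-2} from Lemma~\ref{lem:1}. This delivers exactly the last term
\[
\|\bm{U}^\star\|_{2,\infty}\sigma_{\max}\Bigl(\tfrac{\sigma}{\sigma_{\min}}\sqrt{\tfrac{n}{p}}+\tfrac{\|\bm{M}^\star\|_\infty}{\sigma_{\min}}\sqrt{\tfrac{n}{p}}\Bigr)^2
\]
of the claimed bound, via $\|\bm{U}^\star\bm{\Sigma}^\star(\bm{H}_{\bm{V}}^\top\bm{H}_{\bm{V}}-\bm{I}_r)\|_{2,\infty} \le \|\bm{U}^\star\|_{2,\infty}\|\bm{\Sigma}^\star\|\|\bm{H}_{\bm{V}}^\top\bm{H}_{\bm{V}}-\bm{I}_r\|$.

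For the noise piece $(\bm{M}^0-\bm{M}^\star)(\bm{V}^0 \bm{H}_{\bm{V}} - \bm{V}^\star)$, I would analyze each row $l$ and decouple the dependence of $\bm{V}^0$ on row $l$ of $\bm{M}$ by inserting the leave-one-out iterate $\bm{V}^{0,(l)}$:
\[
(\bm{M}^0-\bm{M}^\star)_{l,\cdot}(\bm{V}^0 \bm{H}_{\bm{V}} - \bm{V}^\star)
= (\bm{M}^0-\bm{M}^\star)_{l,\cdot}\bigl[\bm{V}^0 \bm{H}_{\bm{V}} - \bm{V}^{0,(l)} \bm{H}_{\bm{V}^{(l)}}\bigr] + (\bm{M}^0-\bm{M}^\star)_{l,\cdot}\bigl[\bm{V}^{0,(l)} \bm{H}_{\bm{V}^{(l)}} - \bm{V}^\star\bigr].
\]
The first summand is controlled by Cauchy--Schwarz, using spectral bounds on $\bm{M}^0-\bm{M}^\star$ (Lemma~\ref{lem:1}) and the leave-one-out proximity provided by Lemma~\ref{lem:5}; after converting back to the $\ell_{2,\infty}$-norms of $\bm{U}^0\bm{H}_{\bm{U}}-\bm{U}^\star$ and $\bm{V}^0\bm{H}_{\bm{V}}-\bm{V}^\star$, this will generate the second and third terms in the claim. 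The second summand is the key gain: since $\bm{V}^{0,(l)}$ is independent of row $l$ of the randomness, one may apply matrix Bernstein conditionally, producing the first concentration term of order $\log^2 n \cdot \sqrt{\mu r n}\,(\sigma^2+\|\bm{M}^\star\|_\infty^2)/(\sigma_{\min} p)$.

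The main obstacle will be the Bernstein step. The entries of $\bm{M}^0-\bm{M}^\star$ decompose into a centered bulk $\varepsilon_{i,j}\ind_{|\varepsilon_{i,j}|\le \tau}-\mathbb{E}[\cdot]$, a Huberization tail $\tau\,\mathrm{sgn}(\varepsilon_{i,j})\ind_{|\varepsilon_{i,j}|>\tau}$, and a Huberization bias $\mathbb{E}[\psi_\tau(\varepsilon_{i,j})]$, each of which must be split off with the correct variance proxy and uniform bound, mirroring the truncation analysis in the proofs of Lemmas~\ref{lem:strongcvx} and~\ref{lem:loo-1}. The tail contribution is handled by Markov's inequality on $\mathbb{P}(|\varepsilon_{i,j}|>\tau/2)\le 4\sigma^2/\tau^2$, which forces the prefactor $\tau+\|\bm{M}^\star\|_\infty$ appearing in the second term of the claim; and care is needed because $\ind_{|(\bm{X}\bm{Y}^\top)_{i,j}-M_{i,j}|\le\tau}$ is \emph{not} independent of row $l$, so one first reduces to the independent indicator $\ind_{|\varepsilon_{i,j}|\le 2\tau}$ via the entrywise control $\|\bm{M}^0\bm{V}^0-\bm{M}^\star\|_\infty \ll \tau$ coming from Lemma~\ref{lem:2} (or rather its input from Lemma~\ref{lem:1}).
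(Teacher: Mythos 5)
Your skeleton coincides with the paper's own proof: the exact identity $\bm{U}^{0}\bm{\Sigma}^{0}=\bm{M}^{0}\bm{V}^{0}$ rewriting the target as $\bm{M}^{0}(\bm{V}^{0}\bm{H}_{\bm{V}}-\bm{V}^{\star})$, the signal/noise split $\bm{M}^{0}=\bm{M}^{\star}+(\bm{M}^{0}-\bm{M}^{\star})$, the second-order treatment of the signal term via $\bm{M}^{\star}(\bm{V}^{0}\bm{H}_{\bm{V}}-\bm{V}^{\star})=\bm{U}^{\star}\bm{\Sigma}^{\star}(\bm{H}_{\bm{V}}^{\top}\bm{H}_{\bm{V}}-\bm{I}_{r})$ together with Lemma~\ref{lem:1} (this is how the last term arises in the paper as well), the separate handling of the Huberization bias $\mathbb{E}[\bm{M}^{0}-\bm{M}^{\star}]$ with the $\sigma^{2}/\tau$ entrywise bound, and the row-wise leave-one-out decoupling plus conditional matrix Bernstein for the centered noise. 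All of that matches.

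The genuine gap is where the leave-one-out proximity $\Vert\bm{V}^{0}\bm{H}_{\bm{V}}-\bm{V}^{0,(l)}\bm{H}_{\bm{V}}^{(l)}\Vert_{\mathrm{F}}$ comes from. You propose to quote Lemma~\ref{lem:5}, but that lemma controls $\Vert\bm{F}^{0}\bm{H}^{0}-\bm{F}^{0,(l)}\bm{R}^{0,(l)}\Vert_{\mathrm{F}}$ for the balanced factors $\bm{X}^{0}=\bm{U}^{0}(\bm{\Sigma}^{0})^{1/2}$, $\bm{Y}^{0}=\bm{V}^{0}(\bm{\Sigma}^{0})^{1/2}$ under a different alignment, so it does not directly give the quantity you need for the orthonormal $\bm{V}^{0}$ aligned by $\bm{H}_{\bm{V}}=\bm{V}^{0\top}\bm{V}^{\star}$; moreover, in the paper's logical order the proof of Lemma~\ref{lem:5} itself requires $\ell_{2,\infty}$ control of $\bm{U}^{0,(l)},\bm{V}^{0,(l)}$ obtained by rerunning the Lemma~\ref{lem:3-3}--\ref{lem:3-4}--\ref{lem:3-5} chain, so invoking it here would be circular. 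The paper instead derives the proximity inside this very proof: Wedin's $\sin\bm{\Theta}$ theorem applied to $\bm{M}^{0}$ versus $\bm{M}^{0,(l)}$ reduces it to $\Vert(\bm{M}^{0}-\bm{M}^{0,(l)})\bm{V}^{0,(l)}\Vert_{\mathrm{F}}$ and $\Vert(\bm{M}^{0}-\bm{M}^{0,(l)})^{\top}\bm{U}^{0,(l)}\Vert_{\mathrm{F}}$, these are bounded by Bernstein over the single row/column $l$, and then two self-bounding inequalities are closed by rearrangement (the decoupled Bernstein term $\alpha_{1}$ reappears inside the proximity bound, and $\Vert\bm{U}^{0,(l)}\bm{H}_{\bm{U}}^{(l)}-\bm{U}^{\star}\Vert_{2,\infty}$ must be traded back for $\Vert\bm{U}^{0}\bm{H}_{\bm{U}}-\bm{U}^{\star}\Vert_{2,\infty}$); this bootstrapping is exactly why the lemma keeps $\Vert\bm{U}^{0}\bm{H}_{\bm{U}}-\bm{U}^{\star}\Vert_{2,\infty}$ and $\Vert\bm{V}^{0}\bm{H}_{\bm{V}}-\bm{V}^{\star}\Vert_{2,\infty}$ on its right-hand side, and it is missing from your plan. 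Two smaller points: at initialization the Winsorization is applied to the raw data, so the truncation indicator is $\ind_{|M_{i,j}|\leq\tau}$, independent across entries and of row $l$ — no reduction to $\ind_{|\varepsilon_{i,j}|\leq2\tau}$ is needed (that issue belongs to the gradient-descent lemmas where the indicator depends on the iterates); and the term $\frac{\tau+\Vert\bm{M}^{\star}\Vert_{\infty}}{p}\log n\,\Vert\bm{V}^{0}\bm{H}_{\bm{V}}-\bm{V}^{\star}\Vert_{2,\infty}$ actually comes from the $B\log n$ part of the Bernstein bound on the decoupled summand, not from the Cauchy--Schwarz summand as you attribute it. These last two are cosmetic; the proximity/self-bounding step is the substantive omission.
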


\begin{lemma}\label{lem:3-4}Suppose the sample size obeys $np\geq C\kappa^{2}\mu^{3}r^{2}\log^{3}n$
for some sufficiently large constant $C>0$, the noise satisfies $\frac{\sigma}{\sigma_{\min}}\sqrt{\frac{n}{p}}\leq\frac{c}{\sqrt{\mu\log^{3}n}}$
for some sufficiently small constant $c>0$. Then with probability
at least $1-O(n^{-10})$, one has 
\begin{align*}
\left\Vert \bm{U}^{0}\bm{H}_{\bm{U}}-\bm{U}^{\star}\right\Vert _{2,\infty} & \lesssim\frac{\left(\sigma+\left\Vert \bm{M}^{\star}\right\Vert _{\infty}\right)}{\sigma_{\min}}\sqrt{\frac{r\log n}{p}}+\left\Vert \bm{U}^{\star}\right\Vert _{2,\infty}\kappa\left(\frac{\sigma}{\sigma_{\min}}\sqrt{\frac{n}{p}}+\frac{\left\Vert \bm{M}^{\star}\right\Vert _{\infty}}{\sigma_{\min}}\sqrt{\frac{n}{p}}\right)^{2}\\
 & \qquad+\frac{\left(\sigma\sqrt{np}+\left\Vert \bm{M}^{\star}\right\Vert _{\infty}\right)\log n}{\sigma_{\min}p}\sqrt{\frac{\mu r}{n}}.
\end{align*}
\end{lemma}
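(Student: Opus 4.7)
The strategy is to derive an algebraic identity that expresses $\bm{U}^{0}\bm{H}_{\bm{U}}-\bm{U}^{\star}$ in terms of three controllable pieces, pass to the $\ell_{2,\infty}$ norm, and then apply Lemmas \ref{lem:3-2} and \ref{lem:3-3} together with a fresh row-wise Bernstein bound.

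\emph{Step 1 (the key identity).} Using $\bm{U}^{\star}\bm{\Sigma}^{\star}=\bm{M}^{\star}\bm{V}^{\star}$ and adding/subtracting $\bm{U}^{0}\bm{\Sigma}^{0}\bm{H}_{\bm{V}}$, I would write
\begin{align*}
(\bm{U}^{0}\bm{H}_{\bm{U}}-\bm{U}^{\star})\bm{\Sigma}^{\star}
&= (\bm{M}^{0}-\bm{M}^{\star})\bm{V}^{\star} + (\bm{U}^{0}\bm{\Sigma}^{0}\bm{H}_{\bm{V}}-\bm{M}^{0}\bm{V}^{\star}) \\
&\quad + \bm{U}^{0}(\bm{H}_{\bm{U}}\bm{\Sigma}^{\star}-\bm{\Sigma}^{0}\bm{H}_{\bm{V}}).
\end{align*}
Right-multiplying by $(\bm{\Sigma}^{\star})^{-1}$ and taking $\ell_{2,\infty}$ norms reduces the problem to estimating three pieces.

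\emph{Step 2 (bounding the three pieces).} For $\|(\bm{M}^{0}-\bm{M}^{\star})\bm{V}^{\star}\|_{2,\infty}$, I would decompose $\psi_{\tau}(M_{i,j})-M_{i,j}^{\star}$ into a mean-centered stochastic part and the Huberization bias $\mathbb{E}[\psi_{\tau}(M_{i,j})]-M_{i,j}^{\star}$, the latter being of order $\sigma^{2}/\tau\lesssim\sigma/\sqrt{np}$ by Markov's inequality applied to $\mathbb{P}(|\varepsilon|>\tau/2)$. A row-wise vector Bernstein inequality, combined with the incoherence $\|\bm{V}^{\star}\|_{2,\infty}\le\sqrt{\mu r/n}$, produces the two bias/stochastic terms $(\sigma+\|\bm{M}^{\star}\|_{\infty})\sqrt{r\log n/p}/\sigma_{\min}$ and $(\sigma\sqrt{np}+\|\bm{M}^{\star}\|_{\infty})\log n\sqrt{\mu r/n}/(\sigma_{\min}p)$ visible in the target bound. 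For $\|\bm{U}^{0}\bm{\Sigma}^{0}\bm{H}_{\bm{V}}-\bm{M}^{0}\bm{V}^{\star}\|_{2,\infty}$, I invoke Lemma \ref{lem:3-3} directly; it returns the $(\sigma^{2}+\|\bm{M}^{\star}\|_{\infty}^{2})/p$ contribution plus further error terms involving $\|\bm{U}^{0}\bm{H}_{\bm{U}}-\bm{U}^{\star}\|_{2,\infty}$ and $\|\bm{V}^{0}\bm{H}_{\bm{V}}-\bm{V}^{\star}\|_{2,\infty}$, whose prefactors are $o(1)$ under the stated conditions and can be absorbed by a symmetric argument later. For the cross term, I use
\[
\bm{H}_{\bm{U}}\bm{\Sigma}^{\star}-\bm{\Sigma}^{0}\bm{H}_{\bm{V}}=\bm{H}_{\bm{U}}(\bm{\Sigma}^{\star}-\bm{H}_{\bm{U}}^{\top}\bm{\Sigma}^{0}\bm{H}_{\bm{V}})+(\bm{H}_{\bm{U}}\bm{H}_{\bm{U}}^{\top}-\bm{I})\bm{\Sigma}^{0}\bm{H}_{\bm{V}},
\]
applying Lemma \ref{lem:3-2} to the first summand and \eqref{subeq:lem1-2}--\eqref{subeq:lem1-4} to bound $\|\bm{H}_{\bm{U}}\bm{H}_{\bm{U}}^{\top}-\bm{I}\|\lesssim\|\bm{U}^{0}\bm{Q}_{\bm{U}}-\bm{U}^{\star}\|^{2}$. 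Finally, $\|\bm{U}^{0}\|_{2,\infty}=\|\bm{U}^{0}\bm{Q}_{\bm{U}}\|_{2,\infty}\le\|\bm{U}^{\star}\|_{2,\infty}+\|\bm{U}^{0}\bm{H}_{\bm{U}}-\bm{U}^{\star}\|_{2,\infty}+\|\bm{U}^{0}\|_{2,\infty}\|\bm{H}_{\bm{U}}-\bm{Q}_{\bm{U}}\|$, yielding the $\kappa\|\bm{U}^{\star}\|_{2,\infty}(\cdot)^{2}$ contribution in the final bound.

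\emph{Step 3 (closing the system).} The inequality assembled above contains $\|\bm{U}^{0}\bm{H}_{\bm{U}}-\bm{U}^{\star}\|_{2,\infty}$ and $\|\bm{V}^{0}\bm{H}_{\bm{V}}-\bm{V}^{\star}\|_{2,\infty}$ on both sides. Writing the symmetric inequality for $\|\bm{V}^{0}\bm{H}_{\bm{V}}-\bm{V}^{\star}\|_{2,\infty}$ (by swapping the roles of $\bm{U}$ and $\bm{V}$) and summing gives a $2\times 2$ linear system, which under the hypothesized sample size $np\gg\kappa^{2}\mu^{3}r^{2}\log^{3}n$ and noise level $(\sigma/\sigma_{\min})\sqrt{n/p}\ll1/\sqrt{\mu\log^{3}n}$ has self-referential coefficients strictly below a constant, allowing the recursive terms to be absorbed into the left-hand side.

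\emph{Main obstacle.} The principal difficulty is not any single bound but the tangled recursion: Lemmas \ref{lem:3-2} and \ref{lem:3-3}, together with the self-control of $\|\bm{U}^{0}\|_{2,\infty}$, each reintroduce $\ell_{2,\infty}$-type errors on the right-hand side. Verifying that the prescribed sample-size and noise-level thresholds suffice to make every such coefficient $o(1)$, uniformly and jointly for $\bm{U}$ and $\bm{V}$, is the technically delicate step. A secondary subtlety is producing the $\kappa(\cdot)^{2}$ factor in the second term of the conclusion: this arises from the quadratic-in-perturbation correction $\bm{H}_{\bm{U}}\bm{H}_{\bm{U}}^{\top}-\bm{I}$, which must be handled carefully to avoid losing the noise-proportional scaling that underpins the overall minimax optimality established in the paper.
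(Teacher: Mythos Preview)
Your proposal is correct and follows essentially the same route as the paper: the paper uses the identical three-term decomposition $\omega_{1}+\omega_{2}+\omega_{3}$ (your Step~1), bounds $\omega_{3}=\tfrac{1}{\sigma_{\min}}\|(\bm{M}^{0}-\bm{M}^{\star})\bm{V}^{\star}\|_{2,\infty}$ via a mean/centered split and row-wise Bernstein, invokes Lemma~\ref{lem:3-3} for $\omega_{2}$, and then closes the coupled $\bm{U}/\bm{V}$ recursion exactly as in your Step~3. The only cosmetic difference is in $\omega_{1}$: the paper cites \citet{yan2021inference} to reach the bound $\tfrac{1}{\sigma_{\min}}\|\bm{U}^{0}\|_{2,\infty}\bigl(\|\bm{H}_{\bm{U}}^{\top}\bm{\Sigma}^{0}\bm{H}_{\bm{V}}-\bm{\Sigma}^{\star}\|+\sigma_{\max}\|\bm{H}_{\bm{U}}-\bm{Q}_{\bm{U}}\|\bigr)$, whereas you obtain the equivalent quadratic correction via $\bm{H}_{\bm{U}}\bm{H}_{\bm{U}}^{\top}-\bm{I}$; since $\|\bm{H}_{\bm{U}}-\bm{Q}_{\bm{U}}\|\asymp\|\bm{H}_{\bm{U}}\bm{H}_{\bm{U}}^{\top}-\bm{I}\|\asymp\|\sin\bm{\Theta}\|^{2}$, the two are interchangeable.
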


We start from the decomposition that 
\begin{equation}
\bm{U}^{0}\bm{Q}_{\bm{U}}-\bm{U}^{\star}=\underbrace{\left(\bm{M}^{0}-\bm{M}^{\star}\right)\bm{V}^{\star}\left(\bm{\Sigma}^{\star}\right)^{-1}}_{\eqqcolon\beta_{1}}+\underbrace{\bm{\Delta}_{\bm{U}}\left(\bm{\Sigma}^{\star}\right)^{-1}}_{\eqqcolon\beta_{2}}+\underbrace{\bm{U}^{0}\bm{Q}_{\bm{U}}\bm{\Delta}_{\bm{\Sigma}}\left(\bm{\Sigma}^{\star}\right)^{-1}}_{\eqqcolon\beta_{3}},\label{eq:lem3-5-1}
\end{equation}

where 
\[
\bm{\Delta}_{\bm{U}}=\bm{U}^{0}\bm{\Sigma}^{0}\bm{Q}_{\bm{V}}-\bm{M}^{0}\bm{V}^{\star}\qquad\mathrm{and}\qquad\bm{\Delta}_{\bm{\Sigma}}=\bm{\Sigma}^{\star}-\bm{Q}_{\bm{U}}^{\top}\bm{\Sigma}^{0}\bm{Q}_{\bm{V}}.
\]

\begin{enumerate}
\item For the first term $\beta_{1}$, one has 
\begin{align*}
\left\Vert \left(\bm{M}^{0}-\bm{M}^{\star}\right)\bm{V}^{\star}\left(\bm{\Sigma}^{\star}\right)^{-1}\right\Vert _{2,\infty} & \leq\left\Vert \left(\bm{E}-\mathbb{E}\left[\bm{E}\right]\right)\bm{V}^{\star}\left(\bm{\Sigma}^{\star}\right)^{-1}\right\Vert _{2,\infty}+\left\Vert \mathbb{E}\left[\bm{E}\right]\bm{V}^{\star}\left(\bm{\Sigma}^{\star}\right)^{-1}\right\Vert _{2,\infty},
\end{align*}
where we denote 
\begin{equation}
\bm{E}=\bm{M}^{0}-\bm{M}^{\star}.\label{defn-E}
\end{equation}
Note that the entries of $\bm{E}$ are independent and 
\begin{align*}
E_{i,j} & =\left[\frac{1}{p}\delta_{i,j}\left(M_{i,j}^{\star}+\varepsilon_{i,j}\right)-M_{i,j}^{\star}\right]\ind_{\left|M_{i,j}^{0}\right|\leq\tau}+\left[\frac{\tau}{p}\delta_{i,j}\mathrm{sign}\left(M_{i,j}^{0}\right)-M_{i,j}^{\star}\right]\ind_{\left|M_{i,j}^{0}\right|>\tau}.
\end{align*}
It then follows that 
\begin{align}
\left|\mathbb{E}\left[E_{i,j}\right]\right| & =\left|\mathbb{E}\left[\varepsilon_{i,j}\ind_{\left|M_{i,j}^{0}\right|\leq\tau}\right]+\mathbb{E}\left[\left(\tau\mathrm{sign}\left(M_{i,j}^{0}\right)-M_{i,j}^{\star}\right)\ind_{\left|M_{i,j}^{0}\right|>\tau}\right]\right|\nonumber \\
 & \overset{\text{(i)}}{=}\left|-\mathbb{E}\left[\varepsilon_{i,j}\ind_{\left|M_{i,j}^{0}\right|>\tau}\right]+\mathbb{E}\left[\left(\tau\mathrm{sign}\left(M_{i,j}^{0}\right)-M_{i,j}^{\star}\right)\ind_{\left|M_{i,j}^{0}\right|>\tau}\right]\right|\nonumber \\
 & \leq\left|\mathbb{E}\left[\varepsilon_{i,j}\ind_{\left|M_{i,j}^{0}\right|>\tau}\right]\right|+\mathbb{E}\left[\left|\tau\mathrm{sign}\left(M_{i,j}^{0}\right)-M_{i,j}^{\star}\right|\ind_{\left|M_{i,j}^{0}\right|>\tau}\right]\nonumber \\
 & \leq\sqrt{\mathbb{E}\left[\varepsilon_{i,j}^{2}\right]\mathbb{E}\left[\ind_{\left|M_{i,j}^{0}\right|>\tau}\right]}+\left(\tau+\left\Vert \bm{M}^{\star}\right\Vert _{\infty}\right)\mathbb{E}\left[\ind_{\left|M_{i,j}^{0}\right|>\tau}\right]\nonumber \\
 & \overset{\text{(ii)}}{\lesssim}\frac{\sigma^{2}}{\tau},\label{eq:mean-E}
\end{align}
where (i) comes from the fact that 
\[
0=\mathbb{E}\left[\varepsilon_{i,j}\right]=\mathbb{E}\left[\varepsilon_{i,j}\ind_{\left|M_{i,j}^{0}\right|\leq\tau}\right]+\mathbb{E}\left[\varepsilon_{i,j}\ind_{\left|M_{i,j}^{0}\right|>\tau}\right],
\]
and (ii) is due to an application of Markov inequality, 
\begin{equation}
\mathbb{E}\left[\ind_{\left|M_{i,j}^{0}\right|>\tau}\right]\le\mathbb{E}\left[\ind_{\left|\varepsilon_{i,j}\right|>\tau-\left\Vert \bm{M}^{\star}\right\Vert _{\infty}}\right]\leq\mathbb{E}\left[\ind_{\left|\varepsilon_{i,j}\right|>\tau/2}\right]\le\frac{\sigma^{2}}{\left(\tau/2\right)^{2}}.\label{eq:initial-ind}
\end{equation}
Furthermore, one has 
\begin{align}
	\mathbb{V}\left[E_{i,j}\right] & =\mathbb{V}\left[\frac{1}{p}\delta_{i,j}\left(\left(M_{i,j}^{\star}+\varepsilon_{i,j}\right)\ind_{\left|M_{i,j}^{0}\right|\leq\tau}+\tau\mathrm{sign}\left(M_{i,j}^{0}\right)\ind_{\left|M_{i,j}^{0}\right|>\tau}\right)\right]\nonumber \\
	& \leq\frac{1}{p}\mathbb{E}\left[\left(\left(M_{i,j}^{\star}+\varepsilon_{i,j}\right)\ind_{\left|M_{i,j}^{0}\right|\leq\tau}+\tau\mathrm{sign}\left(M_{i,j}^{0}\right)\ind_{\left|M_{i,j}^{0}\right|>\tau}\right)^{2}\right]\nonumber \\
	& \overset{\text{(i)}}{\leq}\frac{2}{p}\mathbb{E}\left[\left(M_{i,j}^{\star}+\varepsilon_{i,j}\right)^{2}\ind_{\left|M_{i,j}^{0}\right|\leq\tau}+\tau^{2}\ind_{\left|M_{i,j}^{0}\right|>\tau}\right]\nonumber \\
	& \leq\frac{2}{p}\left(\mathbb{E}\left[\left(M_{i,j}^{\star}+\varepsilon_{i,j}\right)^{2}\right]+\tau^{2}\mathbb{E}\left[\ind_{\left|M_{i,j}^{0}\right|>\tau}\right]\right)\nonumber \\
	& \overset{\text{(ii)}}{\leq}\frac{6}{p}\left(\left\Vert \bm{M}^{\star}\right\Vert _{\infty}^{2}+\sigma^{2}\right)\eqqcolon\widetilde{\sigma}^{2},\label{eq:defn-sigmatilde}
\end{align}
where (i) comes from the elementary fact that $(a+b)^{2}\leq2(a^{2}+b^{2})$
and (ii) makes use of \eqref{eq:initial-ind}. Additionally, we have
a simple upper bound that 
\begin{equation}
	B\coloneqq\max_{i,j}\left|E_{i,j}-\mathbb{E}\left[E_{i,j}\right]\right|\leq\frac{\tau+2\left\Vert \bm{M}^{\star}\right\Vert _{\infty}}{p}.\label{eq:defn-B}\end{equation}
Therefore, Lemma \ref{lem:bernstein} gives rise to 
\begin{align*}
\left\Vert \left(\bm{E}-\mathbb{E}\left[\bm{E}\right]\right)\bm{V}^{\star}\left(\bm{\Sigma}^{\star}\right)^{-1}\right\Vert _{2,\infty} & \lesssim\widetilde{\sigma}\left\Vert \bm{V}^{\star}\left(\bm{\Sigma}^{\star}\right)^{-1}\right\Vert _{\mathrm{F}}\sqrt{\log n}+B\left\Vert \bm{V}^{\star}\left(\bm{\Sigma}^{\star}\right)^{-1}\right\Vert _{2.\infty}\log n\\
 & \lesssim\frac{\widetilde{\sigma}\sqrt{r}}{\sigma_{\min}}\sqrt{\log n}+\frac{B}{\sigma_{\min}}\left\Vert \bm{V}^{\star}\right\Vert _{2,\infty}\log n.
\end{align*}
Regarding $\Vert\mathbb{E}[\bm{E}]\bm{V}^{\star}(\bm{\Sigma}^{\star})^{-1}\Vert_{2,\infty}$,
one has 
\[
\left\Vert \mathbb{E}\left[\bm{E}\right]\bm{V}^{\star}\left(\bm{\Sigma}^{\star}\right)^{-1}\right\Vert _{2,\infty}\leq\left\Vert \mathbb{E}\left[\bm{E}\right]\right\Vert _{2,\infty}\left\Vert \bm{V}^{\star}\left(\bm{\Sigma}^{\star}\right)^{-1}\right\Vert \leq\frac{\sqrt{n}}{\sigma_{\min}}\max_{i,j}\left|\mathbb{E}\left[E_{i,j}\right]\right|\lesssim\frac{1}{\sigma_{\min}}\frac{\sigma^{2}\sqrt{n}}{\tau},
\]
where the last inequality follows from \eqref{eq:mean-E}. 
\item Next, we turning attention to $\beta_{2}$, which can be further decomposed
as 
\begin{align*}
\left\Vert \bm{\Delta}_{\bm{U}}\right\Vert _{2,\infty} & \leq\left\Vert \bm{U}^{0}\bm{\Sigma}^{0}\bm{H}_{\bm{V}}-\bm{M}^{0}\bm{V}^{\star}\right\Vert _{2,\infty}+\left\Vert \bm{U}^{0}\bm{\Sigma}^{0}\left(\bm{H}_{\bm{V}}-\bm{Q}_{\bm{V}}\right)\right\Vert _{2,\infty}\\
 & \leq\underbrace{\left\Vert \bm{U}^{0}\bm{\Sigma}^{0}\bm{H}_{\bm{V}}-\bm{M}^{0}\bm{V}^{\star}\right\Vert _{2,\infty}}_{\eqqcolon\gamma_{1}}+\underbrace{\left\Vert \bm{U}^{0}\right\Vert _{2,\infty}\left\Vert \bm{\Sigma}^{0}\right\Vert \left\Vert \bm{H}_{\bm{V}}-\bm{Q}_{\bm{V}}\right\Vert }_{\eqqcolon\gamma_{2}}.
\end{align*}
Combining Lemma \ref{lem:3-3} and Lemma \ref{lem:3-4} yields the
bound of $\gamma_{1}$ 
\begin{align}
\left\Vert \bm{U}^{0}\bm{\Sigma}^{0}\bm{H}_{\bm{V}}-\bm{M}^{0}\bm{V}^{\star}\right\Vert _{2,\infty} & \lesssim\frac{\log^{2}n}{\sigma_{\min}}\sqrt{\mu rn}\frac{\sigma^{2}+\left\Vert \bm{M}^{\star}\right\Vert _{\infty}^{2}}{p}\nonumber \\
 & \qquad+\left\Vert \bm{U}^{\star}\right\Vert _{2,\infty}\sigma_{\max}\left(\frac{\sigma}{\sigma_{\min}}\sqrt{\frac{n}{p}}+\frac{\left\Vert \bm{M}^{\star}\right\Vert _{\infty}}{\sigma_{\min}}\sqrt{\frac{n}{p}}\right)^{2}.\label{eq:initial-gamma1}
\end{align}
Regarding $\gamma_{2}$, one has 
\begin{align}
\left\Vert \bm{U}^{0}\right\Vert _{2,\infty} & \leq\left\Vert \bm{U}^{0}\bm{H}_{\bm{U}}-\bm{U}^{\star}\right\Vert _{2,\infty}+\left\Vert \bm{U}^{\star}\right\Vert _{2,\infty}\nonumber \\
 & \leq\widetilde{C}\frac{\left(\sigma+\left\Vert \bm{M}^{\star}\right\Vert _{\infty}\right)}{\sigma_{\min}}\sqrt{\frac{r\log n}{p}}+\widetilde{C}\left\Vert \bm{U}^{\star}\right\Vert _{2,\infty}\kappa\left(\frac{\sigma}{\sigma_{\min}}\sqrt{\frac{n}{p}}+\frac{\left\Vert \bm{M}^{\star}\right\Vert _{\infty}}{\sigma_{\min}}\sqrt{\frac{n}{p}}\right)^{2}\nonumber \\
 & \qquad+\widetilde{C}\frac{\left(\sigma\sqrt{np}+\left\Vert \bm{M}^{\star}\right\Vert _{\infty}\right)\log n}{\sigma_{\min}p}\sqrt{\frac{\mu r}{n}}+\left\Vert \bm{U}^{\star}\right\Vert _{2,\infty}\nonumber \\
 & \leq2\left\Vert \bm{U}^{\star}\right\Vert _{2,\infty},\label{eq:initial-gamma2-2}
\end{align}
as long as $\frac{\sigma}{\sigma_{\min}}\sqrt{\frac{\kappa\mu rn\log n}{p}}\ll1$
and $n^{2}p\gg\kappa^{3}\mu^{2}r^{3}n\log n$. Furthermore, we have
\begin{align}
\left\Vert \bm{\Sigma}^{0}\right\Vert  & =\left\Vert \bm{Q}_{\bm{U}}^{\top}\bm{\Sigma}^{0}\bm{Q}_{\bm{V}}-\bm{\Sigma}^{\star}\right\Vert \leq\left\Vert \bm{Q}_{\bm{U}}^{\top}\bm{\Sigma}^{0}\bm{Q}_{\bm{V}}-\bm{\Sigma}^{\star}\right\Vert +\left\Vert \bm{\Sigma}^{\star}\right\Vert \nonumber \\
 & \leq\widetilde{C}\sigma_{\max}\left(\frac{n}{\sigma_{\min}^{2}}\frac{\left\Vert \bm{M}^{\star}\right\Vert _{\infty}^{2}+\sigma^{2}}{p}\right)+\widetilde{C}\sqrt{\frac{1}{p}\left(\left\Vert \bm{M}^{\star}\right\Vert _{\infty}^{2}+\sigma^{2}\right)}\sqrt{r\log n}+\widetilde{C}n\frac{\sigma^{2}}{\tau}+\left\Vert \bm{\Sigma}^{\star}\right\Vert \nonumber \\
 & \leq2\left\Vert \bm{\Sigma}^{\star}\right\Vert ,\label{eq:initial-gamma2-1}
\end{align}
provided $\frac{\sigma}{\sigma_{\min}}\sqrt{\frac{n}{p}}\ll1$ and
$n^{2}p\gg\kappa^{2}\mu^{2}r^{2}n\log n$. Taking \eqref{eq:initial-gamma2-2},
\eqref{eq:initial-gamma2-1} and Lemma \ref{lem:1} collectively gives
\begin{equation}
\gamma_{2}\lesssim\sigma_{\max}\left\Vert \bm{U}^{\star}\right\Vert _{2,\infty}\frac{n}{\sigma_{\min}^{2}}\frac{\left\Vert \bm{M}^{\star}\right\Vert _{\infty}^{2}+\sigma^{2}}{p}.\label{eq:initial-gamma2}
\end{equation}
Consequently, \eqref{eq:initial-gamma2} combined with \eqref{eq:initial-gamma1}
reveals that 
\begin{align*}
\left\Vert \bm{\Delta}_{\bm{U}}\right\Vert _{2,\infty} & \lesssim\frac{\log^{2}n}{\sigma_{\min}}\sqrt{\mu rn}\frac{\sigma^{2}+\left\Vert \bm{M}^{\star}\right\Vert _{\infty}^{2}}{p}+\left\Vert \bm{U}^{\star}\right\Vert _{2,\infty}\sigma_{\max}\left(\frac{\sigma}{\sigma_{\min}}\sqrt{\frac{n}{p}}+\frac{\left\Vert \bm{M}^{\star}\right\Vert _{\infty}}{\sigma_{\min}}\sqrt{\frac{n}{p}}\right)^{2}\\
 & \qquad+\sigma_{\max}\left\Vert \bm{U}^{\star}\right\Vert _{2,\infty}\frac{n}{\sigma_{\min}^{2}}\frac{\left\Vert \bm{M}^{\star}\right\Vert _{\infty}^{2}+\sigma^{2}}{p}\\
 & \lesssim\frac{\log^{2}n}{\sigma_{\min}}\sqrt{\mu rn}\frac{\sigma^{2}+\left\Vert \bm{M}^{\star}\right\Vert _{\infty}^{2}}{p}+\left\Vert \bm{U}^{\star}\right\Vert _{2,\infty}\sigma_{\max}\left(\frac{\sigma}{\sigma_{\min}}\sqrt{\frac{n}{p}}+\frac{\left\Vert \bm{M}^{\star}\right\Vert _{\infty}}{\sigma_{\min}}\sqrt{\frac{n}{p}}\right)^{2}.
\end{align*}
\item The last term $\beta_{3}$ can be controlled by utilizing \eqref{eq:initial-gamma2-2}
and Lemma \ref{lem:3-2}: 
\begin{align*}
 & \left\Vert \bm{U}^{0}\bm{Q}_{\bm{U}}\bm{\Delta}_{\bm{\Sigma}}\left(\bm{\Sigma}^{\star}\right)^{-1}\right\Vert _{2,\infty}\\
 & \leq\left\Vert \bm{U}^{0}\right\Vert _{2,\infty}\left\Vert \bm{Q}_{\bm{U}}\right\Vert \left\Vert \bm{\Delta}_{\bm{\Sigma}}\right\Vert \left\Vert \left(\bm{\Sigma}^{\star}\right)^{-1}\right\Vert \\
 & \lesssim\left\Vert \bm{U}^{\star}\right\Vert _{2,\infty}\frac{1}{\sigma_{\min}}\left[\sigma_{\max}\left(\frac{n}{\sigma_{\min}^{2}}\frac{\left\Vert \bm{M}^{\star}\right\Vert _{\infty}^{2}+\sigma^{2}}{p}\right)+\sqrt{\frac{1}{p}\left(\left\Vert \bm{M}^{\star}\right\Vert _{\infty}^{2}+\sigma^{2}\right)}\sqrt{r\log n}+n\frac{\sigma^{2}}{\tau}\right].
\end{align*}
\end{enumerate}
Plugging all these bounds into the decomposition \eqref{eq:lem3-5-1},
we arrive at 
\begin{align*}
& \left\Vert \bm{U}^{0}\bm{Q}_{\bm{U}}-\bm{U}^{\star}\right\Vert _{2,\infty}  \\
& \leq\left\Vert \left(\bm{M}^{0}-\bm{M}^{\star}\right)\bm{V}^{\star}\left(\bm{\Sigma}^{\star}\right)^{-1}\right\Vert _{2,\infty}+\left\Vert \bm{\Delta}_{\bm{U}}\left(\bm{\Sigma}^{\star}\right)^{-1}\right\Vert _{2,\infty}+\left\Vert \bm{U}^{0}\bm{Q}_{\bm{U}}\bm{\Delta}_{\bm{\Sigma}}\left(\bm{\Sigma}^{\star}\right)^{-1}\right\Vert _{2,\infty}\\
 & \lesssim\frac{\widetilde{\sigma}\sqrt{r}}{\sigma_{\min}}\sqrt{\log n}+\frac{B}{\sigma_{\min}}\left\Vert \bm{V}^{\star}\right\Vert _{2,\infty}\log n+\frac{1}{\sigma_{\min}}\frac{\sigma^{2}\sqrt{n}}{\tau}\\
 & \qquad+\frac{1}{\sigma_{\min}}\left[\frac{\log^{2}n}{\sigma_{\min}}\sqrt{\mu rn}\frac{\sigma^{2}+\left\Vert \bm{M}^{\star}\right\Vert _{\infty}^{2}}{p}+\left\Vert \bm{U}^{\star}\right\Vert _{2,\infty}\sigma_{\max}\left(\frac{\sigma}{\sigma_{\min}}\sqrt{\frac{n}{p}}+\frac{\left\Vert \bm{M}^{\star}\right\Vert _{\infty}}{\sigma_{\min}}\sqrt{\frac{n}{p}}\right)^{2}\right]\\
 & \qquad+\left\Vert \bm{U}^{\star}\right\Vert _{2,\infty}\frac{1}{\sigma_{\min}}\left[\sigma_{\max}\left(\frac{n}{\sigma_{\min}^{2}}\frac{\left\Vert \bm{M}^{\star}\right\Vert _{\infty}^{2}+\sigma^{2}}{p}\right)+\sqrt{\frac{1}{p}\left(\left\Vert \bm{M}^{\star}\right\Vert _{\infty}^{2}+\sigma^{2}\right)}\sqrt{r\log n}+n\frac{\sigma^{2}}{\tau}\right]\\
 & \lesssim\frac{\left\Vert \bm{M}^{\star}\right\Vert _{\infty}}{\sigma_{\min}}\sqrt{\frac{\mu r\log n}{p}}+\frac{\sigma}{\sigma_{\min}}\sqrt{\frac{\mu r}{p}}\log n\\
 & \qquad+\left\Vert \bm{U}^{\star}\right\Vert _{2,\infty}\left[\kappa\left(\frac{\sigma}{\sigma_{\min}}\sqrt{\frac{n}{p}}+\frac{\left\Vert \bm{M}^{\star}\right\Vert _{\infty}}{\sigma_{\min}}\sqrt{\frac{n}{p}}\right)^{2}+\sqrt{\frac{1}{p}\left(\left\Vert \bm{M}^{\star}\right\Vert _{\infty}^{2}+\sigma^{2}\right)}\frac{\sqrt{r\log n}}{\sigma_{\min}}\right],
\end{align*}
whereas the last inequality holds as long as $np\gg\mu^{2}\kappa^{2}r^{2}\log^{2}n$.

\subsubsection{Proof of Lemma \ref{lem:3-2}}

To begin with, we can decompose $\Vert\bm{Q}_{\bm{U}}^{\top}\bm{\Sigma}^{0}\bm{Q}_{\bm{V}}-\bm{\Sigma}^{\star}\Vert$
as 
\begin{align}
\left\Vert \bm{Q}_{\bm{U}}^{\top}\bm{\Sigma}^{0}\bm{Q}_{\bm{V}}-\bm{\Sigma}^{\star}\right\Vert &\leq\underbrace{\left\Vert \bm{Q}_{\bm{U}}^{\top}\bm{\Sigma}^{0}\bm{Q}_{\bm{V}}-\bm{H}_{\bm{U}}^{\top}\bm{\Sigma}^{0}\bm{H}_{\bm{V}}\right\Vert }_{\eqqcolon\alpha_{1}}+\underbrace{\left\Vert \bm{H}_{\bm{U}}^{\top}\bm{\Sigma}^{0}\bm{H}_{\bm{V}}-\bm{U}^{\star\top}\bm{M}^{0}\bm{V}^{\star}\right\Vert }_{\eqqcolon\alpha_{2}}\nonumber\\
&\qquad+\underbrace{\left\Vert \bm{U}^{\star\top}\bm{M}^{0}\bm{V}^{\star}-\bm{\Sigma}^{\star}\right\Vert }_{\eqqcolon\alpha_{3}}.
\end{align}
In the sequel, we shall establish the bounds on $\alpha_{1}$, $\alpha_{2}$,
$\alpha_{3}$ separately. 
\begin{enumerate}
\item Regarding the first term $\alpha_{1}$, one has 
\begin{align*}
\alpha_{1} & \leq\left\Vert \left(\bm{Q}_{\bm{U}}-\bm{H}_{\bm{U}}\right)^{\top}\bm{\Sigma}^{0}\bm{Q}_{\bm{V}}\right\Vert +\left\Vert \bm{H}_{\bm{U}}^{\top}\bm{\Sigma}^{0}\left(\bm{Q}_{\bm{V}}-\bm{H}_{\bm{V}}\right)\right\Vert \\
 & \leq\left\Vert \bm{Q}_{\bm{U}}-\bm{H}_{\bm{U}}\right\Vert \left\Vert \bm{\Sigma}^{0}\right\Vert \left\Vert \bm{H}_{\bm{V}}\right\Vert +\left\Vert \bm{H}_{\bm{U}}\right\Vert \left\Vert \bm{\Sigma}^{0}\right\Vert \left\Vert \bm{Q}_{\bm{V}}-\bm{H}_{\bm{V}}\right\Vert \\
 & \lesssim\sigma_{\max}\left(\frac{n}{\sigma_{\min}^{2}}\frac{\left\Vert \bm{M}^{\star}\right\Vert _{\infty}^{2}+\sigma^{2}}{p}\right),
\end{align*}
where the last line follows from Lemma \ref{lem:1} and \eqref{eq:initial-gamma2-1}. 
\item Next, since $\alpha_{2}$ is exactly the same as the term in \citet[Section C.3.2]{yan2021inference},
we can invoke the results therein to obtain 
\[
\alpha_{2}\lesssim\left\Vert \bm{U}^{0}\bm{Q}_{\bm{U}}-\bm{U}^{\star}\right\Vert \left\Vert \bm{E}\right\Vert \left\Vert \bm{V}^{0}\bm{Q}_{\bm{V}}-\bm{V}^{\star}\right\Vert \lesssim\left(\sigma\sqrt{\frac{n}{p}}+\sqrt{\frac{n}{p}}\left\Vert \bm{M}^{\star}\right\Vert _{\infty}\right)^{3}\frac{1}{\sigma_{\min}^{2}},
\]
where the last line arises from Lemma \ref{lem:1}. 
\item Turning to the last term $\alpha_{3}$, one has 
\begin{align*}
\alpha_{3} & =\left\Vert \bm{U}^{\star\top}\bm{M}^{0}\bm{V}^{\star}-\bm{\Sigma}^{\star}\right\Vert =\left\Vert \bm{U}^{\star\top}\left(\bm{M}^{0}-\bm{M}^{\star}\right)\bm{V}^{\star}\right\Vert \\
 & \leq\underbrace{\left\Vert \bm{U}^{\star\top}\left(\bm{M}^{0}-\bm{M}^{\star}-\mathbb{E}\left[\bm{M}^{0}-\bm{M}^{\star}\right]\right)\bm{V}^{\star}\right\Vert }_{\eqqcolon\alpha_{31}}+\underbrace{\left\Vert \bm{U}^{\star\top}\mathbb{E}\left[\bm{M}^{0}-\bm{M}^{\star}\right]\bm{V}^{\star}\right\Vert }_{\eqqcolon\alpha_{32}}
\end{align*}
The bound of $\alpha_{31}$ can be derived in the same way as \citet[Equation (C.11)]{yan2021inference}
\begin{align*}
\alpha_{31} & \lesssim\widetilde{\sigma}\sqrt{r\log n}+\frac{B\mu r\log n}{n}\lesssim\sqrt{\frac{1}{p}\left(\left\Vert \bm{M}^{\star}\right\Vert _{\infty}^{2}+\sigma^{2}\right)}\sqrt{r\log n}+\frac{\left(\tau+\left\Vert \bm{M}^{\star}\right\Vert _{\infty}\right)\mu r\log n}{np}\\
 & \lesssim\sqrt{\frac{1}{p}\left(\left\Vert \bm{M}^{\star}\right\Vert _{\infty}^{2}+\sigma^{2}\right)}\sqrt{r\log n},
\end{align*}
where $\widetilde{\sigma}$ and $B$ are defined in \eqref{eq:defn-sigmatilde}
and \eqref{eq:defn-B}, and the last inequality holds as long as $n\gg\mu^{2}r\log n$
and $np\gg1$. Regarding $\alpha_{32}$, one has 
\[
\alpha_{32}\leq\left\Vert \bm{U}^{\star}\right\Vert \left\Vert \bm{V}^{\star}\right\Vert \left\Vert \mathbb{E}\left[\bm{M}^{0}-\bm{M}^{\star}\right]\right\Vert \leq\left\Vert \mathbb{E}\left[\bm{M}^{0}-\bm{M}^{\star}\right]\right\Vert _{\mathrm{F}}\lesssim nB.
\]
\end{enumerate}
Finally, taking all the results above together, one has 
\begin{align*}
\left\Vert \bm{\Delta}_{\bm{\Sigma}}\right\Vert  & \lesssim\sigma_{\max}\left(\frac{n}{\sigma_{\min}^{2}}\frac{\left\Vert \bm{M}^{\star}\right\Vert _{\infty}^{2}+\sigma^{2}}{p}\right)+\sqrt{\frac{1}{p}\left(\left\Vert \bm{M}^{\star}\right\Vert _{\infty}^{2}+\sigma^{2}\right)}\sqrt{r\log n}+n\frac{\sigma^{2}}{\tau}.
\end{align*}
Additionally, we have 
\begin{align*}
\left\Vert \bm{H}_{\bm{U}}^{\top}\bm{\Sigma}^{0}\bm{H}_{\bm{V}}-\bm{\Sigma}^{\star}\right\Vert  & \leq\left\Vert \bm{H}_{\bm{U}}^{\top}\bm{\Sigma}^{0}\bm{H}_{\bm{V}}-\bm{U}^{\star\top}\bm{M}^{0}\bm{V}^{\star}\right\Vert +\left\Vert \bm{U}^{\star\top}\bm{M}^{0}\bm{V}^{\star}-\bm{\Sigma}^{\star}\right\Vert =\alpha_{2}+\alpha_{3}\\
 & \leq\sigma_{\max}\left(\frac{n}{\sigma_{\min}^{2}}\frac{\left\Vert \bm{M}^{\star}\right\Vert _{\infty}^{2}+\sigma^{2}}{p}\right)+\sqrt{\frac{1}{p}\left(\left\Vert \bm{M}^{\star}\right\Vert _{\infty}^{2}+\sigma^{2}\right)}\sqrt{r\log n}+n\frac{\sigma^{2}}{\tau}.
\end{align*}

\subsubsection{Proof of Lemma \ref{lem:3-3}}

Applying the triangle inequality enables us to obtain

\begin{align}
 & \left\Vert \bm{U}^{0}\bm{\Sigma}^{0}\bm{H}_{\bm{V}}-\bm{M}^{0}\bm{V}^{\star}\right\Vert _{2,\infty}\nonumber \\
 & \leq\left\Vert \left(\bm{M}^{0}-\bm{M}^{\star}\right)\left(\bm{V}^{0}\bm{H}_{\bm{V}}-\bm{V}^{\star}\right)\right\Vert _{2,\infty}+\left\Vert \bm{M}^{\star}\left(\bm{V}^{0}\bm{H}_{\bm{V}}-\bm{V}^{\star}\right)\right\Vert _{2,\infty}\nonumber \\
 & \leq\underbrace{\left\Vert \bm{M}^{\star}\left(\bm{V}^{0}\bm{H}_{\bm{V}}-\bm{V}^{\star}\right)\right\Vert _{2,\infty}}_{\eqqcolon\beta_{1}}+\underbrace{\left\Vert \left(\bm{M}^{0}-\bm{M}^{\star}-\mathbb{E}\left[\bm{M}^{0}-\bm{M}^{\star}\right]\right)\left(\bm{V}^{0}\bm{H}_{\bm{V}}-\bm{V}^{\star}\right)\right\Vert _{2,\infty}}_{\eqqcolon\beta_{2}}\nonumber \\
 & \qquad+\underbrace{\left\Vert \mathbb{E}\left[\bm{M}^{0}-\bm{M}^{\star}\right]\left(\bm{V}^{0}\bm{H}_{\bm{V}}-\bm{V}^{\star}\right)\right\Vert _{2,\infty}}_{\eqqcolon\beta_{3}}.\label{eq:lem3-3-decompose}
\end{align}
In what follows, we shall control these three terms separately. 
\begin{enumerate}
\item We start from $\beta_{2}$. In view of the leave-one-out sequences
defined in Algorithm \ref{alg:gd-rmc-loo}, one has 
\begin{align*}
 & \left\Vert \left(\bm{M}^{0}-\bm{M}^{\star}-\mathbb{E}\left[\bm{M}^{0}-\bm{M}^{\star}\right]\right)_{l,\cdot}\left(\bm{V}^{0}\bm{H}_{\bm{V}}-\bm{V}^{\star}\right)\right\Vert _{2}\\
 & \leq\underbrace{\left\Vert \left(\bm{M}^{0}-\bm{M}^{\star}-\mathbb{E}\left[\bm{M}^{0}-\bm{M}^{\star}\right]\right)_{l,\cdot}\left(\bm{V}^{0,\left(l\right)}\bm{H}_{\bm{V}}^{\left(l\right)}-\bm{V}^{\star}\right)\right\Vert _{2}}_{\eqqcolon\alpha_{1}}\\
 & \qquad+\underbrace{\left\Vert \left(\bm{M}^{0}-\bm{M}^{\star}-\mathbb{E}\left[\bm{M}^{0}-\bm{M}^{\star}\right]\right)_{l,\cdot}\left(\bm{V}^{0,\left(l\right)}\bm{H}_{\bm{V}}^{\left(l\right)}-\bm{V}^{0}\bm{H}_{\bm{V}}\right)\right\Vert _{2}}_{\eqqcolon\alpha_{2}}.
\end{align*}
Recall the definitions of $B$ and $\widetilde{\sigma}$ in \eqref{eq:defn-sigmatilde}
and \eqref{eq:defn-B}. Conditional on $\bm{V}^{\left(l\right)}$,
invoking Lemma \ref{lem:bernstein} yields 
\begin{align}
\alpha_{1} & \lesssim\widetilde{\sigma}\sqrt{\log n}\left\Vert \bm{V}^{0,\left(l\right)}\bm{H}_{\bm{V}}^{\left(l\right)}-\bm{V}^{\star}\right\Vert _{\mathrm{F}}+B\log n\left\Vert \bm{V}^{0,\left(l\right)}\bm{H}_{\bm{V}}^{\left(l\right)}-\bm{V}^{\star}\right\Vert _{2,\infty}\nonumber \\
 & \lesssim\widetilde{\sigma}\sqrt{\log n}\left\Vert \bm{V}^{0}\bm{H}_{\bm{V}}-\bm{V}^{\star}\right\Vert _{\mathrm{F}}+B\log n\left\Vert \bm{V}^{0}\bm{H}_{\bm{V}}-\bm{V}^{\star}\right\Vert _{2,\infty}\nonumber \\
 & \qquad+\left(\widetilde{\sigma}\sqrt{\log n}+B\log n\right)\left\Vert \bm{V}^{0,\left(l\right)}\bm{H}_{\bm{V}}^{\left(l\right)}-\bm{V}^{0}\bm{H}_{\bm{V}}\right\Vert _{\mathrm{F}}.\label{eq:alpha1}
\end{align}
Regarding $\alpha_{2}$, we have 
\begin{align}
\alpha_{2} & \leq\left\Vert \left(\bm{M}^{0}-\bm{M}^{\star}-\mathbb{E}\left[\bm{M}^{0}-\bm{M}^{\star}\right]\right)_{l,\cdot}\right\Vert _{2}\left\Vert \bm{V}^{0,\left(l\right)}\bm{H}_{\bm{V}}^{\left(l\right)}-\bm{V}^{0}\bm{H}_{\bm{V}}\right\Vert _{\mathrm{F}}\nonumber \\
 & \lesssim\left(\widetilde{\sigma}\sqrt{n}+B\sqrt{\log n}\right)\left\Vert \bm{V}^{0,\left(l\right)}\bm{H}_{\bm{V}}^{\left(l\right)}-\bm{V}^{0}\bm{H}_{\bm{V}}\right\Vert _{\mathrm{F}},\label{eq:lem3-3-alpha2}
\end{align}
where the second line makes use of \citet[Theorem 3.4]{chen2021spectral}. 
To bound $\Vert\bm{V}^{0,\left(l\right)}\bm{H}_{\bm{V}}^{\left(l\right)}-\bm{V}^{0}\bm{H}_{\bm{V}}\Vert,$
one has 
\begin{align}
\left\Vert \bm{V}^{0,\left(l\right)}\bm{H}_{\bm{V}}^{\left(l\right)}-\bm{V}^{0}\bm{H}_{\bm{V}}\right\Vert _{\mathrm{F}}&=\left\Vert \left(\bm{V}^{0,\left(l\right)}\bm{V}^{0,\left(l\right)\top}-\bm{V}^{0}\bm{V}^{0\top}\right)\bm{V}^{\star}\right\Vert _{\mathrm{F}}\nonumber \\
&\leq\left\Vert \bm{V}^{0,\left(l\right)}\bm{V}^{0,\left(l\right)\top}-\bm{V}^{0}\bm{V}^{0\top}\right\Vert _{\mathrm{F}}.\label{eq:lem3-3-00}
\end{align}
Invoking Wedin's sin$\bm{\Theta}$ Theorem \citet[Theorem 2.9]{chen2021spectral}
yields 
\begin{align}
 & \max\left\{ \left\Vert \bm{U}^{0,\left(l\right)}\bm{U}^{0,\left(l\right)\top}-\bm{U}^{0}\bm{U}^{0\top}\right\Vert _{\mathrm{F}},\left\Vert \bm{V}^{0,\left(l\right)}\bm{V}^{0,\left(l\right)\top}-\bm{V}^{0}\bm{V}^{0\top}\right\Vert _{\mathrm{F}}\right\} \nonumber \\
 & \lesssim\frac{\max\left\{ \left\Vert \left(\bm{M}^{0,\left(l\right)}-\bm{M}^{0}\right)\bm{V}^{0,\left(l\right)}\right\Vert _{\mathrm{F}},\left\Vert \left(\bm{M}^{0,\left(l\right)}-\bm{M}^{0}\right)^{\top}\bm{U}^{0,\left(l\right)}\right\Vert _{\mathrm{F}}\right\} }{\sigma_{r}\left(\bm{M}^{0,\left(l\right)}\right)-\sigma_{r+1}\left(\bm{M}^{0}\right)-\left\Vert \bm{M}^{0,\left(l\right)}-\bm{M}^{0}\right\Vert }\nonumber \\
 & \lesssim\frac{\max\left\{ \left\Vert \left(\bm{M}^{0,\left(l\right)}-\bm{M}^{0}\right)\bm{V}^{0,\left(l\right)}\right\Vert _{\mathrm{F}},\left\Vert \left(\bm{M}^{0,\left(l\right)}-\bm{M}^{0}\right)^{\top}\bm{U}^{0,\left(l\right)}\right\Vert _{\mathrm{F}}\right\} }{\sigma_{r}\left(\bm{M}^{\star}\right)-\sigma_{r+1}\left(\bm{M}^{\star}\right)-\left\Vert \bm{M}^{0}-\bm{M}^{\star}\right\Vert -\left\Vert \bm{M}^{0,\left(l\right)}-\bm{M}^{\star}\right\Vert -\left\Vert \bm{M}^{0,\left(l\right)}-\bm{M}^{0}\right\Vert }\nonumber \\
 & \lesssim\frac{\max\left\{ \left\Vert \left(\bm{M}^{0,\left(l\right)}-\bm{M}^{0}\right)\bm{V}^{0,\left(l\right)}\right\Vert _{\mathrm{F}},\left\Vert \left(\bm{M}^{0,\left(l\right)}-\bm{M}^{0}\right)^{\top}\bm{U}^{0,\left(l\right)}\right\Vert _{\mathrm{F}}\right\} }{\sigma_{\min}},\label{eq:lem3-3-0}
\end{align}
where the last inequality utilizes Lemma \ref{lem:1}. Then we turn
attention to $\Vert(\bm{M}^{0,(l)}-\bm{M}^{0})\bm{V}^{0,(l)}\Vert_{\mathrm{F}}$
and $\Vert(\bm{M}^{0,(l)}-\bm{M}^{0})^{\top}\bm{U}^{0,(l)}\Vert_{\mathrm{F}}$.
In view of the definition of $\bm{M}^{0,(l)}$ (cf.~\eqref{eq:spectral-method-matrix-1}),
we can deduce that \eqref{eq:defn-sigmatilde} 
\begin{align}
&\left\Vert \left(\bm{M}^{0,\left(l\right)}-\bm{M}^{0}\right)\bm{V}^{0,\left(l\right)}\right\Vert _{\mathrm{F}} \nonumber \\
& \quad=\left\Vert \left(\frac{1}{2p}\mathcal{P}_{l}\left(\psi_{\tau}\left(\bm{M}^{0}\right)\right)-\mathcal{P}_{l}\left(\bm{M}^{\star}\right)\right)\bm{V}^{0,\left(l\right)}\right\Vert _{2}\nonumber \\
 &\quad \overset{\text{(i)}}{\leq}\left\Vert \bm{E}_{l,\cdot}\bm{V}^{0,\left(l\right)}\bm{H}_{\bm{V}}^{\left(l\right)}\right\Vert _{2}\nonumber \\
 &\quad \leq\left\Vert \left(\bm{E}-\mathbb{E}\left[\bm{E}\right]\right)_{l,\cdot}\bm{V}^{\star}\right\Vert _{2}+\left\Vert \left(\bm{E}-\mathbb{E}\left[\bm{E}\right]\right)_{l,\cdot}\left(\bm{V}^{0,\left(l\right)}\bm{H}_{\bm{V}}^{\left(l\right)}-\bm{V}^{\star}\right)\right\Vert _{2}\nonumber \\
 &\quad \qquad+\left\Vert \left(\mathbb{E}\left[\bm{E}\right]\right)_{l,\cdot}\bm{V}^{0,\left(l\right)}\bm{H}_{\bm{V}}^{\left(l\right)}\right\Vert _{2}\nonumber \\
 &\quad \overset{\text{(ii)}}{\lesssim}\widetilde{\sigma}\sqrt{\log n}\left\Vert \bm{V}^{\star}\right\Vert _{\mathrm{F}}+B\log n\left\Vert \bm{V}^{\star}\right\Vert _{2,\infty}+\alpha_{1}+\left(\frac{\sigma^{2}}{\tau}\right)\sqrt{n}\left\Vert \bm{V}^{\star}\right\Vert ,\label{eq:lem3-3-1}
\end{align}
where (i) is due to \eqref{subeq:lem1-4}; (ii) comes from \eqref{eq:mean-E},
Lemma \ref{lem:bernstein} and the fact that 
\[
\left\Vert \bm{V}^{0,\left(l\right)}\bm{H}_{\bm{V}}^{\left(l\right)}\right\Vert \lesssim\left\Vert \bm{V}^{0,\left(l\right)}\right\Vert \lesssim\left\Vert \bm{V}^{\star}\right\Vert .
\]
In terms of $\Vert(\bm{M}^{0,(l)}-\bm{M}^{0})^{\top}\bm{U}^{0,(l)}\Vert_{\mathrm{F}}$,
one has 
\begin{align}
\left\Vert \left(\bm{M}^{0,\left(l\right)}-\bm{M}^{0}\right)^{\top}\bm{U}^{0,\left(l\right)}\right\Vert _{\mathrm{F}} & =\left\Vert \bm{E}_{l,\cdot}^{\top}\bm{U}_{l,\cdot}^{0,\left(l\right)}\right\Vert _{\mathrm{F}}=\left\Vert \bm{E}_{l,\cdot}\right\Vert _{2}\left\Vert \bm{U}_{l,\cdot}^{0,\left(l\right)}\right\Vert _{2}\nonumber \\
 & \overset{\text{(i)}}{\leq}2\left(\left\Vert \left(\bm{E}-\mathbb{E}\left[\bm{E}\right]\right)_{l,\cdot}\right\Vert _{2}+\left\Vert \left(\mathbb{E}\left[\bm{E}\right]\right)_{l,\cdot}\right\Vert _{2}\right)\left\Vert \bm{U}_{l,\cdot}^{0,\left(l\right)}\bm{H}_{\bm{U}}^{\left(l\right)}\right\Vert _{2}\nonumber \\
 & \overset{\text{(ii)}}{\lesssim}2\left(\left\Vert \bm{E}-\mathbb{E}\left[\bm{E}\right]\right\Vert +\frac{\sigma^{2}}{\tau}\sqrt{n}\right)\left(\left\Vert \bm{U}^{\star}\right\Vert _{2,\infty}+\left\Vert \bm{U}^{0,\left(l\right)}\bm{H}_{\bm{U}}^{\left(l\right)}-\bm{U}^{\star}\right\Vert _{2,\infty}\right)\nonumber \\
 & \overset{\text{(iii)}}{\lesssim}2\left(\widetilde{\sigma}\sqrt{n}+\frac{\sigma^{2}}{\tau}\sqrt{n}\right)\left(\left\Vert \bm{U}^{\star}\right\Vert _{2,\infty}+\left\Vert \bm{U}^{0,\left(l\right)}\bm{H}_{\bm{U}}^{\left(l\right)}-\bm{U}^{\star}\right\Vert _{2,\infty}\right)\nonumber \\
 & \lesssim\widetilde{\sigma}\sqrt{n}\left(\left\Vert \bm{U}^{\star}\right\Vert _{2,\infty}+\left\Vert \bm{U}^{0,\left(l\right)}\bm{H}_{\bm{U}}^{\left(l\right)}-\bm{U}^{\star}\right\Vert _{2,\infty}\right),\label{eq:lem3-3-2}
\end{align}
where (i) arises from Lemma \ref{lem:1}; (ii) is due to \eqref{eq:mean-E};
(iii) follows from the standard matrix tail bounds \citep[Theorem 3.4]{chen2021spectral}.
Plugging \eqref{eq:lem3-3-1} and \eqref{eq:lem3-3-2} into \eqref{eq:lem3-3-0}
and \eqref{eq:lem3-3-00} gives 
\begin{align}
 & \max\left\{ \left\Vert \bm{U}^{0,\left(l\right)}\bm{H}_{\bm{U}}^{\left(l\right)}-\bm{U}^{0}\bm{H}_{\bm{U}}\right\Vert _{\mathrm{F}},\left\Vert \bm{V}^{0,\left(l\right)}\bm{H}_{\bm{V}}^{\left(l\right)}-\bm{V}^{0}\bm{H}_{\bm{V}}\right\Vert _{\mathrm{F}}\right\} \nonumber \\
 & \lesssim\frac{\widetilde{\sigma}}{\sigma_{\min}}\sqrt{\log n}\left\Vert \bm{V}^{\star}\right\Vert _{\mathrm{F}}+\frac{B}{\sigma_{\min}}\log n\left\Vert \bm{V}^{\star}\right\Vert _{2,\infty}+\frac{\alpha_{1}}{\sigma_{\min}}+\left(\frac{\sigma^{2}}{\tau\sigma_{\min}}\right)\sqrt{n}\left\Vert \bm{V}^{\star}\right\Vert \nonumber \\
 & \qquad+\frac{\widetilde{\sigma}\sqrt{n}}{\sigma_{\min}}\left(\left\Vert \bm{U}^{\star}\right\Vert _{2,\infty}+\left\Vert \bm{U}^{0,\left(l\right)}\bm{H}_{\bm{U}}^{\left(l\right)}-\bm{U}^{\star}\right\Vert _{2,\infty}\right).\label{eq:alpha2}
\end{align}
Then we turn to bound $\Vert\bm{U}^{0,(l)}\bm{H}_{\bm{U}}^{(l)}-\bm{U}^{\star}\Vert_{2,\infty}$.
One has 
\begin{align*}
 & \left\Vert \bm{U}^{0,\left(l\right)}\bm{H}_{\bm{U}}^{\left(l\right)}-\bm{U}^{\star}\right\Vert _{2,\infty}\\
 & \lesssim\left\Vert \bm{U}^{0}\bm{H}_{\bm{U}}-\bm{U}^{\star}\right\Vert _{2,\infty}+\left\Vert \bm{U}^{0,\left(l\right)}\bm{H}_{\bm{U}}^{\left(l\right)}-\bm{U}^{0}\bm{H}_{\bm{U}}\right\Vert _{\mathrm{F}}\\
 & \lesssim\left\Vert \bm{U}^{0}\bm{H}_{\bm{U}}-\bm{U}^{\star}\right\Vert _{2,\infty}+\frac{\widetilde{\sigma}}{\sigma_{\min}}\sqrt{\log n}\left\Vert \bm{V}^{\star}\right\Vert _{\mathrm{F}}+\frac{B}{\sigma_{\min}}\log n\left\Vert \bm{V}^{\star}\right\Vert _{2,\infty}+\frac{\alpha_{1}}{\sigma_{\min}}\\
 & \qquad+\left(\frac{\sigma^{2}}{\tau\sigma_{\min}}\right)\sqrt{n}\left\Vert \bm{V}^{\star}\right\Vert +\frac{\widetilde{\sigma}\sqrt{n}}{\sigma_{\min}}\left(\left\Vert \bm{U}^{\star}\right\Vert _{2,\infty}+\left\Vert \bm{U}^{0,\left(l\right)}\bm{H}_{\bm{U}}^{\left(l\right)}-\bm{U}^{\star}\right\Vert _{2,\infty}\right).
\end{align*}
Rearrange the terms containing $\Vert\bm{U}^{0,(l)}\bm{H}_{\bm{U}}^{(l)}-\bm{U}^{\star}\Vert_{2,\infty}$
to obtain 
\begin{align}
\left\Vert \bm{U}^{0,\left(l\right)}\bm{H}_{\bm{U}}^{\left(l\right)}-\bm{U}^{\star}\right\Vert _{2,\infty} & \lesssim\left\Vert \bm{U}^{0}\bm{H}_{\bm{U}}-\bm{U}^{\star}\right\Vert _{2,\infty}+\frac{\widetilde{\sigma}\sqrt{\log n}}{\sigma_{\min}}\left\Vert \bm{V}^{\star}\right\Vert _{\mathrm{F}}+\frac{B\log n}{\sigma_{\min}}\left\Vert \bm{V}^{\star}\right\Vert _{2,\infty}\nonumber \\
 & \qquad+\frac{\alpha_{1}}{\sigma_{\min}}+\left(\frac{\sigma^{2}}{\tau\sigma_{\min}}\right)\sqrt{n}\left\Vert \bm{V}^{\star}\right\Vert +\frac{\widetilde{\sigma}\sqrt{n}}{\sigma_{\min}}\left\Vert \bm{U}^{\star}\right\Vert _{2,\infty}.\label{eq:lem3-3-4}
\end{align}
Substitution of \eqref{eq:lem3-3-4} into \eqref{eq:lem3-3-alpha2}
yields 
\begin{align}
 & \max\left\{ \left\Vert \bm{U}^{0,\left(l\right)}\bm{H}_{\bm{U}}^{\left(l\right)}-\bm{U}^{0}\bm{H}_{\bm{U}}\right\Vert _{\mathrm{F}},\left\Vert \bm{V}^{0,\left(l\right)}\bm{H}_{\bm{V}}^{\left(l\right)}-\bm{V}^{0}\bm{H}_{\bm{V}}\right\Vert _{\mathrm{F}}\right\} \nonumber \\
 & \lesssim\frac{\widetilde{\sigma}\sqrt{\log n}}{\sigma_{\min}}\left\Vert \bm{V}^{\star}\right\Vert _{\mathrm{F}}+\frac{B\log n}{\sigma_{\min}}\left\Vert \bm{V}^{\star}\right\Vert _{2,\infty}+\frac{\alpha_{1}}{\sigma_{\min}}+\left(\frac{\sigma^{2}}{\tau\sigma_{\min}}\right)\sqrt{n}\left\Vert \bm{V}^{\star}\right\Vert \nonumber \\
 & \qquad+\frac{\widetilde{\sigma}\sqrt{n}}{\sigma_{\min}}\left\Vert \bm{U}^{\star}\right\Vert _{2,\infty}+\frac{\widetilde{\sigma}\sqrt{n}}{\sigma_{\min}}\left\Vert \bm{U}^{0}\bm{H}_{\bm{U}}-\bm{U}^{\star}\right\Vert _{2,\infty},\label{eq:Ul-U}
\end{align}
provided that $\frac{\sigma}{\sigma_{\min}}\sqrt{\frac{n}{p}}\ll1$.
Plugging this into \eqref{eq:lem3-3-alpha2} gives 
\begin{align}
\alpha_{2} & \lesssim\frac{\left(\widetilde{\sigma}\sqrt{n}+B\sqrt{\log n}\right)}{\sigma_{\min}}\left(\widetilde{\sigma}\sqrt{\log n}\left\Vert \bm{V}^{\star}\right\Vert _{\mathrm{F}}+B\log n\left\Vert \bm{V}^{\star}\right\Vert _{2,\infty}+\alpha_{1}+\left(\frac{\sigma^{2}}{\tau}\right)\sqrt{n}\left\Vert \bm{V}^{\star}\right\Vert \right)\nonumber \\
 & \qquad+\frac{\left(\widetilde{\sigma}\sqrt{n}+B\sqrt{\log n}\right)}{\sigma_{\min}}\left(\widetilde{\sigma}\sqrt{n}\left\Vert \bm{U}^{\star}\right\Vert _{2,\infty}+\widetilde{\sigma}\sqrt{n}\left\Vert \bm{U}^{0}\bm{H}_{\bm{U}}-\bm{U}^{\star}\right\Vert _{2,\infty}\right)\label{eq:lem3-3-alpha2-1}
\end{align}
Furthermore, substitution of \eqref{eq:Ul-U} into \eqref{eq:alpha1}
yields 
\begin{align*}
\alpha_{1} & \lesssim\widetilde{\sigma}\sqrt{\log n}\left\Vert \bm{V}^{0}\bm{H}_{\bm{V}}-\bm{V}^{\star}\right\Vert _{\mathrm{F}}+B\log n\left\Vert \bm{V}^{0}\bm{H}_{\bm{V}}-\bm{V}^{\star}\right\Vert _{2,\infty}\\
 & \qquad+\frac{\left(\widetilde{\sigma}\sqrt{\log n}+B\log n\right)}{\sigma_{\min}}\left(\widetilde{\sigma}\sqrt{\log n}\left\Vert \bm{V}^{\star}\right\Vert _{\mathrm{F}}+B\log n\left\Vert \bm{V}^{\star}\right\Vert _{2,\infty}+\alpha_{1}+\left(\frac{\sigma^{2}}{\tau}\right)\sqrt{n}\left\Vert \bm{V}^{\star}\right\Vert \right)\\
 & \qquad+\frac{\left(\widetilde{\sigma}\sqrt{\log n}+B\log n\right)}{\sigma_{\min}}\left(\widetilde{\sigma}\sqrt{n}\left\Vert \bm{U}^{\star}\right\Vert _{2,\infty}+\widetilde{\sigma}\sqrt{n}\left\Vert \bm{U}^{0}\bm{H}_{\bm{U}}-\bm{U}^{\star}\right\Vert _{2,\infty}\right).
\end{align*}
We can rearrange terms to derive the bound of $\alpha_{1}$ 
\begin{align}
\alpha_{1} & \lesssim\widetilde{\sigma}\sqrt{\log n}\left\Vert \bm{V}^{0}\bm{H}_{\bm{V}}-\bm{V}^{\star}\right\Vert _{\mathrm{F}}+B\log n\left\Vert \bm{V}^{0}\bm{H}_{\bm{V}}-\bm{V}^{\star}\right\Vert _{2,\infty}\nonumber \\
 & \qquad+\frac{\left(\widetilde{\sigma}\sqrt{\log n}+B\log n\right)}{\sigma_{\min}}\left(\widetilde{\sigma}\sqrt{\log n}\left\Vert \bm{V}^{\star}\right\Vert _{\mathrm{F}}+B\log n\left\Vert \bm{V}^{\star}\right\Vert _{2,\infty}+\left(\frac{\sigma^{2}}{\tau}\right)\sqrt{n}\left\Vert \bm{V}^{\star}\right\Vert \right)\nonumber \\
 & \qquad+\frac{\left(\widetilde{\sigma}\sqrt{\log n}+B\log n\right)}{\sigma_{\min}}\left(\widetilde{\sigma}\sqrt{n}\left\Vert \bm{U}^{\star}\right\Vert _{2,\infty}+\widetilde{\sigma}\sqrt{n}\left\Vert \bm{U}^{0}\bm{H}_{\bm{U}}-\bm{U}^{\star}\right\Vert _{2,\infty}\right)\nonumber \\
 & \lesssim\frac{\widetilde{\sigma}\sqrt{r\log n}}{\sigma_{\min}}\left(\sigma\sqrt{\frac{n}{p}}+\left\Vert \bm{M}^{\star}\right\Vert _{\infty}\sqrt{\frac{n}{p}}\right)+B\frac{\log n}{\sigma_{\min}}\left(B\sqrt{\frac{\mu r}{n}}\log n+\widetilde{\sigma}\sqrt{\mu r}\right)\nonumber \\
 & \qquad+B\log n\left\Vert \bm{V}^{0}\bm{H}_{\bm{V}}-\bm{V}^{\star}\right\Vert _{2,\infty}.\label{eq:lem3-3-alpha1}
\end{align}
Here the second line utilizes the fact that 
\begin{align*}
\left\Vert \bm{V}^{0}\bm{H}_{\bm{V}}-\bm{V}^{\star}\right\Vert _{\mathrm{F}} & =\left\Vert \bm{V}^{0}\bm{V}^{0\top}\bm{V}^{\star}-\bm{V}^{\star}\right\Vert _{\mathrm{F}}=\left\Vert \left(\bm{V}^{0}\bm{V}^{0\top}-\bm{V}^{\star}\bm{V}^{\star\top}\right)\bm{V}^{\star}\right\Vert _{\mathrm{F}}\\
 & \leq\left\Vert \bm{V}^{0}\bm{V}^{0\top}-\bm{V}^{\star}\bm{V}^{\star\top}\right\Vert _{\mathrm{F}}\lesssim\left\Vert \bm{V}^{0}\bm{Q}_{\bm{V}}-\bm{V}^{\star}\right\Vert _{\mathrm{F}}\\
 & \lesssim\frac{\sigma}{\sigma_{\min}}\sqrt{\frac{n}{p}}+\frac{\left\Vert \bm{M}^{\star}\right\Vert _{\infty}}{\sigma_{\min}}\sqrt{\frac{n}{p}},
\end{align*}
where the last inequality follows from Lemma \ref{lem:1}. Combining
\eqref{eq:lem3-3-alpha1} and \eqref{eq:lem3-3-alpha2-1}, we obtain
\begin{align}
 & \left\Vert \left(\bm{M}^{0}-\bm{M}^{\star}-\mathbb{E}\left[\bm{M}^{0}-\bm{M}^{\star}\right]\right)\left(\bm{V}^{0}\bm{H}_{\bm{V}}-\bm{V}^{\star}\right)\right\Vert _{2,\infty}\nonumber \\
 & \lesssim\frac{\left(\widetilde{\sigma}\sqrt{n}+B\sqrt{\log n}\right)}{\sigma_{\min}}\left(\widetilde{\sigma}\sqrt{\log n}\left\Vert \bm{V}^{\star}\right\Vert _{\mathrm{F}}+B\log n\left\Vert \bm{V}^{\star}\right\Vert _{2,\infty}+\alpha_{1}+\left(\frac{\sigma^{2}}{\tau}\right)\sqrt{n}\left\Vert \bm{V}^{\star}\right\Vert \right)\nonumber \\
 & \qquad+\frac{\left(\widetilde{\sigma}\sqrt{n}+B\sqrt{\log n}\right)}{\sigma_{\min}}\left(\widetilde{\sigma}\sqrt{n}\left\Vert \bm{U}^{\star}\right\Vert _{2,\infty}+\widetilde{\sigma}\sqrt{n}\left\Vert \bm{U}^{0}\bm{H}_{\bm{U}}-\bm{U}^{\star}\right\Vert _{2,\infty}\right)\nonumber\\
 &\qquad+ \frac{\widetilde{\sigma}\sqrt{r\log n}}{\sigma_{\min}}\left(\sigma\sqrt{\frac{n}{p}}+\left\Vert \bm{M}^{\star}\right\Vert _{\infty}\sqrt{\frac{n}{p}}\right)+B\frac{\log n}{\sigma_{\min}}\left(B\sqrt{\frac{\mu r}{n}}\log n+\widetilde{\sigma}\sqrt{\mu r}\right)\nonumber \\
 &\qquad+B\log n\left\Vert \bm{V}^{0}\bm{H}_{\bm{V}}-\bm{V}^{\star}\right\Vert _{2,\infty}\nonumber \\
 & \lesssim\widetilde{\sigma}^{2}\sqrt{\mu rn}\frac{\log^{2}n}{\sigma_{\min}}+B\log n\left\Vert \bm{V}^{0}\bm{H}_{\bm{V}}-\bm{V}^{\star}\right\Vert _{2,\infty}+\frac{\widetilde{\sigma}^{2}n\sqrt{\log n}}{\sigma_{\min}}\left\Vert \bm{U}^{0}\bm{H}_{\bm{U}}-\bm{U}^{\star}\right\Vert _{2,\infty},\label{eq:lem3-3-7}
\end{align}
where the last inequality holds provided $\frac{\sigma}{\sigma_{\min}}\sqrt{\frac{n}{p}}\ll\frac{1}{\log n}$
and $np\gg\mu\kappa r\log n$. 
\item Next, we turn attention to $\beta_{3}$, which satisfies 
\begin{align*}
 & \left\Vert \mathbb{E}\left[\bm{M}^{0}-\bm{M}^{\star}\right]\left(\bm{V}\bm{H}_{\bm{V}}^{0}-\bm{V}^{\star}\right)\right\Vert _{2,\infty}\\
 & \leq\left\Vert \mathbb{E}\left[\bm{M}^{0}-\bm{M}^{\star}\right]\right\Vert _{2,\infty}\left\Vert \bm{V}^{0}\bm{H}_{\bm{V}}-\bm{V}^{\star}\right\Vert \\
 & \lesssim\sqrt{n}\frac{\sigma^{2}}{\tau}\left(\frac{\sigma}{\sigma_{\min}}\sqrt{\frac{n}{p}}+\frac{\left\Vert \bm{M}^{\star}\right\Vert _{\infty}}{\sigma_{\min}}\sqrt{\frac{n}{p}}\right),
\end{align*}
where the third line follows from \eqref{eq:mean-E} and a consequence
of Lemma \ref{lem:1} 
\begin{align}
\left\Vert \bm{V}^{0}\bm{H}_{\bm{V}}-\bm{V}^{\star}\right\Vert  & \leq\left\Vert \bm{V}^{0}\bm{Q}_{\bm{V}}-\bm{V}^{\star}\right\Vert +\left\Vert \bm{V}^{0}\bm{H}_{\bm{V}}-\bm{V}^{0}\bm{Q}_{\bm{V}}\right\Vert \nonumber \\
 & \leq\left\Vert \bm{V}^{0}\bm{Q}_{\bm{V}}-\bm{V}^{\star}\right\Vert +\left\Vert \bm{H}_{\bm{V}}-\bm{Q}_{\bm{V}}\right\Vert \nonumber \\
 & \lesssim\frac{\sigma}{\sigma_{\min}}\sqrt{\frac{n}{p}}+\frac{\left\Vert \bm{M}^{\star}\right\Vert _{\infty}}{\sigma_{\min}}\sqrt{\frac{n}{p}}.\label{eq:lem3-3-8}
\end{align}
\item Lastly, we can reuse the results in \citet[Section 3.3 Step 3]{yan2021inference}
to bound $\beta_{1}$ as 
\begin{align}
\left\Vert \bm{M}^{\star}\left(\bm{V}^{0}\bm{H}_{\bm{V}}-\bm{V}^{\star}\right)\right\Vert _{2,\infty} & \lesssim\left\Vert \bm{U}^{\star}\right\Vert _{2,\infty}\sigma_{\max}\left\Vert \bm{V}^{0}\bm{R}_{\bm{V}}-\bm{V}^{\star}\right\Vert ^{2}\nonumber \\
 & \lesssim\left\Vert \bm{U}^{\star}\right\Vert _{2,\infty}\sigma_{\max}\left(\frac{\sigma}{\sigma_{\min}}\sqrt{\frac{n}{p}}+\frac{\left\Vert \bm{M}^{\star}\right\Vert _{\infty}}{\sigma_{\min}}\sqrt{\frac{n}{p}}\right)^{2},\label{eq:lem3-3-9}
\end{align}
where the second inequality makes use of Lemma \ref{lem:1}. 
\end{enumerate}
Finally, plugging the bounds \eqref{eq:lem3-3-7}, \eqref{eq:lem3-3-8}
and \eqref{eq:lem3-3-9} into \eqref{eq:lem3-3-decompose} reveals
that 
\begin{align*}
\left\Vert \bm{U}^{0}\bm{\Sigma}^{0}\bm{H}_{\bm{V}}-\bm{M}^{0}\bm{V}^{\star}\right\Vert  & \lesssim\frac{\log^{2}n}{\sigma_{\min}}\sqrt{\mu rn}\frac{\sigma^{2}+\left\Vert \bm{M}^{\star}\right\Vert _{\infty}^{2}}{p}+\frac{\tau+\left\Vert \bm{M}^{\star}\right\Vert _{\infty}}{p}\log n\left\Vert \bm{V}^{0}\bm{H}_{\bm{V}}-\bm{V}^{\star}\right\Vert _{2,\infty}\\
 & \qquad+\frac{1}{\sigma_{\min}}\frac{\sigma^{2}+\left\Vert \bm{M}^{\star}\right\Vert _{\infty}^{2}}{p}n\left\Vert \bm{U}^{0}\bm{H}_{\bm{U}}-\bm{U}^{\star}\right\Vert _{2,\infty}\\
 & \qquad+\left\Vert \bm{U}^{\star}\right\Vert _{2,\infty}\sigma_{\max}\left(\frac{\sigma}{\sigma_{\min}}\sqrt{\frac{n}{p}}+\frac{\left\Vert \bm{M}^{\star}\right\Vert _{\infty}}{\sigma_{\min}}\sqrt{\frac{n}{p}}\right)^{2}.
\end{align*}

\subsubsection{Proof of Lemma \ref{lem:3-4}}

We start from a crucial decomposition 
\begin{align*}
\left\Vert \bm{U}^{0}\bm{H}_{\bm{U}}-\bm{U}^{\star}\right\Vert _{2,\infty} & \leq\frac{1}{\sigma_{\min}}\left\Vert \bm{U}^{0}\bm{H}_{\bm{U}}\bm{\Sigma}^{\star}-\bm{U}^{\star}\bm{\Sigma}^{\star}\right\Vert _{2,\infty}\\
 & \leq\underbrace{\frac{1}{\sigma_{\min}}\left\Vert \bm{U}^{0}\bm{H}_{\bm{U}}\bm{\Sigma}^{\star}-\bm{U}^{0}\bm{\Sigma}^{0}\bm{H}_{\bm{V}}\right\Vert _{2,\infty}}_{\eqqcolon\omega_{1}}+\underbrace{\frac{1}{\sigma_{\min}}\left\Vert \bm{U}^{0}\bm{\Sigma}^{0}\bm{H}_{\bm{V}}-\bm{M}^{0}\bm{V}^{\star}\right\Vert _{2,\infty}}_{\eqqcolon\omega_{2}}\\
 & \qquad+\underbrace{\frac{1}{\sigma_{\min}}\left\Vert \left(\bm{M}^{0}-\bm{M}^{\star}\right)\bm{V}^{\star}\right\Vert _{2,\infty}}_{\eqqcolon\omega_{3}}.
\end{align*}
In the sequel, we shall bound these three terms separately. 
\begin{enumerate}
\item To bound $\omega_{1}$, we note that it is exactly the term $\beta_{3}$
in \citet[Section C.3.4]{yan2021inference}. Invoking the results
therein, one has 
\begin{align*}
\omega_{1} & \leq\frac{1}{\sigma_{\min}}\left(\left\Vert \bm{U}^{\star}\right\Vert _{2,\infty}+\left\Vert \bm{U}^{0}\bm{H}_{\bm{U}}-\bm{U}^{\star}\right\Vert _{2,\infty}\right)\left(\left\Vert \bm{H}_{\bm{U}}^{\top}\bm{\Sigma}^{0}\bm{H}_{\bm{V}}-\bm{\Sigma}^{\star}\right\Vert +\left\Vert \bm{\Sigma}^{\star}\right\Vert \left\Vert \bm{H}_{\bm{U}}-\bm{Q}_{\bm{U}}\right\Vert \right).
\end{align*}
\item Regarding $\omega_{2}$, Lemma \ref{lem:3-3} reveals that 
\begin{align*}
 & \frac{1}{\sigma_{\min}}\left\Vert \bm{U}^{0}\bm{\Sigma}^{0}\bm{H}_{\bm{V}}-\bm{M}^{0}\bm{V}^{\star}\right\Vert _{2,\infty}\\
 & \lesssim\frac{\log^{2}n}{\sigma_{\min}^{2}}\sqrt{\mu rn}\frac{\sigma^{2}+\left\Vert \bm{M}^{\star}\right\Vert _{\infty}^{2}}{p}+\frac{\tau+\left\Vert \bm{M}^{\star}\right\Vert _{\infty}}{p\sigma_{\min}}\log n\left\Vert \bm{V}^{0}\bm{H}_{\bm{V}}-\bm{V}^{\star}\right\Vert _{2,\infty}\\
 & \qquad+\frac{n}{\sigma_{\min}^{2}}\frac{\sigma^{2}+\left\Vert \bm{M}^{\star}\right\Vert _{\infty}^{2}}{p}\left\Vert \bm{U}^{0}\bm{H}_{\bm{U}}-\bm{U}^{\star}\right\Vert _{2,\infty}+\left\Vert \bm{U}^{\star}\right\Vert _{2,\infty}\kappa\left(\frac{\sigma}{\sigma_{\min}}\sqrt{\frac{n}{p}}+\frac{\left\Vert \bm{M}^{\star}\right\Vert _{\infty}}{\sigma_{\min}}\sqrt{\frac{n}{p}}\right)^{2}.
\end{align*}
\item Turning attention to the last term $\omega_{3}$, we have 
\begin{align*}
 & \frac{1}{\sigma_{\min}}\left\Vert \left(\bm{M}^{0}-\bm{M}^{\star}\right)\bm{V}^{\star}\right\Vert _{2,\infty}\\
 & \lesssim\frac{1}{\sigma_{\min}}\left\Vert \left(\bm{M}^{0}-\bm{M}^{\star}-\mathbb{E}\left[\bm{M}^{0}-\bm{M}^{\star}\right]\right)\bm{V}^{\star}\right\Vert _{2,\infty}+\frac{1}{\sigma_{\min}}\left\Vert \mathbb{E}\left[\bm{M}^{0}-\bm{M}^{\star}\right]\bm{V}^{\star}\right\Vert _{2,\infty}\\
 & \overset{\text{(i)}}{\lesssim}\frac{1}{\sigma_{\min}}\left(\widetilde{\sigma}\left\Vert \bm{V}^{\star}\right\Vert _{\mathrm{F}}\sqrt{\log n}+B\left\Vert \bm{V}^{\star}\right\Vert _{2,\infty}\log n\right)+\frac{\sqrt{n}}{\sigma_{\min}}B\\
 & \overset{\text{(ii)}}{\lesssim}\frac{\left(\sigma+\left\Vert \bm{M}^{\star}\right\Vert _{\infty}\right)}{\sigma_{\min}}\sqrt{\frac{r\log n}{p}}+\frac{\left(\tau+\left\Vert \bm{M}^{\star}\right\Vert _{\infty}\right)\log n}{\sigma_{\min}p}\left\Vert \bm{V}^{\star}\right\Vert _{2,\infty},
\end{align*}
where (i) is due to Lemma \ref{lem:bernstein} and (ii) plugs in the
definitions of $\widetilde{\sigma}$ and $B$ (cf.~\eqref{eq:defn-sigmatilde}
and \eqref{eq:defn-B}). 
\end{enumerate}
Finally, taking the above bounds in $\omega_{1}$, $\omega_{2}$ and
$\omega_{3}$ collectively, we arrive at 
\begin{align*}
&\left\Vert \bm{U}^{0}\bm{H}_{\bm{U}}-\bm{U}^{\star}\right\Vert _{2,\infty} \\
& \quad\lesssim\frac{1}{\sigma_{\min}}\left(\left\Vert \bm{U}^{\star}\right\Vert _{2,\infty}+\left\Vert \bm{U}^{0}\bm{H}_{\bm{U}}-\bm{U}^{\star}\right\Vert _{2,\infty}\right)\left[\left\Vert \bm{H}_{\bm{U}}^{\top}\bm{\Sigma}^{0}\bm{H}_{\bm{V}}-\bm{\Sigma}^{\star}\right\Vert +\left\Vert \bm{\Sigma}^{\star}\right\Vert \left\Vert \bm{H}_{\bm{U}}-\bm{Q}_{\bm{U}}\right\Vert \right]\\
 & \qquad+\frac{\log^{2}n}{\sigma_{\min}^{2}}\sqrt{\mu rn}\frac{\sigma^{2}+\left\Vert \bm{M}^{\star}\right\Vert _{\infty}^{2}}{p}+\frac{\tau+\left\Vert \bm{M}^{\star}\right\Vert _{\infty}}{p\sigma_{\min}}\log n\left\Vert \bm{V}^{0}\bm{H}_{\bm{V}}-\bm{V}^{\star}\right\Vert _{2,\infty}\\
 & \qquad+\frac{n}{\sigma_{\min}}\frac{\sigma^{2}+\left\Vert \bm{M}^{\star}\right\Vert _{\infty}^{2}}{p}\left\Vert \bm{U}^{0}\bm{H}_{\bm{U}}-\bm{U}^{\star}\right\Vert _{2,\infty}+\left\Vert \bm{U}^{\star}\right\Vert _{2,\infty}\kappa\left(\frac{\sigma}{\sigma_{\min}}\sqrt{\frac{n}{p}}+\frac{\left\Vert \bm{M}^{\star}\right\Vert _{\infty}}{\sigma_{\min}}\sqrt{\frac{n}{p}}\right)^{2}\\
 & \qquad+\frac{\left(\sigma+\left\Vert \bm{M}^{\star}\right\Vert _{\infty}\right)}{\sigma_{\min}}\sqrt{\frac{r\log n}{p}}+\frac{\left(\tau+\left\Vert \bm{M}^{\star}\right\Vert _{\infty}\right)\log n}{\sigma_{\min}p}\left\Vert \bm{V}^{\star}\right\Vert _{2,\infty}.
\end{align*}
Rearranging terms containing $\Vert\bm{U}^{0}\bm{H}_{\bm{U}}-\bm{U}^{\star}\Vert_{2,\infty}$
and making use of Lemma \ref{lem:1} and \ref{lem:3-2}, we can obtain
\begin{align}
 & \left\Vert \bm{U}^{0}\bm{H}_{\bm{U}}-\bm{U}^{\star}\right\Vert _{2,\infty}\nonumber \\
 & \lesssim\frac{\left(\sigma+\left\Vert \bm{M}^{\star}\right\Vert _{\infty}\right)}{\sigma_{\min}}\sqrt{\frac{r\log n}{p}}+\frac{\tau+\left\Vert \bm{M}^{\star}\right\Vert _{\infty}}{p\sigma_{\min}}\log n\left\Vert \bm{V}^{0}\bm{H}_{\bm{V}}-\bm{V}^{\star}\right\Vert _{2,\infty}\nonumber \\
 & \qquad+\left\Vert \bm{U}^{\star}\right\Vert _{2,\infty}\kappa\left(\frac{\sigma}{\sigma_{\min}}\sqrt{\frac{n}{p}}+\frac{\left\Vert \bm{M}^{\star}\right\Vert _{\infty}}{\sigma_{\min}}\sqrt{\frac{n}{p}}\right)^{2}+\frac{\left(\tau+\left\Vert \bm{M}^{\star}\right\Vert _{\infty}\right)\log n}{\sigma_{\min}p}\left\Vert \bm{V}^{\star}\right\Vert _{2,\infty},\label{eq:U-2infty}
\end{align}
provided that $\frac{\sigma}{\sigma_{\min}}\sqrt{\frac{n}{p}}\ll\frac{1}{\sqrt{\mu\log^{3}n}}$
and $np\gg\kappa^{2}\mu^{3}r^{2}\log^{3}n$. Similarly, one has 
\begin{align}
 & \left\Vert \bm{V}^{0}\bm{H}_{\bm{V}}-\bm{V}^{\star}\right\Vert _{2,\infty}\nonumber \\
 & \lesssim\frac{\left(\sigma+\left\Vert \bm{M}^{\star}\right\Vert _{\infty}\right)}{\sigma_{\min}}\sqrt{\frac{r\log n}{p}}+\frac{\tau+\left\Vert \bm{M}^{\star}\right\Vert _{\infty}}{p\sigma_{\min}}\log n\left\Vert \bm{U}^{0}\bm{H}_{\bm{U}}-\bm{U}^{\star}\right\Vert _{2,\infty}\nonumber \\
 & \qquad+\left\Vert \bm{V}^{\star}\right\Vert _{2,\infty}\kappa\left(\frac{\sigma}{\sigma_{\min}}\sqrt{\frac{n}{p}}+\frac{\left\Vert \bm{M}^{\star}\right\Vert _{\infty}}{\sigma_{\min}}\sqrt{\frac{n}{p}}\right)^{2}+\frac{\left(\tau+\left\Vert \bm{M}^{\star}\right\Vert _{\infty}\right)\log n}{\sigma_{\min}p}\left\Vert \bm{U}^{\star}\right\Vert _{2,\infty}.\label{eq:V-2infty}
\end{align}
Plugging \eqref{eq:V-2infty} into \eqref{eq:U-2infty} and rearranging
terms reveal that 
\begin{align*}
\left\Vert \bm{U}^{0}\bm{H}_{\bm{U}}-\bm{U}^{\star}\right\Vert _{2,\infty} & \lesssim\frac{\left(\sigma+\left\Vert \bm{M}^{\star}\right\Vert _{\infty}\right)}{\sigma_{\min}}\sqrt{\frac{r\log n}{p}}+\left\Vert \bm{U}^{\star}\right\Vert _{2,\infty}\kappa\left(\frac{\sigma}{\sigma_{\min}}\sqrt{\frac{n}{p}}+\frac{\left\Vert \bm{M}^{\star}\right\Vert _{\infty}}{\sigma_{\min}}\sqrt{\frac{n}{p}}\right)^{2}\\
 & \qquad+\frac{\left(\sigma\sqrt{np}+\left\Vert \bm{M}^{\star}\right\Vert _{\infty}\right)\log n}{\sigma_{\min}p}\left\Vert \bm{V}^{\star}\right\Vert _{2,\infty}.
\end{align*}
The bound of $\Vert\bm{V}^{0}\bm{H}_{\bm{V}}-\bm{V}^{\star}\Vert_{2,\infty}$
can be established by a similar argument.

\subsection{Proof of Lemma \ref{lem:4}}

For any fixed $1\leq l\leq2n$, the triangle inequality enables us
to obtain 
\begin{align}
\left\Vert \left[\begin{array}{c}
\bm{X}^{0,\left(l\right)}\bm{H}^{0,\left(l\right)}-\bm{X}^{\star}\\
\bm{Y}^{0,\left(l\right)}\bm{H}^{0,\left(l\right)}-\bm{Y}^{\star}
\end{array}\right]\right\Vert _{2} & \leq\left\Vert \left[\begin{array}{c}
\bm{X}^{0,\left(l\right)}\left(\bm{H}^{0,\left(l\right)}-\bm{Q}^{\left(l\right)}\right)\\
\bm{Y}^{0,\left(l\right)}\left(\bm{H}^{0,\left(l\right)}-\bm{Q}^{\left(l\right)}\right)
\end{array}\right]\right\Vert _{2}+\left\Vert \left[\begin{array}{c}
\bm{X}^{0,\left(l\right)}\bm{Q}^{\left(l\right)}-\bm{X}^{\star}\\
\bm{Y}^{0,\left(l\right)}\bm{Q}^{\left(l\right)}-\bm{Y}^{\star}
\end{array}\right]\right\Vert _{2}\nonumber \\
 & \leq\left\Vert \left[\begin{array}{c}
\bm{X}^{0,\left(l\right)}\\
\bm{Y}^{0,\left(l\right)}
\end{array}\right]\right\Vert _{2}\left\Vert \bm{H}^{0,\left(l\right)}-\bm{Q}^{\left(l\right)}\right\Vert +\left\Vert \left[\begin{array}{c}
\bm{X}^{0,\left(l\right)}\bm{Q}^{\left(l\right)}-\bm{X}^{\star}\\
\bm{Y}^{0,\left(l\right)}\bm{Q}^{\left(l\right)}-\bm{Y}^{\star}
\end{array}\right]\right\Vert _{2}\label{eq:lem4-00}
\end{align}
In view of 
\[
\left[\begin{array}{c}
\bm{X}^{0,\left(l\right)}\\
\bm{Y}^{0,\left(l\right)}
\end{array}\right]=\left[\begin{array}{cc}
\bm{0} & \bm{M}^{0,\left(l\right)}\\
\left(\bm{M}^{0,\left(l\right)}\right)^{\top} & \bm{0}
\end{array}\right]\left[\begin{array}{c}
\bm{U}^{0,\left(l\right)}\\
\bm{V}^{0,\left(l\right)}
\end{array}\right]\left(\bm{\Sigma}^{\left(l\right)}\right)^{-1/2}
\]
and the definition of $\widetilde{\bm{M}}^{\star}$ (cf.~\eqref{eq:tilde-M}),
we can obtain 
\[
\left[\begin{array}{c}
\bm{X}^{0,\left(l\right)}\bm{Q}^{\left(l\right)}-\bm{X}^{\star}\\
\bm{Y}^{0,\left(l\right)}\bm{Q}^{\left(l\right)}-\bm{Y}^{\star}
\end{array}\right]=\widetilde{\bm{M}}^{0,\left(l\right)}\left[\begin{array}{c}
\bm{U}^{0,\left(l\right)}\\
\bm{V}^{0,\left(l\right)}
\end{array}\right]\left(\bm{\Sigma}^{\left(l\right)}\right)^{-1/2}\bm{Q}^{\left(l\right)}-\widetilde{\bm{M}}^{\star}\left[\begin{array}{c}
\bm{U}^{\star}\\
\bm{V}^{\star}
\end{array}\right]\left(\bm{\Sigma}^{\star}\right)^{-1/2}.
\]
Due to the fact that $\widetilde{\bm{M}}_{l,\cdot}^{0,(l)}=\widetilde{\bm{M}}_{l,\cdot}^{\star}$,
one has 
\begin{align*}
\left[\begin{array}{c}
\bm{X}^{0,\left(l\right)}\bm{Q}^{\left(l\right)}-\bm{X}^{\star}\\
\bm{Y}^{0,\left(l\right)}\bm{Q}^{\left(l\right)}-\bm{Y}^{\star}
\end{array}\right]_{l,\cdot} & =\widetilde{\bm{M}}_{l,\cdot}^{\star}\left\{ \left[\begin{array}{c}
\bm{U}^{0,\left(l\right)}\\
\bm{V}^{0,\left(l\right)}
\end{array}\right]\left(\bm{\Sigma}^{\left(l\right)}\right)^{-1/2}\bm{Q}^{\left(l\right)}-\left[\begin{array}{c}
\bm{U}^{\star}\\
\bm{V}^{\star}
\end{array}\right]\left(\bm{\Sigma}^{\star}\right)^{-1/2}\right\} \\
 & =\widetilde{\bm{M}}_{l,\cdot}^{\star}\left[\begin{array}{c}
\bm{U}^{0,\left(l\right)}\\
\bm{V}^{0,\left(l\right)}
\end{array}\right]\left[\left(\bm{\Sigma}^{\left(l\right)}\right)^{-1/2}\bm{Q}^{\left(l\right)}-\bm{Q}^{\left(l\right)}\left(\bm{\Sigma}^{\star}\right)^{-1/2}\right]\\
 & \qquad+\widetilde{\bm{M}}_{l,\cdot}^{\star}\left[\begin{array}{c}
\bm{U}^{0,\left(l\right)}\bm{Q}^{\left(l\right)}-\bm{U}^{\star}\\
\bm{V}^{0,\left(l\right)}\bm{Q}^{\left(l\right)}-\bm{V}^{\star}
\end{array}\right]\left(\bm{\Sigma}^{\star}\right)^{-1/2}.
\end{align*}
Therefore, it follows that 
\begin{align}
\left\Vert \left[\begin{array}{c}
\bm{X}^{0,\left(l\right)}\bm{Q}^{\left(l\right)}-\bm{X}^{\star}\\
\bm{Y}^{0,\left(l\right)}\bm{Q}^{\left(l\right)}-\bm{Y}^{\star}
\end{array}\right]_{l,\cdot}\right\Vert _{2} & \leq\left\Vert \bm{M}^{\star}\right\Vert _{2,\infty}\left\Vert \left(\bm{\Sigma}^{\left(l\right)}\right)^{-1/2}\bm{Q}^{\left(l\right)}-\bm{Q}^{\left(l\right)}\left(\bm{\Sigma}^{\star}\right)^{-1/2}\right\Vert \nonumber \\
 & \qquad+\frac{\left\Vert \bm{M}^{\star}\right\Vert _{2,\infty}}{\sqrt{\sigma_{\min}}}\left\Vert \left[\begin{array}{c}
\bm{U}^{0,\left(l\right)}\bm{Q}^{\left(l\right)}-\bm{U}^{\star}\\
\bm{V}^{0,\left(l\right)}\bm{Q}^{\left(l\right)}-\bm{V}^{\star}
\end{array}\right]\right\Vert .\label{eq:lem4-0}
\end{align}
To control this bound, \citet[Lemma 46]{ma2017implicit} gives 
\begin{align}
\left\Vert \left(\bm{\Sigma}^{\left(l\right)}\right)^{-1/2}\bm{Q}^{\left(l\right)}-\bm{Q}^{\left(l\right)}\left(\bm{\Sigma}^{\star}\right)^{-1/2}\right\Vert  & =\left\Vert \left(\bm{\Sigma}^{\left(l\right)}\right)^{-1/2}\left[\bm{Q}^{\left(l\right)}\left(\bm{\Sigma}^{\star}\right)^{1/2}-\left(\bm{\Sigma}^{\left(l\right)}\right)^{1/2}\bm{Q}^{\left(l\right)}\right]\left(\bm{\Sigma}^{\star}\right)^{-1/2}\right\Vert \nonumber \\
 & \lesssim\frac{1}{\sigma_{\min}}\left\Vert \bm{Q}^{\left(l\right)}\left(\bm{\Sigma}^{\star}\right)^{1/2}-\left(\bm{\Sigma}^{\left(l\right)}\right)^{1/2}\bm{Q}^{\left(l\right)}\right\Vert \nonumber \\
 & \lesssim\frac{1}{\sigma_{\min}^{3/2}}\left\Vert \widetilde{\bm{M}}^{0,\left(l\right)}-\widetilde{\bm{M}}^{\star}\right\Vert .\label{eq:lem4-1}
\end{align}
Furthermore, \citet[Lemma 45, 47]{ma2017implicit} imply that 
\begin{align}
\left\Vert \left[\begin{array}{c}
\bm{U}^{0,\left(l\right)}\bm{Q}^{\left(l\right)}-\bm{U}^{\star}\\
\bm{V}^{0,\left(l\right)}\bm{Q}^{\left(l\right)}-\bm{V}^{\star}
\end{array}\right]\right\Vert  & \lesssim\frac{1}{\sigma_{\min}}\left\Vert \widetilde{\bm{M}}^{0,\left(l\right)}-\widetilde{\bm{M}}^{\star}\right\Vert ,\label{eq:lem4-2}\\
\left\Vert \bm{H}^{0,\left(l\right)}-\bm{Q}^{\left(l\right)}\right\Vert  & \lesssim\frac{1}{\sigma_{\min}}\left\Vert \widetilde{\bm{M}}^{0,\left(l\right)}-\widetilde{\bm{M}}^{\star}\right\Vert .\label{eq:lem4-3}
\end{align}
Substitution of \eqref{eq:lem4-1}-\eqref{eq:lem4-3} into \eqref{eq:lem4-0}
yields 
\begin{equation}
\left\Vert \left[\begin{array}{c}
\bm{X}^{0,\left(l\right)}\bm{Q}^{\left(l\right)}-\bm{X}^{\star}\\
\bm{Y}^{0,\left(l\right)}\bm{Q}^{\left(l\right)}-\bm{Y}^{\star}
\end{array}\right]_{l,\cdot}\right\Vert _{2}\leq\frac{1}{\sigma_{\min}^{3/2}}\left\Vert \widetilde{\bm{M}}^{0,\left(l\right)}-\widetilde{\bm{M}}^{\star}\right\Vert \left\Vert \bm{M}^{\star}\right\Vert _{2,\infty}.\label{eq:lem4-4}
\end{equation}
Plugging \eqref{eq:lem4-3} and \eqref{eq:lem4-4} into \eqref{eq:lem4-00}
gives

\begin{align*}
\left\Vert \left[\begin{array}{c}
\bm{X}^{0,\left(l\right)}\bm{H}^{0,\left(l\right)}-\bm{X}^{\star}\\
\bm{Y}^{0,\left(l\right)}\bm{H}^{0,\left(l\right)}-\bm{Y}^{\star}
\end{array}\right]_{l,\cdot}\right\Vert _{2} & \leq\frac{1}{\sigma_{\min}}\left\Vert \widetilde{\bm{M}}^{0,\left(l\right)}-\widetilde{\bm{M}}^{\star}\right\Vert \left\Vert \left[\begin{array}{c}
\bm{X}^{0,\left(l\right)}\\
\bm{Y}^{0,\left(l\right)}
\end{array}\right]\right\Vert _{2,\infty}+\frac{1}{\sigma_{\min}^{3/2}}\left\Vert \widetilde{\bm{M}}^{0,\left(l\right)}-\widetilde{\bm{M}}^{\star}\right\Vert \left\Vert \bm{M}^{\star}\right\Vert _{2,\infty}\\
 & \lesssim\frac{\sqrt{\kappa}}{\sigma_{\min}}\left(\sigma\sqrt{\frac{n}{p}}+\sqrt{\frac{n}{p}}\left\Vert \bm{M}^{\star}\right\Vert _{\infty}\right)\left\Vert \bm{X}^{\star}\right\Vert _{2,\infty},
\end{align*}
where the last inequality makes use of \eqref{subeq:aux-6} and \eqref{subeq:lem1-1l}
\[
\left\Vert \widetilde{\bm{M}}^{0,\left(l\right)}-\widetilde{\bm{M}}^{\star}\right\Vert =\left\Vert \bm{M}^{0,\left(l\right)}-\bm{M}^{\star}\right\Vert \lesssim\sigma\sqrt{\frac{n}{p}}+\sqrt{\frac{n}{p}}\left\Vert \bm{M}^{\star}\right\Vert _{\infty}.
\]

\subsection{Proof of Lemma \ref{lem:5}}
To start with, we define 
\[
\bm{Q}^{0,\left(l\right)}=\arg\min_{\bm{R}\in\mathcal{O}^{r\times r}}\left\Vert \bm{U}^{0,\left(l\right)}\bm{R}-\bm{U}^{0}\right\Vert _{\mathrm{F}}.
\]
The definition of $\bm{R}^{0,\left(l\right)}$ (cf.~\eqref{defn-Rtl})
implies that 
\begin{equation}
\left\Vert \bm{X}^{0}\bm{H}^{0}-\bm{X}^{0,\left(l\right)}\bm{R}^{0,\left(l\right)}\right\Vert _{\mathrm{F}}\leq\left\Vert \bm{X}^{0}-\bm{X}^{0,\left(l\right)}\bm{Q}^{0,\left(l\right)}\right\Vert _{\mathrm{F}}.\label{eq:lem5-1}
\end{equation}
Then one has the following decomposition of $\bm{X}^{0,(l)}\bm{Q}^{0,(l)}-\bm{X}^{0}$,
\[
\bm{X}^{0,\left(l\right)}\bm{Q}^{0,\left(l\right)}-\bm{X}^{0}=\bm{U}^{0,\left(l\right)}\left[\left(\bm{\Sigma}^{0,\left(l\right)}\right)^{1/2}\bm{Q}^{0,\left(l\right)}-\bm{Q}^{0,\left(l\right)}\left(\bm{\Sigma}^{0}\right)^{1/2}\right]+\left(\bm{U}^{0,\left(l\right)}\bm{Q}^{0,\left(l\right)}-\bm{U}^{0}\right)\left(\bm{\Sigma}^{0}\right)^{1/2}.
\]
The triangle inequality reveals that 
\begin{align*}
\left\Vert \bm{X}^{0,\left(l\right)}\bm{Q}^{0,\left(l\right)}-\bm{X}^{0}\right\Vert _{\mathrm{F}} & \leq\left\Vert \left(\bm{\Sigma}^{0,\left(l\right)}\right)^{1/2}\bm{Q}^{0,\left(l\right)}-\bm{Q}^{0,\left(l\right)}\left(\bm{\Sigma}^{0}\right)^{1/2}\right\Vert _{\mathrm{F}}+\left\Vert \bm{U}^{0,\left(l\right)}\bm{Q}^{0,\left(l\right)}-\bm{U}^{0}\right\Vert _{\mathrm{F}}\left\Vert \left(\bm{\Sigma}^{0}\right)^{1/2}\right\Vert .
\end{align*}
Invoking \citet[Lemma 46]{ma2017implicit} gives 
\begin{align*}
 & \left\Vert \left(\bm{\Sigma}^{0,\left(l\right)}\right)^{1/2}\bm{Q}^{0,\left(l\right)}-\bm{Q}^{0,\left(l\right)}\left(\bm{\Sigma}^{0}\right)^{1/2}\right\Vert _{\mathrm{F}}\\
 & \lesssim\frac{1}{\sqrt{\sigma_{\min}}}\left\Vert \left(\widetilde{\bm{M}}^{0}-\widetilde{\bm{M}}^{0,\left(l\right)}\right)\left[\begin{array}{c}
\bm{U}^{0,\left(l\right)}\\
\bm{V}^{0,\left(l\right)}
\end{array}\right]\right\Vert _{\mathrm{F}}\\
 & \lesssim\frac{1}{\sqrt{\sigma_{\min}}}\left\Vert \left(\bm{M}^{0}-\bm{M}^{0,\left(l\right)}\right)\bm{V}^{0,\left(l\right)}\right\Vert _{\mathrm{F}}+\frac{1}{\sqrt{\sigma_{\min}}}\left\Vert \left(\bm{M}^{0}-\bm{M}^{0,\left(l\right)}\right)^{\top}\bm{U}^{0,\left(l\right)}\right\Vert _{\mathrm{F}}.
\end{align*}
Furthermore, Davis-Kahan's sin$\Theta$ theorem \citep{davis1970rotation}
implies that 
\[
\left\Vert \bm{U}^{0,\left(l\right)}\bm{Q}^{0,\left(l\right)}-\bm{U}^{0}\right\Vert _{\mathrm{F}}\lesssim\frac{1}{\sigma_{\min}}\left\Vert \left(\bm{M}^{0}-\bm{M}^{0,\left(l\right)}\right)\bm{V}^{0,\left(l\right)}\right\Vert _{\mathrm{F}}.
\]
Taking the results above together gives rise to 
\begin{align}
\left\Vert \bm{X}^{0,\left(l\right)}\bm{Q}^{0,\left(l\right)}-\bm{X}^{0}\right\Vert _{\mathrm{F}} & \lesssim\frac{1}{\sqrt{\sigma_{\min}}}\left\Vert \left(\bm{M}^{0}-\bm{M}^{0,\left(l\right)}\right)\bm{V}^{0,\left(l\right)}\right\Vert _{\mathrm{F}}+\frac{1}{\sqrt{\sigma_{\min}}}\left\Vert \left(\bm{M}^{0}-\bm{M}^{0,\left(l\right)}\right)^{\top}\bm{U}^{0,\left(l\right)}\right\Vert _{\mathrm{F}}\nonumber \\
 & +\frac{1}{\sigma_{\min}}\left\Vert \left(\bm{M}^{0}-\bm{M}^{0,\left(l\right)}\right)^{\top}\bm{U}^{0,\left(l\right)}\right\Vert _{\mathrm{F}}\left\Vert \left(\bm{\Sigma}^{0}\right)^{1/2}\right\Vert \nonumber \\
 & \lesssim\frac{1}{\sqrt{\sigma_{\min}}}\left\Vert \left(\bm{M}^{0}-\bm{M}^{0,\left(l\right)}\right)\bm{V}^{0,\left(l\right)}\right\Vert _{\mathrm{F}}+\sqrt{\frac{\kappa}{\sigma_{\min}}}\left\Vert \left(\bm{M}^{0}-\bm{M}^{0,\left(l\right)}\right)^{\top}\bm{U}^{0,\left(l\right)}\right\Vert _{\mathrm{F}},\label{eq:lem5-2}
\end{align}
where the last inequality utilizes \eqref{eq:initial-gamma2-1}. Then
we resort to the following claim to control the right-hand side. 
\begin{claim}
\label{claim:lem5}With probability exceeding $1-O(n^{-100})$, one
has
\begin{align}
&\left\Vert \left(\bm{M}^{0}-\bm{M}^{0,\left(l\right)}\right)^{\top}\bm{U}^{0,\left(l\right)}\right\Vert _{\mathrm{F}}\nonumber \\
&\qquad\lesssim\left(\sqrt{\frac{n\left(\left\Vert \bm{M}^{\star}\right\Vert _{\infty}^{2}+\sigma^{2}\right)}{p}\log n}+\frac{\left(\tau+\left\Vert \bm{M}^{\star}\right\Vert _{\infty}\right)}{p}\log n\right)\left\Vert \bm{U}^{0,\left(l\right)}\right\Vert _{2,\infty},\label{eq:claim5-1}
\end{align}
and
\begin{align}
	&\left\Vert \left(\bm{M}^{0}-\bm{M}^{0,\left(l\right)}\right)\bm{V}^{0,\left(l\right)}\right\Vert _{\mathrm{F}}\nonumber \\
	&\qquad \lesssim\left(\sqrt{\frac{n\left(\left\Vert \bm{M}^{\star}\right\Vert _{\infty}^{2}+\sigma^{2}\right)}{p}\log n}+\frac{\left(\tau+\left\Vert \bm{M}^{\star}\right\Vert _{\infty}\right)}{p}\log n\right)\left\Vert \bm{V}^{0,\left(l\right)}\right\Vert _{2,\infty}.\label{eq:claim5-2}
\end{align}
\end{claim}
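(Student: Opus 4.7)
The plan is to exploit the single-row (or single-column) support of $\bm{M}^{0}-\bm{M}^{0,(l)}$ together with the conditional independence granted by the leave-one-out construction. I describe the case $1\le l\le n$; the case $n+1\le l\le 2n$ is obtained by swapping rows and columns throughout. First, from \eqref{eq:spectral-method-matrix-1} the matrix $\bm{M}^{0,(l)}$ agrees with $\bm{M}^{0}$ off the $l$-th row and replaces that row by the deterministic vector $\bm{M}^{\star}_{l,\cdot}$, so $\bm{M}^{0}-\bm{M}^{0,(l)}$ is supported on row $l$ and its nonzero row equals the error vector $\bm{E}_{l,\cdot}$ from \eqref{defn-E}. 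Consequently,
\[
\bigl\|(\bm{M}^{0}-\bm{M}^{0,(l)})^{\top}\bm{U}^{0,(l)}\bigr\|_{\mathrm{F}}=\|\bm{E}_{l,\cdot}\|_{2}\,\|\bm{U}^{0,(l)}_{l,\cdot}\|_{2}\le\|\bm{E}_{l,\cdot}\|_{2}\,\|\bm{U}^{0,(l)}\|_{2,\infty},
\]
\[
\bigl\|(\bm{M}^{0}-\bm{M}^{0,(l)})\bm{V}^{0,(l)}\bigr\|_{\mathrm{F}}=\Bigl\|\sum_{j=1}^{n}E_{l,j}\bm{V}^{0,(l)}_{j,\cdot}\Bigr\|_{2},
\]
and, crucially, $\bm{U}^{0,(l)}$ and $\bm{V}^{0,(l)}$ are independent of $\bm{E}_{l,\cdot}$ because $\bm{M}^{0,(l)}$ discards all randomness from row $l$.

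For \eqref{eq:claim5-1} I would control $\|\bm{E}_{l,\cdot}\|_{2}$ by vector Bernstein applied to the independent summands $E_{l,j}\bm{e}_{j}$, invoking the pointwise bound $|E_{l,j}|\le B\lesssim(\tau+\|\bm{M}^{\star}\|_{\infty})/p$ and the variance bound $\Var(E_{l,j})\le\widetilde{\sigma}^{2}\lesssim(\|\bm{M}^{\star}\|_{\infty}^{2}+\sigma^{2})/p$ from \eqref{eq:defn-B} and \eqref{eq:defn-sigmatilde}; this yields a centered fluctuation of order $\widetilde{\sigma}\sqrt{n\log n}+B\log n$, while the bias $\|\mathbb{E}[\bm{E}_{l,\cdot}]\|_{2}\le\sqrt{n}\,\sigma^{2}/\tau$ from \eqref{eq:mean-E} is dominated by this fluctuation once $\tau=C_{\tau}(\|\bm{M}^{\star}\|_{\infty}+\sigma\sqrt{np})$ is substituted. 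For \eqref{eq:claim5-2}, conditional on $\bm{V}^{0,(l)}$ I would apply matrix Bernstein to $\sum_{j}E_{l,j}\bm{V}^{0,(l)}_{j,\cdot}$: each summand has norm at most $B\|\bm{V}^{0,(l)}\|_{2,\infty}$, and the matrix variance statistic is bounded by $\widetilde{\sigma}^{2}r$ via $\sum_{j}\|\bm{V}^{0,(l)}_{j,\cdot}\|_{2}^{2}=r$ in one direction and by $\widetilde{\sigma}^{2}$ via $\bm{V}^{0,(l)\top}\bm{V}^{0,(l)}=\bm{I}_{r}$ in the other, giving a fluctuation of order $\widetilde{\sigma}\sqrt{r\log n}+B\|\bm{V}^{0,(l)}\|_{2,\infty}\log n$. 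Since any $n\times r$ matrix with orthonormal columns satisfies $\|\bm{V}^{0,(l)}\|_{2,\infty}\ge\sqrt{r/n}$ by pigeonhole, the first term is upper-bounded by $\widetilde{\sigma}\sqrt{n\log n}\,\|\bm{V}^{0,(l)}\|_{2,\infty}$, and the whole expression reorganizes into the right-hand side of \eqref{eq:claim5-2}.

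The only genuine care is needed for the Huberization bias: the $E_{l,j}$ are not centered, but the per-entry bias is uniformly tiny ($|\mathbb{E}[E_{l,j}]|\lesssim\sigma^{2}/\tau$), and under the stated tuning of $\tau$ it is absorbed into the Bernstein fluctuation. Apart from tracking this bias correctly, the argument is a standard conditional Bernstein estimate made possible by the clean leave-one-out independence, and the case $n+1\le l\le 2n$ follows by the analogous column-wise argument.
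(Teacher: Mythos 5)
Your proposal is correct and follows essentially the same route as the paper: exploit that $\bm{M}^{0}-\bm{M}^{0,(l)}$ is supported on the single row (or column) $l$ with entries $\bm{E}_{l,\cdot}$, use independence of $\bm{U}^{0,(l)},\bm{V}^{0,(l)}$ from that row's randomness, and apply a (matrix) Bernstein bound with exactly the $L\lesssim(\tau+\Vert\bm{M}^{\star}\Vert_{\infty})/p$ and variance $\lesssim(\Vert\bm{M}^{\star}\Vert_{\infty}^{2}+\sigma^{2})/p$ ingredients, converting $\Vert\bm{V}^{0,(l)}\Vert_{\mathrm{F}}$ into $\sqrt{n}\Vert\bm{V}^{0,(l)}\Vert_{2,\infty}$ at the end just as the paper does. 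The only (harmless) deviations are cosmetic: your rank-one reduction of \eqref{eq:claim5-1} to $\Vert\bm{E}_{l,\cdot}\Vert_{2}\Vert\bm{U}^{0,(l)}_{l,\cdot}\Vert_{2}$ matches how the paper treats the analogous term elsewhere, and your explicit tracking of the Huberization bias is in fact slightly more careful than the paper's proof, which absorbs it into a second-moment bound without comment.
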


In terms of $\Vert\bm{U}^{0,\left(l\right)}\Vert_{2,\infty}$, applying
similar derivation as Lemma \ref{lem:3-5} enables us to obtain that
\begin{align*}
&\left\Vert \bm{U}^{0,\left(l\right)}\bm{Q}_{\bm{U}}^{\left(l\right)}-\bm{U}^{\star}\right\Vert _{2,\infty} \\
&\quad \lesssim\frac{\left\Vert \bm{M}^{\star}\right\Vert _{\infty}}{\sigma_{\min}}\sqrt{\frac{r\log n}{p}}+\frac{\sigma}{\sigma_{\min}}\sqrt{\frac{\mu r}{p}}\log n\\
 & \qquad+\left\Vert \bm{U}^{\star}\right\Vert _{2,\infty}\left[\kappa\left(\frac{\sigma}{\sigma_{\min}}\sqrt{\frac{n}{p}}+\frac{\left\Vert \bm{M}^{\star}\right\Vert _{\infty}}{\sigma_{\min}}\sqrt{\frac{n}{p}}\right)^{2}+\sqrt{\frac{1}{p}\left(\left\Vert \bm{M}^{\star}\right\Vert _{\infty}^{2}+\sigma^{2}\right)}\frac{\sqrt{r\log n}}{\sigma_{\min}}\right]\\
 &\quad \ll\left\Vert \bm{U}^{\star}\right\Vert _{2,\infty},
\end{align*}
as long as $\frac{\sigma}{\sigma_{\min}}\sqrt{\frac{\kappa rn\log^{2}n}{p}}\ll1$
and $n^{2}p\gg\kappa^{2}\mu^{2}r^{3}n\log n$. Analogously we have
\[
\left\Vert \bm{V}^{0,\left(l\right)}\bm{Q}_{\bm{V}}^{\left(l\right)}-\bm{V}^{\star}\right\Vert _{2,\infty}\lesssim\left\Vert \bm{V}^{\star}\right\Vert _{2,\infty}.
\]
It then follows that 
\begin{align*}
\left\Vert \bm{U}^{0,\left(l\right)}\right\Vert _{2,\infty} & =\left\Vert \bm{U}^{0,\left(l\right)}\bm{Q}_{\bm{U}}^{\left(l\right)}\right\Vert _{2,\infty}\leq\left\Vert \bm{U}^{\star}\right\Vert _{2,\infty}+\left\Vert \bm{U}^{0,\left(l\right)}\bm{Q}_{\bm{U}}^{\left(l\right)}-\bm{U}^{\star}\right\Vert _{2,\infty}\leq2\left\Vert \bm{U}^{\star}\right\Vert _{2,\infty},
\end{align*}
and similarly $\Vert\bm{V}^{0,\left(l\right)}\Vert_{2,\infty}\leq2\Vert\bm{V}^{\star}\Vert_{2,\infty}.$
Therefore, combining \eqref{eq:lem5-1}, \eqref{eq:lem5-2} and Claim
\ref{claim:lem5} yields

\begin{align*}
\left\Vert \bm{X}^{0}\bm{H}^{0}-\bm{X}^{0,\left(l\right)}\bm{R}^{0,\left(l\right)}\right\Vert _{\mathrm{F}} & \lesssim\sqrt{\frac{\kappa}{\sigma_{\min}}}\left(\left\Vert \left(\bm{M}^{0}-\bm{M}^{0,\left(l\right)}\right)^{\top}\bm{U}^{0,\left(l\right)}\right\Vert _{\mathrm{F}}+\left\Vert \left(\bm{M}^{0}-\bm{M}^{0,\left(l\right)}\right)\bm{V}^{0,\left(l\right)}\right\Vert _{\mathrm{F}}\right)\\
 & \lesssim\sqrt{\frac{\kappa}{\sigma_{\min}}}\left(\sqrt{\frac{n\left(\left\Vert \bm{M}^{\star}\right\Vert _{\infty}^{2}+\sigma^{2}\right)}{p}\log n}+\frac{\left(\tau+\left\Vert \bm{M}^{\star}\right\Vert _{\infty}\right)}{p}\log n\right)\left\Vert \left[\begin{array}{c}
\bm{U}^{\star}\\
\bm{V}^{\star}
\end{array}\right]\right\Vert _{2,\infty}\\
 & \lesssim\sqrt{\kappa}\left(\frac{\sigma}{\sigma_{\min}}\sqrt{\frac{n}{p}}+\frac{\left\Vert \bm{M}^{\star}\right\Vert _{\infty}}{\sigma_{\min}}\sqrt{\frac{n}{p}}\right)\log n\left\Vert \bm{F}^{\star}\right\Vert _{2,\infty}.
\end{align*}
This completes the proof.
\begin{proof}
[Proof of Claim \ref{claim:lem5}] We prove \eqref{eq:claim5-2}
here, and \eqref{eq:claim5-1} would follow in an analogous way. From
the definition of $\bm{M}^{0,(l)}$ (cf.~\eqref{eq:spectral-method-matrix-1}),
one has 
\begin{align*}
p\left(\bm{M}^{0}-\bm{M}^{0,\left(l\right)}\right)\bm{V}^{0,\left(l\right)} & =\left(\mathcal{P}_{\Omega_{l}}\left(\psi_{\tau}\left(\bm{M}\right)\right)-p\mathcal{P}_{l}\left(\bm{M}^{\star}\right)\right)\bm{V}^{0,\left(l\right)}\\
 & =\sum_{j=1}^{n}\left(\delta_{l,j}\psi_{\tau}\left(M_{l,j}\right)-pM_{l,j}^{\star}\right)\bm{V}_{j,\cdot}^{0,\left(l\right)}.
\end{align*}
It is then easy to check that 
\begin{align*}
L & \coloneqq\max_{1\leq j\leq n}\left\Vert \left(\delta_{l,j}\left(M_{l,j}\ind_{\left|M_{l,j}\right|\leq\tau}+\tau\ind_{\left|M_{l,j}\right|>\tau}\right)-pM_{l,j}^{\star}\right)\bm{V}_{j,\cdot}^{0,\left(l\right)}\right\Vert _{2}\leq\left(\tau+\left\Vert \bm{M}^{\star}\right\Vert _{\infty}\right)\left\Vert \bm{V}^{\star}\right\Vert _{2,\infty}\\
V & \coloneqq\left\Vert \sum_{j=1}^{n}\mathbb{E}\left[\left(\delta_{l,j}\psi_{\tau}\left(M_{l,j}\right)-pM_{l,j}^{\star}\right)^{2}\right]\bm{V}_{j,\cdot}^{0,\left(l\right)}\bm{V}_{j,\cdot}^{0,\left(l\right)\top}\right\Vert \\
 & \leq\max_{j}\mathbb{E}\left[\left(\left(\delta_{l,j}-p\right)M_{l,j}^{\star}\ind_{\left|M_{l,j}\right|\leq\tau}+\delta_{l,j}\varepsilon_{l,j}\ind_{\left|M_{l,j}\right|\leq\tau}+\left(\tau\delta_{l,j}-pM_{l,j}^{\star}\right)\ind_{\left|M_{l,j}\right|>\tau}\right)^{2}\right]\left\Vert \sum_{j=1}^{n}\bm{V}_{j,\cdot}^{0,\left(l\right)}\bm{V}_{j,\cdot}^{0,\left(l\right)\top}\right\Vert \\
 & \lesssim\left(\mathbb{E}\left[\left(\delta_{l,j}-p\right)^{2}\left(M_{l,j}^{\star}\right)^{2}\right]+\mathbb{E}\left[\delta_{l,j}\varepsilon_{l,j}^{2}\right]+\mathbb{E}\left[\left(\tau\delta_{l,j}-pM_{l,j}^{\star}\right)^{2}\ind_{\left|M_{l,j}\right|>\tau}\right]\right)\left\Vert \bm{V}^{0,\left(l\right)}\right\Vert _{\mathrm{F}}^{2}\\
 & \lesssim p\left(\left\Vert \bm{M}^{\star}\right\Vert _{\infty}^{2}+\sigma^{2}\right)\left\Vert \bm{V}^{0,\left(l\right)}\right\Vert _{\mathrm{F}}^{2}.
\end{align*}
Here the last line is an application of Markov inequality due to 
\[
\mathbb{P}\left(\left|M_{l,j}\right|>\tau\right)\leq\mathbb{P}\left(\left|\varepsilon_{l,j}\right|>\tau-\left|M_{l,j}^{\star}\right|\right)\le\mathbb{P}\left(\left|\varepsilon_{l,j}\right|>\tau/2\right)\leq\frac{\sigma^{2}}{\left(\tau/2\right)^{2}}.
\]
We are now ready to apply the matrix Bernstein inequality \citep[Theorem 6.1.1]{tropp2015introduction}:
\begin{align*}
&\left\Vert \left(\bm{M}^{0}-\bm{M}^{0,\left(l\right)}\right)\bm{V}^{0,\left(l\right)}\right\Vert _{\mathrm{F}} \\
&\quad \lesssim\frac{1}{p}\left(\sqrt{V\log n}+L\log n\right)\\
 &\quad \lesssim\sqrt{\frac{\left(\left\Vert \bm{M}^{\star}\right\Vert _{\infty}^{2}+\sigma^{2}\right)}{p}\log n}\left\Vert \bm{V}^{0,\left(l\right)}\right\Vert _{\mathrm{F}}+\frac{\left(\tau+\left\Vert \bm{M}^{\star}\right\Vert _{\infty}\right)}{p}\left\Vert \bm{V}^{0,\left(l\right)}\right\Vert _{2,\infty}\log n\\
 &\quad \lesssim\sqrt{\frac{n\left(\left\Vert \bm{M}^{\star}\right\Vert _{\infty}^{2}+\sigma^{2}\right)}{p}\log n}\left\Vert \bm{V}^{0,\left(l\right)}\right\Vert _{2,\infty}+\frac{\left(\tau+\left\Vert \bm{M}^{\star}\right\Vert _{\infty}\right)}{p}\left\Vert \bm{V}^{0,\left(l\right)}\right\Vert _{2,\infty}\log n.
\end{align*}
\end{proof}
\subsection{Proof of Lemma \ref{lem:aux}}

For \eqref{subeq:aux-1}, one has 
\begin{align*}
 & \left\Vert \bm{F}^{t\left(l\right)}\bm{R}^{t,\left(l\right)}-\bm{F}^{\star}\right\Vert _{2,\infty}\\
 & \leq\left\Vert \bm{F}^{t\left(l\right)}\bm{R}^{t,\left(l\right)}-\bm{F}^{t}\bm{H}^{t}\right\Vert _{2,\infty}+\left\Vert \bm{F}^{t}\bm{H}^{t}-\bm{F}^{\star}\right\Vert _{2,\infty}\\
 & \leq\left\Vert \bm{F}^{t\left(l\right)}\bm{R}^{t,\left(l\right)}-\bm{F}^{t}\bm{H}^{t}\right\Vert _{\mathrm{F}}+\left\Vert \bm{F}^{t}\bm{H}^{t}-\bm{F}^{\star}\right\Vert _{2,\infty}\\
 & \overset{\text{(i)}}{\lesssim}\kappa^{3/2}\left(\frac{\sigma}{\sigma_{\min}}\sqrt{\frac{n}{p}}+\frac{\left\Vert \bm{M}^{\star}\right\Vert _{\infty}}{\sigma_{\min}}\sqrt{\frac{n}{p}}\right)\log n\left\Vert \bm{F}^{\star}\right\Vert _{2,\infty},
\end{align*}
where (i) follows from Lemma \ref{lem:loo-1} and \ref{lem:loo-2}.

Turning attention to \eqref{subeq:aux-2}, we have 
\begin{align*}
\left\Vert \bm{F}^{t\left(l\right)}\bm{R}^{t,\left(l\right)}-\bm{F}^{\star}\right\Vert  & \leq\left\Vert \bm{F}^{t\left(l\right)}\bm{R}^{t,\left(l\right)}-\bm{F}^{t}\bm{H}^{t}\right\Vert _{\mathrm{F}}+\left\Vert \bm{F}^{t}\bm{H}^{t}-\bm{F}^{\star}\right\Vert \\
 & \lesssim\sqrt{r}\left(\frac{\sigma}{\sigma_{\min}}\sqrt{\frac{n}{p}}+\frac{\left\Vert \bm{M}^{\star}\right\Vert _{\infty}}{\sigma_{\min}}\sqrt{\frac{n}{p}}\right)\left\Vert \bm{F}^{\star}\right\Vert ,
\end{align*}
where the second line makes use of Lemma \ref{lem:loo-1} and \ref{lem:contraction}.

Moreover, 
\begin{align*}
 & \left\Vert \bm{F}^{t,\left(l\right)}\bm{H}^{t,\left(l\right)}-\bm{F}^{\star}\right\Vert \\
 & \leq\left\Vert \bm{F}^{t,\left(l\right)}\bm{H}^{t,\left(l\right)}-\bm{F}^{t}\bm{H}^{t}\right\Vert _{\mathrm{F}}+\left\Vert \bm{F}^{t}\bm{H}^{t}-\bm{F}^{\star}\right\Vert \\
 & \leq5\kappa\left\Vert \bm{F}^{t,\left(l\right)}\bm{R}^{t,\left(l\right)}-\bm{F}^{t}\bm{H}^{t}\right\Vert _{\mathrm{F}}+\left\Vert \bm{F}^{t}\bm{H}^{t}-\bm{F}^{\star}\right\Vert \\
 & \lesssim\left(\frac{\sigma}{\sigma_{\min}}\sqrt{\frac{n}{p}}+\frac{\left\Vert \bm{M}^{\star}\right\Vert _{\infty}}{\sigma_{\min}}\sqrt{\frac{n}{p}}\right)\sqrt{r}\left\Vert \bm{F}^{\star}\right\Vert ,
\end{align*}
which is due to \eqref{subeq:aux-4}, Lemma \ref{lem:loo-1} and \ref{lem:contraction}.

\eqref{subeq:aux-4} can be proved analogously as \citet[Lemma 18]{chen2020noisy}
and thus omitted here for brevity.

Finally, one has 
\begin{align*}
\left\Vert \bm{F}^{t}\right\Vert  & \leq\left\Vert \bm{F}^{t}\bm{H}^{t}-\bm{F}^{\star}\right\Vert +\left\Vert \bm{F}^{\star}\right\Vert \\
 & \overset{\text{(i)}}{\leq}C\sqrt{\kappa}\left(\frac{\sigma}{\sigma_{\min}}\sqrt{\frac{n}{p}}+\frac{\left\Vert \bm{M}^{\star}\right\Vert _{\infty}}{\sigma_{\min}}\sqrt{\frac{n}{p}}\right)\left\Vert \bm{F}^{\star}\right\Vert _{\mathrm{F}}+\left\Vert \bm{F}^{\star}\right\Vert \\
 & \overset{\text{(ii)}}{\leq}2\left\Vert \bm{F}^{\star}\right\Vert ,
\end{align*}
where (i) is due to \eqref{eq:operator-norm}, and (ii) holds as long
as $\frac{\sigma}{\sigma_{\min}}\sqrt{\frac{n}{p}}\ll1/\sqrt{\kappa r}$
and $np\gg\mu^{2}\kappa^{3}r^{3}$. In a similar way, we can prove
\eqref{subeq:aux-5} and \eqref{subeq:aux-6}.

\subsection{Proof of Lemma \ref{lem:1}}

Recalling the definition of $\bm{E}$ (cf.~\eqref{defn-E}), one
has 
\begin{align*}
\left\Vert \bm{E}\right\Vert  & \leq\left\Vert \bm{E}-\mathbb{E}\left[\bm{E}\right]\right\Vert +\left\Vert \mathbb{E}\left[\bm{E}\right]\right\Vert _{\mathrm{F}}\\
 & \overset{\text{(i)}}{\lesssim}\widetilde{\sigma}\sqrt{n}+n\frac{\sigma^{2}}{\tau}\\
 & \overset{\text{(ii)}}{\lesssim}\sigma\sqrt{\frac{n}{p}}+\sqrt{\frac{n}{p}}\left\Vert \bm{M}^{\star}\right\Vert _{\infty},
\end{align*}
where (i) makes use of standard matrix tail bounds \citep[Theorem 3.1.4]{chen2021spectral}
and (ii) follows from the definition of $\widetilde{\sigma}$ in \eqref{eq:defn-sigmatilde}.
In addition, we note that Weyl's inequality implies 
\[
\sigma_{r}\left(\bm{M}^{0}\right)\geq\sigma_{\min}-\left\Vert \bm{E}\right\Vert \geq\frac{1}{2}\sigma_{\min}.
\]
Applying Wedin's sin$\bm{\Theta}$ Theorem \citep[Theorem 2.3.1]{chen2021spectral}
then reveals 
\[
\max\left\{ \left\Vert \bm{U}\bm{Q}_{\bm{U}}-\bm{U}^{\star}\right\Vert ,\left\Vert \bm{V}\bm{Q}_{\bm{V}}-\bm{V}^{\star}\right\Vert \right\} \leq\frac{\sqrt{2}\left\Vert \bm{E}\right\Vert }{\sigma_{r}\left(\bm{M}^{0}\right)-\sigma_{r+1}\left(\bm{M}^{\star}\right)}\lesssim\frac{\sigma}{\sigma_{\min}}\sqrt{\frac{n}{p}}+\frac{\left\Vert \bm{M}^{\star}\right\Vert _{\infty}}{\sigma_{\min}}\sqrt{\frac{n}{p}}.
\]

Next, we turn to control $\Vert\bm{H}_{\bm{V}}-\bm{Q}_{\bm{V}}\Vert$.
The definition of $\bm{H}_{\bm{V}}$ (cf.~\eqref{defn-Huv}) enables
us to write the SVD of $\bm{H}_{\bm{V}}$ as 
\[
\bm{H}_{\bm{V}}=\bm{L}_{1}\cos\bm{\Theta}\bm{L}_{2}^{\top},
\]
where $\bm{\Theta}$ is a diagonal matrix consisting of the principal
angles between the subspaces $\bm{V}$ and $\bm{V}^{\star}$. Then
\eqref{defn-Quv} gives $\bm{Q}_{\bm{V}}=\bm{L}_{1}\bm{L}_{2}^{\top}$
and it follows that

\begin{equation}
\left\Vert \bm{H}_{\bm{V}}-\bm{Q}_{\bm{V}}\right\Vert =\left\Vert \bm{L}_{1}\left(\cos\bm{\Theta}-\bm{I}_{r}\right)\bm{L}_{2}^{\top}\right\Vert =\left\Vert 2\sin^{2}\left(\bm{\Theta}/2\right)\right\Vert \lesssim\left\Vert \sin\bm{\Theta}\right\Vert ^{2}\lesssim\frac{n}{\sigma_{\min}^{2}}\frac{\left\Vert \bm{M}^{\star}\right\Vert _{\infty}^{2}+\sigma^{2}}{p},\label{eq:lem1-HQ}
\end{equation}
where the last inequality invokes Wedin's sin$\bm{\Theta}$ Theorem
\citep[Theorem 2.3.1]{chen2021spectral} again. The results of $\bm{H}_{\bm{U}}$
can be obtained similarly.

The last inequality is a direct consequence of \eqref{subeq:lem1-3}
\[
\frac{1}{2}\leq\left\Vert \bm{Q}_{\bm{U}}\right\Vert -\left\Vert \bm{H}_{\bm{U}}-\bm{Q}_{\bm{U}}\right\Vert \leq\left\Vert \bm{H}_{\bm{U}}\right\Vert \leq\left\Vert \bm{H}_{\bm{U}}-\bm{Q}_{\bm{U}}\right\Vert +\left\Vert \bm{Q}_{\bm{U}}\right\Vert \leq2.
\]

\section{Technical lemmas}

\begin{lemma}\label{lem:bernoulli}Suppose $\{g_{i,j}\}_{i,j}$ are
i.i.d. Bernoulli random variables with parameter $p$. Then with probability
at least $1-O(n^{-10})$, one has 
\[
\left\Vert \left\{ g_{i,j}\right\} _{i,j}\right\Vert \lesssim np.
\]

\end{lemma}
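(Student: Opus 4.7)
The plan is to decompose the random matrix into a rank-one mean part and a centered fluctuation part, and then bound each separately. Writing $\bm{G} = \{g_{i,j}\}_{i,j}$, we decompose
\[
\bm{G} = p\,\bm{1}\bm{1}^{\top} + (\bm{G} - p\,\bm{1}\bm{1}^{\top}).
\]
The first summand is exactly rank one with spectral norm $\|p\bm{1}\bm{1}^{\top}\| = np$, so by the triangle inequality it only remains to control the centered part.

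For the centered matrix $\bm{W} \coloneqq \bm{G} - p\,\bm{1}\bm{1}^{\top}$, the entries $W_{i,j} = g_{i,j} - p$ are independent, mean zero, satisfy $|W_{i,j}| \leq 1$, and have variance $p(1-p)\leq p$. I would apply a standard matrix concentration bound (e.g.\ matrix Bernstein via symmetric dilation, as in Tropp's inequality) to $\bm{W}$, which is the same tool used repeatedly elsewhere in the paper. The relevant parameters are $L = 1$ and $\|\sum_{i,j}\mathbb{E}[W_{i,j}^2\, \bm{e}_i\bm{e}_j^{\top}\bm{e}_j\bm{e}_i^{\top}]\| \leq np$ (and similarly for the other symmetric variance), giving
\[
\|\bm{W}\| \lesssim \sqrt{np\log n} + \log n
\]
with probability at least $1 - O(n^{-10})$.

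Combining via the triangle inequality yields $\|\bm{G}\| \leq np + C(\sqrt{np\log n} + \log n)$, and the desired bound $\|\bm{G}\| \lesssim np$ follows as soon as $np \gtrsim \log n$, which is in force throughout the paper's sampling regime (cf.\ \eqref{eq:condition}). There is no real obstacle here; the only thing to be careful about is the regime of $np$, since the logarithmic corrections dominate only when $np \lesssim \log n$, in which case the stated conclusion is vacuous or trivial.
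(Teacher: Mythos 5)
Your proof is correct, and it uses exactly the same decomposition as the paper: $\{g_{i,j}\}_{i,j}= p\,\bm{1}\bm{1}^{\top}+(\{g_{i,j}\}_{i,j}-p\,\bm{1}\bm{1}^{\top})$, triangle inequality, and a concentration bound on the centered part. The only difference is the tool used for that centered part: the paper cites \citet[Lemma 3.2]{keshavan2010matrix}, which gives the sharper bound $\lVert\{g_{i,j}\}_{i,j}-p\,\bm{1}\bm{1}^{\top}\rVert\lesssim\sqrt{np}$ with no logarithmic factor, whereas your matrix-Bernstein argument yields $\sqrt{np\log n}+\log n$. Your route is slightly lossier but more self-contained, since it reuses the Bernstein machinery already invoked throughout the paper, and the loss is immaterial: under the paper's standing sample-size condition ($np\gtrsim\log^{2}n$, cf.\ \eqref{eq:condition}) both fluctuation bounds are dominated by the rank-one term $np$, so the conclusion $\lVert\{g_{i,j}\}_{i,j}\rVert\lesssim np$ follows either way. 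Your closing caveat about the regime $np\lesssim\log n$ is the right thing to flag; the paper's version has the analogous (milder) proviso $np\gg1$.
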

\begin{proof}
Triangle inequality gives 
\begin{align*}
\left\Vert \left\{ g_{i,j}\right\} _{i,j}\right\Vert  & \leq\left\Vert \left\{ g_{i,j}\right\} _{i,j}-p\bm{1}\bm{1}^{\top}\right\Vert +\left\Vert p\bm{1}\bm{1}^{\top}\right\Vert \\
 & \lesssim\sqrt{np}+np\lesssim np.
\end{align*}
where the second line makes use of \citet[Lemma 3.2]{keshavan2010matrix}
and holds provided that $np\gg1$.
\end{proof}
\begin{lemma}Assume the matrix $\bm{E}\coloneqq\{E_{i,j}\}_{i,j}\in\mathbb{R}^{n\times n}$
consists of independent random variables obeying that for any $1\leq i,j\leq n$,
\[
\mathbb{E}\left[E_{i,j}\right]=0,\qquad\mathbb{E}\left[E_{i,j}^{2}\right]=\sigma_{i,j}^{2}\leq\sigma^{2},\qquad\left|E_{i,j}\right|\leq B.
\]

\begin{enumerate}
\item With probability exceeding $1-O(n^{-10})$, one has 
\begin{equation}
\left\Vert \mathcal{P}_{\Omega}\left(\bm{E}\right)\right\Vert \lesssim\sigma\sqrt{n}+B\sqrt{\log n}.\label{lem:noise}
\end{equation}
\item For any fixed matrix $\bm{A}$, one has 
\begin{equation}
\left\Vert \bm{E}\bm{A}\right\Vert _{2,\infty}\lesssim\sigma\left\Vert \bm{A}\right\Vert _{\mathrm{F}}\sqrt{\log n}+B\left\Vert \bm{A}\right\Vert _{2,\infty}\log n,\label{lem:bernstein}
\end{equation}
with probability over $1-O(n^{-100})$. 
\end{enumerate}
\end{lemma}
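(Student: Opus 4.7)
The two assertions are both off-the-shelf matrix concentration statements; I would handle them separately. For the spectral-norm bound \eqref{lem:noise}, my plan is to write the (possibly sampled) noise matrix as a sum of independent, mean-zero, rank-one summands, $\bm{E} = \sum_{i,j} E_{i,j}\, \bm{e}_i \bm{e}_j^{\top}$, and to apply a matrix Bernstein / Bandeira--van Handel inequality. The two ingredients are the per-summand bound $\|E_{i,j}\bm{e}_i\bm{e}_j^{\top}\| \leq B$ and the matrix-variance proxy $\max\bigl\{\|\sum_{i,j} \mathbb{E}[E_{i,j}^{2}]\bm{e}_i\bm{e}_i^{\top}\|,\, \|\sum_{i,j}\mathbb{E}[E_{i,j}^{2}]\bm{e}_j\bm{e}_j^{\top}\|\bigr\} \leq n\sigma^{2}$, from which the stated bound $\sigma\sqrt{n}+B\sqrt{\log n}$ follows after optimizing the deviation parameter so that the tail is at most $n^{-10}$. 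If one invokes only the classical Tropp matrix Bernstein, the natural output is $\sigma\sqrt{n\log n}+B\log n$; the slightly sharper form in the statement can be extracted either from the Bandeira--van Handel bound on $\mathbb{E}\|\bm{E}\|$ combined with Talagrand-type concentration, or absorbed into the hidden constant after a routine inspection.

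For the row-wise bound \eqref{lem:bernstein}, the plan is to fix a row index $i$ and observe that
\begin{equation*}
(\bm{E}\bm{A})_{i,\cdot}^{\top} \;=\; \sum_{j=1}^{n} E_{i,j}\, \bm{A}_{j,\cdot}^{\top}
\end{equation*}
is a sum of $n$ independent, zero-mean random vectors. I would compute the two Bernstein parameters:
\begin{equation*}
L \;=\; \max_{j} \bigl\| E_{i,j}\, \bm{A}_{j,\cdot}^{\top}\bigr\|_{2} \;\leq\; B\,\|\bm{A}\|_{2,\infty},\qquad
V \;=\; \Bigl\|\sum_{j=1}^{n} \mathbb{E}\!\left[E_{i,j}^{2}\right]\bm{A}_{j,\cdot}^{\top}\bm{A}_{j,\cdot}\Bigr\|
\;\leq\; \sigma^{2}\,\|\bm{A}\|_{\mathrm{F}}^{2},
\end{equation*}
and apply the vector Bernstein inequality (e.g.\ \citet[Proposition 2]{koltchinskii2011nuclear}, which is already cited in the paper) at deviation level $t \asymp \sqrt{V\log n}+L\log n$. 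This produces
$\|(\bm{E}\bm{A})_{i,\cdot}\|_{2}\lesssim \sigma\|\bm{A}\|_{\mathrm{F}}\sqrt{\log n}+B\,\|\bm{A}\|_{2,\infty}\log n$ with probability at least $1-O(n^{-101})$. A union bound over $i=1,\dots,n$ then upgrades this to the claim, with exceptional probability at most $O(n^{-100})$ after choosing the absolute constant in $\lesssim$ sufficiently large.

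The two parts are essentially independent and neither presents a real obstacle: the only mildly non-trivial point is matching the precise $\sqrt{\log n}$ (as opposed to $\log n$) factor in \eqref{lem:noise}, which dictates the choice between a plain matrix Bernstein argument and a slightly sharper dimension-free estimate. Everything else is direct bookkeeping of the variance and per-summand bounds, followed by a union bound in the second part.
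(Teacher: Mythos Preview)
Your proposal is correct and aligns with the paper's treatment: the paper does not spell out a proof at all but simply cites \citet[Equation~3.9]{chen2021spectral} for \eqref{lem:noise} and identifies \eqref{lem:bernstein} with \citet[Lemma~5]{yan2021inference}. Your sketch---matrix Bernstein/Bandeira--van~Handel for the spectral-norm bound and row-wise vector Bernstein plus a union bound for the $\ell_{2,\infty}$ bound---is exactly the content behind those citations, and you correctly flag that the sharper $\sqrt{\log n}$ factor in \eqref{lem:noise} requires the dimension-free estimate rather than plain Tropp.
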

\begin{proof}
\citet[Equation 3.9]{chen2021spectral} gives \eqref{lem:noise}.
This is the same as \citet[Lemma 5]{yan2021inference}.
\end{proof}

\end{document}